\tikzstyle{arc}=[->,shorten <=3pt, shorten >=3pt,
\tikzstyle{edge}=[shorten <=2pt, shorten >=2pt,
\tikzstyle{blueE}=[shorten <=2pt, shorten >=2pt,
\tikzstyle{redE}=[shorten <=2pt, shorten >=2pt,
\tikzstyle{vertex}=[circle, fill=white, draw,
\tikzstyle{redA}=[->, dashed, shorten <=3pt, shorten >=3pt, red, >=stealth,
\tikzstyle{blueA}=[->, shorten <=3pt, shorten >=3pt, blue, >=stealth,
\newtheorem{theorem}{Theorem}[section]
\newtheorem{lemma}[theorem]{Lemma}
\newtheorem{corollary}[theorem]{Corollary}
\newtheorem{proposition}[theorem]{Proposition}
\newtheorem{observation}[theorem]{Observation}
\newtheorem{problem}[theorem]{Problem}
\newtheorem{question}[theorem]{Question}
\newtheorem{example}[theorem]{Example}
\newcommand{\calG}{\mathcal G}
\newcommand{\calC}{\mathcal C}
\newcommand{\calB}{\mathcal B}
\newcommand{\calD}{\mathcal D}
\newcommand{\calF}{\mathcal F}
\newcommand{\calA}{\mathcal A}
\newcommand{\calP}{\mathcal P}
\newcommand{\calQ}{\mathcal Q}
\newcommand{\calN}{\mathcal N}
\newcommand{\calM}{\mathcal M}
\newcommand{\LOR}{\mathcal{LOR}}
\newcommand{\OR}{\mathcal{OR}}
\newcommand{\DI}{\mathcal{DI}}
\newcommand{\AO}{\mathcal{AO}}
\newcommand{\LOOR}{\mathcal{LOOR}}
\newcommand{\COR}{\mathcal{COR}}
\newcommand{\SO}{\mathcal{SO}}
\newcommand{\PO}{\mathcal{PO}}
\newcommand{\T}{\mathcal T}
\newcommand{\bbA}{\mathbb A}
\newcommand{\bbB}{\mathbb B}
\newcommand{\bbC}{\mathbb C}
\newcommand{\bbD}{\mathbb D}
\newcommand{\bbX}{\mathbb X}
\newcommand{\bbY}{\mathbb Y}
\DeclareMathOperator{\ex}{ex}
\DeclareMathOperator{\co}{co-}
\DeclareMathOperator{\Csp}{CSP}
\DeclareMathOperator{\Mod}{Mod}
\DeclareMathOperator{\Fwd}{Fwd}
\DeclareMathOperator{\QF}{QF}
\title{Local expressions of hereditary classes}
\author{Santiago Guzm\'an-Pro\thanks{This project has received funding from the European Union
(Project POCOCOP, ERC Synergy Grant 101071674).
Views and opinions expressed are however those of the author(s) only and do not
necessarily reflect those of the European Union or the European Research
Council Executive Agency. Neither the European Union nor the granting
authority can be held responsible for them.}\\
\footnotesize{santiago.guzman\_pro@tu-dresden.de}}
\affil{Institut f\"ur Algebra, TU Dresden, Germany}
\begin{document}
\date{}

\maketitle
\begin{abstract}
A well-established research line in structural and algorithmic graph theory
is characterizing graph classes by listing their minimal obstructions. When
this list is finite for some class $\calC$ we obtain a polynomial-time
algorithm for recognizing graphs in $\calC$, and from a logic point of
view, having finitely many obstructions corresponds to  being definable by a
universal sentence. However, in many cases we study classes with
infinite sets of minimal obstructions, and this might have neither algorithmic nor logic
implications for such a class.
Some decades ago, Skrien (1982) and Damaschke (1990) introduced finite expressions
of graph classes by means of forbidden orientations and forbidden linear orderings,
and  recently, similar research lines appeared in the literature, such as 
expressions by forbidden circular orders, by forbidden tree-layouts, and by forbidden
edge-coloured graphs.
In this paper, we introduce \textit{local expressions} of graph classes; a general
framework for characterizing graph classes by  forbidden equipped graphs.
In particular, it encompasses all  research lines mentioned above,
and we provide some new examples of such characterizations.  Moreover, we see
that every local expression of a class $\calC$ yields a polynomial-time
certification algorithm for graphs in $\calC$. Finally, from a logic point of view,
we show that being locally expressible corresponds to being definable
in the logic SNP introduced by Feder and Vardi (1999). 

\end{abstract}


\section{Introduction}

A natural way of describing hereditary graph classes is by listing their sets of minimal
obstructions. For instance, forests and chordal graphs are defined as graphs with
no induced cycles and graphs with no induced cycles of length at  least four,
respectively. Similarly, the class of bipartite graphs is characterized as
the class of graphs with no odd induced cycles.
Finding these sets of minimal obstructions for hereditary graph classes is a
natural research line that has motivated several works on structural graph
theory. Possibly the most celebrated case is the resolution of the strong perfect graph
conjecture~\cite{berge1961}, which after almost 40 years of research culminated in
the strong perfect graph theorem~\cite{chusdnovskyAM164} characterizing perfect graphs
as those graphs with no induced odd-holes nor odd-antiholes.

These characterizations have strong limitations. For instance, the expressive power
of descriptions by \textit{finitely} many minimal  obstructions is not strong enough to capture
several well-structured graph classes such as forests, chordal graphs, and bipartite
graphs (among many others). Moreover, even by considering descriptions by infinitely
many minimal obstructions, it seems out of reach at the moment to list all minimal obstructions
of standard graph classes. For example, for each $k\ge 3$, the list of minimal obstructions to
$k$-colourability remains unknown, and even for efficiently recognizable classes such as circular-arc
graphs, and $(2,1)$-colourable graphs, their lists of minimal obstructions are still unknown.
It is often possible to overcome these limitations by equipping graphs with extra structure,
and forbidding finitely many equipped graphs. For example, the Roy-Gallai-Hasse-Vitaver Theorem
characterizes $k$-colourable graphs as those graphs that admit an orientation with no directed path
on $k+1$ vertices~\cite{gallaiPCT, hasseIMN28, royIRO1, vitaverDAN147}. Similarly, Fulkerson and
Gross~\cite{fulkersonPJM15} proved that a graph $G$ is a chordal graph if and only if there
is  a linear ordering $\le$ of $V(G)$ such that $(G,\le)$ avoids the structure
$\left(\{v_1 \le v_2 \le v_3\}, \{v_{1}v_{2}, v_{1}v_{3}\}\right)$, i.e., if $G$ admits a \textit{perfect
elimination ordering.}

We say that a class of graphs $\mathcal{C}$ is \textit{expressible by forbidden
orientations} (resp.\ \textit{linear orderings)} if there is a finite set $F$ of 
oriented graphs (resp.\ linearly ordered graphs) such that $\mathcal{C}$ is the class of 
graphs that admit an $F$-free orientation (resp.\ linear ordering). It is straightforward to 
notice that any class with a finite set of minimal obstructions is expressible by forbidden 
orientations and by forbidden linear orderings, i.e., these characterization 
\textit{templates} have a stronger expressive power than expressions by finitely many 
forbidden induced subgraphs. In \cref{tab:examples}, we list some graph classes
with infinitely many minimal obstructions that are expressible by forbidden orientations
or by forbidden linear orderings.

\begin{table}[ht!]

\begin{center}
    \begin{tabular}{| c | c |}
    \hline
    \textbf{Forbidden orientations} & \textbf{Forbidden linear orderings} \\ \hline
    Comparability graphs \cite{skrienJGT6} & Comparability graphs \cite{damaschkeTCGT1990}\\ \hline
    Unicyclic graphs \cite{guzmanAR} & Forests \cite{damaschkeTCGT1990} \\ \hline
    Proper circular-arc graphs \cite{skrienJGT6} &  Interval graphs~\cite{olariuIPL37} \\ \hline
    Proper Helly circular-arc graphs \cite{guzmanAR} & Proper interval graphs \cite{feuilloleyJDM} \\ \hline
    Bipartite graphs \cite{guzmanAR}  &  Bipartite graphs \cite{feuilloleyJDM} \\ \hline
    $\Csp(C_{{2n+1}})$ \cite{guzmanEJC28} & $(k,l)$-colourable graphs \cite{feuilloleyJDM}\\ \hline
    \end{tabular}
    \caption{Examples of graph classes $\mathcal{C}$ with an infinite set of minimal obstructions, such that
    $\mathcal{C}$ is  expressible by forbidden orientations or by forbidden linear orderings.
    See Appendix~\ref{ap:graph-classes} for a glossary of graph classes.}
    \label{tab:examples}
    \end{center}
  \end{table}

As far as we are aware, Skrien~\cite{skrienJGT6} and Damaschke~\cite{damaschkeTCGT1990}
were the first authors to systematically study expressions of graph classes by forbidden
orientations and by forbidden linear orderings, respectively.\footnote{Skrien called 
them $F$-graphs classes, and Damaschke $FOSG$-classes.} Their work has recently been 
extended in~\cite{guzmanEJC28, guzmanAR}  for forbidden orientations, and 
in~\cite{feuilloleyAR, feuilloleyJDM} for forbidden linear orderings. Furthermore, there 
has been a growing interest in equipping graphs with further structure to provide 
different templates for finite descriptions of graph classes. For instance,
in~\cite{guzmanAMC438} the authors consider circularly ordered graphs,
in~\cite{paulPREPRINT} Paul and Protopapas introduce the  notion of tree-layout graphs,
and in~\cite{bokIP} expressions by forbidden $2$-edge-coloured graphs were considered. 
This work stems from this rising topic in structural graph theory: we propose a general 
framework that encompasses all these recent characterisation templates of graph classes.
To do so, we will use simple model theoretic notions, and basic category theory nomenclature
introduced in Section~\ref{sec:prelim}. \newpage

The main contributions of the present work are:
 \begin{enumerate}
    \item Introducing \textit{local expressions} of graph classes by \textit{
        concrete functors}, a unified framework to talk about characterizations of graph
        classes by finitely many forbidden equipped graphs. In particular, we survey
        several known characterisations that correspond to certain local expressions, and
        provide new examples. 
    \item Showing that concrete functors between categories of relational structures are
        categorical duals of quantifier-free definitions between relational signatures.
        An important application of this result is that concrete functors are polynomial-time
        computable, and thus, local expressions of graph classes $\mathcal C$ yield
        polynomial-time certificates for membership in $\mathcal C$.
    \item Introducing the \textit{Characterization Problem}, the \textit{Complexity Problem}, and
    the \textit{Expressibilty Problem}: three generic problems that arise from local expressions
    of graph classes, and that encompass most of the problems considered in this rising area
    of structural graph theory, e.g., in~\cite{bodirskyAR, damaschkeTCGT1990, duffusRSA7, feuilloleyJDM, feuilloleyAR,
    guzmanAMC438, guzmanEJC28, guzmanEJC105, hellESA2014, paulPREPRINT, skrienJGT6}.
 \end{enumerate}

The rest of this paper is structured as follows. First, in Section~\ref{sec:prelim} we 
introduce model and category theory notions needed for later sections.
Then, in Section~\ref{sec:local-expresions} we introduce local expressions of graph
classes by concrete functors and provide a wide range of examples of such
expressions.  In Section~\ref{sec:duality}, we consider local expressions arising from so-called
quantifier-free definitions and prove the categorical duality of concrete functors and
quantifier-free definitions. Later, in Section~\ref{sec:pseudo-local}, we introduce the
notion of pseudo-local classes which arises from considering local expressions by concrete
functors with local domain. Finally, in Section~\ref{sec:conclusion} we
introduce the Characterization Problem, the Complexity Problem, and the
Expressibilty Problem. We also survey and  propose some particular instances of
these problems that remain open.


\section{Preliminaries}
\label{sec:prelim}

This work is mainly addressed to graph theorists, and thus we assume
familiarity with graph theory. For standard notions the reader can
refer to~\cite{bondy2008}, non-standard notions will be introduced
throughout the manuscript as needed, and a glossary of graph classes
can be found in Appendix~\ref{ap:graph-classes}.

No involved model theory is needed for this work, nonetheless, we
carefully introduce concepts from this area that will be useful for
this work, and we illustrate these notions with examples from graph theory.
Readers acquainted with elementary model theory and classical first-order logic
may safely skip Sections~\ref{sub:rel-structures} and~\ref{sub:logic}.

\subsection{Relational structures}
\label{sub:rel-structures}

We follow~\cite{hodges1993} for classical concepts in model theory. A
\textit{relational signature} is a set $\tau$ of relation symbols $R,S,\dots$
each equipped with a positive integer $k$ called the \textit{arity} of the 
corresponding relational symbol. A \textit{$\tau$-structure} $\mathbb A$
consists of the following building blocks.
\begin{itemize}
	\item A  \textit{vertex set} $V(\mathbb A)$, also called the \textit{domain}
    of $\mathbb A$.
	\item For each $R \in \tau$ of arity $k$, a $k$-ary relation on $V(\mathbb A)$.
	We write $R(\mathbb A)$ to denote the relation on $V(\mathbb A)$ named by $R$. We also say that is
    the \textit{interpretation} of $R$ in $\mathbb A$.
\end{itemize}

When the structure $\mathbb A$ is clear from context, we will simply write $V$ instead of
$V(\mathbb A)$, and unless stated otherwise we will work with finite structures, that
is, structures with finite vertex sets. Moreover, we will restrict ourselves to
finite relational signatures. In particular, this implies that for each relational 
signature $\tau$ considered in this work, there are finitely many $\tau$-structures,
up to isomorphism. We denote by $\Omega$ the \textit{empty signature}, i.e.,
$\Omega$ is the signature with no relation symbols. 

In this scenario we can think of a graph $G$ as a relational structure where the
vertex set of $G$ is $V(G)$ and the edge set $E(G)$ is a binary relation over $V(G)$
named by the relation symbol $E$. Similarly, a linearly ordered graph $(G,\le)$ is a
relational structure with two binary relations, one named after $E$ and one named
after $\le$. 

A \textit{substructure} $\mathbb B$ of a $\tau$-structure $\mathbb A$
is a $\tau$-structure with domain $V(\mathbb B) \subseteq V(\mathbb A)$
such that for each $R\in \tau$ of arity $k$, the interpretation of $R$ in $\mathbb B$
is the intersection $R(\mathbb A)\cap V(B)^k$. Given a subset $U\subseteq V(\mathbb A)$,
we denote by $\mathbb A[U]$ the substructure of $\mathbb A$ with vertex set $U$. 
Notice that an $\{E\}$-substructure of a graph $G$ corresponds to the graph
theoretical notion of \textit{induced subgraph}.

Graph homomorphisms naturally generalize to arbitrary relational structures. 
Consider a pair of $\tau$-structures $\mathbb A$ and $\mathbb B$. 
A \textit{homomorphism} $f\colon\mathbb{A \to B}$ is a function $f\colon
V(A)\to V(B)$ such that for every positive integer $k$, for every $k$-ary relation
symbol $R\in \tau$ and for every $k$-tuple $(a_1,\dots, a_k)\in V(\mathbb A)^k$, 
if $(a_1,\dots, a_k)\in R(\mathbb A)$ then $(f(a_1), \dots, f(a_n))\in R(B)$.
If such homomorphism exists we say that $\mathbb A$ is \textit{homomorphic} to
$\mathbb B$  and write $\mathbb A\to \mathbb B$; otherwise we write
$\mathbb A\not\to \mathbb B$. We write $\Csp(\mathbb B)$ to denote the class
of $\tau$-structures homomorphic to $\mathbb B$. We abuse nomenclature, and
whenever $H$ is a graph, we denote by $\Csp(H)$ the class of graph homomorphic
to $H$, e.g., $\Csp(K_3)$ is the class of $3$-colourable graphs. 

An \textit{embedding} is an injective homomorphism $f\colon A\to B$
such that for every $k$-ary relation symbol $R\in \tau$ and every $k$-tuple
$(a_1,\dots, a_k)\in V(\mathbb A)^k$, it is the case that $(a_1,\dots, a_k)\in R(\mathbb A)$
if and only  if $(f(a_1), \dots, f(a_k))\in R(\mathbb B)$. We write
$\mathbb{A < B}$ to denote that $\mathbb A$ embeds in $\mathbb B$.
If an embedding $f\colon \mathbb{A\to B}$ is also surjective, 
we say that $f$ is an \textit{isomorphism}. If such isomorphism exists we write
$\mathbb{A\cong B}$ and say that $\mathbb A$ and $\mathbb B$ are \textit{isomorphic}
$\tau$-structures. Note that there is an embedding $f\colon \mathbb{A\to B}$ if
and only if $\mathbb A\cong \mathbb B[U]$ for some set  $U\subseteq V(\mathbb B)$. 
In particular, $\mathbb A\cong \mathbb B[f[V(\mathbb A)]]$. Thus, whenever
$\mathbb{A < B}$ we can identify  $\mathbb A$ with an induced substructure of $\mathbb B$.
Finally, given a set $\mathcal F$ of $\tau$-structures, a $\tau$-structure
$\mathbb A$ is \textit{$\mathcal F$-free} if and only if $\mathbb A$ does not embed any
structure in $\mathcal F$.


\subsection{Local classes}

We will identify a property of relational structures with the class of structures
that satisfy this property. Conversely, we identify a class of relational structures
with the property defined by membership to this class. We always assume that properties are 
closed under isomorphisms.  A \textit{hereditary class (property)}
of $\tau$-structures is a class $\mathcal{C}$ of $\tau$-structures
closed under embeddings, i.e., if $\mathbb A\in \mathcal{C}$ and $\mathbb{B < A}$,
then $A\in \mathcal{C}$ --- equivalently, $\mathcal{C}$ is closed under substructures. 
We denote by $\Mod_\tau$ the class of all $\tau$-structures.
For instance,  for every set $\mathcal F$ of $\tau$-structures the class of
$\mathcal F$-free $\tau$-structures is a hereditary class. Moreover, for every
hereditary class $\mathcal{C}$ of $\tau$-structures there is a set
$\mathcal F$ such that $\mathcal{C}$ is the set of $\mathcal F$-free
structures. 

A class $\mathcal{C}$ of $\tau$-structures is a \textit{local class}
if there is a positive integer $N$ such that a $\tau$-structure $\mathbb A$
belongs to $\mathcal{C}$ if and only if every substructure $\mathbb A$ of at
most $N$ vertices belongs to $\mathcal{C}$. Intuitively speaking, a class
$\mathcal C$ is local if every ``large'' structure $\mathbb A$ belongs to $\mathcal C$
whenever every ``small'' substructure of $\mathbb A$ belongs to $\mathcal C$.
For instance, we say that a $\tau$-structure $\mathbb A$ is an \textit{empty}
$\tau$-structure if the interpretation of every relation symbol $R\in \tau$
in $\mathbb A$ is empty. In other words, $\mathbb A$ is a vertex set
with no further structure. One can soon notice that, since we are
working with finite signatures, the class of empty $\tau$-structures is a local
class (it suffices to let $N$ be the maximum arity of a relational symbol in $\tau$).
From a graph theory perspective, it is not hard to notice that a class of graphs 
$\mathcal C$ is a local class if and only if it has a finitely many minimal
obstructions. This naturally generalizes to classes of relational structures
as follows. 

Consider a hereditary class of $\tau$-structures $\mathcal C$. A
\textit{minimal bound} of $\mathcal C$ is a $\tau$-structure $\mathbb B$ that
does not belong to $\mathcal C$ but every proper substructure
$\mathbb A$ of $\mathbb B$ belongs to $\mathcal{C}$. 
A \textit{finitely bounded} class of $\tau$-structures $\mathcal C$ is a class with
a finite set of minimal bounds, up to isomorphism.

To illustrate minimal bounds consider the class of loopless graphs 
$\mathcal G$, and the class of loopless oriented graphs $\OR$. So,
the $\{E\}$-minimal bound of $\mathcal{G}$ are $(\{x\},\{(x,x)\})$ and
$(\{x,y\},\{(x,y)\})$, i.e., a loop and an antisymmetric arc, and the
minimal bounds of $\OR$  are $(\{x\},\{(x,x)\})$ and
$(\{x,y\},\{(x,y),(y,x)\})$, i.e., a loop and a symmetric pair of arcs.
Thus, loopless graphs and loopless oriented graphs a examples of finitely 
bounded classes of $\{E\}$-structures.
The following relation between local classes and finite bounds follows
from the assumption that we are working with finite relational signatures. 

\begin{observation}\label{obs:localfinite}
    Given a finite relational signature $\tau$, a class $\mathcal C$ of
    $\tau$-structures is a local class if and only if it is finitely bounded. 
\end{observation}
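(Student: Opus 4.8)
The plan is to show that a single integer governs both notions: the locality threshold $N$ on one side, and the maximum number of vertices of a minimal bound on the other. The bridge between ``bounded size'' and ``finitely many'' is precisely the standing assumption that $\tau$ is finite, so that there are only finitely many $\tau$-structures on at most $N$ vertices up to isomorphism. Before proving either implication I would record a preliminary fact that is used implicitly: \emph{every local class is hereditary}. Indeed, if $\mathcal C$ is local with threshold $N$, $\mathbb A\in\mathcal C$, and $\mathbb{B<A}$, then every substructure of $\mathbb B$ on at most $N$ vertices is a substructure of $\mathbb A$ on at most $N$ vertices, hence lies in $\mathcal C$ by locality of $\mathbb A$; applying locality to $\mathbb B$ gives $\mathbb B\in\mathcal C$. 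This makes the phrase ``finitely bounded'' meaningful for a local class.

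For the forward direction (local $\Rightarrow$ finitely bounded), let $\mathcal C$ be local with threshold $N$ and let $\mathbb B$ be any minimal bound. Since $\mathbb B\notin\mathcal C$, locality produces a substructure $\mathbb A$ of $\mathbb B$ on at most $N$ vertices with $\mathbb A\notin\mathcal C$. The key observation is that this witness cannot be a \emph{proper} substructure of $\mathbb B$: every proper substructure of a minimal bound lies in $\mathcal C$. Hence $\mathbb A=\mathbb B$, and in particular $|V(\mathbb B)|\le N$. Thus every minimal bound has at most $N$ vertices, and finiteness of $\tau$ yields only finitely many such structures up to isomorphism, so $\mathcal C$ is finitely bounded.

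For the reverse direction (finitely bounded $\Rightarrow$ local), let $\mathcal C$ be hereditary with a finite set $\mathcal M$ of minimal bounds and set $N=\max_{\mathbb B\in\mathcal M}|V(\mathbb B)|$ (taking $N=1$ if $\mathcal M=\varnothing$, a case in which $\mathcal C=\Mod_\tau$ is local trivially). I would verify the locality biconditional for an arbitrary $\mathbb A$. The ``only if'' half is immediate from heredity. For the ``if'' half I argue by contraposition: if $\mathbb A\notin\mathcal C$, then choosing a substructure of $\mathbb A$ not in $\mathcal C$ of minimum cardinality yields a minimal bound $\mathbb B$ embedded in $\mathbb A$; since $\mathcal M$ is a complete list of minimal bounds, $|V(\mathbb B)|\le N$, so $\mathbb A$ has a substructure on at most $N$ vertices outside $\mathcal C$. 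This is exactly the failure of the right-hand condition, completing the equivalence.

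Neither direction hides a genuine difficulty; the argument is essentially bookkeeping once the correct integer is identified. The two points that deserve care — and which I expect to be the only places one could slip — are the automatic heredity of local classes (so that the two definitions are being compared on the same footing) and the explicit appeal to finiteness of the signature to pass from a uniform vertex bound on minimal bounds to their finiteness up to isomorphism. I would also make sure the minimal-bound extraction step uses transitivity of the substructure relation, so that a minimal substructure of $\mathbb A$ outside $\mathcal C$ really is a minimal bound of $\mathcal C$.
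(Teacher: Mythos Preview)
Your proof is correct and fills in exactly the routine details the paper leaves implicit (the statement is recorded as an observation without proof, and later Theorem~\ref{thm:equivlocal} simply cites it). Your care in first checking that local classes are hereditary, and in invoking finiteness of $\tau$ to pass from a size bound to finitely many isomorphism types, is precisely what makes the observation go through.
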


It will be convenient to introduce the following relative version of local classes. 
Consider a pair $\mathcal{C}_1,\mathcal{C}_2$ of hereditary classes of $\tau$-structures
such that $\calC_1\subseteq \calC_2$. We say that $\calC_1$ is a
\textit{local class relative to $\calC_2$} if there is a positive integer
$N$ such that a structure $\mathbb A\in \calC_2$ belongs
to $\calC_1$ if and only if every substructure of $\mathbb A$ induced by 
at most $N$ vertices belongs to $\calC_1$.
Equivalently, one can soon notice that  $\calC_1$ is a local class
relative to $\calC_2$ if and only if there are only finitely many bounds of 
$\calC_1$ in $\calC_2$, up to isomorphism.
In this case we write $\calC_1\subseteq_l \calC_2$.
In particular, since $\Mod_\tau$ is the class of all $\tau$-structure, 
a class of $\tau$-structures $\mathcal{C}$ is a local class 
if and only if $\calC \subseteq_l \Mod_\tau$. 

Going back to structural graph theory, one can observe that the standard notion
of minimal obstructions of a graph class $\mathcal C$ corresponds to the minimal
bounds of $\calC$ in $\calG$.

It is not hard to notice that the relation of ``being local relative to''
is transitive. Moreover, since the class of $\tau$-structures is a local class
of $\sigma$-structure whenever $\tau \subseteq \sigma$, it follows that
the property of being a local class does not depend on the signature. We
make these statements precise in the following immediate observations. 

\begin{observation}\label{obs:localtransitive}
Let $\tau$ be a relational signature and consider three
classes of $\tau$-structures $\calC_1$, $\calC_2$, and $\calC_3$.
If  $\calC_1 \subseteq_l\calC_2$ and
$\calC_2 \subseteq_l \calC_3$, then  $\calC_1\subseteq_l \calC_3$.
\end{observation}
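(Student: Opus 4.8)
The plan is to show that if $\calC_1 \subseteq_l \calC_2$ is witnessed by an integer $N_1$ and $\calC_2 \subseteq_l \calC_3$ by an integer $N_2$, then $N := \max(N_1,N_2)$ witnesses $\calC_1 \subseteq_l \calC_3$. First I would record the inclusion $\calC_1 \subseteq \calC_3$, which is immediate from $\calC_1 \subseteq \calC_2 \subseteq \calC_3$. It then remains to verify the local criterion relative to $\calC_3$ with the bound $N$: namely, that for every $\mathbb A \in \calC_3$, we have $\mathbb A \in \calC_1$ if and only if every substructure of $\mathbb A$ on at most $N$ vertices lies in $\calC_1$.

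The forward implication is immediate from heredity: if $\mathbb A \in \calC_1$, then every substructure of $\mathbb A$ lies in $\calC_1$, since $\calC_1$ is closed under substructures. The content is in the backward implication, which I would prove in two stages, climbing the chain from $\calC_3$ to $\calC_2$ to $\calC_1$. So fix $\mathbb A \in \calC_3$ and assume that every substructure of $\mathbb A$ on at most $N$ vertices belongs to $\calC_1$.

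In the first stage I would show $\mathbb A \in \calC_2$. Because $N_2 \le N$, every substructure of $\mathbb A$ on at most $N_2$ vertices lies in $\calC_1$, hence in $\calC_2$ via the inclusion $\calC_1 \subseteq \calC_2$. Applying the locality of $\calC_2$ relative to $\calC_3$ (whose witness is $N_2$) to the structure $\mathbb A \in \calC_3$ then yields $\mathbb A \in \calC_2$. In the second stage, now that $\mathbb A \in \calC_2$, I would invoke the locality of $\calC_1$ relative to $\calC_2$ (witness $N_1$): since $N_1 \le N$, every substructure of $\mathbb A$ on at most $N_1$ vertices already lies in $\calC_1$ by hypothesis, so $\mathbb A \in \calC_1$, as desired.

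There is no genuine obstacle here; the only point requiring care is the order in which the two witnesses are applied, together with the observation that the hypothesis ``all small substructures lie in $\calC_1$'' automatically supplies the weaker hypothesis ``all small substructures lie in $\calC_2$'' needed to ascend from $\calC_3$ to $\calC_2$. One could alternatively phrase the argument in terms of the finitely many bounds, using the equivalence noted just before the statement, but the substructure-counting formulation above is the most direct.
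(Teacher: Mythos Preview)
Your argument is correct and is exactly the natural two-stage climb one would expect; the paper itself leaves this observation without proof, labelling it an ``immediate observation,'' so there is no alternative approach to compare against. Your choice of $N = \max(N_1,N_2)$ and the order in which you apply the two locality witnesses are precisely what is needed.
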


\begin{observation}\label{obs:absolute}
    Consider a pair of relational signatures $\tau,\sigma$ such that
    $\tau \subseteq \sigma$. A class of $\tau$-structures $\mathcal C$ is a local
    class of $\tau$-structures if and only if $\mathcal C$ is a local class of
    $\sigma$-structures.
\end{observation}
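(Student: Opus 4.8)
The plan is to prove both directions of the biconditional in Observation~\ref{obs:absolute}, treating separately the situation where $\tau \subseteq \sigma$. The key conceptual point is that the notion of substructure is preserved under adding relation symbols: if $\mathcal{C}$ is a class of $\tau$-structures, we may equally regard it as a class of $\sigma$-structures that happen to interpret every symbol in $\sigma \setminus \tau$ arbitrarily, but the decisive observation is that the \emph{restriction} from $\sigma$-structures to $\tau$-structures commutes with taking induced substructures. Concretely, for a $\sigma$-structure $\mathbb{A}$ and a subset $U \subseteq V(\mathbb{A})$, the $\tau$-reduct of $\mathbb{A}[U]$ equals $(\mathbb{A}|_\tau)[U]$, where $\mathbb{A}|_\tau$ denotes the forgetting of the interpretations of symbols not in $\tau$.

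First I would make precise what it means to view a class of $\tau$-structures as a class of $\sigma$-structures. Since $\mathcal{C}$ is a class of $\tau$-structures, membership $\mathbb{A} \in \mathcal{C}$ depends only on the $\tau$-reduct of $\mathbb{A}$; that is, for a $\sigma$-structure $\mathbb{A}$ we declare $\mathbb{A} \in \mathcal{C}$ if and only if $\mathbb{A}|_\tau \in \mathcal{C}$. With this convention I would argue the forward direction: suppose $\mathcal{C}$ is local as a class of $\tau$-structures, witnessed by the integer $N$. Given a $\sigma$-structure $\mathbb{A}$ all of whose $\le N$-vertex substructures lie in $\mathcal{C}$, I want $\mathbb{A} \in \mathcal{C}$. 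Using the commutation identity above, every $\le N$-vertex $\tau$-substructure of $\mathbb{A}|_\tau$ arises as the $\tau$-reduct of a corresponding $\le N$-vertex $\sigma$-substructure of $\mathbb{A}$, hence lies in $\mathcal{C}$; locality of $\mathcal{C}$ over $\tau$ then yields $\mathbb{A}|_\tau \in \mathcal{C}$, i.e.\ $\mathbb{A} \in \mathcal{C}$, so the same $N$ witnesses locality over $\sigma$.

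For the reverse direction I would use \cref{obs:localfinite} together with \cref{obs:localtransitive}, which the paper explicitly licenses. The class $\Mod_\tau$ of all $\tau$-structures is a local class of $\sigma$-structures: the remark preceding these observations states exactly that $\Mod_\tau \subseteq_l \Mod_\sigma$ whenever $\tau \subseteq \sigma$, since a $\sigma$-structure is (the expansion of) a $\tau$-structure precisely when its symbols outside $\tau$ are empty, and emptiness of a finitely-arity relation is detectable on boundedly small substructures. Now if $\mathcal{C}$ is local over $\sigma$, then $\mathcal{C} \subseteq_l \Mod_\sigma$, and since $\mathcal{C} \subseteq \Mod_\tau \subseteq_l \Mod_\sigma$, transitivity applied in the form $\mathcal{C} \subseteq_l \Mod_\tau$ gives locality over $\tau$ --- here one checks the bounds of $\mathcal{C}$ inside $\Mod_\tau$ are among its bounds inside $\Mod_\sigma$, which are finite.

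The main obstacle, and the step requiring the most care, is the bookkeeping around viewing $\mathcal{C}$ simultaneously as a $\tau$-class and a $\sigma$-class, since the statement is only meaningful once that identification is fixed. In particular I would be careful to verify the commutation of reduct with induced substructure (an entirely formal but essential check) and to confirm that a minimal bound of $\mathcal{C}$ relative to $\Mod_\tau$ has empty interpretations for symbols in $\sigma \setminus \tau$, so that counting bounds over $\tau$ and over $\sigma$ agree up to the finitely many bounds witnessing $\Mod_\tau \subseteq_l \Mod_\sigma$. Everything else is a direct application of \cref{obs:localfinite,obs:localtransitive}, so the proof is short once the reduct/substructure interaction is spelled out.
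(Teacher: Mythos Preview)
Your argument is essentially correct and close in spirit to what the paper intends, but there is a small mislabeling worth pointing out. The paper does not give a detailed proof; it simply calls the statement an ``immediate observation'' after noting that $\Mod_\tau \subseteq_l \Mod_\sigma$ and that relative locality is transitive. That hint cleanly handles the \emph{forward} direction: $\mathcal{C} \subseteq_l \Mod_\tau$ together with $\Mod_\tau \subseteq_l \Mod_\sigma$ yields $\mathcal{C} \subseteq_l \Mod_\sigma$ by \cref{obs:localtransitive}. You instead prove the forward direction by a direct $N$-argument, which is also fine.

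Where you wobble is the reverse direction. You write that from $\mathcal{C} \subseteq_l \Mod_\sigma$ and $\mathcal{C} \subseteq \Mod_\tau \subseteq_l \Mod_\sigma$, ``transitivity applied in the form $\mathcal{C} \subseteq_l \Mod_\tau$'' gives the conclusion. But that configuration is not an instance of transitivity: \cref{obs:localtransitive} goes from $\calC_1 \subseteq_l \calC_2$ and $\calC_2 \subseteq_l \calC_3$ to $\calC_1 \subseteq_l \calC_3$, not the other way. What you actually need---and what your parenthetical correctly supplies---is the separate (and equally elementary) fact that any minimal bound of $\mathcal{C}$ inside $\Mod_\tau$ is already a minimal bound of $\mathcal{C}$ inside $\Mod_\sigma$, so finiteness of the latter set implies finiteness of the former. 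So the proof is right; just drop the word ``transitivity'' there and promote the parenthetical to the main argument.
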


In particular, Observation~\ref{obs:absolute} allows us to talk about local
classes of relational structures without needing to specify the signature.
We conclude this subsection by showing that the union and intersection of relative
local classes are relative local classes as well.

\begin{proposition}\label{prop:inter+union}
Let $\tau$ be a finite relational signature, and consider three classes
of $\tau$-structures $\calC_1$, $\calC_2$ and $\calC_3$. 
If $\calC_1 \subseteq_l \calC_3$ and $\calC_2  \subseteq_l  \calC_3$, then
$\calC_1 \cap \calC_2 \subseteq_l \calC_3$ and $\calC_1 \cup \calC_2
\subseteq_l \calC_3$.
\end{proposition}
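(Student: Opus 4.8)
The plan is to extract explicit locality thresholds from each hypothesis and combine them. By definition of $\subseteq_l$, there are positive integers $N_1$ and $N_2$ such that, for every $\mathbb A\in\calC_3$, membership of $\mathbb A$ in $\calC_1$ (resp.\ $\calC_2$) is witnessed by its substructures on at most $N_1$ (resp.\ $N_2$) vertices. Both conclusions reduce to exhibiting a single threshold $N$ that works, where the only nontrivial implication is ``all small substructures lie in the class $\Rightarrow$ the whole structure lies in the class,'' since the reverse implication is immediate from the fact that $\calC_1$ and $\calC_2$ are hereditary: any substructure of a member of $\calC_1\cap\calC_2$, or of $\calC_1\cup\calC_2$, again lies in that class. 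The inclusions $\calC_1\cap\calC_2\subseteq\calC_3$ and $\calC_1\cup\calC_2\subseteq\calC_3$ follow at once from $\calC_1,\calC_2\subseteq\calC_3$.

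For the intersection I would set $N=\max(N_1,N_2)$. Fix $\mathbb A\in\calC_3$ all of whose substructures on at most $N$ vertices lie in $\calC_1\cap\calC_2$. In particular every substructure on at most $N_1\le N$ vertices lies in $\calC_1$, so $\calC_1\subseteq_l\calC_3$ gives $\mathbb A\in\calC_1$; symmetrically $\mathbb A\in\calC_2$, and hence $\mathbb A\in\calC_1\cap\calC_2$. This establishes $\calC_1\cap\calC_2\subseteq_l\calC_3$.

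The union is the only part requiring a genuine idea, and the main obstacle is that a single bounded witness must certify failure in \emph{both} classes simultaneously, which forces an additive rather than a maximum threshold. Here I would take $N=N_1+N_2$ and argue by contraposition. Suppose $\mathbb A\in\calC_3$ with $\mathbb A\notin\calC_1\cup\calC_2$. Then $\mathbb A\notin\calC_1$, so by $\calC_1\subseteq_l\calC_3$ there is a substructure $\mathbb B_1$ on at most $N_1$ vertices with $\mathbb B_1\notin\calC_1$; likewise there is $\mathbb B_2$ on at most $N_2$ vertices with $\mathbb B_2\notin\calC_2$. Let $\mathbb B=\mathbb A[V(\mathbb B_1)\cup V(\mathbb B_2)]$, which has at most $N_1+N_2=N$ vertices. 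Since $\calC_1$ is hereditary and $\mathbb B_1$ embeds in $\mathbb B$ with $\mathbb B_1\notin\calC_1$, we get $\mathbb B\notin\calC_1$; symmetrically $\mathbb B\notin\calC_2$. Thus $\mathbb B$ is a substructure of $\mathbb A$ on at most $N$ vertices avoiding $\calC_1\cup\calC_2$. Contrapositively, if every substructure of $\mathbb A$ on at most $N$ vertices lies in $\calC_1\cup\calC_2$, then $\mathbb A\in\calC_1\cup\calC_2$; combined with the heredity direction this yields $\calC_1\cup\calC_2\subseteq_l\calC_3$.
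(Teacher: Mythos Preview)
Your proof is correct and follows essentially the same approach as the paper. The paper phrases the argument in the equivalent language of finitely many minimal bounds rather than locality thresholds, but the key idea for the union---combining a witness of size at most $N_1$ for failure in $\calC_1$ with a witness of size at most $N_2$ for failure in $\calC_2$ into a single substructure of size at most $N_1+N_2$---is identical to yours.
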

\begin{proof}
It is straightforward to notice that every minimal bound of
$\mathcal{C}_1\cap \mathcal{C}_2$ in
$\calC_3$ is a either a minimal bound of $\calC_1$ in $\calC_3$
or a minimal bound of $\calC_2$ in $\calC_3$. So
$\calC_1\cap \calC_2$ is a local class relative to $\calC_3$
whenever $\calC_1$ and $\calC_2$ are local relative to $\calC_3$. 

To prove that $\mathcal{C}_1 \cup \mathcal{C}_2$ is a local class relative to
$\calC_3$, first recall that for every positive integer $n$, we can generate
finitely many non-isomorphic $\tau$-structures on $n$ vertices.
Also note that if a pair of $\tau$-structures $\bbA, \bbB$ embed
in a $\tau$-structure $\mathbb D$ then,
the there is a substructure $\mathbb D'$ of $\mathbb D$
with at most $|V(\bbA)|+|V(\bbB)|$ vertices such that $\bbA,\bbB <\mathbb D' <
\mathbb D$. Thus, by letting $N$ be the maximum of $|V(\bbA)| +|V(\bbB)|$
where $\bbA$ ranges over the finitely many bounds of $\calC_1$ in $\calC_3$, and
$\bbB$ over the finitely many bounds of $\calC_2$ in $\calC_3$, one can notice
that a structure $\mathbb D\in \calC_3$ belongs  to $\calC_1\cup \calC_2$ 
if and only if every substructure of $\mathbb D$ with at most $N$ vertices
belongs to $\calC_1 \cup \calC_2$. Therefore, the union $\calC_1\cup \calC_2$
is a local class relative to $\calC_3$. 
\end{proof}

\begin{corollary}\label{cor:inter+union}
Consider a pair of classes of $\tau$-structures $\calC_1$ and
$\calC_2$. If $\calC_1$ and $\calC_2$ are local classes, then 
$\calC_1\cup \calC_2$ and $\calC_1\cap \calC_2$
are local classes as well.
\end{corollary}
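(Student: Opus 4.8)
The plan is to derive this corollary as an immediate special case of the preceding proposition by instantiating the ambient class $\calC_3$ as $\Mod_\tau$. Recall that the excerpt established the identity $\calC$ is a local class if and only if $\calC \subseteq_l \Mod_\tau$; this is exactly the bridge between the absolute notion of locality and the relative notion treated in \cref{prop:inter+union}, and it is what allows the corollary to follow without any new argument.

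First I would observe that the hypotheses translate directly: if $\calC_1$ and $\calC_2$ are local classes, then by definition $\calC_1 \subseteq_l \Mod_\tau$ and $\calC_2 \subseteq_l \Mod_\tau$. Since $\Mod_\tau$ is itself a class of $\tau$-structures, the triple $\calC_1, \calC_2, \Mod_\tau$ satisfies the hypotheses of \cref{prop:inter+union} with $\calC_3 := \Mod_\tau$. Applying that proposition yields $\calC_1 \cap \calC_2 \subseteq_l \Mod_\tau$ and $\calC_1 \cup \calC_2 \subseteq_l \Mod_\tau$. Translating back through the same equivalence, these two statements say precisely that $\calC_1 \cap \calC_2$ and $\calC_1 \cup \calC_2$ are local classes, which is the desired conclusion.

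There is essentially no obstacle here, since all the real work lives in \cref{prop:inter+union} (the union case, which needs the bound $N = \max(|V(\bbA)| + |V(\bbB)|)$ to merge a bound of $\calC_1$ with a bound of $\calC_2$ inside a single small substructure). The only point requiring a moment's care is confirming that $\Mod_\tau$ is a legitimate choice of $\calC_3$: it is trivially hereditary, and the inclusions $\calC_1, \calC_2 \subseteq \Mod_\tau$ hold vacuously, so the instantiation is valid. I would therefore write the proof as a single short paragraph invoking \cref{prop:inter+union} with $\calC_3 = \Mod_\tau$ together with the characterization $\calC \text{ local} \iff \calC \subseteq_l \Mod_\tau$.
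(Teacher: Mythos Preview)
Your proposal is correct and matches exactly the intended derivation: the paper states the corollary immediately after \cref{prop:inter+union} without proof, and the only way to read it is as the specialization $\calC_3 = \Mod_\tau$ together with the equivalence ``$\calC$ local $\iff \calC \subseteq_l \Mod_\tau$''. There is nothing to add.
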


\subsection{Formulas and sentences}
\label{sub:logic}

As previously mentioned, all standard model theoretic definitions and concepts
are taken from \cite{hodges1993}, and this subsection may be safely skipped
by many readers. To illustrate some of the concepts introduced
ahead, we consider digraphs and their signature $\{E\}$.

Given a relational signature $\tau$ we construct certain formal languages
to talk about $\tau$-structures. 
To begin with, we need a stack of \textit{variables}
which will be written
as $u, v, x, y, z, x_0, x_1$ and so on. The \textit{atomic $\tau$-formulas}
are defined as follows. 
\begin{itemize}
	\item If $x$ and $y$ are variables, then $x = y$ is an atomic formula of $\tau$.
	\item Let $k$ be a positive integer, let $R\in \tau$ be an $k$-ary relation symbol,
	and consider any variables $x_1,\dots, x_k$. Then $R(x_1,\dots, x_k)$ is
	an atomic formula of $\tau$.
\end{itemize}

If we introduce an atomic formula $\phi$ as $\phi(\protect\overline{x})$ it means that
$\overline{x}$ is a finite sequence $(x_1,x_2,\dots , x_n)$ of distinct variables, and 
every variable that occurs in $\phi$ lies in $\overline{x}$. 
When $\overline{x}$ is replaced by a sequence $\overline{a}$, then
$\phi(\overline{a})$ means the atomic formula obtained by replacing the variables
in $\overline{x}$ by the variables in $\overline{a}$
(we always assume that $\overline{a}$ is at least as long as $\overline{x}$).
In particular, when $\overline{a}$ is a sequence $a_1,\dots, a_m$ of vertices of
a $\tau$-structure $\bbA$,
$\phi$ makes a statement about $\bbA$ and $\overline{a}$. If this statement
is true, we say that $\phi(\overline{x})$ \textit{is true}
of $\overline{a}$ in $\bbA$
or that $\overline{a}$ \textit{satisfies} $\phi(\overline{x})$  in $\bbA$. In symbols we
write $\bbA\models \phi(\overline{a})$, and we write $\bbA\not\models\phi(\overline{a})$
when $\overline{a}$ does not satisfy $\phi(\overline{x})$ in $\bbA$.
We formally define $\models$ as follows. Let $\phi(\overline{x})$ be an atomic
formula of $\tau$, let $\bbA$ be a $\tau$-structure, and let $\overline{a}\in V(\bbA)^m$.
If $\phi(\overline{x})$ is the formula $x_i = x_j$ for some indices $i$ and $j$,
then $\bbA\models \phi(\overline{a})$ if and only if $a_i = a_j$; and if $\phi(\overline{x})$
is the formula $R(x_{i_1},\dots, x_{i_l})$ for some $R\in \tau$ and $i_1,\dots, i_l\in\{1,\dots, k\}$,
then $\bbA\models \phi(\overline{a})$ if and only if $(a_{i_1},\dots, a_{i_l})\in R(\bbA)$.
For instance, if $\phi$ be the atomic formula $E(x,y)$ and $D$ is a digraph, then
$D\models \phi(u,v)$ if and only if there is an arc from $u$ to $v$ in $D$. 

\textit{First-order} $\tau$-formulas are recursively defined from the symbols
of $\tau$ together with: $\top$ ``true'', $\bot$ ``false'', $\land$ ``and'',
$\lor$ ``or'', $\lnot$ ``not'', $\forall$ ``for all elements'', and $\exists$
``there is an element''. The recursive definition is as follows: every atomic $\tau$-formula
is a first-order $\tau$-formula, and so are $\top$ and $\bot$; if $\phi$ and $\psi$ are
first-order $\tau$-formulas,  then  $\lnot\phi$, $\phi\land \psi$, and $\phi\lor\psi$ are
first-order $\tau$-formulas;  and if $\phi$ is a first-order $\tau$-formula and $y$ is a
variable, then $\forall y\phi$ and $\exists y\phi$ are first-order $\tau$-formulas.

The symbols $\forall$ and $\exists$ are called \textit{quantifiers}, and whenever
a variable appears next to a quantifier -- $\forall y$ ``for all $y$'' or $\exists y$
``there is $y$'' -- we say that it is a \textit{bounded} occurrence of the variable $y$;
otherwise it is a \textit{free}  occurrence of $y$.
The \textit{free variables} of a $\tau$-formula $\phi$ are those with free occurrences
in $\phi$. \textit{Quantifier-free} $\tau$-formulas are first-order $\tau$-formulas without
quantifiers, i.e., with no symbols $\forall$ nor $\exists$. 

Similar to atomic  $\tau$-formulas, when we introduce a first-order $\tau$-formula
$\phi(\overline{x})$, it means that all the free variables of $\phi$ are in $\overline{x}$. So,
$\phi(\overline{a})$ means that we substitute $x_i$ by $a_i$. We extend the definition of $\models$
 from atomic $\tau$-formulas to first-order $\tau$-formulas as follows: if $\phi$
is an atomic $\tau$-formula, then $\bbA \models\phi$ is defined as above; for each 
$\tau$-structure $\bbA$, $\bbA \models\top$, and $\bbA \not\models \bot$;
$\bbA\models \lnot\phi(\overline{a})$ if and only if  $\bbA\not\models\phi(\overline{a})$;
$\bbA\models \phi(\overline{a})\land \psi(\overline{b})$ if and only if $\bbA\models
\phi(\overline{a})$ and $\bbA\models \psi(\overline{b})$;
$\bbA\models \phi(\overline{a})\lor \psi(\overline{b})$ if and only if $\bbA\models \phi(\overline{a})$ or
$\bbA\models \psi(\overline{b})$; if $\phi$ is $\forall y\psi$, where $\psi$ is $\psi(y,\overline{x})$,
then  $\bbA\models \phi(\overline{a})$  if and only if for every element $b\in V(\bbA)$, $\bbA\models
\psi(b,\overline{a})$; finally, if $\phi$ is $\exists y\psi$, where $\psi$ is $\psi(y,\overline{x})$, then
$\bbA\models \phi(\overline{a})$  if and only if there is an element $b\in V(\bbA)$ such that 
$\bbA\models \psi(b,\overline{a})$.

Going back to the signature $\{E\}$, consider the formula $\phi$, $\phi =
\exists y(E(x,y))$, and let $D$ be a digraph and $u$ a vertex of $D$.
Then, $D\models \phi(u)$ if and only if $u$ has an out-neighbour.

Every first-order formula $\phi$ with $k$ free variables, defines
a $k$-ary relation $R_\phi(\bbA)$ over any $\tau$-structure $\bbA$
by considering the $k$-tuples $\overline{a} \in V(\bbA)^k$, such that
$\bbA\models \phi(\overline{a})$. We talk about $R_\phi(\bbA)$ as
the \textit{relation defined} by $\phi$ in $\bbA$. We say that a pair of
formulas, $\phi$ and $\psi$,  are \textit{equivalent in} $\bbA$ if
$R_\phi(\bbA) = R_\psi(\bbA)$. We say that $\phi$ and $\psi$ are
\textit{logically equivalent} if they are equivalent in every finite
$\tau$-structure, in symbols we write $\phi \leftrightarrow \psi$.

We proceed to introduce a family of quantifier-free formulas
that we will often use in Section~\ref{sec:duality}. Let $\bbA$ be
a $\tau$-structure with and an enumeration $v_1,\dots, v_n$ of its
vertex set. The set of atomic and negated atomic $\tau$-formulas $\phi$
with free variables $(x_1,\dots, x_n)$ that are true of $(v_1,\dots, v_n)$ in
$\bbA$, is called the \textit{diagram} of $\bbA$. We find it convenient to 
consider the conjunction of the formulas if the diagram of $\bbA$, which
we denote by $\chi_\tau(\bbA)$ and call it the \textit{characteristic $\tau$-formula}
of $\mathbb A$. Notice that $\chi_\tau(\bbA)$ depends on the enumeration
of $V(\bbA)$, but this does not matter as any two enumerations yield logically
equivalent formulas. The following lemma shows that the characteristic $\tau$-formula
of $\bbA$ is a syntactic description of $\bbA$ with respect to embeddings,
and it is an adaptation of the Diagram lemma (see, e.g., Lemma 1.4.2 in \cite{hodges1993})
to the present context and nomenclature.

\begin{lemma}\cite{hodges1993}\label{lem:characteristic}
Let $\tau$ be a relational signature,  $\bbA$ and $\bbB$ a pair of $\tau$-structures,
and $\overline a = (a_1,\dots, a_n)$ an enumeration of $V(\bbA)$. The following statements are
equivalent of an $n$-tuple $\overline{b}$ of $V(\bbB)$.
\begin{itemize}
    \item $B\models \chi_\tau(\bbA)(\overline{b})$.
    \item  The mapping  $\overline{a}\mapsto \overline{b}$ defines an embedding $f\colon \bbA \to \bbB$.
    \item Every quantifier-free $\tau$-formula $\phi$ with $n$ variables is true of $\overline a$ in $\bbA$ if and only
    if $\phi$ is true of $\overline b$ in $\bbB$.
\end{itemize}
\end{lemma}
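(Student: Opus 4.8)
The plan is to prove the cycle of implications $(3)\Rightarrow(1)\Rightarrow(2)\Rightarrow(3)$, where $(1)$, $(2)$, and $(3)$ denote the three listed statements in the order given. The one fact that drives everything is that $\chi_\tau(\bbA)$ is itself a quantifier-free $\tau$-formula in the variables $x_1,\dots,x_n$, and that by the definition of the diagram each of its conjuncts, and hence their conjunction, is true of $\overline a$ in $\bbA$. This makes $(3)\Rightarrow(1)$ immediate: taking $\phi=\chi_\tau(\bbA)$ in statement $(3)$ and using that $\chi_\tau(\bbA)$ is true of $\overline a$ in $\bbA$ yields $\bbB\models\chi_\tau(\bbA)(\overline b)$.

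For $(1)\Rightarrow(2)$ I would read off the conjunction $\chi_\tau(\bbA)$ conjunct by conjunct. Because $\overline a$ is an enumeration its entries are pairwise distinct, so for all $i\neq j$ the negated atomic formula $\lnot(x_i=x_j)$ belongs to the diagram; since $\overline b$ satisfies these conjuncts, the $b_i$ are pairwise distinct and the assignment $a_i\mapsto b_i$ is a well-defined injection $f$. For the relational clauses, fix a $k$-ary $R\in\tau$ and indices $i_1,\dots,i_k$: if $(a_{i_1},\dots,a_{i_k})\in R(\bbA)$ then $R(x_{i_1},\dots,x_{i_k})$ is a conjunct of $\chi_\tau(\bbA)$ and therefore holds of $\overline b$, whereas if the tuple is not in $R(\bbA)$ then $\lnot R(x_{i_1},\dots,x_{i_k})$ is a conjunct and so the image tuple avoids $R(\bbB)$. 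Hence $f$ both preserves and reflects every relation, i.e.\ $f$ is an embedding.

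For $(2)\Rightarrow(3)$ I would proceed by induction on the structure of a quantifier-free formula $\phi(x_1,\dots,x_n)$, showing $\bbA\models\phi(\overline a)$ if and only if $\bbB\models\phi(\overline b)$. In the atomic base cases an equality atom $x_i=x_j$ reduces on both sides to the condition $i=j$, using that the entries of $\overline a$ are distinct and that $f$ is injective, while a relational atom is handled directly by the embedding property, which both preserves and reflects $R$. The steps for $\lnot$, $\land$, and $\lor$ follow verbatim from the recursive clauses defining $\models$, and since $\phi$ carries no quantifiers the induction is complete.

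The argument is the Diagram Lemma specialised to embeddings and presents no genuine obstacle; the only point needing care is the treatment of equality, where one must use that an enumeration lists distinct elements so that injectivity of $f$ matches exactly the truth values of the equality atoms on both structures.
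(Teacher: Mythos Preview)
The paper does not supply its own proof of this lemma; it simply cites it as an adaptation of the Diagram Lemma from \cite{hodges1993}. Your argument is correct and is exactly the standard proof one would find in a model theory text: the cycle $(3)\Rightarrow(1)\Rightarrow(2)\Rightarrow(3)$ is the natural route, and each step is handled properly, including the only delicate point (the equality atoms), which you treat carefully using distinctness of the enumeration and injectivity of $f$.
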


A \textit{sentence} is a formula with no free variables. A \textit{theory}
is a set of sentences. Suppose that $\phi$ is a sentence and 
$\bbA$ is an $\tau$-structure. Since there are no free variables in $\phi$,
then $\bbA\models \phi(\overline{a})$ for some sequence of vertices
$\overline{a}$ if and only if $\bbA\models \phi(\overline{b})$ for the empty
sequence $\overline{b}$. Thus, we simply write $\bbA\models \phi$ if
the empty sequence satisfies $\phi$ in $\bbA$, and we say that $\bbA$
\textit{models} $\phi$ or that $\bbA$ is a \textit{model} of $\phi$.
We extend this definition to theories in the obvious way, $\bbA$
\textit{models} a theory $T$ if $\bbA$ is a model of every sentence in
$T$, and in this case we also write $\bbA\models T$. 
For instance, let $\phi$ be the sentence $\forall xE(x,x)$, and $\psi$ the
sentence $\exists x,yE(x,y)$. So, if $D$ is a digraph, then
$D\models \phi$  if and only if every vertex in $D$ has a loop,
and $D\models\psi$ if and only if there is an arc in $D$.

Consider a theory $T$ and a class of $\tau$-structures $\mathcal{C}$. We
say that $T$ \textit{axiomatizes} $\mathcal{C}$ if $\mathcal{C}$ is the class
of all $\tau$-structures that model $T$.  
We say that a class $\mathcal{C}$ is \textit{definable}
if there is a finite theory that axiomatizes $\mathcal{C}$. Equivalently, if there is 
a sentence $\phi$ such that $\calC$ is the class of structures that model $\phi$
(simply take $\phi$ to be the conjunction of all formulas in $T$). Going back to
our previous example, the class of reflexive digraphs is defined by the 
sentence $\forall xE(x,x)$. Similarly, if
$\mathcal{C}$ is the class of digraphs with an isolated vertex, 
then $\mathcal{C}$ is defined by the sentence $\phi$, 
\[
\phi:= \exists x\forall y( x = y \lor (\lnot E(x,y)\land\lnot E(y,x))).
\]

A formula $\phi$ is a \textit{universal formula}
if it is build from quantifier-free formulas
by means of $\lor$, $\land$ and $\forall$. In symbols, we write that
$\phi$ is an \textit{$\forall_1$-formula}. Similarly, an \textit{existential}
formula is build up from $\lor$, $\land$ and $\exists$, and we write \textit{$\exists_1$-formula}.
A \textit{universal} (resp.\ \textit{existential}) theory is a set of universal
(resp.\ existential) sentences, and we say that a class $\mathcal C$
is \textit{$\forall_1$-definable}  if it is axiomatized by a finite universal theory.
It is not hard to observe that if $T$ is a universal theory, then the class of
all models of $T$ is a hereditary class. Moreover, $\forall_1$-definable classes
correspond to local classes as the following statement asserts.

\begin{theorem}\label{thm:equivlocal}
Let $\tau$ be a finite relational signature. The following statements are equivalent
for a hereditary class of $\tau$-structures $\mathcal{C}$.
	\begin{itemize}
		\item $\calC$ is a local class.
		\item $\calC$ is finitely bounded.
		\item $\calC$ is $\forall_1$-definable.
	\end{itemize}
\end{theorem}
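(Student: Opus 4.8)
The plan is to prove the theorem as a cycle of implications. Since Observation~\ref{obs:localfinite} already gives the equivalence of \emph{local} and \emph{finitely bounded}, the only genuine content is relating either of these to $\forall_1$-definability. I would therefore organize the argument as: (local $\Rightarrow$ $\forall_1$-definable) and ($\forall_1$-definable $\Rightarrow$ local), reusing Observation~\ref{obs:localfinite} freely to pass between the first two conditions.

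\textbf{From finitely bounded to $\forall_1$-definable.} Suppose $\mathcal C$ is finitely bounded, with minimal bounds $\mathbb B_1,\dots,\mathbb B_m$. For each $\mathbb B_i$, fix an enumeration of its vertex set and form the characteristic formula $\chi_\tau(\mathbb B_i)(\overline x)$ from Lemma~\ref{lem:characteristic}. The key idea is that, by that lemma, a structure $\mathbb A$ embeds $\mathbb B_i$ if and only if there is a tuple $\overline a$ in $\mathbb A$ satisfying $\chi_\tau(\mathbb B_i)$. Thus ``$\mathbb A$ is $\{\mathbb B_1,\dots,\mathbb B_m\}$-free'' is expressed by the universal sentence
\[
\phi \;:=\; \bigwedge_{i=1}^{m} \forall \overline{x}\,\lnot\,\chi_\tau(\mathbb B_i)(\overline{x}).
\]
Since each $\chi_\tau(\mathbb B_i)$ is quantifier-free, $\phi$ is an $\forall_1$-sentence. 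It remains to check that the $\mathcal F$-free structures for $\mathcal F = \{\mathbb B_1,\dots,\mathbb B_m\}$ are exactly $\mathcal C$: this is precisely the statement that a hereditary class equals the class of structures avoiding its minimal bounds, which holds because $\mathcal C$ is hereditary and every structure outside $\mathcal C$ contains some minimal bound as a substructure (pass to a smallest non-member substructure). Hence $\phi$ axiomatizes $\mathcal C$ and $\mathcal C$ is $\forall_1$-definable.

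\textbf{From $\forall_1$-definable to local.} Suppose $\mathcal C$ is axiomatized by a single $\forall_1$-sentence $\phi$ (taking the conjunction of the finite theory). Put $\phi$ in prenex form $\forall x_1\cdots\forall x_N\,\psi(x_1,\dots,x_N)$ with $\psi$ quantifier-free; let $N$ be the number of universal quantifiers. I claim this $N$ witnesses locality. Indeed, $\mathbb A\models\phi$ iff every assignment of the variables $x_1,\dots,x_N$ to vertices of $\mathbb A$ satisfies $\psi$; since $\psi$ is quantifier-free, whether a given assignment $\overline a$ satisfies $\psi$ depends only on the atomic facts among the (at most $N$) named vertices, i.e.\ only on the induced substructure $\mathbb A[\{a_1,\dots,a_N\}]$. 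Consequently $\mathbb A\models\phi$ if and only if $\mathbb A[U]\models\phi$ for every $U\subseteq V(\mathbb A)$ with $|U|\le N$, which is exactly the definition of a local class. Technically one should note that reindexing/repeating variables is harmless because substructures of size at most $N$ already realize every assignment pattern, and that a quantifier-free $\psi$ is preserved and reflected by embeddings (again via Lemma~\ref{lem:characteristic}).

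The step I expect to require the most care is the $\forall_1 \Rightarrow$ local direction, specifically the clean reduction ``satisfaction of a quantifier-free $\psi$ by $\overline a$ depends only on $\mathbb A[\{a_1,\dots,a_N\}]$.'' The subtlety is handling equalities $x_i = x_j$ and repeated elements in the assignment, so that one correctly restricts attention to substructures of at most $N$ vertices rather than exactly $N$; invoking the third bullet of Lemma~\ref{lem:characteristic} (a quantifier-free formula is true of $\overline a$ in $\mathbb A$ iff true of its image in any structure into which that tuple embeds) makes this precise and avoids ad hoc case analysis. Everything else is bookkeeping, and the passage between ``local'' and ``finitely bounded'' is supplied verbatim by Observation~\ref{obs:localfinite}.
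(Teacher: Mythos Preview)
Your proof is correct and follows essentially the same approach as the paper: Observation~\ref{obs:localfinite} for the first equivalence, and characteristic formulas $\chi_\tau(\mathbb B_i)$ of the minimal bounds to produce the $\forall_1$-axiomatization. The paper's own proof largely defers to external references (and contains a vestigial allusion to a fourth item and a compactness argument that is unnecessary here), whereas you spell out both directions directly; in particular your prenex-form argument for $\forall_1\Rightarrow$ local is exactly the standard elementary proof the paper only cites.
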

\begin{proof}
The first two items are equivalent as stated in Observation~\ref{obs:localfinite}. The equivalence between the
second and third statement is a well-known fact (see, e.g., Lemma 2.3.14 in \cite{bodirsky2021}). The third item
is a particular case of the fourth one. Finally, the latter implies the former since every hereditary class can
be axiomatized by a universal theory -- take for instance the set $\{\forall\overline
x(\lnot \chi_\tau(\bbA)(\overline x))|\bbA$ is a bound of $\mathcal C\}$ -- and so, a universal definition of $\calC$
can be obtained using the assumption that $\calC$ is definable, and standard compactness arguments (see, e.g.,
Corollary 6.1.2 from~\cite{hodges1993}).
\end{proof}

\subsection{Category theory}

We assume familiarity with basis notions from category theory such as \textit{functors,
objects, morphisms,} and \textit{commutative diagrams}. For such basic concepts of category
theory we refer the reader to~\cite{maclane1978}, or to~\cite{riehl2017} for a thorough
study of category theory. Besides this simple nomenclature from category theory, we also
use the notion of  \textit{concrete functors}. These functors are defined in the general
setting of concrete categories, but we choose restrict the definition only to our context.
 
When we talk about functors $F\colon\mathcal{C\to D}$ between classes of relational structures,
we think  $\calC$ and $\mathcal D$ as categories of relational structures whose morphisms
are embeddings. Given a pair of hereditary classes of relational structures $\mathcal{C}$ and
$\mathcal{D}$, a \textit{concrete functor} from $\mathcal{C}$ to $\mathcal{D}$ is a functor
$F\colon\mathcal{C\to D}$ such that for every $\bbA,\bbB\in \calC$ and every embedding
$f\colon\bbA\to\bbB$ the equality $F(f) = f$ holds. In simple terms, concrete functors are 
mappings between classes of relational structures that preserve embeddings.
For instance,  the mapping $S\colon \mathcal{DI \to G}$ that maps a digraph to its
underlying graph, preserves embeddings and so, it is a concrete functor according to
the present context.

\begin{proposition}\label{prop:concretefunct}
Consider a pair $\mathcal{C},\mathcal{D}$ of hereditary classes 
of relational structures.
If $F\colon \mathcal{C \to D}$ is a concrete functor, then the following statements hold:
\begin{itemize}
	\item For every $\bbA\in \calC$, the equality $V(\bbA) = V(F(\bbA))$ holds.
	\item For every $\bbA\in \calC$ and every embedding $f\colon \mathbb X\to F(\bbA)$, there is a
    unique embedding $f'\colon \mathbb Y\to \bbA$ such that $F(f') = f$.
\end{itemize}
\end{proposition}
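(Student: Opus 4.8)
The plan is to derive both claims from two simple facts: a concrete functor, being a functor, preserves identity morphisms, while (by definition) it fixes the underlying function of every embedding; and the classes $\calC,\calD$ are hereditary, hence closed under induced substructures and under isomorphism. For the first bullet I would apply this to the identity embedding $\mathrm{id}_{\bbA}\colon\bbA\to\bbA$. As a functor, $F$ sends it to $\mathrm{id}_{F(\bbA)}$; by concreteness, $F$ sends it to itself as a function, namely $\mathrm{id}_{\bbA}$. Hence the identity function on $V(\bbA)$ and the identity function on $V(F(\bbA))$ coincide, and two identity functions are equal only when their domains agree, giving $V(\bbA)=V(F(\bbA))$.

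For the second bullet the key preliminary step — and the one I expect to carry the weight — is to show that $F$ commutes with taking induced substructures: for every $U\subseteq V(\bbA)$ one has $F(\bbA[U])=F(\bbA)[U]$. To see this, note that $\bbA[U]\in\calC$ since $\calC$ is hereditary, and the inclusion $\iota\colon\bbA[U]\to\bbA$ is an embedding. By concreteness $F(\iota)=\iota$, and by the first bullet $V(F(\bbA[U]))=U$, so $\iota$ is an embedding $F(\bbA[U])\to F(\bbA)$ whose underlying map is the inclusion of $U$ into $V(F(\bbA))$. An embedding that is a set-inclusion identifies its source with the induced substructure on its image, whence $F(\bbA[U])=F(\bbA)[U]$.

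Given the embedding $f\colon\bbX\to F(\bbA)$, I would then set $U=f[V(\bbX)]$ and define $\bbY$ by pulling the structure $\bbA[U]$ back along the bijection $f\colon V(\bbX)\to U$; concretely, $\bbY$ has domain $V(\bbX)$ and $(x_1,\dots,x_k)\in R(\bbY)$ iff $(f(x_1),\dots,f(x_k))\in R(\bbA)$. Then $f\colon\bbY\to\bbA[U]$ is an isomorphism, so $\bbY\cong\bbA[U]\in\calC$ gives $\bbY\in\calC$, and $f\colon\bbY\to\bbA$ is an embedding. Taking $f'=f$, concreteness yields $F(f')=f'=f$ as functions. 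To finish I must check the sources match as objects of $\calD$: since $f$ is an isomorphism $\bbY\to\bbA[U]$ in $\calC$, $F(f)=f$ is an isomorphism $F(\bbY)\to F(\bbA[U])=F(\bbA)[U]$; but $f$ is also an isomorphism $\bbX\to F(\bbA)[U]$, and two structures on $V(\bbX)$ made isomorphic to $F(\bbA)[U]$ by the very same bijection must be equal, so $F(\bbY)=\bbX$. Hence $F(f')=f$ holds as morphisms $\bbX\to F(\bbA)$, establishing existence.

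Uniqueness is then immediate: if $f''\colon\bbY'\to\bbA$ is any embedding with $F(f'')=f$, then concreteness forces $f''=f$ as a function, and an embedding into $\bbA$ with a prescribed underlying function determines its source as the pullback of $\bbA$ along that function, so $\bbY'=\bbY$ and $f''=f'$. The only genuine obstacle is the bookkeeping in the substructure-commutation step, where one must repeatedly invoke the first bullet to align vertex sets and the concreteness of $F$ to recognise inclusions as inclusions; once that is in place, the remainder is routine pullback formalism.
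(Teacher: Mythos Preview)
Your proof is correct and follows essentially the same approach as the paper's: use the identity embedding for the first item, and for the second define $\bbY$ as the pullback of $\bbA$ along $f$, invoking hereditariness of $\calC$ to get $\bbY\in\calC$ and concreteness to get $F(f')=f$. Your argument is in fact more careful than the paper's, which stops at ``$F(f)=f$'' without explicitly verifying that $F(\bbY)=\bbX$ as objects; your substructure-commutation step $F(\bbA[U])=F(\bbA)[U]$ supplies exactly this missing piece, although one could also argue it directly by observing that two embeddings into $F(\bbA)$ with the same underlying function necessarily have the same source structure.
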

\begin{proof}
The first item holds since $F(f) = f$ for every embedding of structures
in $\mathcal{C}$. In particular, for every $\bbA\in \mathcal{C}$ if $f$
is the identity on $\bbA$, then the domain of the function $F(f)$ is
$V(\bbA)$, and so $V(F(\bbA)) = V(\bbA)$. 

To prove the second statement, first notice that if such embedding exists, then
it is unique. Indeed, if $F(f') = f$ and $F(f'') = f$, then, it follows from the definition
of concrete functor that $f' = f = f''$. Now we show that such embedding exists. 
Consider an $\tau$-structure $\bbA\in\calC$ and an embedding
$f\colon \mathbb X\to F(\bbA)$ for some $\mathbb X\in \mathcal D$.
Let $\mathbb Y$ be the structure with vertex set $V(\mathbb X)$, and for each
relation symbol $R$ in the signature of structure in $\calC$, define 
$R(\mathbb Y)$ to be the set of tuple $(x_1,\dots, x_k)$ such that
$(f(x_1),\dots, f(x_k))\in R(\bbA)$. By construction of $\mathbb Y$, it must
be the case that $f\colon \mathbb Y\to \bbA$ is an embedding, so it follows from
the fact that $\mathcal{C}$ is a hereditary class that $\mathbb Y\in \calC$. 
Again, since $F$ is a concrete functor, we conclude that $F(f) = f$, and the claim follows.
\end{proof}

As a side note,  functors that satisfy the second item of
Proposition~\ref{prop:concretefunct} are called \textit{discrete fibrations}
so, this proposition asserts  that every concrete functor is a discrete fibration.

The following statements show that images and preimages (under
concrete functors) of hereditary classes, remain hereditary classes.

\begin{lemma}\label{lem:images}
Consider a pair $\calC$ and $\mathcal{D}$ of hereditary classes 
of relational structures. If $F\colon \mathcal{C\to D}$ is a
concrete functor, then for every hereditary subclass $\calC'\subseteq \calC$,
its image $F[\calC']$ is a hereditary subclass of $\mathcal{D}$.
\end{lemma}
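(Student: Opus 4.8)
The plan is to verify directly that $F[\calC']$ is closed under substructures, since by definition a hereditary subclass of $\mathcal{D}$ is precisely a subclass of $\mathcal{D}$ that is closed under embeddings. That $F[\calC']\subseteq\mathcal{D}$ is immediate from $F$ being a functor into $\mathcal{D}$, so the only thing that requires argument is heredity.

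First I would fix an arbitrary $\bbX\in F[\calC']$ together with a substructure $\bbY<\bbX$, with the goal of showing $\bbY\in F[\calC']$. By definition of the image there is some $\bbA\in\calC'$ with $F(\bbA)=\bbX$, and the embedding witnessing $\bbY<\bbX$ is an embedding $g\colon\bbY\to F(\bbA)$.

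The key step is to lift this embedding along $F$ using the discrete fibration property established in the second item of Proposition~\ref{prop:concretefunct}. Applying that item to the embedding $g\colon\bbY\to F(\bbA)$ produces a $\tau$-structure $\bbC$ and a (unique) embedding $g'\colon\bbC\to\bbA$ with $F(g')=g$. Since $F$ is a functor, the source of the morphism $F(g')$ is $F(\bbC)$, whereas the source of $g$ is $\bbY$; from the identity $F(g')=g$ I therefore read off $F(\bbC)=\bbY$. In particular $g'$ witnesses $\bbC<\bbA$, and because $\calC'$ is hereditary and $\bbA\in\calC'$, it follows that $\bbC\in\calC'$. Hence $\bbY=F(\bbC)\in F[\calC']$, as required, so $F[\calC']$ is closed under substructures.

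I expect the only delicate point to be the bookkeeping in the previous paragraph: correctly deducing $F(\bbC)=\bbY$ from the equality $F(g')=g$ of morphisms, which rests on $F$ preserving sources together with the fact (first item of Proposition~\ref{prop:concretefunct}) that $F$ acts as the identity on underlying vertex sets, so that $F(\bbC)$ and $\bbY$ share the same domain and relations. Everything else is a routine unwinding of the definition of heredity, and no substantial obstacle remains beyond invoking the fibration property of concrete functors.
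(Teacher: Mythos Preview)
Your proof is correct and follows essentially the same approach as the paper: both invoke the second item of Proposition~\ref{prop:concretefunct} to lift an embedding $g\colon\bbY\to F(\bbA)$ to an embedding $g'\colon\bbC\to\bbA$, read off $F(\bbC)=\bbY$ from $F(g')=g$, and conclude by heredity of $\calC'$. The only difference is notational.
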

\begin{proof}
Let $\bbA\in F[\mathcal{C}']$, and $\bbB\in \mathcal{C}'$ such that $F(\bbB) = \bbA$. Suppose
that there is an embedding $f\colon \mathbb X\to \bbA$. We want to show that
$\mathbb X\in F[\mathcal{C}']$. By the second item of Proposition~\ref{prop:concretefunct},
there is an embedding $f'\colon \mathbb Y\to \bbB$ such that $F(f') = f$. 
In particular, this means that $F(\bbY) = \bbX$, and  the claim follows since
$\mathcal{C}'$ is a hereditary class.
\end{proof}

\begin{lemma}\label{lem:preimages}
Consider a pair $\calC$ and $\mathcal{D}$ of hereditary classes 
of relational structures. If $F\colon \mathcal{C\to D}$ is a
concrete functor, then for every hereditary subclass $\mathcal{D}'\subseteq \mathcal{D}$
its preimage $F^{-1}[\mathcal{D}']$ is a hereditary subclass of  $\mathcal{C}$.
Moreover, if $\mathcal F$ is the set of minimal bounds of $\mathcal{D}'$ in $\mathcal{D}$, 
then $F^{-1}[\mathcal F]$ is the set of minimal bounds of $F^{-1}[\mathcal{D}']$ in
$\mathcal{C}$.
\end{lemma}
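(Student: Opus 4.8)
The plan is to reduce everything to the two properties of concrete functors recorded in Proposition~\ref{prop:concretefunct}: that $F$ preserves vertex sets, i.e.\ $V(\bbA)=V(F(\bbA))$, and that $F$ is a discrete fibration, i.e.\ every embedding into $F(\bbA)$ lifts uniquely to an embedding into $\bbA$. The heredity of the preimage is the easy half. Given $\bbA\in F^{-1}[\calD']$ and an embedding $f\colon\bbY\to\bbA$, functoriality produces an embedding $F(f)\colon F(\bbY)\to F(\bbA)$, and since $F$ is concrete $F(f)=f$, so $F(\bbY)<F(\bbA)$. As $\calD'$ is hereditary and $F(\bbA)\in\calD'$, we obtain $F(\bbY)\in\calD'$, that is $\bbY\in F^{-1}[\calD']$ (with $\bbY\in\calC$ because $\calC$ is hereditary). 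Hence $F^{-1}[\calD']$ is a hereditary subclass of $\calC$.

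For the \emph{moreover} part I would prove the asserted set equality by double inclusion, in each direction transporting the two defining conditions of a minimal bound across $F$. First suppose $\bbB\in F^{-1}[\calF]$, so that $F(\bbB)$ is a minimal bound of $\calD'$ in $\calD$. Then $F(\bbB)\notin\calD'$ forces $\bbB\notin F^{-1}[\calD']$, and for every proper substructure $\bbY$ of $\bbB$ the argument above gives an embedding $F(\bbY)<F(\bbB)$ which is again proper, since $V(\bbY)=V(F(\bbY))\subsetneq V(F(\bbB))=V(\bbB)$. Minimality of $F(\bbB)$ then yields $F(\bbY)\in\calD'$, hence $\bbY\in F^{-1}[\calD']$, so $\bbB$ is a minimal bound of $F^{-1}[\calD']$ in $\calC$.

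Conversely, let $\bbB$ be a minimal bound of $F^{-1}[\calD']$ in $\calC$. From $\bbB\notin F^{-1}[\calD']$ we get $F(\bbB)\notin\calD'$, while $F(\bbB)\in\calD$ since $F$ maps into $\calD$. The remaining obligation, that every proper substructure of $F(\bbB)$ lies in $\calD'$, is where the discrete fibration property becomes essential, and is the main (though mild) obstacle: a proper substructure $\bbX$ of $F(\bbB)$ need not a priori be of the form $F(\bbY)$, so I would invoke the second item of Proposition~\ref{prop:concretefunct} to lift the embedding $\bbX\to F(\bbB)$ to an embedding $\bbY\to\bbB$ with $F(\bbY)=\bbX$; once more $V(\bbY)=V(\bbX)\subsetneq V(\bbB)$ certifies that this substructure is proper. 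Minimality of $\bbB$ then forces $\bbY\in F^{-1}[\calD']$, i.e.\ $\bbX=F(\bbY)\in\calD'$. Thus $F(\bbB)$ is a minimal bound of $\calD'$ in $\calD$, equivalently $\bbB\in F^{-1}[\calF]$, which completes the equality.
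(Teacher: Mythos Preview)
Your proof is correct and uses the same two ingredients as the paper's argument: that a concrete functor preserves embeddings (so substructures push forward), and the discrete-fibration property of Proposition~\ref{prop:concretefunct} (so substructures of $F(\bbB)$ lift back). The only difference is organizational. The paper proves the single equivalence ``$\bbA\in F^{-1}[\calD']$ if and only if $\bbA$ is $F^{-1}[\calF]$-free'' by contraposition in each direction, whereas you prove the set equality ``$F^{-1}[\calF]=\{\text{minimal bounds of }F^{-1}[\calD']\}$'' by double inclusion. Your phrasing is in fact slightly tighter for the stated \emph{moreover} clause: the paper's equivalence certifies that $F^{-1}[\calF]$ is a defining set of forbidden substructures but does not by itself verify that each element of $F^{-1}[\calF]$ is \emph{minimal}; you make that step explicit via the vertex-set comparison $V(\bbY)=V(F(\bbY))\subsetneq V(F(\bbB))=V(\bbB)$.
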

\begin{proof}
Let $\mathcal{C}' = F^{-1}[\mathcal{D}]$ and $\calF' = F^{-1}[\calF]$. We want to
show that a structure $\bbA\in \mathcal{C}$ belongs to $\mathcal{C}'$ if and only
if $\bbA$ is $\calF'$-free. We prove both implications by contraposition.
First suppose that $\bbA\in\calC$ is not $\calF'$-free, so there is a structure $\bbB\in \calF'$
and an embedding $f\colon \bbB\to \bbA$. Since $F$ is a concrete functor,
then $f\colon F(\bbB)\to F(\bbA)$ is an embedding. By the choice of $\bbB$,
we know that $F(\bbB)\in \calF$, so $F(\bbA)$ is not $\calF$-free. Hence,
$F(\bbA)\not\in \mathcal{D}'$ and thus,  $\bbA\in \calC\setminus F^{-1}[\mathcal{D}']
=\calC\setminus \mathcal{C}'$. Now we prove the converse implication.
Let  $\bbA\in \calC\setminus \mathcal{C}'$, i.e., $F(\bbA) \in \mathcal{D}\setminus \mathcal D'$, and
so, there is a structure $\bbB\in \calF$ and an embedding $f\colon \bbB\to F(\bbA)$. By the
second part of Proposition, there is a structure $\mathbb Y\in \calC$ and
an embedding $f'\colon \bbY \to \bbA$
such that $F(f') = f$. Hence, $F(\bbY) = \bbB\in \calF$, and thus
$\bbY\in F^{-1}[\calF] = \calF'$. Since $\bbY$ into $\bbA$, we see that
$\bbA$ is not $\calF'$-free, which concludes the proof.
\end{proof}

To conclude this brief section we introduce a particular family of concrete functors.
Consider a pair of relational signatures  $\tau$ and $\sigma$ such that
$\tau\subseteq \sigma$. Let $\bbA$ be an $\tau$-structure, and $\mathbb X$ a
$\sigma$-structure. We say that $\bbA$ is the \textit{$\tau$-reduct}
of $\mathbb X$ if $V(\bbA) = V(\bbX)$, and for every $R\in \tau$ the
interpretations $R(\bbA)$ and $R(\bbX)$ coincide. Note that the $\tau$-reduct
of any $\sigma$-structure is unique, thus we denote it by $\mathbb X_\tau$,
and we say that $\bbX$ is a \textit{$\sigma$-expansion}  of $\bbA$. Whenever
there is no risk of ambiguity regarding the signature $\tau$, we call
$\bbX_\tau$ the reduct of $\bbX$. Finally, suppose that $\mathcal{C}$ is a class
of $\sigma$-structures, and $\calC_\tau$ is the class of $\tau$-reducts of
$\calC'$, then the functor $F\colon \calC'\to \calC$ defined by $F(\bbA) = \bbA_\tau$
is a concrete functor. These functors are called \textit{forgetful functors}.
As a natural example let $\LOR$ be the class of linearly ordered graphs
considered as $\{E,\le\}$-structures, and $\mathcal G$ the class of graphs. 
With this setting, $\mathcal G$ is the class of $\{E\}$-reducts of 
$\LOR$. 



\section{Local Expressions}
\label{sec:local-expresions}

Across different areas of mathematics one encounters several examples of
classes defined with the following scheme. First, consider
two classes of mathematical objects $\mathcal C$ and $\mathcal D$ together
with some functorial mapping $F\colon \mathcal{C\to D}$. Secondly, by taking
a subclass $\mathcal{C}'$ of $\mathcal{C}$, define a class
$\mathcal{D}'$ of objects in  $\mathcal{D}$ as those objects that belong
to $F[\mathcal{C}']$.  Examples of such classes include metrizable spaces,
orderable groups, orientable matroids, and $k$-colourable graphs. 

Several graph classes can be defined with the previous scheme
by considering the functor  $S\colon\mathcal{DI\to G}$ that maps a digraph
(a binary relation) to its underlying graph (its symmetric closure). With
this functor in mind, the class of comparability
graphs is defined as the image of the class of irreflexive partially ordered
sets~\cite{gallaiAMA18}, the class of perfectly orientable graphs is the
image of the class of out-tournaments~\cite{skrienJGT6}, and the class of
$2$-edge-connected graphs is characterized as the image of strongly connected
digraphs (see, e.g., Theorem 5.10 in~\cite{bondy2008}). In general, a graph class
$\calC$ is expressible by forbidden orientations if there is a local class of
oriented graphs $\mathcal D$ such that $S[\mathcal D] = \calC$.
Similarly, expressions by forbidden linearly ordered graphs -- Damaschke's notion
of FOSG-classes~\cite{damaschkeTCGT1990} -- 
can be interpreted in the present context by considering the he consider the
functor $sh_L\colon \mathcal{LOR\to G}$ that forgets the linear ordering of 
a linearly ordered graph.

\subsection{Local expressions}
\label{ExpressivePower}

Throughout the rest of this work, we will only deal with classes and categories 
of relational structures with embeddings.  We will introduce several examples
of concrete functors, and their definition can also be found in
Appendix~\ref{ap:concrete-functors}.\footnote{In
the electronic version, the reader can find a hyperlink to the definition of
each functor over the corresponding symbol. For example,
$\hyperlink{AO}{S_{|\AO}}$, $ex(\hyperlink{LOR}{sh_L})$
and $\hyperlink{OR}{S_{|\OR}\colon\mathcal{OR \to G}}$.}

To begin with, we denote by $\LOR$ the class of linearly ordered graphs, by
$\DI$ the class of digraphs, by $\OR$ the class of oriented graphs, and by
$\AO$ the class of acyclic oriented graphs. The functor $\hyperlink{LOR}{sh_L}\colon\LOR\to \calG$
forgets the linear ordering, the functor $\hyperlink{DI}{S}\colon\DI\to \calG$
maps a digraphs to it underlying graph, and $\hyperlink{OR}{S_{|\OR}}$ and
$\hyperlink{AO}{S_{|\AO}}$ are the restrictions of $S$ to the class of oriented
graphs and of acyclic oriented graphs, respectively. 

Consider a surjective concrete functor $F\colon \mathcal{C\to D}$ where
$\mathcal{C}$ and $\mathcal D$ are hereditary classes --- notice that by
Lemma~\ref{lem:images}, the latter condition is equivalent to
$\calC$ be a hereditary class. 
We say that a class $\mathcal{D' \subseteq D}$ is  \textit{locally expressible by $F$}
if there is a local class $\mathcal{C'}$ relative to $ \calC$ such that
$F[\mathcal{C'}] = \mathcal{D'}$. The \textit{expressive power}
of $F$ is the collection of classes locally expressible
by $F$, and we denote this collection by $\ex(F)$. 
For instance, for any hereditary class $\mathcal{C}$ the expressive power of
the identity $Id_\calC$ is the class of local classes relative to $\calC$.
Clearly, the collection of classes expressible by forbidden linearly
ordered graphs (resp.\ by forbidden orientations) corresponds to the expressive power
of the  forgetful functor $\hyperlink{LOR}{sh_L\colon\mathcal{LOR\to G}}$
(resp.\ to the restriction $\hyperlink{OR}{S_{|\OR}\colon\mathcal{OR\to G}}$
of $\hyperlink{DI}{S\colon \mathcal{DI \to G}}$ to the class of oriented graphs).

The problem of finding hereditary graph classes not expressible by forbidden
(acyclic) oriented graphs considered in~\cite{guzmanEJC105} arises from the
basic observation that there are only countably many hereditary graph classes
expressible by forbidden (acyclic) orientations. In terms of the present
notation, $\ex(\hyperlink{OR}{S_{|\OR}})$ is
a countable set, and one can soon notice that holds for any concrete functor.

\begin{proposition}\label{prop:basicexpressiveproperties}
Let $\mathcal{C}$ and $\mathcal{D}$ be a pair of hereditary classes of
relational structures. For any concrete functor $F\colon\mathcal{C\to D}$ a surjective
concrete functor the following statements hold.
\begin{enumerate}
	\item $\ex(F)$ is a countable set.
	\item If $\mathcal{P,Q}\in \ex(F)$, then $\mathcal{P\cup Q} \in \ex(F)$.
	\item If $\mathcal{P}\in \ex(F)$ and $\mathcal{Q}\subseteq_l \mathcal{P}$,
	then $\mathcal{Q}\in \ex(F)$.
\end{enumerate}
\end{proposition}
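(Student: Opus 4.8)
The plan is to treat the three items separately; the first two are direct consequences of the bookkeeping already in place, while the third is the one that requires care. Throughout I would use that a finite relational signature has only finitely many $\tau$-structures on any fixed number of vertices, so there are only countably many finite $\tau$-structures up to isomorphism. For item~(1), recall that every $\mathcal{D}' \in \ex(F)$ is by definition of the form $F[\mathcal{C}']$ for some $\mathcal{C}' \subseteq_l \mathcal{C}$; hence the assignment $\mathcal{C}' \mapsto F[\mathcal{C}']$ surjects the collection of local classes relative to $\mathcal{C}$ onto $\ex(F)$. Each such $\mathcal{C}'$ is determined by its finite set of minimal bounds in $\mathcal{C}$, so this collection injects into the countable set of finite sets of $\tau$-structures, and therefore $\ex(F)$ is countable. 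For item~(2), I would choose $\mathcal{C}_1, \mathcal{C}_2 \subseteq_l \mathcal{C}$ with $F[\mathcal{C}_1] = \mathcal{P}$ and $F[\mathcal{C}_2] = \mathcal{Q}$; by Proposition~\ref{prop:inter+union} (taking $\mathcal{C}_3 = \mathcal{C}$) the union $\mathcal{C}_1 \cup \mathcal{C}_2$ is local relative to $\mathcal{C}$, and since images commute with unions, $F[\mathcal{C}_1 \cup \mathcal{C}_2] = \mathcal{P} \cup \mathcal{Q}$, giving $\mathcal{P} \cup \mathcal{Q} \in \ex(F)$.

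For item~(3), I would fix a witness $\mathcal{C}' \subseteq_l \mathcal{C}$ with $F[\mathcal{C}'] = \mathcal{P}$ and set $\mathcal{C}'' := \{\mathbb{A} \in \mathcal{C}' : F(\mathbb{A}) \in \mathcal{Q}\}$. The first task is to verify $F[\mathcal{C}''] = \mathcal{Q}$: the inclusion $F[\mathcal{C}''] \subseteq \mathcal{Q}$ is immediate from the definition of $\mathcal{C}''$, and conversely any $\mathbb{B} \in \mathcal{Q} \subseteq \mathcal{P} = F[\mathcal{C}']$ admits some preimage $\mathbb{A} \in \mathcal{C}'$, which then lies in $\mathcal{C}''$ and satisfies $F(\mathbb{A}) = \mathbb{B}$. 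Thus it remains to show that $\mathcal{C}''$ is local relative to $\mathcal{C}$, which I expect to be the main obstacle.

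The difficulty is that the hypothesis $\mathcal{Q} \subseteq_l \mathcal{P}$ measures locality relative to $\mathcal{P}$, not relative to $\mathcal{D}$, so Lemma~\ref{lem:preimages} cannot be applied to $F$ directly. The fix is to restrict the functor: consider $F' := F|_{\mathcal{C}'} \colon \mathcal{C}' \to \mathcal{P}$, which is again a surjective concrete functor between hereditary classes, using that $\mathcal{P} = F[\mathcal{C}']$ is hereditary by Lemma~\ref{lem:images}. Since $\mathcal{C}'' = (F')^{-1}[\mathcal{Q}]$, Lemma~\ref{lem:preimages} applied to $F'$ and the hereditary subclass $\mathcal{Q} \subseteq \mathcal{P}$ identifies the minimal bounds of $\mathcal{C}''$ in $\mathcal{C}'$ with the $F'$-preimages of the minimal bounds of $\mathcal{Q}$ in $\mathcal{P}$. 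The latter set is finite because $\mathcal{Q} \subseteq_l \mathcal{P}$, and since $F'$ preserves vertex sets its preimages have bounded size and hence form a finite set as well; therefore $\mathcal{C}'' \subseteq_l \mathcal{C}'$. Combining this with $\mathcal{C}' \subseteq_l \mathcal{C}$ through the transitivity of being local relative to (Observation~\ref{obs:localtransitive}) yields $\mathcal{C}'' \subseteq_l \mathcal{C}$. Since $F[\mathcal{C}''] = \mathcal{Q}$, this witnesses $\mathcal{Q} \in \ex(F)$ and completes the argument.
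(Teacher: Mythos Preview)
Your proof is correct and follows essentially the same approach as the paper's. For items~(1) and~(2) your arguments match the paper's exactly; for item~(3) the paper merely says the claim ``follows with a straightforward argument as in the proof of Lemma~\ref{lem:preimages}'', while you spell out that argument cleanly by applying Lemma~\ref{lem:preimages} to the restricted functor $F|_{\mathcal{C}'}\colon\mathcal{C}'\to\mathcal{P}$ and then invoking the transitivity of Observation~\ref{obs:localtransitive}.
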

\begin{proof}
    The first item holds since we are considering only finite relational signatures, 
    so the are countably many finite sets of structures in $\calC$, and thus
    countably many images of local classes relative to $\calC$. The second item
    follows from the simple fact that $F[\calC_1\cup \calC_2] = F[\calC_1]\cup F[\calC_2]$,
    and because if $\calC_1,\calC_2\subseteq_l \calC$, then $\calC_1\cup\calC_2\subseteq_l 
    \calC$ (Proposition~\ref{prop:inter+union}). The third item follows with a 
    straightforward argument as in the proof of Lemma~\ref{lem:preimages}.
\end{proof}

As mentioned in the introduction, it is not hard to notice that every class of graphs with a
finite set of minimal obstructions is expressible by forbidden linearly ordered graphs
and by forbidden orientations. This holds in general for local expressions by concrete functors
and follows from the third item in Proposition~\ref{prop:basicexpressiveproperties}.

\begin{corollary}\label{cor:relativelocar-exp}
Consider a pair of hereditary classes of relational structures $\mathcal{C}$ and $\mathcal{D}$.
If $F\colon\mathcal{C\to D}$ is a surjective concrete functor, then every local class
relative to $\mathcal{D}$ is locally expressible by $\ex(F)$.
\end{corollary}

It was observed in \cite{guzmanAMC438} (resp.\ in~\cite{guzmanEJC105}) that every
class expressible by forbidden circular orderings (resp.\ by forbidden acyclic orientations),
is also expressible by forbidden linear orderings. This can be seen as an easy consequence
of a simple algebraic relation between the corresponding forgetful functors, which we proceed
to introduce.

Consider a pair of concrete functors with the same codomain
$F\colon \mathcal{C\to D}$ and $G\colon\mathcal{B\to D}$. We say
that $G$ \textit{factors} $F$ or that $F$
\textit{factors through} $G$ if there is a surjective concrete functor
$H\colon\mathcal{C\to B}$ such that $F = G\circ H$; that is, if the following
diagram commutes.
\begin{center}
\begin{tikzcd}[column sep=1.8cm, row sep=1.1cm]
  \mathcal{C}  \arrow[d, twoheadrightarrow, "H", dashed] \arrow[dr, "F"] &  \\
  \mathcal{B}  \arrow[r, "G"] & \mathcal{D}
\end{tikzcd}
\end{center}

A \textit{local inclusion} is a concrete functor
$i\colon\mathcal{B\hookrightarrow C}$ such that $i$ is an isomorphism
from $\mathcal{B}$ to $i[\mathcal{B}]$ and
$i[\mathcal{B}]\subseteq_l \mathcal{C}$. 
We say that $G$ is a \textit{local restriction} of
$F$ if there is  a local inclusion $i\colon \mathcal{B\hookrightarrow C}$ such that 
$G = F\circ i$; i.e., the following diagram commutes.
\begin{center}
\begin{tikzcd}[column sep=1.8cm, row sep=1.1cm]
  \mathcal{C}   \arrow[dr, "F"] &  \\
  \mathcal{B} \arrow[u, hookrightarrow, "i", dashed]  \arrow[r, "G"] & \mathcal{D}
\end{tikzcd}
\end{center}

For instance,  consider the forgetful functor $\hyperlink{LOR}{sh_L\colon \mathcal{LOR\to G}}$
and the  symmetric functor \hyperlink{AO}{$S_{|\AO}\colon\mathcal{AO\to G}$}.
In this case, $\hyperlink{LOR}{sh_L}$ factors through
$\hyperlink{AO}{S_{|\AO}}$.
Indeed, consider the functor 
$\Fwd\colon \mathcal{LOR\to AO}$ that orients every edge forward and
forgets the linear ordering of the vertices. Clearly, 
$\hyperlink{LOR}{sh_L} = \hyperlink{AO}{S_{|\AO}}\circ \Fwd$. 
As the following proposition asserts, the fact that every
class expressible by forbidden acyclic orientation is also expressible by
forbidden linear orderings follows from the fact that $\hyperlink{LOR}{sh_L}$
factors through $\hyperlink{AO}{S_{|\AO}}$.

\begin{proposition}\label{prop:factor-restriction}
Let $\calB$, $\calC$, and $\calD$ be three hereditary classes of relational structures. 
The following statements hold for any pair of concrete functors
$F\colon \mathcal{C\to D}$ and $G\colon\mathcal{B\to D}$:
\begin{enumerate}
	\item If $G$  factors $F$, then $\ex(G)\subseteq \ex(F)$.
	\item If $G$ is a local restriction of $F$, then $\ex(G)\subseteq \ex(F)$.
\end{enumerate}
\end{proposition}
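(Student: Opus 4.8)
The plan is to treat both parts uniformly: fix a class $\mathcal P\in\ex(G)$, so that there is a local class $\calB'\subseteq_l\calB$ with $G[\calB'] = \mathcal P$, and in each case exhibit an explicit local class relative to $\calC$ whose $F$-image is $\mathcal P$. For part (1) the natural candidate is the preimage $\calC' := H^{-1}[\calB']$, and for part (2) it is the image $\calC' := i[\calB']$. In both cases the image equality $F[\calC'] = \mathcal P$ will be a short formal computation, and the real content is verifying that $\calC'$ is local relative to $\calC$.

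For part (1), I would first invoke Lemma~\ref{lem:preimages} for the concrete functor $H\colon\mathcal{C\to B}$: it gives that $H^{-1}[\calB']$ is a hereditary subclass of $\calC$ and that, writing $\mathcal F$ for the finite set of minimal bounds of $\calB'$ in $\calB$, the set $H^{-1}[\mathcal F]$ is precisely the set of minimal bounds of $H^{-1}[\calB']$ in $\calC$. To conclude $H^{-1}[\calB']\subseteq_l\calC$ I then need $H^{-1}[\mathcal F]$ to be finite; this follows because $H$ preserves vertex sets (Proposition~\ref{prop:concretefunct}), so each preimage has the same number of vertices as its image, and over a finite signature there are only finitely many structures of any bounded size. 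Finally, using $F = G\circ H$ together with the surjectivity of $H$ --- which gives $H[H^{-1}[\calB']] = \calB'$ --- I would compute $F[H^{-1}[\calB']] = G[H[H^{-1}[\calB']]] = G[\calB'] = \mathcal P$.

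For part (2), write $G = F\circ i$ with $i\colon\mathcal{B\hookrightarrow C}$ a local inclusion and set $\calC' := i[\calB']$. The image equality is immediate, $F[i[\calB']] = (F\circ i)[\calB'] = G[\calB'] = \mathcal P$. For locality I would use the two defining features of a local inclusion separately. Since $i$ is a concrete functor and $\calB'$ is hereditary, Lemma~\ref{lem:images} yields that $i[\calB']$ is a hereditary subclass of $\calC$. Since $i$ is an isomorphism onto $i[\calB]$, it sends the finite set of minimal bounds of $\calB'$ in $\calB$ bijectively to the minimal bounds of $i[\calB']$ in $i[\calB]$, whence $i[\calB']\subseteq_l i[\calB]$; combined with $i[\calB]\subseteq_l\calC$ and the transitivity of relative locality (Observation~\ref{obs:localtransitive}), this gives $i[\calB']\subseteq_l\calC$, as needed.

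I expect the image equalities to be routine and the locality verifications to carry the weight. In part (1) the one point that is not purely formal is that the preimage of a finite bound-set remains finite: this is false for arbitrary functors and here rests specifically on concreteness (vertex preservation) plus finiteness of the signature. In part (2) the delicate step is checking that being local relative to a class genuinely transfers along the isomorphism $i$ --- i.e., that an isomorphism of embedding-categories carries minimal bounds to minimal bounds --- after which transitivity through $i[\calB]\subseteq_l\calC$ closes the argument.
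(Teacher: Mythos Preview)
Your proof is correct and follows essentially the same approach as the paper. For part~(2) your argument is identical to the paper's (take $i[\calB']$ and use transitivity of relative locality through $i[\calB]\subseteq_l\calC$); for part~(1) the paper invokes Corollary~\ref{cor:relativelocar-exp} to obtain some $\calQ'\subseteq_l\calC$ with $H[\calQ']=\calB'$, whereas you unpack that corollary and exhibit $\calQ'=H^{-1}[\calB']$ explicitly via Lemma~\ref{lem:preimages}---the underlying mechanism is the same.
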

\begin{proof}
To prove the first statement consider a concrete functor 
$H\colon\mathcal{C\twoheadrightarrow B}$
such that $F = G\circ H$. Let $\mathcal{P}\in \ex(G)$ and
 $\mathcal{Q\subseteq}_l \calB$ such that
 $G[\mathcal{Q}] = \mathcal{P}$. By Corollary~\ref{cor:relativelocar-exp},
 $\mathcal{Q}\in \ex(H)$ so there is a class $\mathcal{Q'\subseteq}_l\mathcal{C}$
 such that $H[\mathcal{Q'] = Q}$. Thus, $F[\mathcal{Q}] = G\circ H[\mathcal{Q}] 
 = \mathcal{P}$ and so, $\mathcal{P}\in \ex(F)$.
 The second statement follows from the fact that if $i\colon \mathcal{B\hookrightarrow C}$
 is a local inclusion and $\mathcal{Q}$ is a local class relative to $\mathcal{B}$,
 then  $i[\mathcal{Q}]$ is a local class relative to $\mathcal{C}$ (see, e.g., Observation~\ref{obs:localtransitive}).
 Thus, if  $\mathcal{P\subseteq D}$ and $G[\mathcal{Q}] = \mathcal{P}$ for some
 $\mathcal{Q\subseteq}_l\mathcal{B}$, then $F[i[\mathcal{Q}]] = \mathcal{P}$
 and $i[\mathcal{Q}]\subseteq_l\mathcal{C}$, so $\mathcal{P}\in \ex(F)$.
\end{proof}

Above, we provided particular instances of the first item of 
Proposition~\ref{prop:factor-restriction}, as an example of the 
second item one can consider that every class expressible by $k$-coloured
graphs, is expressible by $(k+1)$-coloured graphs. Formally, for every positive
integer $k$ denote by $\mathcal{G}_k$ the class of vertex coloured graphs with colours
$\{1,\dots, k\}$. We denote by
$\hyperlink{Gk}{FC_k\colon\mathcal{G}_k\to \mathcal{G}}$ the
functor that forgets all colours. It is not hard to notice that for every positive
integer $k$ the class of (properly) $k$-colourable graphs belongs to
$\ex(\hyperlink{Gk}{FC_k})$. 
Consider now the inclusion functor $i_k\colon \mathcal{G}_k\hookrightarrow
\mathcal{G}_{k+1}$. 
Clearly, $i_k$ is a local inclusion since a $(k+1)$-coloured graph is $k$-coloured
if and only if it has no vertex with colour $k+1$, so $\mathcal{G}_i\subseteq_l
\mathcal{G}_{k+1}$. Naturally, $\hyperlink{Gk}{FC_k = FC_{k+1}}\circ i_k$ so 
$\ex(\hyperlink{Gk}{FC_k})\subseteq \ex(\hyperlink{Gk}{FC_{k+1}})$ for
every positive integer $k$. 

\begin{question}\label{qst:k-k+1}
Is there a positive integer $k$ such that the expressive power
$\ex(\hyperlink{Gk}{FC_k})$ equals the expressive power
$\ex(\hyperlink{Gk}{FC_{k+1}})$?
\end{question}

A natural candidate to show that $\ex(\hyperlink{Gk}{FC_k})$ is a proper subset
of $\ex(\hyperlink{Gk}{FC_{k+1}})$ is the class of $k$-colourable graphs. In particular, 
we believe that the class of $3$-colourable graphs is not locally expressible
by $\hyperlink{Gk}{FC_2}$.

\begin{problem}\label{prob:3col-2colours}
Determine if there is a finite set $\calF$ of $2$-coloured graphs such that a graph
$G$ is $3$-colourable if and only if it admits an $\calF$-free $2$-colouring.
\end{problem}

Allow us to recall a couple of simple observations of expressions by
linearly ordered graphs and oriented graphs that generalize to
concrete functors and their expressive power. Suppose that $\calF$ is a finite
set of linearly ordered graphs and let $\calF^\ast$ be the set
$\{(G,\le^\ast)\colon (G,\le)\in \calF\}$, where  $\le^\ast$ is the inverse
linear ordering of $\le$. Clearly, a graph $G$ admits an
$\calF$-free linear ordering if and only if $G$ admits an $\calF^\ast$-free linear 
ordering --- this is called the mirror property in~\cite{feuilloleyJDM}. Similarly,
for a set $\calF$ of oriented graphs, let $\overleftarrow{\calF}$ be the set obtained
by reversing the orientation of every oriented graph in $\calF$. 
In this case, the set of graphs that admit an $\calF$-free orientation is the same
as the class of graphs that admit an $\overleftarrow{\calF}$-free orientation.

Again, both observations in the previous paragraph are particular instances
of a simple ``categorical'' observation. 
Consider a concrete functor $F\colon\mathcal{C\to D}$
and a concrete isomorphism $\nu\colon\mathcal{C\to C}$,
i.e.,  a concrete functor which is bijective on objects
and embeddings. If $F = F\circ \nu$
then for every class $\mathcal{P\subseteq D}$ and any  
class $\mathcal{Q\subseteq}_l\mathcal{ C}$ such that $F[\mathcal{Q}] =
\mathcal{P}$ the equality $F\circ\nu[\mathcal{Q] = P}$ trivially holds,
and $\nu[\mathcal{Q}]\subseteq_l \mathcal{C}$. 
In the context of the previous paragraph, consider the functors
$re\colon \mathcal{OR\to OR}$ 
and $\ast\colon \mathcal{LOR\to LOR}$
where $re$ reverses the orientation of every oriented graph and $\ast$
that maps a linearly ordered graph to the same graph with the inverse linear
ordering of the vertices. Clearly, $re$ and $\ast$ are isomorphisms and
the following diagrams commute.

\begin{center}
\begin{tikzcd}[column sep=1.8cm, row sep=1.1cm]
 \OR \arrow[d,"\hyperlink{OR}{S_{|\OR}}"] \arrow[r, "re"]
 & \OR \arrow[dl,"\hyperlink{OR}{S_{|\OR}}"] \\
 \mathcal{G} & 
 \end{tikzcd}\qquad
\begin{tikzcd}[column sep=1.8cm, row sep=1.1cm]
 \LOR \arrow[d,"\hyperlink{LOR}{sh_L}"] \arrow[r, "\ast"]
 & \LOR \arrow[dl,"\hyperlink{LOR}{sh_L}"] \\
 \mathcal{G}& 
\end{tikzcd}
\end{center}

This trivial observation can be slightly strengthened to yield the following
simple result.

\begin{proposition}\label{prop:symmetries}
Consider a concrete functor $F\colon \mathcal{C\to D}$ between
a pair of hereditary classes of relational structures, and  let 
  $\mu\colon\mathcal{D\to D}$ be a concrete isomorphism. If there is a concrete isomorphism
$\nu\colon\mathcal{C\to C}$ such that
$F\circ \nu = \mu\circ F$ then, for every
hereditary class $\mathcal{P\subseteq D}$,
\[
\mathcal{P}\in \ex(F) \text{ if and only if } \mu[P]\in \ex(F).
\]
\end{proposition}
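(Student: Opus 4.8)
The plan is to unfold the definition of membership in $\ex(F)$ and transport a witness across the isomorphism $\nu$. Recall that $\mathcal{P}\in\ex(F)$ means there is a class $\mathcal{Q}\subseteq_l\mathcal{C}$ with $F[\mathcal{Q}]=\mathcal{P}$. For the forward implication I would take such a witness $\mathcal{Q}$ and propose $\nu[\mathcal{Q}]$ as a witness for $\mu[\mathcal{P}]$. The image computation is immediate from the commuting square: since $F\circ\nu=\mu\circ F$, we get $F[\nu[\mathcal{Q}]]=(F\circ\nu)[\mathcal{Q}]=(\mu\circ F)[\mathcal{Q}]=\mu[F[\mathcal{Q}]]=\mu[\mathcal{P}]$.

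The substantive point is that $\nu[\mathcal{Q}]$ is again a local class relative to $\mathcal{C}$, and this is where I expect the only real work. The plan is to use the characterization of $\subseteq_l$ by finitely many bounds rather than the quantitative $N$-bound formulation. Since $\nu$ is a concrete isomorphism, it is an automorphism of $\mathcal{C}$ that is bijective on objects and fixes embeddings ($\nu(f)=f$), hence preserves and reflects the embedding relation, the proper-substructure relation, and membership in $\mathcal{C}$. Consequently $\nu$ carries the (finite) set of minimal bounds of $\mathcal{Q}$ in $\mathcal{C}$ bijectively onto the minimal bounds of $\nu[\mathcal{Q}]$ in $\mathcal{C}$: a structure $\mathbb{B}\in\mathcal{C}$ is a minimal bound of $\nu[\mathcal{Q}]$ exactly when $\nu^{-1}(\mathbb{B})$ is a minimal bound of $\mathcal{Q}$, because $\mathbb{B}\notin\nu[\mathcal{Q}]$ iff $\nu^{-1}(\mathbb{B})\notin\mathcal{Q}$, and every proper substructure of $\mathbb{B}$ lies in $\nu[\mathcal{Q}]$ iff every proper substructure of $\nu^{-1}(\mathbb{B})$ lies in $\mathcal{Q}$. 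Hence $\nu[\mathcal{Q}]$ has finitely many bounds in $\mathcal{C}$, i.e.\ $\nu[\mathcal{Q}]\subseteq_l\mathcal{C}$, and together with the image computation this gives $\mu[\mathcal{P}]\in\ex(F)$. (Alternatively one can package this as a local inclusion and reuse the bookkeeping behind Observation~\ref{obs:localtransitive}; note also that $\mu[\mathcal{P}]$ is hereditary by Lemma~\ref{lem:images}.)

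For the converse I would observe that the hypotheses are symmetric under passing to inverses. Since $\mu$ and $\nu$ are concrete isomorphisms, so are $\mu^{-1}$ and $\nu^{-1}$, and rearranging $F\circ\nu=\mu\circ F$ (compose with $\mu^{-1}$ on the left and $\nu^{-1}$ on the right) yields $F\circ\nu^{-1}=\mu^{-1}\circ F$. Thus the pair $(\mu^{-1},\nu^{-1})$ satisfies exactly the hypothesis of the forward implication, so applying that implication to the class $\mu[\mathcal{P}]$ gives $\mu[\mathcal{P}]\in\ex(F)\Rightarrow\mu^{-1}[\mu[\mathcal{P}]]=\mathcal{P}\in\ex(F)$. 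Combining the two implications proves the equivalence. The main obstacle is the middle step---confirming that a concrete isomorphism preserves relative locality---but this is genuinely routine once one argues with the finitely-many-bounds description of $\subseteq_l$, since isomorphisms of categories with embeddings as morphisms respect both membership in the class and the proper-substructure relation.
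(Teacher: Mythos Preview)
Your proposal is correct and follows essentially the same approach as the paper: take a witness $\mathcal{Q}\subseteq_l\mathcal{C}$ for $\mathcal{P}$, push it through $\nu$, use the commuting square to compute $F[\nu[\mathcal{Q}]]=\mu[\mathcal{P}]$, and verify $\nu[\mathcal{Q}]\subseteq_l\mathcal{C}$. The paper's proof is a two-line sketch stating exactly these two facts without justification; you have simply filled in the details (via the finitely-many-bounds characterization) and made the converse explicit by passing to $(\mu^{-1},\nu^{-1})$, which the paper leaves implicit.
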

\begin{proof}
This statement is an immediate implication of the following two facts.
If $\mathcal{Q}\subseteq_l \mathcal{C}$ then $\mathcal{\nu[Q]}\subseteq_l
\mathcal{C}$ and if $F[\mathcal{Q}] = \mathcal{P}$ then
$F\circ\nu[\mathcal{Q}]  = \mu[\mathcal{P}]$.
\end{proof}

As previously stated, the mirror property from~\cite{feuilloleyJDM} is captured
by Proposition~\ref{prop:symmetries} by taking $\nu$ to be $\ast$, and $\mu$
to be the identity. Now we provide a simple example where $\mu$ is not the identity. 
Let $co\colon\mathcal{G\to G}$ be the functor that maps a graph $G$ to if complement
$\overline{G}$, and $co_L\colon\mathcal{LOR\to LOR}$ the functor that maps a linearly
ordered graph $(G,\le)$ to $(\overline{G},\le)$. Clearly, $co\colon \mathcal{G\to G}$ and
$co_L\colon\mathcal{LOR\to LOR}$ are isomorphisms and the following diagram
commutes.
\begin{center}
\begin{tikzcd}[column sep=1.8cm, row sep=1.1cm]
 \LOR \arrow[d,"\hyperlink{LOR}{sh_L}"] \arrow[r, "co_L"]
 & \LOR \arrow[d,"\hyperlink{LOR}{sh_L}"] \\
 \mathcal{G} \arrow[r, "co"] & \mathcal{G} &
\end{tikzcd}
\end{center}
 So, $\mathcal{P}\in \ex(\hyperlink{LOR}{sh_L})$ if and only if
 $co[\mathcal{P}] \in \ex(\hyperlink{LOR}{sh_L})$ --- this is called
 the exchange-complement property in~\cite{feuilloleyJDM}. 

It is not hard to notice that if $F\colon \calC\to \calD$ and $G\colon \calC'\to \calD'$
are a pair of concrete functors such that $\ex(F)\subseteq \ex(G)$, then $D\subseteq D'$
because $\calD\in \ex(F)$ (see, e.g., Proposition~\ref{prop:basicexpressiveproperties}). 
So, if $\ex(F) = \ex(G)$, then $F$ and $G$ have the same image. Are there any
stronger algebraic relations between two functors with the same expressive power?
Of course we are particularly interested in the case when the codomain is the
class of graphs.

\begin{question}\label{qst:relationfunctors}
Given a pair of concrete functors $F\colon\mathcal{C\to G}$ and $F'\colon\mathcal{D\to G}$
such that $\ex(F) \subseteq \ex(F')$, is there a meaningful relation between $F$ and $F'$?
\end{question}

\begin{question}\label{qst:equivalencefunctors}
Given a concrete functor $F\colon\mathcal{C\to G}$,
is there a meaningful characterization of the functors
$F'\colon\mathcal{D\to G}$ such that $\ex(F) = \ex(F')$?
\end{question}

Above, we considered simple algebraic relations between functors $F$ and $G$
that translate to relations between their expressive powers. To conclude this
subsection, we consider simple algebraic operations between $F$ and $G$ that
have a neat relation with their expressive powers. To begin with, given a
pair of functors $F\colon\mathcal{B\to D}$ and $G\colon\mathcal{C\to D}$
we consider their disjoint union $F\oplus G$. 
\footnote{In order to stay within our context, 
the disjoint union of $\mathcal{B}$ and $\mathcal{C}$ must be a category
of relational structures with embeddings. This can be attained by colouring all
the vertices of structures in $\mathcal{B}$ with colour $U_B$
and the vertices of structures in $\mathcal{C}$ with colour $U_C$, so 
the disjoint union of these classes (with coloured vertices) together with 
embeddings is isomorphic to the disjoint union of the categories $\mathcal{B}$
and $\mathcal{C}$.
}
It turns out
that the expressive power of the disjoint union of a pair of functors can 
easily be described as follows.

\begin{proposition}\label{prop:disjointunion}
Consider a pair $F\colon\mathcal{B\to D}$ and $G\colon\mathcal{C\to D}$
of concrete functors. The expressive power of $F\oplus G$ is the collection
\[
\bigl\{\mathcal{P\cup Q\subseteq D}\colon
 \mathcal{P}\in \ex(F), \mathcal{Q}\in \ex(G)\bigl\}.
 \]
 \end{proposition}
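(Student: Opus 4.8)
The plan is to prove a double inclusion between $\ex(F \oplus G)$ and the collection
\[
\mathcal{S} = \bigl\{\mathcal{P\cup Q\subseteq D}\colon \mathcal{P}\in \ex(F), \mathcal{Q}\in \ex(G)\bigr\}.
\]
First I would fix notation for the disjoint union: by the footnote, $\mathcal{B}\oplus \mathcal{C}$ is realized as a class of relational structures whose vertices are coloured either $U_B$ or $U_C$, and every structure in this class is (isomorphic to) a disjoint union of a $\mathcal{B}$-part and a $\mathcal{C}$-part. The functor $F\oplus G$ applies $F$ to the $\mathcal{B}$-coloured component and $G$ to the $\mathcal{C}$-coloured component, then takes the union of the two images in $\mathcal{D}$ (forgetting the colours). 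A key structural observation I would record first is that $F$ and $G$ are both \emph{local restrictions} of $F\oplus G$: the inclusions $\mathcal{B}\hookrightarrow \mathcal{B}\oplus\mathcal{C}$ and $\mathcal{C}\hookrightarrow \mathcal{B}\oplus\mathcal{C}$ (as the monochromatic substructures) are local inclusions, since a structure in $\mathcal{B}\oplus\mathcal{C}$ lies in the image of $\mathcal{B}$ exactly when it avoids the single bound consisting of one $U_C$-coloured vertex, and symmetrically for $\mathcal{C}$. Hence by Proposition~\ref{prop:factor-restriction}, $\ex(F)\subseteq \ex(F\oplus G)$ and $\ex(G)\subseteq \ex(F\oplus G)$, and then the second item of Proposition~\ref{prop:basicexpressiveproperties} gives $\mathcal{P}\cup\mathcal{Q}\in \ex(F\oplus G)$, establishing $\mathcal{S}\subseteq \ex(F\oplus G)$.

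For the reverse inclusion $\ex(F\oplus G)\subseteq \mathcal{S}$, I would take an arbitrary $\mathcal{R}\in \ex(F\oplus G)$ witnessed by a local class $\mathcal{E}\subseteq_l \mathcal{B}\oplus\mathcal{C}$ with $(F\oplus G)[\mathcal{E}] = \mathcal{R}$. The central point is that every object $\mathbb{E}\in \mathcal{E}$ splits canonically as $\mathbb{E} = \mathbb{E}_B \oplus \mathbb{E}_C$ into its $U_B$- and $U_C$-coloured parts, with $\mathbb{E}_B\in\mathcal{B}$ and $\mathbb{E}_C\in\mathcal{C}$, and $(F\oplus G)(\mathbb{E}) = F(\mathbb{E}_B)\cup G(\mathbb{E}_C)$. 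I would then define $\mathcal{P}$ to be the class of all $\mathbb{D}\in\mathcal{D}$ that arise as $F(\mathbb{E}_B)$ for some $\mathbb{E}\in\mathcal{E}$ (equivalently, $\mathcal{P} = F[\mathcal{E}_B]$ where $\mathcal{E}_B$ is the class of $\mathcal{B}$-parts occurring in $\mathcal{E}$), and $\mathcal{Q} = G[\mathcal{E}_C]$ symmetrically. The plan is to argue that $\mathcal{E}_B\subseteq_l \mathcal{B}$ and $\mathcal{E}_C\subseteq_l \mathcal{C}$, so that $\mathcal{P}\in\ex(F)$ and $\mathcal{Q}\in\ex(G)$, and then to verify $\mathcal{R} = \mathcal{P}\cup\mathcal{Q}$.

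The locality of $\mathcal{E}_B$ and $\mathcal{E}_C$ is where I expect the main obstacle, and it requires using that $\mathcal{E}$ is local \emph{as a class of $\mathcal{B}\oplus\mathcal{C}$-structures}. The subtlety is that the $\mathcal{B}$-parts appearing in members of $\mathcal{E}$ might a priori depend on which $\mathcal{C}$-part they are paired with; I would need to argue that $\mathcal{E}$ being hereditary forces each $\mathbb{E}_B$ to occur already on its own (pair it with an empty $\mathcal{C}$-part, which embeds, hence lies in $\mathcal{E}$ by heredity), so $\mathcal{E}_B = \{\mathbb{E}_B : \mathbb{E}\in\mathcal{E}\}$ is itself a hereditary class whose bounds are monochromatic. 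Then a bound of $\mathcal{E}_B$ in $\mathcal{B}$ corresponds to the $U_B$-monochromatic bounds of $\mathcal{E}$ in $\mathcal{B}\oplus\mathcal{C}$, of which there are finitely many since $\mathcal{E}\subseteq_l\mathcal{B}\oplus\mathcal{C}$, giving $\mathcal{E}_B\subseteq_l \mathcal{B}$. Finally, the equality $\mathcal{R}=\mathcal{P}\cup\mathcal{Q}$ follows because $(F\oplus G)(\mathbb{E}_B\oplus\mathbb{E}_C) = F(\mathbb{E}_B)\cup G(\mathbb{E}_C)$, together with the fact that taking $\mathbb{E}_C$ empty realizes every element of $\mathcal{P}$ in $\mathcal{R}$ (and symmetrically for $\mathcal{Q}$), while conversely every element of $\mathcal{R}$ is a union $F(\mathbb{E}_B)\cup G(\mathbb{E}_C)$ which lies in $\mathcal{P}\cup\mathcal{Q}$ precisely when one allows the union operation to be absorbed — here I would double-check whether the intended reading of the displayed collection treats $\mathcal{P}\cup\mathcal{Q}$ as the class-theoretic union, in which case the identity $F(\mathbb{E}_B)\cup G(\mathbb{E}_C)\in \mathcal{P}\cup\mathcal{Q}$ is the point that needs the most careful justification.
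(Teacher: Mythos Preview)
Your first inclusion $\mathcal{S}\subseteq\ex(F\oplus G)$ is fine and matches the paper's argument.

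The reverse inclusion, however, rests on a misreading of the disjoint union. You take an object $\mathbb{E}\in\mathcal{B}\oplus\mathcal{C}$ to be a \emph{mixture} $\mathbb{E}_B\oplus\mathbb{E}_C$ of a $U_B$-coloured part and a $U_C$-coloured part, with $(F\oplus G)(\mathbb{E})=F(\mathbb{E}_B)\cup G(\mathbb{E}_C)$. That is not what the construction is: the footnote specifies the \emph{categorical} coproduct, so every object of $\mathcal{B}\oplus\mathcal{C}$ is monochromatic --- either wholly a $\mathcal{B}$-structure or wholly a $\mathcal{C}$-structure --- and $(F\oplus G)(\mathbb{E})$ is simply $F(\mathbb{E})$ or $G(\mathbb{E})$ accordingly. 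Your own observation that $i_B[\mathcal{B}]$ is the class avoiding a single $U_C$-vertex is already a hint: if mixed structures were allowed, that class would not be all of $i_B[\mathcal{B}]$ among the objects of the coproduct.

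This misreading is exactly why your final step stalls. Under your interpretation, an element of $\mathcal{R}$ has the form $F(\mathbb{E}_B)\cup G(\mathbb{E}_C)$, and there is no reason this structure should lie in $\mathcal{P}\cup\mathcal{Q}$ (the set-theoretic union of two classes); indeed it generally will not. Under the correct interpretation the difficulty evaporates: any local $\mathcal{E}\subseteq_l\mathcal{B}\sqcup\mathcal{C}$ decomposes as $\mathcal{E}=(\mathcal{E}\cap\mathcal{B})\sqcup(\mathcal{E}\cap\mathcal{C})$, whence $(F\oplus G)[\mathcal{E}]=F[\mathcal{E}\cap\mathcal{B}]\cup G[\mathcal{E}\cap\mathcal{C}]$ immediately, and one only has to note (as the paper does) that $\mathcal{E}\cap\mathcal{B}\subseteq_l\mathcal{B}$ and $\mathcal{E}\cap\mathcal{C}\subseteq_l\mathcal{C}$.
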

 \begin{proof}
 
Let  $A = \{\mathcal{P\cup Q\subseteq D}\colon \mathcal{P}\in \ex(F),
\mathcal{Q}\in \ex(G)\}$.
Consider the canonical inclusions $i_B\colon\mathcal{B\hookrightarrow B\oplus C}$
and $i_C\colon\mathcal{C\hookrightarrow B\oplus C}$. Denote by $M_B$
and $M_C$ the structures in $\mathcal{B}$ and $\mathcal{C}$
with one vertex. Since we are working with  finite relational signatures,
$M_B$ and $M_C$ are finite sets. Clearly, $i_B[\mathcal{B}]$ is the class
of $i_C[M_C]$-free structures in the disjoint union $\mathcal{B\sqcup C}$,
and $i_C[\mathcal{C}]$
is the class of $i_B[M_B]$-free structures in $\mathcal{B\cup C}$. 
Thus, $i_B$ and $i_C$ are local embeddings and by definition of $F\oplus G$
we know that $F = (F\oplus G)\circ i_B$ and $G = (F\oplus G)\circ i_C$. 
Thus, by the second part of Proposition~\ref{prop:factor-restriction},
$\ex(F)\cup \ex(G)\subseteq \ex(F\oplus G)$. 
Moreover, since expressive powers are closed under unions
(second part of Proposition~\ref{prop:basicexpressiveproperties}), 
we conclude that $A \subseteq \ex(F\oplus G)$. 

In order to simplify notation, we identify $\mathcal{B}$ and $\mathcal{C}$ with
$i_B[\mathcal{B}]$ and $i_C[\mathcal{C}]$, respectively. Let $\mathcal{Q}$
be a local class relative $\mathcal{B\sqcup C}$. It is straightforward to verify
that $\mathcal{Q\cap B}$ and $\mathcal{Q\cap C}$
are local properties with respect to $\mathcal{B}$ and $\mathcal{C}$ respectively.
So, if $\mathcal{P}\in \ex(F\oplus G)$ and $\mathcal{Q\subseteq}_l
\mathcal{B\sqcup C}$ is such that  $\mathcal{P} = (F\oplus G)[\mathcal{Q}]$, then
$\mathcal{P} = F[\mathcal{Q\cap B}]\cup G[\mathcal{Q\cap C}]$
where $\mathcal{Q\cap B}\subseteq_l\mathcal{B}$ and 
$\mathcal{Q\cap C}\subseteq_l \mathcal{C}$. Clearly, 
$F[\mathcal{Q\cap B}]\subseteq \ex(F)$ and $G[\mathcal{Q\cap C}]\in \ex(G)$. 
Therefore, $\mathcal{P}\in  A$ and the claim follows.
 \end{proof}
 
The previous statement has the following consequence
regarding certain minimality of $\ex(F\oplus G)$.
 
\begin{corollary}\label{cor:disjointunion}
Consider three concrete functors $F$, $G$ and $H$ with codomain
$\mathcal{D}$.  If $\ex(F)\subseteq \ex(H)$ and $\ex(G)\subseteq \ex(H)$
then $\ex(F\oplus G)\subseteq \ex(H)$. 
\end{corollary}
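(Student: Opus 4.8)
The plan is to combine the explicit description of $\ex(F\oplus G)$ furnished by Proposition~\ref{prop:disjointunion} with the closure of expressive powers under unions established in the second item of Proposition~\ref{prop:basicexpressiveproperties}. Both ingredients are already in hand, so the corollary should follow by a direct chase of memberships; I do not expect a genuine obstacle here, and the argument will be short.

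First I would fix an arbitrary hereditary class $\mathcal{R}\in \ex(F\oplus G)$. By Proposition~\ref{prop:disjointunion}, $\mathcal{R}$ decomposes as $\mathcal{R} = \mathcal{P}\cup\mathcal{Q}$ for some $\mathcal{P}\in \ex(F)$ and $\mathcal{Q}\in \ex(G)$. Applying the hypothesis $\ex(F)\subseteq \ex(H)$ to $\mathcal{P}$, and the hypothesis $\ex(G)\subseteq \ex(H)$ to $\mathcal{Q}$, I obtain $\mathcal{P}\in \ex(H)$ and $\mathcal{Q}\in \ex(H)$.

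It then remains to pass from these two memberships to $\mathcal{P}\cup\mathcal{Q}\in \ex(H)$, which is precisely the content of the second item of Proposition~\ref{prop:basicexpressiveproperties} instantiated at the functor $H$: expressive powers are closed under finite unions. Hence $\mathcal{R} = \mathcal{P}\cup\mathcal{Q}\in \ex(H)$, and since $\mathcal{R}$ was arbitrary, $\ex(F\oplus G)\subseteq \ex(H)$ as required. The only point worth verifying is a bookkeeping one rather than a mathematical difficulty: all the unions and membership statements take place inside the same ambient class of $\mathcal{D}$-structures, which is guaranteed by the standing hypothesis that $F$, $G$, and $H$ (and therefore $F\oplus G$) share the codomain $\mathcal{D}$.
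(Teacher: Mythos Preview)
Your proof is correct and follows essentially the same route as the paper: both combine the description of $\ex(F\oplus G)$ from Proposition~\ref{prop:disjointunion} with the closure of $\ex(H)$ under unions from Proposition~\ref{prop:basicexpressiveproperties}. The only difference is cosmetic ordering---you start from an element of $\ex(F\oplus G)$ and unpack it, while the paper first notes that all the relevant unions lie in $\ex(H)$ and then invokes Proposition~\ref{prop:disjointunion}.
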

\begin{proof}
By  Proposition~\ref{prop:basicexpressiveproperties}, expressive powers
are closed under disjoint unions. So, if $ex (F)\cup \ex(G)\subseteq \ex(H)$
then $\{\mathcal{P\cup Q\subseteq D}\colon
\mathcal{P}\in \ex(G), \mathcal{F}\in \ex(G)\} \subseteq \ex(H)$. Thus, by 
Proposition~\ref{prop:disjointunion}, $\ex(F\oplus G)\subseteq \ex(H)$.
\end{proof}

There is another natural but more elaborated way of combining 
expressive powers. Namely, their \textit{pullback} which we carefully
introduce now. 
Consider a pair $\mathcal{A}$ and $\mathcal{B}$ of classes of relational
structures with disjoint signatures $\sigma_A$ and $\sigma_B$ and denote
by $\tau$ the disjoint union $\sigma_A\sqcup \sigma_B$. Let $sh_A$ and
$sh_B$  be
the forgetful functors that map a $\tau$-structure to a $\sigma_A$-structure and
a $\sigma_B$-structure, respectively.
Let $F\colon\mathcal{A\to D}$ and $G\colon\mathcal{B\to D}$ be a pair
of concrete functors. The pullback
$P\colon\mathcal{A}\times_\calD\mathcal{ B\to D}$ is defined as follows. 
The class $\mathcal{A}\times_\calD\mathcal{B}$ is defined as the class of $\tau$-structures
$\bbC$ such that $sh_A(\bbC)\in \mathcal{A}$, $sh_B(\bbC)\in\mathcal{B}$
and $F(sh_A(\bbC)) = G(sh_B(\bbC))$. For every $\bbC\in\mathcal{B}\times_\calD\mathcal{C}$
we define  $P(\bbC)$ to be  $F(sh_A(\bbC))$, or equivalently
$P(\bbC)  = G(sh_B(\bbC))$. In other words, $\mathcal{A}\times_\calD\mathcal{B}$
is the maximal subclass of $sh_A^{-1}[\mathcal{A}]\cap sh_B^{-1}[\mathcal{B}]$ that
makes the following diagram commute. 

\begin{center}
\begin{tikzcd}[column sep=1.8cm, row sep=1.1cm]
 \mathcal{A}\times_\calD\mathcal{B} \arrow[d,"sh_A"] \arrow[r, "sh_B"] 
\arrow[dr, dashed, "P"] 
& \mathcal{B} \arrow[d,"G"] \\
 \mathcal{A} \arrow[r, "F"] & \mathcal{D} 
\end{tikzcd}
\end{center}

\begin{corollary}\label{cor:pullback-disjoint}
Let $F\colon\mathcal{A\to D}$ and $G\colon\mathcal{B\to D}$ be a pair
of surjective concrete functors. If
$P \colon\mathcal{A}\times_\calD\mathcal{ B\to D}$ is the pullback of
$F$ and $G$, then $\ex(F\oplus G)\subseteq \ex(P)$. 
\end{corollary}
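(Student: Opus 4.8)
The plan is to obtain the inclusion by combining two tools already in hand: monotonicity of expressive power under factoring (\cref{prop:factor-restriction}) and the domination principle for disjoint unions (\cref{cor:disjointunion}). Concretely, by \cref{cor:disjointunion} applied with $H=P$, it suffices to establish the two separate inclusions $\ex(F)\subseteq \ex(P)$ and $\ex(G)\subseteq \ex(P)$; these immediately yield $\ex(F\oplus G)\subseteq \ex(P)$. So the whole task reduces to comparing each of $F$ and $G$ with the pullback $P$. (Along the way I would note that $\ex(P)$ is even defined, i.e.\ that $P$ is surjective: since $P = F\circ sh_A$ with $F$ surjective and, as shown below, $sh_A$ surjective, $P$ is surjective.)

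Before invoking these propositions I would record the prerequisite that the pullback $\mathcal{A}\times_\calD\mathcal{B}$ is a hereditary class of $\tau$-structures, so that every functor in sight genuinely lives in our category of hereditary classes with embeddings. This is a short check: if $\bbC$ lies in the pullback and $\bbC'$ is an induced substructure, then $sh_A(\bbC')$ and $sh_B(\bbC')$ are induced substructures of $sh_A(\bbC)\in\mathcal A$ and $sh_B(\bbC)\in\mathcal B$, hence belong to $\mathcal A$ and $\mathcal B$ by heredity; and because $F$ and $G$ are concrete, they send these restrictions to the induced substructures of the common structure $F(sh_A(\bbC)) = G(sh_B(\bbC))$ on $V(\bbC')$, which therefore coincide. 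Thus $\bbC'$ again meets all defining conditions of the pullback.

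The core of the argument is the pair of tautological factorizations $P = F\circ sh_A$ and $P = G\circ sh_B$, read directly off the definition of the pullback, together with the claim that the two projections $sh_A\colon\mathcal A\times_\calD\mathcal B\to\mathcal A$ and $sh_B\colon\mathcal A\times_\calD\mathcal B\to\mathcal B$ are \emph{surjective} concrete functors (they are forgetful functors, hence concrete). Granting surjectivity, the identity $P = F\circ sh_A$ says precisely that $F$ factors $P$ through the surjective concrete functor $sh_A$, so \cref{prop:factor-restriction}(1) yields $\ex(F)\subseteq \ex(P)$; symmetrically $P = G\circ sh_B$ gives $\ex(G)\subseteq \ex(P)$. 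Feeding both into \cref{cor:disjointunion} finishes the proof.

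The step I expect to be the main obstacle is the surjectivity of $sh_A$ (and, by symmetry, $sh_B$). To produce a preimage of a given $\bbA\in\mathcal A$, I would use surjectivity of $G$ to choose $\bbB\in\mathcal B$ with $G(\bbB) = F(\bbA)$; since $F$ and $G$ are concrete they preserve vertex sets (\cref{prop:concretefunct}), so $V(\bbB) = V(G(\bbB)) = V(F(\bbA)) = V(\bbA)$, and I can amalgamate $\bbA$ and $\bbB$ into a single $\tau$-structure $\bbC$ on this common vertex set carrying the $\sigma_A$-relations of $\bbA$ and the $\sigma_B$-relations of $\bbB$. By construction $sh_A(\bbC)=\bbA$, $sh_B(\bbC)=\bbB$, and $F(sh_A(\bbC)) = G(sh_B(\bbC))$, so $\bbC$ lies in the pullback and maps onto $\bbA$. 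The subtle point to get right is that this amalgamation is legitimate exactly because $F$ and $G$ agree on vertex sets and the signatures $\sigma_A,\sigma_B$ are disjoint, so there is no clash when merging the two relational structures.
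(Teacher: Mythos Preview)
Your proof is correct and follows essentially the same route as the paper: use the factorizations $P=F\circ sh_A$ and $P=G\circ sh_B$ together with \cref{prop:factor-restriction}(1) to obtain $\ex(F)\cup\ex(G)\subseteq\ex(P)$, then invoke \cref{cor:disjointunion}. You additionally spell out the surjectivity of the projections and the heredity of the pullback, which the paper leaves implicit; these checks are correct and worth including.
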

\begin{proof}
It follows from the first part of Proposition~\ref{prop:factor-restriction}
that  $\ex(F)\cup \ex(G)\subseteq \ex(P)$. So, Corollary~\ref{cor:disjointunion}
implies that $\ex(F\oplus G)\subseteq \ex(H)$.
\end{proof}

Besides expressing unions of classes expressible by $F$ and $G$, the
pullback of $F$ and $G$ also expresses intersections of classes
locally expressible by $F$ with classes locally expressible by $G$.

\begin{lemma}\label{lem:pullback}
Let $F\colon\mathcal{A\to D}$ and $G\colon\mathcal{B\to D}$ be a pair
of surjective concrete functors. If $\calP\in \ex(F)$ and
$\calQ\in \ex(G)$, then $\mathcal{P\cap Q} \in \ex(F \times_\calD G)$.
\end{lemma}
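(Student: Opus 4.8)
The plan is to exhibit an explicit local class $\mathcal{E}$ relative to the pullback class $\mathcal{X} := \mathcal{A}\times_\calD\mathcal{B}$ whose image under the pullback functor $P = F\times_\calD G$ is exactly $\calP\cap\calQ$. Fix witnessing local classes $\mathcal{A}'\subseteq_l\mathcal{A}$ with $F[\mathcal{A}'] = \calP$ and $\mathcal{B}'\subseteq_l\mathcal{B}$ with $G[\mathcal{B}'] = \calQ$, which exist by definition of $\ex$. Writing $sh_A, sh_B$ for the forgetful functors restricted to $\mathcal{X}$, I would set
\[
\mathcal{E} := sh_A^{-1}[\mathcal{A}']\cap sh_B^{-1}[\mathcal{B}'].
\]

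First I would check that $\mathcal{X}$ is itself a hereditary class and that $sh_A, sh_B$ are concrete functors on it. Heredity of $\mathcal{X}$ holds because forgetful functors preserve embeddings: for any substructure $\bbC'$ of some $\bbC\in\mathcal{X}$, the images $sh_A(\bbC')$ and $sh_B(\bbC')$ lie in the hereditary classes $\mathcal{A}$ and $\mathcal{B}$, and the equality $F(sh_A(\bbC')) = G(sh_B(\bbC'))$ is inherited since both sides are the induced substructure of $F(sh_A(\bbC)) = G(sh_B(\bbC))$ on the common vertex set $V(\bbC')$ (using that concrete functors fix vertex sets, \cref{prop:concretefunct}). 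I would then argue $\mathcal{E}\subseteq_l\mathcal{X}$: by \cref{lem:preimages} the minimal bounds of $sh_A^{-1}[\mathcal{A}']$ in $\mathcal{X}$ are precisely the $sh_A$-preimages of the minimal bounds of $\mathcal{A}'$ in $\mathcal{A}$; since $\mathcal{A}'\subseteq_l\mathcal{A}$ there are only finitely many such bounds, and each has only finitely many preimages because $\tau$ is finite and concrete functors preserve vertex sets. Hence $sh_A^{-1}[\mathcal{A}']\subseteq_l\mathcal{X}$, symmetrically $sh_B^{-1}[\mathcal{B}']\subseteq_l\mathcal{X}$, and \cref{prop:inter+union} gives $\mathcal{E}\subseteq_l\mathcal{X}$.

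It then remains to verify $P[\mathcal{E}] = \calP\cap\calQ$. For the inclusion $\subseteq$: if $\bbC\in\mathcal{E}$ then $sh_A(\bbC)\in\mathcal{A}'$ and $sh_B(\bbC)\in\mathcal{B}'$, so $P(\bbC) = F(sh_A(\bbC))\in\calP$ and simultaneously $P(\bbC) = G(sh_B(\bbC))\in\calQ$. For $\supseteq$: given $\bbD\in\calP\cap\calQ$, choose $\bbA\in\mathcal{A}'$ with $F(\bbA) = \bbD$ and $\bbB\in\mathcal{B}'$ with $G(\bbB) = \bbD$. The crucial point is that $V(\bbA) = V(\bbD) = V(\bbB)$ by \cref{prop:concretefunct}, so $\bbA$ and $\bbB$ live on a common vertex set and glue into a single $\tau$-structure $\bbC$ with $sh_A(\bbC) = \bbA$ and $sh_B(\bbC) = \bbB$; since $F(\bbA) = \bbD = G(\bbB)$, this $\bbC$ lies in $\mathcal{X}$, hence in $\mathcal{E}$, and $P(\bbC) = \bbD$.

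I expect the main obstacle to be the locality of $\mathcal{E}$ rather than the set-equality, which is a direct diagram chase. Concretely, the two delicate steps are (i) confirming that $\mathcal{X}$ is hereditary so that \cref{lem:preimages} applies to the restricted forgetful functors, and (ii) passing from ``finitely many minimal bounds of $\mathcal{A}'$ in $\mathcal{A}$'' to ``finitely many minimal bounds of $sh_A^{-1}[\mathcal{A}']$ in $\mathcal{X}$'', for which the finiteness of the signature $\tau$ together with the vertex-set-preservation of concrete functors are both essential. The gluing step in the $\supseteq$ inclusion, which turns a pair of reducts with the same domain into a single $\tau$-structure, is exactly where surjectivity-style information on the fibres (again via \cref{prop:concretefunct}) makes the pullback behave like an intersection.
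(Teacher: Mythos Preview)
Your proof is correct and follows essentially the same approach as the paper: define $\mathcal{E} = sh_A^{-1}[\mathcal{A}']\cap sh_B^{-1}[\mathcal{B}']$, argue it is local relative to the pullback class via \cref{lem:preimages} and finiteness of the signature, and verify $P[\mathcal{E}] = \calP\cap\calQ$ by the same two-inclusion diagram chase, including the gluing of $\bbA$ and $\bbB$ over their common vertex set. You are somewhat more explicit than the paper in checking that $\mathcal{X}$ is hereditary and in invoking \cref{prop:inter+union} for the intersection step, but the substance is identical.
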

\begin{proof}
    Let $\calA' \subseteq \calA$ (resp.\ $\calB'\subseteq \calB$)
    be a class with a finite set of minimal obstructions $\calM$ (resp.\ $\calN$),
    and such that $F[\calA'] = \mathcal{P}$ (resp.\ $G[\calB'] = \calQ$).
    It follows from Lemma~\ref{lem:preimages} that the intersection
    $sh_\calA^{-1}[\calA'] \cap sh_\calB^{-1}[\calB']$ is the class
    of $(sh_\calA^{-1}[\calN] \cup sh_\calB^{-1}[\calN])$-free structures
    in $\calA \times_\calD \calB$, and thus
    $sh_\calA^{-1}[\calA'] \cap sh_\calB^{-1}[\calB']$ is a local class. 
    Now we prove that $P[sh_\calA^{-1}[\calA'] \cap sh_\calB^{-1}[\calB']]
    = \calP \cap \calQ$. Clearly,
    $P[sh_\calA^{-1}[\calA'] \cap sh_\calB^{-1}[\calB']] \subseteq P[sh_\calA^{-1}[\calA']]
    \cap P[sh_\calB^{-1}[\calB']]$ and so, it follows from the definition of the pullback
    that  $P[sh_\calA^{-1}[\calA'] \cap sh_\calB^{-1}[\calB']] \subseteq \calP
    \cap \calQ$. Finally, suppose that a structure $\bbD \in \calP \cap\calQ$
    and let $\bbA \in \calA'$ and $\bbB\in \calB'$ be such that 
    $F(\bbA ) = \bbD  = G(\bbB)$. Thus, by considering the
    $(\sigma_\calA \sqcup \sigma_\calB)$-structure $\bbC$ whose
    $\sigma_\calA$-reduct is $\bbA$ and $\sigma_\calB$-reduct of $\bbB$, 
    we conclude that $\calC \in sh_\calA^{-1}[\calA'] \cap sh_\calB^{-1}[\calB']$
    and $P(\calC) =  F(sh_A(\bbC)) = G(sh_B(\bbC)) = \bbD$. Therefore, 
    $\calP\cap \calQ \subseteq P[sh_\calA^{-1}[\calA'] \cap sh_\calB^{-1}[\calB']]$
    and the claim follows. 
\end{proof}

We provide a simple application of Lemma~\ref{lem:pullback} in Example~\ref{ex:2-arc-col}.
Also, notice that  Lemma~\ref{lem:pullback}
implies that for any pair of classes $\calP,\calQ$ expressible by some
concrete functor $F\colon\calC\to \calD$, their intersection $\calP\cap\calQ$ is 
expressible by the pullback $F\times_\calD F$. A priory, $\calP\cap\calD$
could be locally expressible already by $F$, nonetheless we cannot find
any evident argument why this is true of false. 

\begin{question}\label{qst:intersectionclosed}
Is there a concrete functor $F$ such that $\ex(F)$ is not closed under intersections?
\end{question}

Proposition~\ref{prop:disjointunion} asserts that the disjoint unions $F\oplus G$
of a pair of concrete functors has the minimal expressive power that contains
$\ex(F)\cup \ex(G)$. It might be too much to hope for, but in light of Lemma~\ref{lem:pullback} 
it would be nice if the pullback $F\times_\calD G$ has the minimal expressive power
that contains $\calP\cap\calQ$ for every $\calP\in \ex(F)$ and $\calQ\in\ex(G)$.
In any case, we are interested in finding a description of the expressive power
of pullbacks similar to the description of the expressive power
of disjoint unions proposed in Proposition~\ref{prop:disjointunion}.

\begin{question}\label{qst:pullbackDescription}
Given a pair of surjective concrete functors $F\colon \mathcal{A\to D}$ and
$G\colon\mathcal{B\to D}$, is there a description of the expressive power of
their pullback in terms of the expressive powers of $F$ and $G$?
\end{question}


\subsection{Examples}
\label{sec:examples}

The aim of the rest of this section is to  provide several examples of
concrete functors $F\colon\mathcal{X\to G}$ that help illustrate the previous
definitions and observations. So far we have considered examples arising from
characterizations by forbidden (acyclic) orientations~\cite{guzmanAR,skrienJGT6},
by forbidden linear orderings~\cite{damaschkeTCGT1990,feuilloleyJDM},
and by forbidden circular orderings~\cite{guzmanAMC438}. As far as we are concerned,
the  examples proposed below have not yet been systematically studied (except 
for so-called tree-layouts~\cite{paulPREPRINT}), and we do not intend this
to be such a thorough study, but rather a handful of examples that could
motivate future research.\\

\noindent\textbf{Vertex-coloured graphs.}
One of the simplest way to equip graphs with further structure is by adding a
vertex colouring. In model theoretic terms, this corresponds to expanding the
signature of graph $\{E\}$ with unary predicates $U_1,\dots, U_k$, and considering
$\{E,U_1,\dots, U_k\}$-structures $\mathbb A$, where $(V(\bbA),E)$ is a graph
and $(U_1(\bbA),\dots, U_k(\bbA))$ is a partition of $V(\bbA)$ (with possibly
empty classes). Recall that we denote by $\calG_k$ the class of $k$-vertex coloured
graph, and by $FC_k\colon \calG_k\to\calG$ the functor the forgets the vertex-colouring. 
For $G\in\{K_2,2K_1\}$ and a pair of colours $i,j\in\{1,\dots, k\}$, we denote
by $(G,\mathbf{i},\mathbf{j})$ the vertex-colouring of $G$ where one vertex has
colour $i$ and the other has colour $j$.

\begin{example}\label{ex:M-partition}
    Let $M$ be a $k\times k$-matrix with $\{0,1,\ast\}$-entries. For each
    $i,j\in \{1,\dots,k\}$ define $\calF_k$ as follows: if $M_{ij} = \ast$ let
    $\calF_{ij} = \varnothing$; else, if $M_{ij} = 1$ let $\calF_{ij} = \{(2K_1,\mathbf{i},
    \mathbf{j})\}$; otherwise, if $M_{ij} = 0$ let $\calF_{ij} = \{(K_2,\mathbf{i},
    \mathbf{j})\}$. It is not hard to observe that a graph $G$ admits an $M$-partition
    (in the sense of \cite{hellEJC35}) if and only if $G$ admits an
    $(\cup_{i,j\in[k]} \calF_{ij})$-free $k$-vertex-colouring. Thus, local expressions
    by finitely many forbidden vertex-colour graphs extend the expressive power of matrix partitions. 
\end{example}

From the example above, it is natural to consider \textit{generalized} matrix partitions
by considering matrices with entries in $\{0,1,\ast,\oplus\}$, where $M_{ij} = \oplus$
represents forbidding the set $\{(2K_1,\mathbf{i}, \mathbf{j}), (K_2,\mathbf{i}, \mathbf{j})\}$.
This notion was introduced in~\cite{guzmanPHD}, but yet remains unexplored. 

\begin{example}
    Given a pair of hereditary classes $\calP, \calQ$ a $(\calP,\calQ)$-colouring
    of a graph $G$ is a partition $(X_1,X_2)$ of $V(G)$ such that the subgraph of
    $G$ induced by $X_1$ belongs to $\calP$, and the subgraph of $G$ induced by $X_2$
    belongs to $\calQ$. It is straightforward to observe that if $\calP$ and $\calQ$
    have finitely many minimal obstructions, then the class of $(\calP,\calQ)$-colourable
    graphs is locally expressible by $FC_2$.
\end{example}

Local expressions by forbidden vertex-coloured graphs corresponds to the so-called
\textit{monadic SNP} logic for graphs (see, e.g., \cite{federJC28}, or
subsections 1.4.2 and 1.4.3 in~\cite{bodirsky2021}).
Moreover, if we restrict to local class $\calC$ of vertex-coloured graphs that are also closed under inverse
homomorphisms (i.e., if $G$ is a vertex-coloured graph in $\calC$ and $G\to H$, then
$G\in \calC$),  then we recover monotone monadic SNP for graphs. It follows
from~\cite{federJC28,kunC33} and the celebrated CSP dichotomy~\cite{bulatovFOCS,zhukACM67} that
if a class of graphs $\calC$ is expressible in monotone monadic SNP (by means of forbidden
homomorphisms from vertex-coloured graphs), then the recognition problem of $\calC$ is either
in P or NP-complete.\\

\noindent\textbf{Equivalence graphs.} An \textit{equivalence graph} is a graph $G$
together with an equivalence relation $\sim$ on $V(G)$. For instance $(K_n,V(K_n)^2)$ is the complete
graph on $n$ vertices with one equivalence class. We denote the class of equivalence graphs
by $\mathcal{EQ}$, and by $\hyperlink{EQ}{sh_\sim\colon \mathcal{EQ}\to \mathcal{G}}$
the functor that forgets the equivalence relation. Given a graph $G$, we denote by
$EQ(G)$ the set of equivalence graphs  $(H,\sim)$ such that $H/\sim$ is isomorphic to $G$, 
and for each $v\in V(H)$ the quotient $(H-v)/\sim$ is not isomorphic to $G$. It is straightforward to 
observe that $EQ(G)$ is a finite set for for every finite graph $G$. The following example
is a particular instance of a more general example due to Bodirsky~\cite{bodirsky-personal}.
His example shows that every finite CSP with signature $\sigma$ is the reduct of a finitely
bounded class of $\sigma \cup \{\sim\}$-structures, where $\sim$ is a binary relation symbol
used to  describe an equivalence relation. 

\begin{example}\label{ex:equivalence-CSP}\cite{bodirsky-personal}
Let $EQ_n$ be the set of all equivalence graphs on $n$ vertices with unitary equivalence classes.
Given a graph $H$ on $n$ vertices let $\calF_H$ be the union of $\{(K_2,V(K_2)^2)\}$, of $EQ_{n+1}$, and of
the sets $EQ(H')$ where $H'$ ranges over all graphs on at most $n$ vertices that are not a subgraph of
$H$. It follows that a graph $G$ is $H$-colourable if and only if there is an equivalence
relation $\sim$ of $V(H)$ such that $(H,\sim)$ is an $\calF_H$-free equivalence graph. Thus, every
class of $H$-colourable graphs is locally expressible by $\hyperlink{EQ}{sh_\sim\colon \mathcal{EQ}\to
\mathcal{G}}$.
\end{example}

\noindent\textbf{Edge-coloured graphs.} After colouring vertices one can naturally
consider colouring edges. Given a positive integer $k$, we denote by $\mathcal{EG}_k$
the class of $2$-edge-coloured graphs, and let $\hyperlink{EG2}{sh_k\colon \mathcal{EG}_k\to
\mathcal{G}}$ be the functor that forgets the colourings. 
Evidently, the class of graphs with edge-chromatic number at most $k$ can
be described by finitely many forbidden $k$-edge-coloured graphs, i.e. locally
expressible by $\hyperlink{EG2}{sh_k}$.
And it turns out that several natural graph classes, such as line graphs 
of bipartite graphs and co-bipartite graphs, can be described by
means of forbidden $2$-edge-coloured graphs~\cite{bokIP}. Also,
amongst the many papers working towards solving the strong perfect
graph conjecture~\cite{berge1961}, Chv\'atal and Sbihi~\cite{chvatalJCTB44}
considered the class graphs that admit a $2$-edge-colouring with no
monochromatic induced  $P_3$, as building blocks for a decomposition theorem
for claw-free perfect graphs.

\begin{example}[from~\cite{bokIP}]\label{ex:co-bip}
Let $\calF$ be the set of $2$-edge-coloured graphs depicted in Figure~\ref{fig:2-edge-col}.
Notice that if $G$ is a co-bipartite graph $(X,Y)$, then by colouring all edges $XY$-edge with blue,
and all $XX$-edges and $YY$-edges with red, we obtain an $\calF$-free edge-colouring of $G$
--- an $XY$-edge (resp.\ $XX$-edge, and $YY$-edge), is an edge $xy$ such that $x\in X$ and $y\in Y$
(resp.\ $x,y\in X$, and $x,y\in Y$). 
Conversely, it is not hard to notice that if $G$ admits an $\calF$-free $2$-edge-colouring,
then either: the red edges in such a colouring are a cover of $G$ that induce two disjoint cliques;
or the red edges are a complete graph that covers all but one vertex of $G$; or $G$ has at most
two vertices. In either case, we conclude that $G$ is a co-bipartite graph.
\end{example}

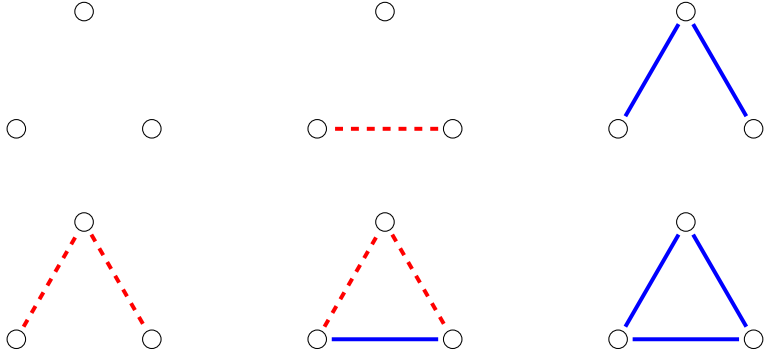
\begin{figure}[ht!]
\begin{center}
\begin{tikzpicture}[scale = 0.8]

\begin{scope}
    \foreach \i in {0,1,2}
      \draw ({(360/3)*\i + 90}:1.3) node(\i)[vertex]{};
      
  \end{scope}
  
  \begin{scope}[xshift=5cm]
    \foreach \i in {0,1,2}
      \draw ({(360/3)*\i + 90}:1.3) node(\i)[vertex]{};

    \draw [redE] (2) to (1);
  \end{scope}

    \begin{scope}[yshift=-3.5cm]
    \foreach \i in {0,1,2}
      \draw ({(360/3)*\i + 90}:1.3) node(\i)[vertex]{};

    \draw [redE] (1) to (0);
    \draw [redE] (0) to (2);
  \end{scope}

   \begin{scope}[xshift=5cm,yshift=-3.5cm]
    \foreach \i in {0,1,2}
      \draw ({(360/3)*\i + 90}:1.3) node(\i)[vertex]{};

    \draw [redE] (1) to (0);
    \draw [redE] (0) to (2);
    \draw [blueE]  (2) to (1);
  \end{scope}

  \begin{scope}[xshift=10cm,yshift=-3.5cm]
    \foreach \i in {0,1,2}
      \draw ({(360/3)*\i + 90}:1.3) node(\i)[vertex]{};
    
    \draw [blueE] (1) to (0);
    \draw [blueE]  (2) to (1);
    \draw [blueE] (2) to (0);
  \end{scope}

  \begin{scope}[xshift=10cm]
    \foreach \i in {0,1,2}
      \draw ({(360/3)*\i + 90}:1.3) node(\i)[vertex]{};

    \draw [blueE] (1) to (0);
    \draw [blueE] (0) to (2);
  \end{scope}

\end{tikzpicture}
\caption{A set $\calF$ of $2$-edge-coloured graph such that $G$ admits an $\calF$-free
$2$-edge-colouring if and only if $G$ is a co-bipartite graph (Example~\ref{ex:co-bip})}
\label{fig:2-edge-col}
\end{center}
\end{figure}

Similar as in the vertex case, expressions by means of forbidden homomorphisms of
edge-coloured graphs (a particular instance of forbidden induced edge-coloured graphs)
correspond to the logic  MMSNP$_2$ (see, e.g., Proposition 4.3.3 in~\cite{barsukovPhD}).
Contrary to MMSNP, it is still open whether the recognition problem of graphs
classes expressible in MMSNP$_2$ exhibit P vs.\ NP-complete dichotomy as finite domain CSPs.\\

\noindent\textbf{Digraphs.}
 Now, we consider  a slight variation on oriented expressions
 of graph classes. Namely, we consider the symmetric functor
$\hyperlink{DI}{S\colon \mathcal{DI\to G}}$. 
Since $\OR$ is the class of digraphs with no symmetric arcs, 
$\OR$ is a local subclass of $\DI$ and so, $\hyperlink{OR}{S_{|\OR}}$ is
a local restriction of $\hyperlink{DI}{S}$. Thus, it follows from
Proposition~\ref{prop:factor-restriction} that  $\ex(\hyperlink{OR}{S_{|\OR}})
\subseteq \ex(\hyperlink{DI}{S})$. 
A natural question that arises, is if by considering the extension of
$\hyperlink{OR}{S_{|\OR}}\colon \mathcal{OR\to G}$ to
$\hyperlink{DI}{S\colon\mathcal{DI\to G}}$ we actually increase the expressive power.

\begin{question}\label{qst:OR-S}
Is there a graph property locally expressible by $\hyperlink{DI}{S}$ but not by
$\hyperlink{OR}{S_{|\OR}}$?
\end{question}

As a brief note, observe that $\AO\subseteq \OR$, but $\AO$ is not a local class relative
to $\OR$ since all directed cycles are minimal obstructions of $\AO$ in $\OR$. Thus,
Proposition~\ref{prop:factor-restriction} does not imply that
$\ex(\hyperlink{AO}{S_{|\AO}}) \subseteq \ex(\hyperlink{OR}{S_{|\OR}})$. Actually, 
the previous inclusion does not hold since chordal graphs are expressible by
$\hyperlink{AO}{S_{|\AO}}$~\cite{skrienJGT6}, but are not expressible by
$\hyperlink{OR}{S_{|\OR}}$~\cite{guzmanEJC105}.

We conclude this brief dive into the expressive power of $\hyperlink{DI}{S}$
considering certain graph  families introduced by Gavril~\cite{gavrilIPL73}.
Given a hereditary class of graphs $\mathcal{P}$, a \textit{$\mathcal{P}$-mixed
graph}  is a graph $G$ whose edge set can be partitioned into $E_1$ and $E_2$,
where $(V(G),E_1)\in \mathcal{P}$, and $(V(G),E_2)$ admits a transitive orientation
$(V(G),E'_2)$ such that if $(x,y)\in E'_2(G)$ and $yz\in E_1$, then $xz\in E_1$.
These graph classes were introduced in order to describe certain intersection graphs.

\begin{example}\label{ex:Pmixed}
    If $\mathcal{P}$ is a local class of graphs,  then the class of $\mathcal{P}$-mixed
    graphs belongs to $\ex(S)$. Indeed, let $\calF$ be the set of minimal obstruction of
    $\mathcal P$, and let $M$ consists of $\overrightarrow{P}_3$
    and three digraphs with vertex set $\{x,y,z\}$ and edge sets $\{(x,y),(y,z),(z,y)\}$,
    $\{(x,y),(y,z),(z,y),(x,z)\}$, and $\{(x,y),(y,z),(z,y),(z,x)\}$ respectively.
    It is routine work to verify that a graph is a $\mathcal P$-mixed graph if and only 
    if there is an $(\mathcal F\cup M)$-free digraph $D$ such that $S(D) = G$.  Thus, the class
    of $\mathcal{P}$-mixed graphs is locally expressible by $\hyperlink{DI}{S}$.
\end{example}

\noindent\textbf{Orientations of edges and non-edges.} 
Denote by $\T_2$ the class of tournaments with a $\{red,blue\}$-arc
colouring, and let $\hyperlink{T2}{SB\colon \T_2\to \mathcal{G}}$ be the functor that 
deletes the red arcs, and takes the symmetric closure of the blue arcs. 
In other words, a tournament $T$ in $\T_2$ can be interpreted
as an orientation of the edges and non-edges of $\hyperlink{T2}{SB(T)}$.
It is not hard to notice that $\hyperlink{OR}{S_{|\OR}}$ factors
$\hyperlink{T2}{SB}$, e.g., the functor $F_b\colon \T_2\to \OR$
that deletes the red arcs, is a surjective functor such that the following
diagram commutes.
\begin{center}
\begin{tikzcd}[column sep=1.8cm, row sep=1.1cm]
  \T_2  \arrow[d, twoheadrightarrow, "F_b", dashed]
  \arrow[dr, "\hyperlink{T2}{SB}"] &  \\
   \OR  \arrow[r, "\hyperlink{OR}{S_{|\OR}}"] & \mathcal{G}
\end{tikzcd}
\end{center}
By Proposition~\ref{prop:factor-restriction}, and by the previous arguments
we conclude that the expressive power of $\hyperlink{T2}{SB}$ extends the
expressive power of forbidden orientations, i.e., of $\hyperlink{OR}{S_{\OR}}$.
It is also straightforward to observe that $\hyperlink{T2}{SB}$ extends
the expressive power of forbidden linear orderings. Indeed, consider the local
inclusion $i\colon \LOR \to \T_2$ defined as follows. 
Given a linearly ordered graph $(G,\le)$,  the $2$-arc-coloured tournament 
$i(G,\le)$ has the same vertex set as $G$, it has as a blue arc set, all edges
of $G$ oriented forward (according to $\le$), and its red arc set, contains
all pairs of non-adjacent vertices also oriented forward  (according to $\le$).
It is not hard to notice that this  construction defines an injective concrete
functor $i\colon\mathcal{LOR\to T}_2$. Moreover,  the image $i[\LOR]$ equals
the class of $2$-arc-coloured transitive tournaments\footnote{The
interpretation of linearly ordered graphs as $2$-arc-coloured transitive
tournaments was suggested by Reza Naserasr.}. So, $\hyperlink{LOR}{sh_L}$
is a local restriction of $\hyperlink{T2}{SB}$. This means that
$i$ is a local inclusion that makes the following diagram commute.
\begin{center}
\begin{tikzcd}[column sep=1.8cm, row sep=1.1cm]
 \T_2   \arrow[dr, "\hyperlink{T2}{SB}"] &  \\
  \LOR \arrow[u, hookrightarrow, "i", dashed] 
  \arrow[r, "\hyperlink{LOR}{sh_L}"] & \mathcal{G}
\end{tikzcd}
\end{center}
By Proposition~\ref{prop:factor-restriction}, and by previous arguments
we conclude that $\ex(\hyperlink{LOR}{sh_L})\cup
\ex(\hyperlink{OR}{S_{|\OR}})
\subseteq
\ex(\hyperlink{T2}{SB})$. 
In other words, every class expressible by forbidden orientations or
by forbidden linear orderings is expressible by forbidden $2$-arc-coloured
tournaments. The family of graph classes expressible by forbidden
orderings on three vertices is described in~\cite{feuilloleyJDM}, and
the family of graph classes expressible by forbidden oriented graphs on
three vertices is described in~\cite{guzmanAR}. Thus, it is only
natural to consider the following problem.

\begin{problem}\label{prob:expressions2arc}
List the graph classes expressible by forbidden $2$-arc-coloured
tournaments on three vertices.
\end{problem}

Towards answering Problem~\ref{prob:expressions2arc}, it is convenient
to regard $\hyperlink{T2}{SB\colon\T_2\to\mathcal{G}}$  as
the pullback of a pair of functors. Let $\overline S\colon\mathcal{DI\to G}$ be the
functor that maps a digraph to the complement of its symmetric
closure, i.e., for each digraph $G'$, the equality
$\overline S(G') = \overline{S(G')}$ holds.
Finally, let $F_b\colon\T_2\to \OR$ be the functor
that deletes the red arcs (and preserves the blue arcs), and
$F_r\colon\T_2\to \OR$ be the functor that deletes
the blue arcs (and preserves the red arcs). It is not hard to notice
that  $\hyperlink{T2}{SB}$ is the pullback of $\hyperlink{OR}{S_{|\OR}}$
and of $\overline S_{|\OR}$. 

\begin{center}
\begin{tikzcd}[column sep=1.8cm, row sep=1.1cm]
 \T_2 \arrow[d,"F_r"] \arrow[r, "F_b"] 
\arrow[dr, dashed, "\hyperlink{T2}{SB}"] 
& \OR \arrow[d,"\hyperlink{OR}{S_{|\OR}}"] \\
 \OR \arrow[r, "\overline S_{|\OR}"] & \mathcal{G} 
\end{tikzcd}
\end{center}

Therefore, by Lemma~\ref{lem:pullback}, if a pair of graph classes
$\calP$ and $\calQ$ are expressible by forbidden orientations, 
then the intersection $\calP\cap {\co}\calQ$ is locally expressible
by means of forbidden $2$-arc-coloured tournaments, i.e., 
$\calP\cap {\co}\calQ \in \ex(\hyperlink{T2}{SB})$. Moreover, 
by examining the proof of Lemma~\ref{lem:pullback} one can 
see that if $\calP$ and $\calQ$ are expressible by forbidden orientations
on three vertices,  then the intersection $\calP\cap {\co}\calQ$ is locally
expressible by means of forbidden $2$-arc-coloured tournaments on three vertices. 

\begin{example}\label{ex:2-arc-col}
    A connected graph $G$ is a proper circular-arc graph if and only if admits an
    orientation $G'$ where the in- and out-neighbourhood of every vertex
    is a tournament~\cite{skrienJGT6}. Also, it follows from the
    Roy-Gallai-Hasse-Vitaver Theorem~\cite{gallaiAMA18,hasseIMN28,royIRO1,vitaverDAN147}
    that a graph $G$ is bipartite if and only if it admits an orientation $G'$ with no
    directed walk on two vertices. It is not hard
    to translate both of these characterizations into expressions by forbidden oriented
    graphs on $3$ vertices (see, e.g., Proposition 15 in~\cite{guzmanAR}). So, using
    these expressions and the proof of Lemma~\ref{lem:pullback}, we see that a graph is a
    proper circular-arc co-bipartite graph if and only if its edges and non-edges can be
    oriented in such a way that avoids the $2$-edge-coloured tournaments depicted in 
    Figure~\ref{fig:2-arc-col}.
\end{example}

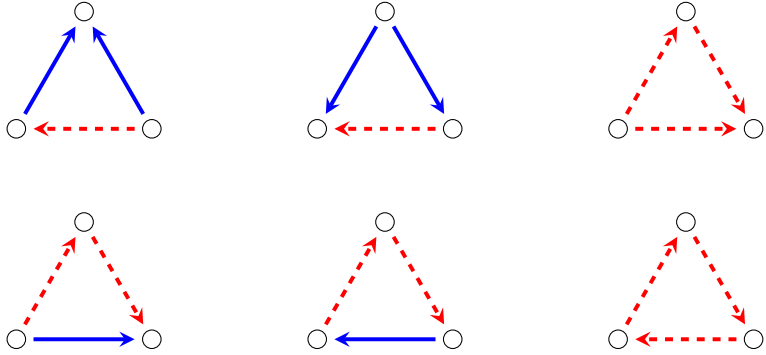
\begin{figure}[ht!]
\begin{center}
\begin{tikzpicture}[scale = 0.8]

\begin{scope}
    \foreach \i in {0,1,2}
      \draw ({(360/3)*\i + 90}:1.3) node(\i)[vertex]{};
      
    \draw [blueA] (1) to (0);
    \draw [blueA]  (2) to (0);
    \draw [redA] (2) to (1);
  \end{scope}
  
  \begin{scope}[xshift=5cm]
    \foreach \i in {0,1,2}
      \draw ({(360/3)*\i + 90}:1.3) node(\i)[vertex]{};

    \draw [blueA] (0) to (1);
    \draw [blueA]  (0) to (2);
    \draw [redA] (2) to (1);
  \end{scope}

    \begin{scope}[yshift=-3.5cm]
    \foreach \i in {0,1,2}
      \draw ({(360/3)*\i + 90}:1.3) node(\i)[vertex]{};

    \draw [redA] (1) to (0);
    \draw [redA] (0) to (2);
    \draw [blueA] (1) to (2);
  \end{scope}

   \begin{scope}[xshift=5cm,yshift=-3.5cm]
    \foreach \i in {0,1,2}
      \draw ({(360/3)*\i + 90}:1.3) node(\i)[vertex]{};
    
    \draw [redA] (1) to (0);
    \draw [blueA]  (2) to (1);
    \draw [redA] (0) to (2);
  \end{scope}

  \begin{scope}[xshift=10cm,yshift=-3.5cm]
    \foreach \i in {0,1,2}
      \draw ({(360/3)*\i + 90}:1.3) node(\i)[vertex]{};
    
    \draw [redA] (1) to (0);
    \draw [redA] (0) to (2);
    \draw [redA] (2) to (1);
  \end{scope}

  \begin{scope}[xshift=10cm]
    \foreach \i in {0,1,2}
      \draw ({(360/3)*\i + 90}:1.3) node(\i)[vertex]{};

    \draw [redA] (1) to (0);
    \draw [redA] (0) to (2);
    \draw [redA] (1) to (2);
  \end{scope}

\end{tikzpicture}
\caption{A set $\calF$ of $2$-arc-coloured tournaments that such that
a graph $G$ admits an orientation of edges and non-edges 
that avoids $\calF$ if and only if it is a proper circular-arc co-bipartite graph
--- orientations of edges are represented by blue arcs, and orientations of non-edges
are represented by red dashed arcs.}
\label{fig:2-arc-col}
\end{center}
\end{figure}

Recall that all graph classes characterized by a set of forbidden
linear ordered graphs on three vertices is recognizable in polynomial time
\cite{hellESA2014}. Since $2$-arc-coloured tournaments extend linear
orderings of graphs (as explained above), one can naturally consider the
following question. 

\begin{problem}\label{prob:liftingarc}
Determine the complexity of recognizing graph classes expressible by
forbidden $2$-arc-coloured tournaments on three vertices.
\end{problem}

\noindent\textbf{Partially and suitably ordered graphs.}
Above, we considered a simple but non-obvious way of extending local expressions by
forbidden linear orderings. The following examples arise as obvious extensions of
$\hyperlink{LOR}{sh_L\colon\mathcal{LOR\to G}}$.
A \textit{partially ordered graph}  is a graph $G$ together with a partial ordering
$\le$ of $V(G)$.  For instance, $(K_n,\varnothing)$ is the partially ordered graph
on $n$ pairwise adjacent vertices such that all vertices are incomparable in
the partial ordering. We denote by $\PO$ the class of partially
ordered graphs. A partially ordered graph  $(G,\le)$ is \textit{suitably ordered}
if for every pair $x$ and $y$ of adjacent vertices then  $x\le y$ or $y\le x$.
In other words, a suitably ordered graph is an $\{E,\le\}$-structure $(V,E,\le)$,
where $(V,E)$ is a spanning subgraph of the comparability
graph of $(V,\le)$. We denote by $\SO$ the class of 
suitable ordered graphs.
Note that if $(G,\le)$ is a linearly ordered graph, then the comparability
graph $C$ of $(G,\le)$ is a complete graph, so $G$ is a
spanning subgraph of $C$, i.e., $\LOR\subseteq \SO$. We denote by
$\hyperlink{PO}{sh_P\colon\mathcal{PO\to G}}$ and $\hyperlink{SO}{sh_S\colon\mathcal{SO\to G}}$
the functors that forget the ordering of partially and suitably ordered graphs.

\begin{example}
It is not hard to notice that $\LOR$ is a local subclass of $\SO$
since $\LOR$ is the class of suitably ordered graphs with no
pair of incomparable non-adjacent vertices. Similarly, $\SO$ is the
class of partially ordered graphs with no pair of incomparable adjacent vertices.
Thus, $\hyperlink{LOR}{sh_L}$ is a local restriction of $\hyperlink{SO}{sh_S}$ and
the latter is a local restriction of $\hyperlink{PO}{sh_P}$. Therefore, by
the second part of Proposition~\ref{prop:factor-restriction} we conclude
that  $\ex(\hyperlink{LOR}{sh_L}) \subseteq \ex(\hyperlink{SO}{sh_S})\subseteq
\ex(\hyperlink{PO}{sh_P})$.
\end{example}

Again, we are interested in knowing if either of the extensions of
$\hyperlink{LOR}{sh_L}$ to
$\hyperlink{SO}{sh_S}$ or $sh_S$ to $\hyperlink{PO}{sh_P}$ increase the
expressive power. 

\begin{question}\label{qst:AO-L-S-P}
Which of the inclusions $\ex(\hyperlink{AO}{S_{|\AO}})
\subseteq \ex(\hyperlink{LOR}{sh_L}) \subseteq
\ex(\hyperlink{SO}{sh_S})\subseteq \ex(\hyperlink{PO}{sh_P})$ are proper inclusions?
\end{question}

Allow us to exhibit some graph properties naturally expressible by
$\hyperlink{SO}{sh_S}$ (and thus by $\hyperlink{PO}{sh_P}$).

\begin{example}\label{ex:comparability}
    Denote by $(2K_1,\le)$ the unique linear ordering of $2K_1$. Clearly, a graph 
    $G$ admits a $(2K_1,\le)$-free linear ordering if and only if $G$ is a complete graph.
    Equivalently, if $G$ is the comparability graph of a linear ordered set. Similarly, 
    there is a $(2K_1,\le)$-free suitable ordering of $G$ if and only if $G$ is a
    comparability graph. 
\end{example}

Before considering the following example, recall that the \textit{height} of a poset $P$ is the
maximum length  of a chain in $P$.

\begin{example}\label{ex:comparabilityheightk}
    Notice that for every complete graph $K_n$ there is a unique element in
    $\hyperlink{SO}{sh_S^{-1}(K_n)}$; which we denote by $(K_n,\le)$. It is
    straightforward to observe that a graph if $\mathcal{Q}$ is
    the class of $\{(2K_1,\le), (K_{k+1},\le)\}$-free suitably ordered graphs,
    then $\hyperlink{SO}{sh_S[\mathcal{Q}]}$ is the class of comparability
    graphs of posets of height at most $k$. Thus, for every positive integer $k$
    the class of comparability graphs of height at most $k$ is locally expressible by
    $\hyperlink{SO}{sh_S\colon\mathcal{SO\to G}}$. 
\end{example}

\noindent\textbf{Topology and partially ordered graphs.}
It is well-known that finite topological spaces can be encoded as 
pre-ordered sets --- the minimal open neighborhood of a point $x$ is encoded by 
the upward-set of $x$ in the pre-order. This coding works for general
\textit{Alexandroff spaces}, i.e., a topological space where every point has a
minimal open neighborhood.  Moreover, the category of $T_0$-Alexandroff spaces is isomorphic to
the category of partially ordered sets~\cite{goubault2013}.  Hence, we can think of the class $\PO$
as the class of graphs together with a $T_0$-Alexandroff topology on its vertex set.
In this context, a suitably ordered graph $(G,\le)$ is a graph together with an
Alexandroff topology $\tau$ such that if a subspace $(D,\tau_D)$ is a discrete
topological space, then $D$ is an independent set of vertices.  We would be very
interested to see if this topological interpretation of partially and suitably
ordered graphs could yield some insight on the expressive powers of $\hyperlink{PO}{sh_P}$
and $\hyperlink{SO}{sh_S}$.
Also, there might be certain classes $\mathcal{Q}$ of graphs with a topology
on its vertex set  such that the family
$\hyperlink{PO}{sh_P[\mathcal{Q}]}$ has interesting structural properties.
To illustrate this idea we propose the following straightforward characterizations.

\begin{example}\label{ex:POcomp}
    A graph $G$ is a comparability graph if and only if there is a $T_0$-topology
    $\tau$ on $V(G)$ such that, for every $D\subseteq V(G)$, the subspace
	$(D,\tau_D)$ is a discrete topological space if and only if $D$ is an
	independent set of vertices.
\end{example}

\begin{example}\label{ex:POkcol}
For every positive integer $k$ and every graph $G$ with vertex set $V$,
the following statements are equivalent.
\begin{itemize}
	\item $G$ is $k$-colourable.
	\item There is a topology $\tau$ on $V$ such that each of its connected subspaces
 	is an independent set of vertices, and $(V,\tau)$ has at most $k$
	connected components.
	\item There is a $T_0$-topology $\tau$ on $V$ such that every discrete subspace
	is an independent set of vertices and any subspace $(D,\tau_D)$
	with at least $k+1$ points, contains a non-trivial discrete subspace.
\end{itemize}
\end{example}

\noindent\textbf{Genealogical graphs.}  A \textit{genealogical graph}
is a suitably ordered graph $(G,\le)$ such that the down-set of every vertex
is a chain; that is, for every $x\in V(G)$ if $y\le x$ and $z\le x$, then $y$ and $z$ are
comparable. Genealogical graphs can be coded by a function $p\colon V\to V$
defined by $p(x) = x$ for every $\le$-minimal element, and otherwise
$p(x) = \max\{y\in V(G)\colon y\le x, y\neq x\}$.
We interpret  $p$ as a parent function,  and clearly,  $\le$ is
recovered by the ancestor-descendent relation with respect to $p$.

\begin{example}
    Consider a graph $G$ is a graph, and let $p\colon V(G)\to V(G)$ be a parent
    function obtained from a  DFS-search in $G$. In this case,  $G$  together with the
    ancestor-descendent order $\le_p$ defined by $p$ is a genealogical
    graph $(G,\le_p)$.
\end{example}

It turns out that genealogical graphs are closely related to the independently
introduced notion of \textit{tree-layouts} of graphs from Paul and Protopapas~\cite{paulPREPRINT}.

\begin{example}\label{ex:tree-layout}
    A \emph{tree-layout}~\cite{paulPREPRINT} of a graph $G$ is an ordered triple $(T,r,\rho)$
    where $T$ is a tree with root $r$ and $\rho\colon V(G)\to V(T)$ is a bijection such
    that for each $xy\in E(G)$ the vertex $\rho(x)$ is an ancestor or a descendent of $\rho(y)$. 
    Clearly, $G$ together with the ancestor-descendent  relation  $\le_T$ defined by the
    tree-layout $T_G$ is a genealogical graph. Conversely, if $(G,\le)$ is a genealogical graph
    with a unique $\le$-minimal element $r$, then $(G,\le)$ can be coded as a tree-layout using the parent
    function defined above, where $r$ will be the root of the tree-layout.
\end{example}

Denote by $\mathcal{GEN}$ the class of genealogical
graphs and by $\hyperlink{GEN}{sh_G\colon \mathcal{GEN\to G}}$ the restriction of
$\hyperlink{SO}{sh_S}$ to $\mathcal{GEN}$.
It should not be hard to notice that $\mathcal{GEN}$ is a local set in
$\SO$ and $\LOR$ is a local set relative to $\mathcal{GEN}$.
So, with similar arguments as before we conclude that 
$\ex(\hyperlink{LOR}{sh_L}) \subseteq \ex(\hyperlink{GEN}{sh_G})
\subseteq \ex(\hyperlink{SO}{sh_S})$.

An embedding of tree-layout $T_G$ of a graph $G$ into a tree-layout $T_H$ of a graph $H$
is a graph embedding that preserves the ancestor-descendent relation with respect to the tree-layouts
$T_G$ and $T_H$. We say that a tree-layout $T_G$ of a graph $G$ is \textit{non-separable} if 
for each $x\in V(G)$ there is an edge $yz\in E(G)$ such that $\rho(y)$ is an ancestor of
$x$ and either $x = z$ or $\rho(z)$ is a descendent of $\rho(x)$. It is straightforward to
observe that if $T_G$ and $T_H$ are tree-layouts of a pair of graphs $G,H$,  a function
$f\colon V(G)\to V(H)$ is an embedding with respect to the tree-layouts
$T_G$ and $T_H$ if and only $f\colon (G,\le_T)\to (H,\le_T)$ is an embedding of genealogical
graphs as relational structures (see Example~\ref{ex:tree-layout} for the construction of $(G,\le_T)$.
Moreover, if $T_G$ is a non-separable tree-layout, then a graph $G'$ admits a tree-layout
that avoids $T_G$ if and only if it admits a partial ordering $\le'$ such that $(G,\le')$
is an $(G,\le_T)$-free genealogical graph. 

\begin{lemma}\label{lem:tree-layout}
Let $\calC$ be class of graphs. If $\calC$ admits a description by means of a finite forbidden
set of non-separable tree-layouts $\calF$, then $\calC$ is expressible by genealogical graphs,
i.e., $\calC\in \ex(\hyperlink{GEN}{sh_G})$. 
\end{lemma}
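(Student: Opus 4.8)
The plan is to realise $\calC$ as the $sh_G$-image of a single local class relative to $\mathcal{GEN}$. Write $\calF=\{T_{G_1},\dots,T_{G_m}\}$, and for each non-separable tree-layout $T_{G_i}$ let $(G_i,\le_{T_i})$ be the genealogical graph it induces (Example~\ref{ex:tree-layout}); put $\calF^{\ast}=\{(G_1,\le_{T_1}),\dots,(G_m,\le_{T_m})\}\subseteq\mathcal{GEN}$ and let $\calC'$ be the class of $\calF^{\ast}$-free genealogical graphs. Since $\mathcal{GEN}$ is hereditary and $\calF^{\ast}$ is finite, the minimal bounds of $\calC'$ in $\mathcal{GEN}$ are exactly the embedding-minimal members of $\calF^{\ast}$, so $\calC'$ is a local class relative to $\mathcal{GEN}$ (Observation~\ref{obs:localfinite}). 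It therefore suffices to prove $sh_G[\calC']=\calC$.

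One inclusion is immediate from the embedding correspondence between tree-layouts and genealogical graphs. If $G\in\calC$, fix a tree-layout of $G$ avoiding every member of $\calF$; regarded as a relational structure it is a genealogical ordering $(G,\le)$ with a unique $\le$-minimal element into which no $(G_i,\le_{T_i})$ embeds, so $(G,\le)\in\calC'$ and $G=sh_G(G,\le)\in sh_G[\calC']$. For the reverse inclusion I would start from an arbitrary $(G,\le')\in\calC'$ and manufacture a tree-layout of $G$ avoiding all of $\calF$. By Example~\ref{ex:tree-layout} a genealogical graph codes a tree-layout precisely when it has a unique minimal element, so the problem is to turn the possibly multi-rooted forest-layout $(G,\le')$ into a single-rooted genealogical ordering while keeping it $\calF^{\ast}$-free.

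This is the step I expect to be the main obstacle, and it is exactly where the hypothesis that every $T_{G_i}$ is \emph{non-separable} is used. For a single forbidden layout the equivalence recorded just before this lemma already guarantees that an $(G_i,\le_{T_i})$-free genealogical ordering can be replaced by a single-rooted one; the genuinely new difficulty is \emph{simultaneity}, since I must re-root once, for the whole family $\calF$ at once. A naive fix such as adding a global minimum (or chaining the roots into a path) does not work: introducing comparabilities below a non-minimal vertex can create a copy of some $(G_i,\le_{T_i})$ that was absent in $(G,\le')$, and different members of $\calF$ may then demand incompatible re-rootings.

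The plan is therefore to isolate from the single-layout argument a forest-to-tree conversion $\Phi$ that depends only on the ordering $(G,\le')$ and not on the forbidden pattern, and to check that $\Phi$ preserves $(G_i,\le_{T_i})$-freeness for \emph{every} non-separable tree-layout, so that one application of $\Phi$ settles all of $\calF$ at once. The mechanism I expect to drive this is that $\Phi$ never creates edges and leaves the order within each tree of the forest unchanged; in any hypothetical embedding $g\colon(G_i,\le_{T_i})\hookrightarrow(G,\Phi(\le'))$ that uses an added comparability, non-separability supplies for each vertex $x$ of $G_i$ an edge $yz$ with $y\le_{T_i} x\le_{T_i} z$, and applying $g$ to these spanning edges, starting from the unique minimal vertex of $(G_i,\le_{T_i})$, pins the whole image into a single tree of $(G,\le')$; on that tree $\Phi$ does not alter the order, so $g$ is already an embedding into $(G,\le')$, contradicting $\calF^{\ast}$-freeness of $\le'$. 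Granting that such a pattern-independent $\Phi$ exists and behaves as described, applying it once to an $\calF^{\ast}$-free $(G,\le')$ yields a single-rooted $\calF^{\ast}$-free ordering, i.e.\ a tree-layout of $G$ avoiding all of $\calF$, whence $G\in\calC$. Combined with the first inclusion this gives $sh_G[\calC']=\calC$, and therefore $\calC\in\ex(sh_G)$.
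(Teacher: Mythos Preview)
Your approach is the paper's: encode $\calF$ as genealogical graphs $\calF^\ast$, take $\calC'$ to be the $\calF^\ast$-free members of $\mathcal{GEN}$, and verify $sh_G[\calC']=\calC$; the paper's proof is in fact terser than yours, merely pointing back to the single-pattern equivalence asserted in the preceding paragraph and leaving the extension to finite $\calF$ implicit.

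You go further than the paper by isolating the forest-to-tree conversion step, but you then undercut yourself. The ``chain the roots'' construction you dismiss as a naive fix \emph{is} precisely the pattern-independent $\Phi$ you later postulate: it adds no edges and leaves the order within each original tree unchanged, which are exactly the properties your subsequent argument uses. Concretely, linearly order the $\le'$-minimal elements as $r_1<_\Phi\dots<_\Phi r_k$ and take the transitive closure of $\le'\cup\{(r_i,r_j):i<j\}$; the result is a genealogical order on $V(G)$ with unique minimum $r_1$. Your inductive argument then goes through verbatim: the image $g(m_i)$ of the root of $(G_i,\le_{T_i})$ lies in some tree $T_j$ of $(G,\le')$, and for each non-root $x$ the edge $yz$ supplied by non-separability forces $g(y)$ and $g(z)$ to be $\le'$-comparable (they are adjacent in the suitably ordered $(G,\le')$), hence in the same tree, and $g(x)$ lies $\Phi$-between them, hence also in that tree; induction on height puts all of $g[V(G_i)]$ in $T_j$, where $\Phi$ and $\le'$ agree. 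So there is no gap---just drop the hedge ``granting that such a $\Phi$ exists'' and commit to the construction.
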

\begin{proof}
    This follows by coding the finite set of tree-layout $\calF$ as genealogical graphs
    according to Example~\ref{ex:tree-layout}, and then following the arguments in the paragraph above.
\end{proof}

\begin{example}
    Consider the finite set $\calF$ of consisting of the two linearly ordered graphs with vertex
    $\{1,2,3\}$,  ordering $1\le 2\le 3$, and such that $13\in E$ but $12\not\in E$. 
    It is well-known that a graph $G$ admits an $\calF$-free linear ordering  if and only if
    $G$ is an interval graph~\cite{feuilloleyJDM}. Now, if we allow partial orderings that are
    not necessarily total, but such that $(G,\le)$ is a genealogical graph, then a graph
    $G$ admits such an $\calF$-free partial ordering if and only if $G$ is  a chordal graph. 
    This follows from the  tree-layout characterization of chordal graphs~\cite{paulPREPRINT}, 
    via Lemma~\ref{lem:tree-layout} using the fact that $\calF$ consists of non-separable
    tree-layouts.
\end{example}

Similar as in the previous example, Paul and Protopapas show that connected
trivially-perfect graphs correspond to tree-layout that avoid the unique linear ordering
of $2K_1$. If we now consider genealogical graphs, we recover the whole class of
trivially-perfect graphs. Recall that $(2K_1,\le)$ denoted the unique linear ordering
of $2K_1$. 

\begin{example}
    In Example~\ref{ex:comparability} we noticed that $(2K_1,\le)$ characterizes
    complete graphs by means of forbidden linear orderings, and comparability 
    graph by means of suitably ordered graphs. Now, notice that a graph $G$ admits a partial 
    ordering $\le$ such that $(G,\le)$ is a $(2K_1,\le)$-free genealogical graph if and only
    if $G$ is the comparability graph of a poset where the down-set of ever vertex is a chain. 
    The latter are called trivially-perfect graphs, so $(2K_1,\le)$ characterizes
    trivially-perfect graphs by means of forbidden genealogical graphs.
\end{example}

\noindent\textbf{Linear orderings and orientations.}
To conclude this section, we consider two examples arising as
pullbacks of previously considered functors. First, we consider
the pullback $u_L\colon \OR \times_\calG \LOR \to \calG$. 
It is straightforward to observe that structures in $\OR \times_\calG \LOR$,
can be encoded as linearly ordered oriented graphs, and $u_L$ 
maps a linearly ordered oriented graph to its underlying graph. 
We write $\LOOR$ to denote the class of linearly ordered oriented graphs,
and in terms of commuting diagrams, $F$ forgets the linear ordering 
of a linearly ordered graph, and $G$ preservers the ordering 
and takes the symmetric closure of the oriented arcs, then $u_L$
is the functor that makes the following diagram commute.

\begin{center}
\begin{tikzcd}[column sep=1.8cm, row sep=1.1cm]
 \LOOR \arrow[d,"F"] \arrow[r, "G"] 
\arrow[dr, dashed, "\hyperlink{LOOR}{u_L}"] 
& \LOR \arrow[d,"\hyperlink{LOR}{sh_L}"] \\
 \OR \arrow[r, "S_{|\OR}"] & \calG
\end{tikzcd}
\end{center}

There is no set of linearly ordered graphs nor a set of oriented graphs on
three vertices that characterizes $3$-colourable graphs by forbidden linear
orderings nor by forbidden orientations, respectively. The following result
shows that there is such a set  when we consider the expressive
power of the pullback $\hyperlink{LOOR}{u_L\colon\mathcal{LOOR\to G}}$ --- we
denote this set by $\mathcal{F}_3$, and we depict the linearly ordered oriented
graphs  that belong to it in Figure~\ref{fig:LOOR}.

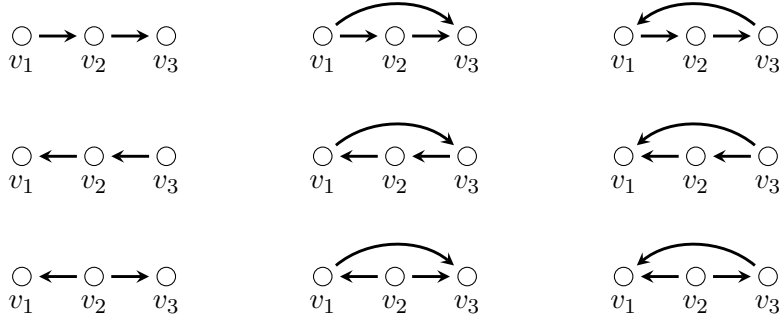
\begin{figure}[ht!]
\begin{center}

\begin{tikzpicture}[scale = 0.8]

\begin{scope}[yshift = 4cm, scale=0.6]
\node [vertex, label=below:{$v_1$}] (1) at (-2,0){};
\node [vertex, label=below:{$v_2$}] (2) at (0,0){};
\node [vertex, label=below:{$v_3$}] (3) at (2,0){};

\draw[arc]    (1)  edge  (2);
\draw[arc]    (2) edge (3);
\end{scope}

\begin{scope}[xshift=5cm, yshift = 4cm, scale=0.6]
\node [vertex, label=below:{$v_1$}] (1) at (-2,0){};
\node [vertex, label=below:{$v_2$}] (2) at (0,0){};
\node [vertex, label=below:{$v_3$}] (3) at (2,0){};

\draw[arc]    (1)  edge [bend right=-40] (3);
\draw[arc]    (1)  edge  (2);
\draw[arc]    (2) edge (3);
\end{scope}

\begin{scope}[xshift=10cm, yshift = 4cm, scale=0.6]
\node [vertex, label=below:{$v_1$}] (1) at (-2,0){};
\node [vertex, label=below:{$v_2$}] (2) at (0,0){};
\node [vertex, label=below:{$v_3$}] (3) at (2,0){};

\draw[arc]    (3)  edge [bend left=-40] (1);
\draw[arc]    (1)  edge  (2);
\draw[arc]    (2) edge (3);
\end{scope}

\begin{scope}[ yshift = 2cm, scale=0.6]
\node [vertex, label=below:{$v_1$}] (1) at (-2,0){};
\node [vertex, label=below:{$v_2$}] (2) at (0,0){};
\node [vertex, label=below:{$v_3$}] (3) at (2,0){};

\draw[arc]    (3)  edge  (2);
\draw[arc]    (2) edge (1);
\end{scope}

\begin{scope}[xshift=5cm, yshift = 2cm, scale=0.6]
\node [vertex, label=below:{$v_1$}] (1) at (-2,0){};
\node [vertex, label=below:{$v_2$}] (2) at (0,0){};
\node [vertex, label=below:{$v_3$}] (3) at (2,0){};

\draw[arc]    (1)  edge [bend right=-40] (3);
\draw[arc]    (3)  edge  (2);
\draw[arc]    (2) edge (1);
\end{scope}

\begin{scope}[xshift=10cm, yshift = 2cm, scale=0.6]
\node [vertex, label=below:{$v_1$}] (1) at (-2,0){};
\node [vertex, label=below:{$v_2$}] (2) at (0,0){};
\node [vertex, label=below:{$v_3$}] (3) at (2,0){};

\draw[arc]    (3)  edge [bend left=-40] (1);
\draw[arc]    (3)  edge  (2);
\draw[arc]    (2) edge (1);
\end{scope}

\begin{scope}[scale=0.6]
\node [vertex, label=below:{$v_1$}] (1) at (-2,0){};
\node [vertex, label=below:{$v_2$}] (2) at (0,0){};
\node [vertex, label=below:{$v_3$}] (3) at (2,0){};

\draw[arc]    (2)  edge  (1);
\draw[arc]    (2) edge (3);
\end{scope}

\begin{scope}[xshift=5cm, scale=0.6]
\node [vertex, label=below:{$v_1$}] (1) at (-2,0){};
\node [vertex, label=below:{$v_2$}] (2) at (0,0){};
\node [vertex, label=below:{$v_3$}] (3) at (2,0){};

\draw[arc]    (1)  edge [bend right=-40] (3);
\draw[arc]    (2)  edge  (1);
\draw[arc]    (2) edge (3);
\end{scope}

\begin{scope}[xshift=10cm, scale=0.6]
\node [vertex, label=below:{$v_1$}] (1) at (-2,0){};
\node [vertex, label=below:{$v_2$}] (2) at (0,0){};
\node [vertex, label=below:{$v_3$}] (3) at (2,0){};

\draw[arc]    (3)  edge [bend left=-40] (1);
\draw[arc]    (2)  edge  (1);
\draw[arc]    (2) edge (3);
\end{scope}

\end{tikzpicture}

\caption{A set of structures that characterizes $3$-colourable graphs by means of 
forbidden linearly ordered oriented graphs.
In each case the linear ordering is $v_1\le v_2\le v_3$.}
\label{fig:LOOR}
\end{center}
\end{figure}

\begin{example}\label{ex:3colLOOR}
Let $\calF$ be the set of linearly ordered oriented graphs
depicted in Figure~\ref{fig:LOOR}, and let $G$ be a $3$-colourable graph
with colour classes $X_1,X_2,X_3$.
Consider any linear ordering $\le$ of $V(G)$ such that $x\le y$ for every
$x\in X_i$ and $y\in X_j$ for $1\le i < j \le 3$, and let $G'$ be the orientation $G'$
of $G$ where all vertices in $X_2$ have out-degree zero, and all vertices in
$X_1$ have in-degree zero. It is not hard to notice that $(G',\le)$ is an
$\calF$-free ordered oriented graph and so, every $3$-colourable graph
can be expanded to a $\calF$-free linearly ordered oriented graph.
Conversely, suppose that $(G',\le)$ is  an $\calF$-free linearly ordered oriented
graph such that $\hyperlink{LOOR}{u_L(G',\le)} = G$. Consider the partition
$(X_1,X_2,X_3)$ defined as follows.
It is straightforward to observe that $G$ is $3$-colourable by considering
the sets $X_1 = \{x\in V(G)\colon$ there is vertex $y\in N^+(x), x\le y\}$,
$X_2 = \{x\in V(G)\colon$ there is vertex $y\in N^+(x), y\le x\}$, 
and  $X_3 = V(G)\setminus (X_1\cup X_2)$. 
Therefore, a graph $G$ is $3$-colourable if and only if it can be expanded to
a $\calF$-free linearly ordered oriented graph.
\end{example}

Oriented graphs and $2$-edge-coloured graphs 
 are very similar structures
in the sense that for every edge there are two possible orientations
and two possible colours to choose from. Nonetheless, there is no isomorphism
between the categories of oriented graphs and $2$-edge-coloured graphs
that preserves the underlying graph structure. To see this, note that
there are two non-isomorphic orientations of the triangle, while there are
four non-isomorphic $2$-edge-coloured triangles. On the contrary, 
the categories $\mathcal{LOR\times_G OR}$ and $\mathcal{LOR\times_G EG}_2$
(where $\hyperlink{EG2}{\mathcal{EG}_2}$ is the class of $2$-edge-coloured graphs)
are isomorphic. To see this,  simply interpret blue symmetric edges as forward edges
(with respect to the linear ordering), and red edges as backward edges, and notice
that is a one-to-one coding between linearly ordered $2$-edge-coloured (symmetric) graphs
and linearly ordered oriented graphs.
In particular, this shows that the expressive powers of the pullbacks
$P_1\colon\mathcal{LOR\times_G OR\to G}$ and 
$P_2\colon\mathcal{LOR\times_G EG}_2\to\mathcal{G}$ are the same.

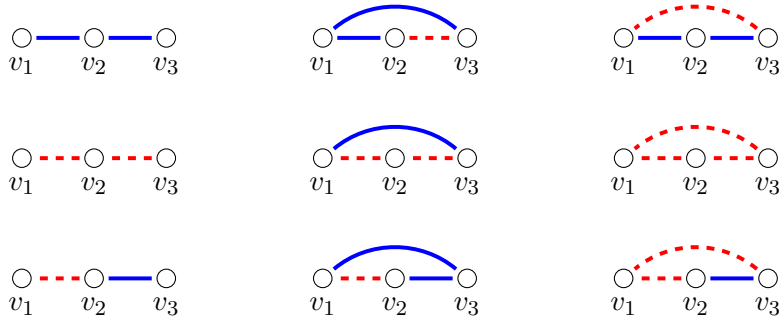
\begin{figure}[ht!]
\begin{center}

\begin{tikzpicture}[scale = 0.8]

\begin{scope}[yshift = 4cm, scale=0.6]
\node [vertex, label=below:{$v_1$}] (1) at (-2,0){};
\node [vertex, label=below:{$v_2$}] (2) at (0,0){};
\node [vertex, label=below:{$v_3$}] (3) at (2,0){};

\draw[blueE]    (1)  edge  (2);
\draw[blueE]    (2) edge (3);
\end{scope}

\begin{scope}[xshift=5cm, yshift = 4cm, scale=0.6]
\node [vertex, label=below:{$v_1$}] (1) at (-2,0){};
\node [vertex, label=below:{$v_2$}] (2) at (0,0){};
\node [vertex, label=below:{$v_3$}] (3) at (2,0){};

\draw[blueE]    (1)  edge [bend right=-40] (3);
\draw[blueE]    (1)  edge  (2);
\draw[redE]    (2) edge (3);
\end{scope}

\begin{scope}[xshift=10cm, yshift = 4cm, scale=0.6]
\node [vertex, label=below:{$v_1$}] (1) at (-2,0){};
\node [vertex, label=below:{$v_2$}] (2) at (0,0){};
\node [vertex, label=below:{$v_3$}] (3) at (2,0){};

\draw[redE]    (3)  edge [bend left=-40] (1);
\draw[blueE]    (1)  edge  (2);
\draw[blueE]    (2) edge (3);
\end{scope}

\begin{scope}[ yshift = 2cm, scale=0.6]
\node [vertex, label=below:{$v_1$}] (1) at (-2,0){};
\node [vertex, label=below:{$v_2$}] (2) at (0,0){};
\node [vertex, label=below:{$v_3$}] (3) at (2,0){};

\draw[redE]    (3)  edge  (2);
\draw[redE]    (2) edge (1);
\end{scope}

\begin{scope}[xshift=5cm, yshift = 2cm, scale=0.6]
\node [vertex, label=below:{$v_1$}] (1) at (-2,0){};
\node [vertex, label=below:{$v_2$}] (2) at (0,0){};
\node [vertex, label=below:{$v_3$}] (3) at (2,0){};

\draw[blueE]    (1)  edge [bend right=-40] (3);
\draw[redE]    (3)  edge  (2);
\draw[redE]    (2) edge (1);
\end{scope}

\begin{scope}[xshift=10cm, yshift = 2cm, scale=0.6]
\node [vertex, label=below:{$v_1$}] (1) at (-2,0){};
\node [vertex, label=below:{$v_2$}] (2) at (0,0){};
\node [vertex, label=below:{$v_3$}] (3) at (2,0){};

\draw[redE]    (3)  edge [bend left=-40] (1);
\draw[redE]    (3)  edge  (2);
\draw[redE]    (2) edge (1);
\end{scope}

\begin{scope}[scale=0.6]
\node [vertex, label=below:{$v_1$}] (1) at (-2,0){};
\node [vertex, label=below:{$v_2$}] (2) at (0,0){};
\node [vertex, label=below:{$v_3$}] (3) at (2,0){};

\draw[redE]    (2)  edge  (1);
\draw[blueE]    (2) edge (3);
\end{scope}

\begin{scope}[xshift=5cm, scale=0.6]
\node [vertex, label=below:{$v_1$}] (1) at (-2,0){};
\node [vertex, label=below:{$v_2$}] (2) at (0,0){};
\node [vertex, label=below:{$v_3$}] (3) at (2,0){};

\draw[blueE]    (1)  edge [bend right=-40] (3);
\draw[redE]    (2)  edge  (1);
\draw[blueE]    (2) edge (3);
\end{scope}

\begin{scope}[xshift=10cm, scale=0.6]
\node [vertex, label=below:{$v_1$}] (1) at (-2,0){};
\node [vertex, label=below:{$v_2$}] (2) at (0,0){};
\node [vertex, label=below:{$v_3$}] (3) at (2,0){};

\draw[redE]    (3)  edge [bend left=-40] (1);
\draw[redE]    (2)  edge  (1);
\draw[blueE]    (2) edge (3);
\end{scope}

\end{tikzpicture}

\caption{A set of structures that characterizes $3$-colourable graphs by means of 
forbidden linearly ordered $2$-edge-coloured graphs.
In each case the linear ordering is $v_1\le v_2\le v_3$.}
\label{fig:LOOR2}
\end{center}
\end{figure}

\begin{example}
    Let $\calF$ be the set of linearly ordered oriented graphs depicted in
    Figure~\ref{fig:LOOR2}. From Example~\ref{ex:3colLOOR}, and the coding
    described in the paragraph above, we conclude that a graph $G$ 
    is $3$-colourable if and only if there is a $\{blue,red\}$-edge-colouring $(E_b,E_r)$
    of $G$ together with a linear orderings such that $(V,E_b,E_r,\le)$ 
    if an $\calF$-free linearly ordered $2$-edge-coloured graph.
\end{example}

\noindent\textbf{Circular orderings and orientations.}
To conclude this section we briefly study the pullback of
$\hyperlink{OR}{S_{|\OR}\colon\mathcal{OR\to G}}$ and
$\hyperlink{COR}{sh_C\colon\mathcal{COR \to G}}$.
Denote by $\mathcal{COOR}$ the class of circularly ordered oriented graphs.
Similar as before, it is straightforward to notice that
$\mathcal{COOR \cong COR \times_G OR}$. 
Denote by $\hyperlink{COOR}{u_C\colon\mathcal{COOR\to G}}$ the functor
that maps a circularly ordered oriented graph to its underlying graph,
i.e., the functor that makes the following diagram commute,
where $F$ forgets the circular ordering, and $G$ takes the symmetric closure
of the arc relation.

\begin{center}
\begin{tikzcd}[column sep=1.8cm, row sep=1.1cm]
 \mathcal{COOR} \arrow[d,"F"] \arrow[r, "G"] 
\arrow[dr, dashed, "\hyperlink{COOR}{u_C}"] 
& \COR \arrow[d,"\hyperlink{COR}{sh_C}"] \\
 \OR \arrow[r, "S_{|\OR}"] & \calG
\end{tikzcd}
\end{center}

Tucker~\cite{tuckerBAMS76} proposed the following characterization of circular-arc graphs.
A graph $G$ is a circular-arc graph if and only if there is a circular ordering of
$V(G)$ such that for every edge $xy\in E(G)$ either $xz\in E(G)$ for every
$x\le z\le y$, or $yz\in E(G)$ for every $y\le z \le x$.
This characterization was later translated to the context of expressions by
forbidden circular and linear orderings~\cite{guzmanAMC438}. Now, we see that Tucker's
characterization can also be translate to forbidden circularly ordered oriented graphs.

\begin{example}\label{ex:circular-arc}
    Consider the set $\calF$ of circularly ordered oriented graphs depicted in
    Figure~\ref{fig:COOR-circulararc}. Let $\le$ be a circular ordering of $V(G)$
    satisfying Tucker's characterization.  Orient an edge $xy\in E(G)$ as the arc $(y,x)$
    if $xz\in E(G)$ for every $x\le z\le y$; otherwise chose the arc $(x,y)$.
    In case there is a symmetric pair of arcs, choose any of them and remove
    the remaining one. Let $G'$ be the previously defined orientation of $G$. 
    By the choice of $\le $, we conclude that if $(x,y)\in E(G')$, then
    for every $z$ in the circular interval from $x$ to $y$ there is an edge
    $xz\in E(G)$ so, either $(x,z)\in E(G')$ or $(z,x)\in E(G')$. Hence, 
    it follows that $(G',\le)$ is an $\calF$-free circularly ordered oriented
    graph. Conversely, consider an $\calF$-free circularly ordered oriented graph
    $(G',c(\le))$, with underlying graph $G$. With similar arguments as in the first
    part of this proof, one can notice that $\le$ satisfies the condition of Tucker's
    characterization. Hence, $G$ is a circular-arc graph if and only if there is a circular
    orderings of $V(G)$ together with an orientation that avoids the set $\calF$ depicted in 
    Figure~\ref{fig:COOR-circulararc}.
\end{example}

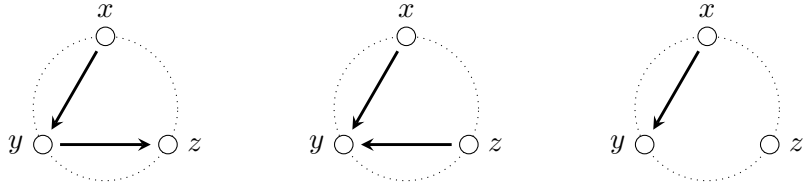
\begin{figure}[ht!]
\begin{center}

\begin{tikzpicture}[scale = 0.8]

\begin{scope}[xshift=-5cm, scale=0.6]
\draw [black, dotted] (0,0) circle[radius = 2];
\node [vertex, label=above:{$x$}] (1) at (90:2){};
\node [vertex, label=right:{$z$}] (2) at (330:2){};
\node [vertex, label=left:{$y$}] (3) at (210:2){};

\draw[arc]    (1)  edge  (3);
\draw[arc]    (3) edge (2);
\end{scope}

\begin{scope}[scale=0.6]
\draw [black, dotted] (0,0) circle[radius = 2];
\node [vertex, label=above:{$x$}] (1) at (90:2){};
\node [vertex, label=right:{$z$}] (2) at (330:2){};
\node [vertex, label=left:{$y$}] (3) at (210:2){};

\draw[arc]    (1)  edge  (3);
\draw[arc]    (2) edge (3);
\end{scope}

\begin{scope}[xshift=5cm, scale=0.6]
\draw [black, dotted] (0,0) circle[radius = 2];
\node [vertex, label=above:{$x$}] (1) at (90:2){};
\node [vertex, label=right:{$z$}] (2) at (330:2){};
\node [vertex, label=left:{$y$}] (3) at (210:2){};

\draw[arc]    (1)  edge  (3);

\end{scope}

\end{tikzpicture}

\caption{A set $\calF$ of circularly ordered oriented graphs, such that a graph $G$ admits a circular
ordering together with an orientation that avoids $\calF$ if and only if $G$ is a circularly ordered
oriented graph. Here, the circular ordering is the circular ordering defined by the clock-wise motion
around the dotted circumference.}
\label{fig:COOR-circulararc}
\end{center}
\end{figure}

Allow us to highlight that Example~\ref{ex:circular-arc} shows that there is a set
of circularly ordered oriented graphs on three vertices that characterizes
circular-arc graphs, while circular-arc graphs are not expressible by forbidden
linear (resp.\ circular) ordered graphs on three vertices~\cite{feuilloleyJDM}
(resp.~\cite{guzmanAMC438}), nor by forbidden oriented graphs on three
vertices~\cite{guzmanAR}.\\

\noindent\textbf{Summary.}
We summarize the relation between the concrete functors we
have introduced so far in the following diagram, where we represent
each of the functors by their domains; a hook
arrow $\mathcal{X\hookrightarrow Y}$
means that the functor $F_\mathcal X\colon \mathcal{X\to G}$ represented by
$\mathcal{X}$ is a local
restriction of the functor $F_\mathcal Y$ represented by $\mathcal  Y$, so
$\ex(F_\mathcal X)\subseteq \ex(F_\mathcal Y)$;
 and an arrow $\mathcal{X\to Y}$ represents that $F_\mathcal X$ factors
through $F_\mathcal Y$ so, $\ex(F_\mathcal Y)\subseteq \ex(F_\mathcal X)$.
Recall that the codomain of each functor considered here is the class of graphs.
\begin{center}
\begin{tikzcd}
    \hyperlink{EQ}{\mathcal{EQ}} & \hyperlink{COOR}{\mathcal{COOR}} \arrow[ddl]  \arrow[d] &
    \hyperlink{LOOR}{\LOOR} \arrow[d] \arrow[l] \arrow[r] &\hyperlink{EG2}{\mathcal{EG}_2} \arrow[r, hook]
    & \dots \arrow[r, hook] &
   \hyperlink{EG2}{\mathcal{EG}_k} \\
   \hyperlink{DI}{\DI}  & \hyperlink{COR}{COR} &
  \hyperlink{LOR}{\LOR} \arrow[l] \arrow[dl, hook] \arrow[d] \arrow[r, hook] &
  \hyperlink{GEN}{\mathcal{GEN}} \arrow[r, hook] &
  \hyperlink{SO}{\SO} \arrow[r, hook] & \hyperlink{PO}{\PO}  \\
   \hyperlink{OR}{\OR} \arrow[u, hook] & \hyperlink{T2}{\T_2} \arrow[l] &
   \hyperlink{AO}{\AO}  & \hyperlink{Gk}{\calG_2} \arrow[r, hook] & \dots \arrow[r, hook] &
   \hyperlink{Gk}{\calG_k}  \\
\end{tikzcd}
\end{center}

Finally, we stand out that, similarly as in the case of linearly
ordered oriented graphs and circularly ordered oriented graphs, one can
combine the expanded graph structures used to characterize graph classes and
so, enrich the expressive power --- for instance, one can consider
vertex-coloured or arc-coloured oriented graphs. As observed above,
this method corresponds to taking the pullback of the corresponding concrete
functors.


\section{Realizations of concrete functors}
\label{sec:duality}

Several concrete functors considered above arise from expanding
the signature $\{E\}$ to some signature $\sigma$, then
considering some class $\calC$ of $\sigma$-structures, and finally consider
the functor $F\colon \calC\to \calG$ that forgets the relation symbols in 
$\sigma\setminus\{E\}$, i.e., for each $\bbA\in \calC$ its image $F(\bbA)$
is its  $\{E\}$-reduct.
Such reducts and forgetful functors between hereditary classes of relational 
structures are in one-to-one correspondence.  In the following section, we introduce
\textit{quantifier-free definitions} and \textit{quantifier-free reducts}.
The latter generalizes previously considered reducts, and as a corollary of the main
result of Section~\ref{sec:CatEmul} (Theorem~\ref{thm:EmMod}),
we see that concrete functors are in one-to-one correspondence with 
quantifier-free reducts. 

\subsection{Quantifier-free definitions and reducts}
\label{sec:emulationsfunctors}

Consider a (finite) relational signature $\tau$, and denote by
$\QF_\tau$ the language of quantifier-free $\tau$-formulas. It is
straightforward to observe that the formulas in $\QF_\tau$
(up to logical equivalence) together with $(\bot,\top,\lor,\land,\lnot)$
form a \textit{Boolean algebra}.

A \textit{quantifier-free $\sigma$-definition} of a  relation symbol $R\in
\tau$ of arity $k$ is a quantifier-free $\sigma$-formula with $k$-free
variables. A \textit{quantifier-free $\sigma$-definition} of $\tau$ is a
set of quantifier-free $\sigma$-formulas $\{\phi_R\}_{R\in\tau}$ such that
for each $R\in \tau$ the formula $\phi_R$ is a quantifier-free $\sigma$-definition
of $R$. To shorten our writing, we will sometimes write \textit{qf definition}
instead of quantifier-free definition.

 Notice that a quantifier-free $\sigma$-definition $\{\phi_R\}_{R\in\tau}$ of $\tau$
defines a function $\Delta\colon\QF_\tau \to \QF_\sigma$ as follows.
First, define $\Delta(\bot) = \bot$, $\Delta(\top) = \top$, $\Delta(x_1 = x_2)
= x_1 = x_2$, and for each $R\in \tau$ let $\Delta(R(\overline x)) = \phi_R(\overline{x})$.
Then, recursively define $\Delta(\phi \land \psi) = \Delta(\phi)\land \Delta(\psi)$,
$\Delta(\phi \lor \psi) = \Delta(\phi)\lor \Delta(\psi)$, and $\Delta(\lnot\phi) =
\lnot \Delta(\phi)$. Intuitively speaking, $\Delta(\phi)$ is obtained from $\phi$
by replacing the appearance of each relational symbol $R$, $R\in \tau$, by the formula
$\phi_R$, while all variables and symbols $\bot$, $\top$, $\land$, $\lor$,
$\lnot$, and $=$ remain unchanged. With a simple inductive argument we can
notice that $\phi$ and $\Delta(\phi)$ use the same free variables.

It is straightforward to notice that $\Delta\colon \QF_\tau\to \QF_\sigma$
defined above is a Boolean lattice homomorphism
$(\QF_\tau,\bot,\top,\lor,\land,\lnot)\to(\QF_\sigma,\bot,\top,\lor,\land,\lnot)$
such that for each $\phi$ and $\Delta(\phi)$ have the same number of free
variables for each $\phi\in \QF_\tau$. Conversely, every Boolean lattice
homomorphism  $\Delta\colon \QF_\tau\to \QF_\sigma$ that preserves the number
of free variables, defines a quantifier-free $\sigma$-definition $\{\phi_R\}_{R\in\tau}$
of $\tau$, with $\phi_R = \Delta(R)$. For this reason, we denote a quantifier-free
$\sigma$-definition of $\tau$ by $\Delta\colon \QF_\tau\to \QF_\sigma$.

A \textit{self (quantifier-free) definition} is a qf definition
$\Delta\colon \QF_\tau\to \QF_\tau$. For instance, consider the
signature of graphs and digraphs $\tau = \{E\}$ and the 
quantifier-free definition $S\colon \QF_\tau\to QF_\tau$ defined by
$S(E) = E(x,y) \lor E(y,x)$. In this case, if $\phi$ is the
formula $x=y\land \lnot E(x,y)$, then $S(\phi)$ is the formula
$x = y \land \lnot (E(x,y) \lor E(y,x))$. We will use the qf definition
$S$ throughout this section to illustrate forthcoming definitions. 

Given a quantifier-free definition  $\Delta\colon \QF_\tau\to \QF_\sigma$ and 
a $\sigma$-structure $\bbA$, we define the \textit{$\Delta$-reduct}, $sh_\Delta(\bbA)$,
to be the $\tau$-structure with vertex set $V(\bbA)$, and for each $R\in \tau$
of arity $k$ a $k$-tuple $\overline{a}$ of vertices belongs to the interpretation
of $R$ in $sh_\Delta(\bbA)$ if and only if $\bbA\models \Delta(R)(\overline{a})$.
In other words, for every $R\in \tau$ the interpretation $R(sh_\Delta(\bbA))$ is the
relation defined by $\Delta(R)$ in $\bbA$. We also say that $\bbA$ is a
$\Delta$\textit{-expansion} of $sh_\Delta(\bbA)$. We denote by $sh_\Delta\colon \Mod_\sigma\to \Mod_\tau$
the mapping that sends a $\sigma$-structure to its $\Delta$-reduct.
As the following lemma asserts, quantifier-free reducts are embedding preserving
mappings between classes of relational structures, i.e., concrete functors
according to the context of this paper. 

To illustrate $\Delta$-reducts consider again the quantifier-free definition
$S\colon \QF_{\{E\}}\to \QF_{\{E\}}$ previously defined. In this case, the $S$-reduct
of a digraph $D$ is its symmetric closure, equivalently  $sh_S(D)$ is
the underlying graph of $D$.

\begin{proposition}\label{prop:shadowsmodel}
Consider a pair $\sigma, \tau$ of relational signatures, a quantifier-free definition
$\Delta\colon QF_\tau \to \QF_\sigma$, and a $\sigma$-structure $\bbA$. Then, for
each quantifier-free $\tau$-formula $\phi$, and each tuple $\overline a$
of vertices of $\bbA$,
\[
\bbA\models \Delta(\phi)(\overline{a}) \text{ if and only if } sh_\Delta(\bbA)\models \phi(\overline{a}).
\]
\end{proposition}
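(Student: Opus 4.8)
The plan is to prove the equivalence by structural induction on the quantifier-free $\tau$-formula $\phi$, exploiting the fact that both the syntactic map $\Delta$ and the satisfaction relation $\models$ are defined by the \emph{same} recursion on the Boolean connectives. Throughout I would use that $V(\bbA) = V(sh_\Delta(\bbA))$, so that a single tuple $\overline{a}$ of vertices can be substituted into both $\phi$ and $\Delta(\phi)$; this is legitimate because, as noted just before the statement, $\phi$ and $\Delta(\phi)$ have exactly the same free variables, and hence no re-indexing of $\overline{a}$ is ever required.

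First I would dispatch the base cases. For $\phi \in \{\top,\bot\}$ we have $\Delta(\phi) = \phi$, and the claim is immediate since the satisfaction of $\top$ and $\bot$ is independent of the structure. For an equality atom $\phi = (x_i = x_j)$ we have $\Delta(\phi) = (x_i = x_j)$, and since $\bbA$ and $sh_\Delta(\bbA)$ share the same domain, $\bbA \models (a_i = a_j)$ iff $a_i = a_j$ iff $sh_\Delta(\bbA) \models (a_i = a_j)$. The crucial base case is a relational atom $\phi = R(\overline{x})$ with $R \in \tau$: here $\Delta(\phi) = \phi_R(\overline{x})$ by definition of $\Delta$, while by the very definition of the $\Delta$-reduct the interpretation $R(sh_\Delta(\bbA))$ is the relation defined by $\Delta(R)$ in $\bbA$, so $\overline{a} \in R(sh_\Delta(\bbA))$ iff $\bbA \models \phi_R(\overline{a})$. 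Thus the equivalence for relational atoms is precisely the defining property of $sh_\Delta$.

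Next I would handle the inductive step for the three connectives $\lnot$, $\land$, and $\lor$. In each case the argument is the same short unfolding: $\Delta$ commutes with the connective because it is a Boolean lattice homomorphism, and the inductive clauses defining $\models$ also commute with the connective, so the induction hypothesis applied to the immediate subformula(s) yields the claim. For $\phi = \lnot\psi$, for instance, one has $\Delta(\lnot\psi) = \lnot\Delta(\psi)$, whence $\bbA \models \Delta(\lnot\psi)(\overline{a})$ iff $\bbA \not\models \Delta(\psi)(\overline{a})$, which by the induction hypothesis holds iff $sh_\Delta(\bbA) \not\models \psi(\overline{a})$, i.e.\ iff $sh_\Delta(\bbA) \models \lnot\psi(\overline{a})$. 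The $\land$ and $\lor$ cases are verbatim the same with ``and''/``or'' in place of negation.

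I do not anticipate any genuine obstacle: the statement is essentially the compatibility of a syntactic substitution with semantic evaluation, and the only point requiring minor care is the bookkeeping of free variables across a conjunction or disjunction whose two conjuncts may a priori involve different variables. This is resolved uniformly by working with one sufficiently long tuple $\overline{a}$ together with the already-recorded fact that $\phi$ and $\Delta(\phi)$ have the same free variables.
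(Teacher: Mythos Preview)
Your proposal is correct and takes essentially the same approach as the paper: structural induction on $\phi$, with the atomic case holding by construction of $sh_\Delta(\bbA)$ and the inductive step following from $\Delta$ being a Boolean lattice homomorphism. The paper's proof is a two-sentence sketch of exactly this argument, and your write-up simply unpacks the details it leaves implicit.
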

\begin{proof}
By construction of $sh_\Delta(\bbA)$ the statement holds when $\phi$ is an atomic
formula. The claim follows inductively from the fact that $\Delta$ is a boolean
lattice homomorphism from $(\QF_\tau,\bot,\top,\lor,\land,\lnot)$ to 
$(\QF_\sigma,\bot,\top,\lor,\land,\lnot)$.
\end{proof}

\begin{corollary}\label{cor:welldef}
Consider a pair $\tau,\sigma$ of relational signatures, and a quantifier-free definition
$\Delta\colon \QF_\tau \to \QF_\sigma$. If $\phi,\psi$ is a pair of logically equivalent
quantifier-free $\tau$-formulas, then $\Delta(\phi)$ and $\Delta(\psi)$ are logically equivalent
$\sigma$-formulas.
\end{corollary}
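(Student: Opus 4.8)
The plan is to reduce the statement directly to Proposition~\ref{prop:shadowsmodel}, which already converts satisfaction of a $\Delta$-image in a $\sigma$-structure into satisfaction of the original formula in its $\Delta$-reduct. Recall that logical equivalence of $\Delta(\phi)$ and $\Delta(\psi)$ means $R_{\Delta(\phi)}(\bbA) = R_{\Delta(\psi)}(\bbA)$ for every finite $\sigma$-structure $\bbA$. Unwinding the definition of the relation defined by a formula, this is exactly the assertion that for every finite $\sigma$-structure $\bbA$ and every tuple $\overline a$ of vertices of $\bbA$ we have $\bbA \models \Delta(\phi)(\overline a)$ if and only if $\bbA \models \Delta(\psi)(\overline a)$.

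First I would fix an arbitrary finite $\sigma$-structure $\bbA$ and a tuple $\overline a$. Applying Proposition~\ref{prop:shadowsmodel} to $\phi$ gives that $\bbA \models \Delta(\phi)(\overline a)$ if and only if $sh_\Delta(\bbA) \models \phi(\overline a)$, and applying it to $\psi$ gives that $\bbA \models \Delta(\psi)(\overline a)$ if and only if $sh_\Delta(\bbA) \models \psi(\overline a)$.

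Next I would invoke the hypothesis. Since $\bbA$ is finite and $V(sh_\Delta(\bbA)) = V(\bbA)$, the reduct $sh_\Delta(\bbA)$ is itself a finite $\tau$-structure. By assumption $\phi$ and $\psi$ are logically equivalent, i.e.\ equivalent in every finite $\tau$-structure, so in particular $sh_\Delta(\bbA) \models \phi(\overline a)$ if and only if $sh_\Delta(\bbA) \models \psi(\overline a)$. Chaining the three equivalences yields $\bbA \models \Delta(\phi)(\overline a)$ if and only if $\bbA \models \Delta(\psi)(\overline a)$, which is what we wanted.

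There is no real obstacle here; the argument is a two-step transport across Proposition~\ref{prop:shadowsmodel} followed by the hypothesis. The only items worth checking are bookkeeping. One is that $\Delta(\phi)$ and $\phi$ share the same free variables (noted just after the definition of $\Delta$), so the same tuple $\overline a$ may be used consistently on both sides of each equivalence. The other is that the $\Delta$-reduct of a finite structure is finite, which guarantees that the hypothesis of logical equivalence---a statement quantified over \emph{finite} $\tau$-structures---does in fact apply to $sh_\Delta(\bbA)$. Both points are immediate from the definitions.
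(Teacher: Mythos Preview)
Your proof is correct and follows essentially the same route as the paper: fix a $\sigma$-structure and a tuple, apply Proposition~\ref{prop:shadowsmodel} to pass to the $\Delta$-reduct, use the hypothesis $\phi\leftrightarrow\psi$ there, and apply Proposition~\ref{prop:shadowsmodel} again to return. The additional bookkeeping you spell out (matching free variables, finiteness of the reduct) is fine but not something the paper bothers to make explicit.
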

\begin{proof}
Let $\bbA$ be a $\sigma$-structure and $\overline{a}$ a tuple of vertices of $\bbA$. 
By Proposition~\ref{prop:shadowsmodel}, $\bbA\models \Delta(\phi)(\overline{a})$
if and only if $sh_\Delta(\bbA)\models \phi(\overline{a})$. Since $\phi\leftrightarrow \psi$
then $sh_\Delta(\bbA)\models \phi(\overline{a})$ if and only if
$sh_\Delta(\bbA)\models \psi(\overline{a})$. Using Proposition~\ref{prop:shadowsmodel}
again, we conclude that $\bbA\models \Delta(\phi)(\overline{a})$ if and only if
$\bbA\models \Delta(\psi(\overline{a}))$, so $\Delta(\phi)\leftrightarrow \Delta(\psi)$.
\end{proof}

We say that a quantifier-free definition $\Delta\colon \QF_\tau \to \QF_\sigma$ is a
\textit{logically injective} quantifier-free definition if for every pair $\phi,\psi$ of
quantifier-free $\tau$-formulas  with $\Delta(\phi) \leftrightarrow \Delta(\psi)$,
it is the case that  $\phi$ and $\psi$ are also logically equivalent. By similar arguments
to the ones in the proof of Corollary~\ref{cor:welldef}
the following statement holds.

\begin{corollary}\label{cor:sur-inje}
Consider a pair $\tau,\sigma$ of signatures and a quantifier-free definition
$\Delta\colon \QF_\tau \to \QF_\sigma$. If  $sh_\Delta\colon Mod_\sigma\to Mod_\tau$ is a surjective
function, then $M$ is a logically injective quantifier-free definition.
\end{corollary}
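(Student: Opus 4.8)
The plan is to prove the statement directly, exploiting that logical injectivity is precisely the converse of the implication established in Corollary~\ref{cor:welldef}, and that surjectivity of $sh_\Delta$ is exactly what lets us run that earlier argument backwards. First I would fix a pair of quantifier-free $\tau$-formulas $\phi,\psi$ satisfying the hypothesis $\Delta(\phi)\leftrightarrow\Delta(\psi)$, and aim to conclude $\phi\leftrightarrow\psi$. Since logical equivalence is tested over \emph{all} finite structures, it suffices to fix an arbitrary finite $\tau$-structure $\bbB$ together with a tuple $\overline b$ of its vertices, and to verify that $\bbB\models\phi(\overline b)$ if and only if $\bbB\models\psi(\overline b)$.

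The key step is to use surjectivity of $sh_\Delta$ to pull $\bbB$ back to the $\sigma$-side: there is some $\sigma$-structure $\bbA$ with $sh_\Delta(\bbA)=\bbB$, and in particular $V(\bbA)=V(\bbB)$ by the definition of the $\Delta$-reduct, so $\overline b$ is simultaneously a tuple of vertices of $\bbA$. I would then apply Proposition~\ref{prop:shadowsmodel} twice, once to $\phi$ and once to $\psi$, obtaining $\bbB\models\phi(\overline b)$ iff $\bbA\models\Delta(\phi)(\overline b)$, and $\bbB\models\psi(\overline b)$ iff $\bbA\models\Delta(\psi)(\overline b)$. The hypothesis $\Delta(\phi)\leftrightarrow\Delta(\psi)$ forces $\bbA\models\Delta(\phi)(\overline b)$ iff $\bbA\models\Delta(\psi)(\overline b)$, and chaining these three biconditionals yields $\bbB\models\phi(\overline b)$ iff $\bbB\models\psi(\overline b)$, as required. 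As $\bbB$ and $\overline b$ were arbitrary, this gives $\phi\leftrightarrow\psi$.

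I do not expect a serious obstacle here: the argument is a verbatim mirror of the proof of Corollary~\ref{cor:welldef}, with surjectivity supplying the witnessing structure $\bbA$ that in the earlier direction was simply given as input. The only point meriting a moment's care is the transfer of the tuple $\overline b$ between $\bbB$ and $\bbA$, which is immediate from the equality of their vertex sets guaranteed by the construction of the $\Delta$-reduct; everything else is a purely formal substitution into Proposition~\ref{prop:shadowsmodel}.
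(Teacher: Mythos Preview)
Your proof is correct and is precisely the argument the paper has in mind: it explicitly says the corollary follows ``by similar arguments to the ones in the proof of Corollary~\ref{cor:welldef},'' and your mirror of that proof---using surjectivity to produce the preimage $\bbA$ and then applying Proposition~\ref{prop:shadowsmodel} twice---is exactly that.
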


Let us go back to our running example $S\colon \QF_{\{E\}}\to \QF_{\{E\}}$ and consider
the formula $\phi(x,y) = E(x,y)\land\lnot E(y,x)$. So $\phi$
is true of an ordered pair of vertices $(u,v)$ in a digraph $D$ if and only 
there is a non-symmetric arc in $D$ from $u$ to $v$.
In particular, for any digraph $D$ with a non-symmetric arc there is a pair
$u,v\in V(D)$ such that $D\models \phi(u,v)$. On the contrary, notice that
$S(\phi)$ is the formula
\[
(E(x,y) \lor E(y,x))\land\lnot (E(x,y) \lor E(y,x))
\]
which is clearly a contradiction. Thus, $S(\phi)$ is logically equivalent to 
$\bot$. Since $\bot = S(\bot)$, and $\phi$ and $\bot$ are not logically equivalent
formulas,   $S$ is not a logically injective qf-definition. 

It is straightforward to observe that for each qf definition $\Delta\colon\QF_\tau\to \QF_\sigma$,
the class of all $\Delta$-reducts of $\sigma$-structures, $sh_\Delta[\Mod_\sigma]$, 
is closed under isomorphisms. We use this claim to prove that the converse statement
of Corollary~\ref{cor:sur-inje} also holds. First, recall that given a $\tau$-structure
$\bbA$ we denote by $\chi_\tau(\bbA)$ the characteristic $\tau$-formula of $\bbA$, and this formula
has the property that  there is an embedding $\varphi\colon \bbA\to \bbB$ if and only if there
is a tuple $\overline{b}$ of vertices of $B$ such that $B\models \chi_\tau(\bbA)(\overline{b})$
(Lemma~\ref{lem:characteristic}).

\begin{lemma}\label{lem:image-characteristic}
Let $\tau$ and $\sigma$ be a pair of relational signatures, $\Delta\colon \QF_\tau \to \QF_\sigma$
a quantifier-free definition, and $\bbA$ a $\tau$-structure. Then, $\bbA\in sh_\Delta[Mod_\sigma]$ if and only
if $\Delta(\chi_\tau(\bbA))$ is not a contradiction.
\end{lemma}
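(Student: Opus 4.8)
The plan is to prove both directions by moving satisfaction across the $\Delta$-reduct with Proposition~\ref{prop:shadowsmodel}, and to read embeddings off $\chi_\tau(\bbA)$ with Lemma~\ref{lem:characteristic}. Here I unpack ``$\Delta(\chi_\tau(\bbA))$ is not a contradiction'' as ``there exist a $\sigma$-structure $\bbB$ and a tuple $\overline{b}$ with $\bbB\models\Delta(\chi_\tau(\bbA))(\overline{b})$''. Throughout, fix the enumeration $\overline{a}=(a_1,\dots,a_n)$ of $V(\bbA)$ defining $\chi_\tau(\bbA)$; applying Lemma~\ref{lem:characteristic} to the identity embedding of $\bbA$ records the fact $\bbA\models\chi_\tau(\bbA)(\overline{a})$, which both directions use.

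For the forward implication I would start from a $\sigma$-structure $\bbB$ with $sh_\Delta(\bbB)=\bbA$. Since $\Delta$-reducts preserve vertex sets, $\overline{a}$ is a tuple of $V(\bbB)$, and Proposition~\ref{prop:shadowsmodel} applied to the quantifier-free formula $\chi_\tau(\bbA)$ gives $\bbB\models\Delta(\chi_\tau(\bbA))(\overline{a})$ precisely because $\bbA\models\chi_\tau(\bbA)(\overline{a})$. Thus $\Delta(\chi_\tau(\bbA))$ is satisfiable, hence not a contradiction.

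For the converse I would take $\bbB$ and $\overline{b}$ with $\bbB\models\Delta(\chi_\tau(\bbA))(\overline{b})$. Proposition~\ref{prop:shadowsmodel} turns this into $sh_\Delta(\bbB)\models\chi_\tau(\bbA)(\overline{b})$, and Lemma~\ref{lem:characteristic} then produces an embedding $\bbA\to sh_\Delta(\bbB)$ sending $a_i\mapsto b_i$; in particular the $b_i$ are distinct and $\bbA\cong sh_\Delta(\bbB)[\{b_1,\dots,b_n\}]$. The one substantive point, which I expect to be the main obstacle, is to see that this induced substructure of a $\Delta$-reduct is itself a $\Delta$-reduct, i.e.\ that $sh_\Delta(\bbB)[\{b_1,\dots,b_n\}]=sh_\Delta(\bbB[\{b_1,\dots,b_n\}])$. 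This is exactly where quantifier-freeness of each $\Delta(R)$ is used: by the third clause of Lemma~\ref{lem:characteristic} applied to the inclusion embedding $\bbB[\{b_1,\dots,b_n\}]\hookrightarrow\bbB$, every quantifier-free $\sigma$-formula takes the same value on tuples from $\{b_1,\dots,b_n\}$ in $\bbB$ and in its induced substructure, so the relations defined by the formulas $\Delta(R)$ agree in the two $\Delta$-reducts. Writing $\bbB'=\bbB[\{b_1,\dots,b_n\}]$ gives $\bbA\cong sh_\Delta(\bbB')$, and since $sh_\Delta[\Mod_\sigma]$ is closed under isomorphisms we conclude $\bbA\in sh_\Delta[\Mod_\sigma]$. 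Apart from this absoluteness argument the proof is a direct translation through the two cited results.
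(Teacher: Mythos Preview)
Your proof is correct and follows essentially the same approach as the paper's. The only difference is the order of two steps in the converse direction: the paper first passes to the substructure $\bbB[\{b_1,\dots,b_n\}]$ (using that $\Delta(\chi_\tau(\bbA))$ is quantifier-free, so its satisfaction is preserved under restriction) and only then applies Proposition~\ref{prop:shadowsmodel}, which makes the resulting embedding automatically a bijection and avoids your separate commutation argument; your route---apply Proposition~\ref{prop:shadowsmodel} first, then argue $sh_\Delta$ commutes with induced substructures---is equally valid and uses the same ingredient (quantifier-freeness) at the same point, just packaged differently.
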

\begin{proof}
Write  $\chi$ to denote $\chi_\tau(\bbA)$. Suppose that $\bbA\in sh_\Delta[Mod_\sigma]$,
where $V(A) = \{a_1,\dots, a_n\}$, and let $\bbA'$ be a $\Delta$-expansion of $\bbA$.
Without loss of generality $\bbA\models \chi(a_1,\dots, a_n)$. So, by
Proposition~\ref{prop:shadowsmodel}, $\bbA'\models \Delta(\chi)(a_1,\dots, a_n)$ and thus,
$\Delta(\chi)$ is not a contradiction. Conversely, suppose that $\Delta(\chi)$ is not a
contradiction and let $n = |V(A)|$. Thus, there is a  $\sigma$-structure $\bbC$, and
an $n$-tuple $\overline{c}$, $\overline{c} = (c_1,\dots, c_n)$, of vertices of $\bbC$ such that
$\bbC\models \Delta(\chi)(\overline{c})$.
Since $\Delta(\chi)$ is a  quantifier-free formula, if $\bbB$ is the substructure of $\bbC$
induced by the vertices in  $\overline{c}$, then $\bbB\models \Delta(\chi)(\overline{c})$. 
So, by Proposition~\ref{prop:shadowsmodel}, $sh_\Delta(\bbB)\models \chi(\overline{c})$
and thus, by Lemma~\ref{lem:characteristic}, the mapping $a_i\mapsto c_i$
defines an embedding $\varphi\colon \bbA\to sh_\Delta(\bbB)$.
We know that $V(sh_\Delta(\bbB)) = V(\bbB) = \{c_1,\dots, c_n\}$ so
$\varphi\colon A\to sh_\Delta(B)$ is an isomorphism, and thus  $\bbA\in sh_\Delta[Mod_\sigma]$.
\end{proof}

Lemma~\ref{lem:image-characteristic} together with Corollary~\ref{cor:sur-inje}
implie the following assertion.

\begin{proposition}\label{prop:insurduality}
Let $\tau$ and $\sigma$ be a pair of relational signatures. For a quantifier-free
definition $\Delta\colon \QF_\tau \to \QF_\sigma$ the following statements are equivalent.
\begin{itemize}
	\item $\Delta$ is a logically injective quantifier-free definition.
	\item For every quantifier-free $\tau$-formula $\phi$, its image $\Delta(\phi)$ is a contradiction
	if and only if $\phi$ is a contradiction.
	\item For every $\tau$-structure $\bbA$ the image $\Delta(\chi_\tau(\bbA))$ of its characteristic
	$\tau$-formula is not a contradiction.
	\item The class of $\Delta$-reducts of $\sigma$-structures is the class of all $\tau$-structures, i.e.,
    $sh_\Delta[\Mod_\sigma] =  \Mod_\tau$.
\end{itemize}
\end{proposition}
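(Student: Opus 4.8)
The plan is to prove the four statements equivalent via the cyclic chain $(1)\Rightarrow(2)\Rightarrow(3)\Rightarrow(4)\Rightarrow(1)$, exploiting that the two genuinely substantive facts are already in hand: Lemma~\ref{lem:image-characteristic} will deliver $(3)\Rightarrow(4)$ almost verbatim, and Corollary~\ref{cor:sur-inje} will deliver $(4)\Rightarrow(1)$ verbatim. Everything else is a matter of combining these with the boolean-homomorphism behaviour of $\Delta$ recorded in Corollary~\ref{cor:welldef} and the normalisation $\Delta(\bot)=\bot$ built into the definition of $\Delta$.

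For $(1)\Rightarrow(2)$ I would split the biconditional. The implication ``$\phi$ a contradiction $\Rightarrow$ $\Delta(\phi)$ a contradiction'' holds for every quantifier-free definition and needs no injectivity: from $\phi\leftrightarrow\bot$ and Corollary~\ref{cor:welldef} we get $\Delta(\phi)\leftrightarrow\Delta(\bot)=\bot$. The converse is precisely where logical injectivity enters: if $\Delta(\phi)\leftrightarrow\bot=\Delta(\bot)$, then applying the defining property of item~(1) with $\psi=\bot$ forces $\phi\leftrightarrow\bot$, so that $\phi$ is a contradiction.

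Next, $(2)\Rightarrow(3)$ follows by noting that the characteristic formula $\chi_\tau(\bbA)$ is never a contradiction, since by Lemma~\ref{lem:characteristic} the identity map witnesses $\bbA\models\chi_\tau(\bbA)(\overline a)$ on the enumeration $\overline a$ of $V(\bbA)$; feeding $\phi=\chi_\tau(\bbA)$ into item~(2) then yields that $\Delta(\chi_\tau(\bbA))$ is not a contradiction. Then $(3)\Rightarrow(4)$ is read straight off Lemma~\ref{lem:image-characteristic}: since that lemma equates ``$\bbA\in sh_\Delta[\Mod_\sigma]$'' with ``$\Delta(\chi_\tau(\bbA))$ is not a contradiction'', item~(3) places every $\tau$-structure in the image, giving $sh_\Delta[\Mod_\sigma]=\Mod_\tau$. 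Finally $(4)\Rightarrow(1)$ is exactly Corollary~\ref{cor:sur-inje}, as item~(4) just restates surjectivity of $sh_\Delta\colon\Mod_\sigma\to\Mod_\tau$.

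I do not expect a real obstacle here; the single point demanding care is to resist invoking injectivity in the easy half of $(1)\Rightarrow(2)$, where it is not needed — injectivity is used only to transport a contradiction backwards through $\Delta$. The remainder is bookkeeping over the earlier lemmas.
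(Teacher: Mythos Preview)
Your proposal is correct and follows exactly the approach the paper intends: the paper offers no proof block at all, merely the sentence ``Lemma~\ref{lem:image-characteristic} together with Corollary~\ref{cor:sur-inje} implie the following assertion,'' and your cyclic chain $(1)\Rightarrow(2)\Rightarrow(3)\Rightarrow(4)\Rightarrow(1)$ is precisely how one unpacks that hint, with Lemma~\ref{lem:image-characteristic} supplying $(3)\Leftrightarrow(4)$ and Corollary~\ref{cor:sur-inje} supplying $(4)\Rightarrow(1)$. The remaining steps $(1)\Rightarrow(2)\Rightarrow(3)$ that you spell out via Corollary~\ref{cor:welldef}, $\Delta(\bot)=\bot$, and the satisfiability of $\chi_\tau(\bbA)$ are exactly the routine verifications the paper leaves implicit.
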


We say that a pair of quantifier-free definitions $\Gamma,\Delta \colon \QF_\tau\to \QF_\sigma$ are
\textit{logically equivalent} if for every $R\in \tau$
the formulas $\Gamma(R)$ and $\Delta(R)$ are logically equivalent. In this case we write
$\Gamma\sim \Delta$. The following observation can be easily proved via Proposition~\ref{prop:shadowsmodel}, 
and with similar arguments as the ones used so far.

\begin{observation}\label{obs:logicaleq}
Let $\tau,\sigma$ be a pair of relational signatures, and $\Gamma\colon \QF_\tau \to \QF_\sigma$ and
$\Delta\colon \QF_\tau \to \QF_\sigma$ a pair of quantifier-free definitions.
Then, $\Gamma$ and $\Delta$ are logically equivalent if and only if for each $\sigma$-structure
$\bbA$ the equality $sh_\Gamma(\bbA) = sh_\Delta(\bbA)$ holds.
\end{observation}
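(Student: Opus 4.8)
The plan is to prove the biconditional by unwinding the two definitions involved --- that of the $\Delta$-reduct and that of logical equivalence --- into a single chain of equivalences, so that both directions are settled simultaneously.

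First I would record the trivial observation that two $\tau$-structures sharing the same domain are equal precisely when, for every relation symbol $R\in\tau$ of arity $k$, their interpretations of $R$ coincide as subsets of the $k$-th power of that domain. Since $sh_\Gamma(\bbA)$ and $sh_\Delta(\bbA)$ both have vertex set $V(\bbA)$, the equality $sh_\Gamma(\bbA) = sh_\Delta(\bbA)$ is therefore equivalent to the assertion that $R(sh_\Gamma(\bbA)) = R(sh_\Delta(\bbA))$ for every $R\in\tau$.

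Next I would translate membership into satisfaction. Applying Proposition~\ref{prop:shadowsmodel} to the atomic formula $R(\overline{x})$ (for which $\Gamma(R(\overline{x})) = \Gamma(R)(\overline{x})$), or simply invoking the definition of the $\Delta$-reduct directly, a $k$-tuple $\overline{a}$ of vertices of $\bbA$ lies in $R(sh_\Gamma(\bbA))$ if and only if $\bbA\models\Gamma(R)(\overline{a})$, and likewise with $\Delta$ in place of $\Gamma$. Hence $R(sh_\Gamma(\bbA)) = R(sh_\Delta(\bbA))$ says exactly that $\Gamma(R)$ and $\Delta(R)$ define the same $k$-ary relation in $\bbA$, i.e.\ that they are equivalent in $\bbA$ in the sense of Section~\ref{sub:logic}.

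I would then chain these equivalences: $sh_\Gamma(\bbA) = sh_\Delta(\bbA)$ holds for every $\sigma$-structure $\bbA$ if and only if for every $R\in\tau$ and every $\bbA$ the formulas $\Gamma(R)$ and $\Delta(R)$ are equivalent in $\bbA$, if and only if for every $R\in\tau$ they are equivalent in every finite $\sigma$-structure, which is by definition the statement $\Gamma(R)\leftrightarrow\Delta(R)$, and this for all $R\in\tau$ is precisely $\Gamma\sim\Delta$. I expect no substantive obstacle here; the proof is pure definition-chasing. The only point that warrants care is to match the universal quantifier over $\sigma$-structures $\bbA$ in the reduct condition with the fact that logical equivalence of quantifier-free formulas is defined as equivalence in \emph{every} finite $\sigma$-structure, so that the two universal quantifications coincide and the biconditional closes cleanly.
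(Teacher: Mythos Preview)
Your proof is correct and follows precisely the route the paper indicates: the paper does not spell out a proof but merely remarks that the observation ``can be easily proved via Proposition~\ref{prop:shadowsmodel}, and with similar arguments as the ones used so far,'' which is exactly the definition-chasing argument you give.
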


Given a qf definition $\Delta\colon \QF_\tau \to \QF_\sigma$ and a
class $\calC$ of $\sigma$-structures, we denote by
$sh_\Delta[\calC]$ the direct image of $\calC$ and call
$sh_\Delta[\calC]$ the \textit{$\Delta$-reduct} of
$\calC$. We also say that $\calC$ is a \textit{$\Delta$-expansion}
of $sh_\Delta(\calC)$.
Finally, if $\calC$ is a class of $\tau$-structures, we denote by 
$sh_\Delta^{-1}[\mathcal{C}]$ the class of all $\sigma$-structures $\bbX$
such that $sh_\Delta(\bbX) \in \calC$. We call $sh_\Delta^{-1}[\calC]$
the \textit{full $\Delta$-expansion} of $\mathcal{C}$. 
For instance, recall that $\OR$ denotes the class of oriented
graphs and $\mathcal{G}$ the class of graphs. Let $S$ be the previously defined
self qf definition  $E\mapsto E\lor \lnot E$. Notice that $\OR$ is an
$S$-expansion of $\calG$ since $sh_{S}[\OR] = \calG$ while the full
$S$-expansion of $\calG$ is the class of all loopless digraphs.

We conclude this subsection with a series of simple examples of quantifier-free
definitions and their corresponding redutcs.

\begin{example}\label{ex:sym-definition}
Our running example $S$ naturally generalizes to arbitrary relational
signatures. Let $\tau$ be a relational signature. The \emph{symmetric} 
quantifier-free definition of $\tau$ is the function $S_\tau\colon \QF_\tau\to
\QF_\tau$ where for each $R\in \tau$  of arity $k$, its image $S_\tau(R)$
is the join of all formulas $R(x_{i_1},x_{i_2},\dots,x_{i_n})$ where $(i_1,\dots,i_n)$
ranges over all permutations of $(1,\dots,n)$.
If $\bbA$ is a $\tau$-structure, then for each $R\in\tau$ the interpretation of $R$
in the $S_\tau$-reduct of $\bbA$  is the symmetric closure of $R(\bbA)$.  Thus, a
$\tau$-structure $\bbA$ is the $S_\tau$-reduct of some $\tau$-structure
if and only if $\bbA$ is a hypergraph.
\end{example}

The following are two simple examples correspond to standard operations in
graph theory. Namely, taking complement and removing loops in graphs. 

\begin{example}\label{ex:simplification-definition}
It is not hard to notice that for every positive integer $n$, there is a
quantifier-free formula with $n$ free variables that expresses that 
$a_1,\dots, a_n$ are pairwise distinct elements.  Let us denote by dif$_n$ any such
formula. The \textit{simplification} quantifier-free definition (of $\tau$) $SP_\tau\colon \QF_\tau\to
\QF_\tau \to \QF_\tau$
is the function that maps $R$ to $R(x_1,\dots,x_n) \land$ dif$_n(x_1,\dots, x_n)$.
Thus, for any digraph $D$, its $SP_{\{E\}}$-reduct is the digraph obtained from $D$ after 
removing loops.
\end{example} 

\begin{example}\label{ex:complement-definition}
The \textit{complement} quantifier-free definition (of $\tau$)
$CO_\tau\colon \QF_\tau\to \QF_\tau$ maps a relation symbol $R$ to $\lnot R$.
So the $CO_{\{E\}}$-reduct of a loopless graph $G$ is the graph obtained from
$\overline G$ after adding a loop on each vertex. 
\end{example}

The following is a more interesting well-known example of quantifier-free reducts.

\begin{example}\label{ex:two-graphs}
    A \emph{two-graph} is a $3$-uniform hypergraph such that every four vertices induce
    an even number of $3$-edges. Now, consider the quantifier-free $\{E\}$-formula
    $\phi$ with three free variable $x,y,z$ that states that $x$, $y$, and $z$ are
    different vertices, and the number of edges induced by $\{x,y,z\}$ is odd.
    Let $H$ be a ternary relation symbol, and $\Delta\colon \QF_{\{H\}}\to \QF_{\{E\}}$ be
    the quantifier-free definition that maps $H$ to $\phi$. It is well-known that
    the class of two-graphs corresponds to the $\Delta$-reduct of the class of graphs
    (see, e.g., \cite{cameronDM127}).
\end{example}


\subsection{Quantifier-free reducts and hereditary classes}

As the following lemma asserts, quantifier-free reducts are embedding preserving
mappings between classes of relational structures, i.e., concrete functors
according to the context of this paper. 

\begin{lemma}\label{lem:preservemb}
Let $\tau$ and $\sigma$ be a pair of signatures, and $\Delta\colon \QF_\tau \to \QF_\sigma$
a quantifier-free definition.
Consider any pair $\bbA$ and $\bbB$ of $\sigma$-structures, and a function
$\varphi\colon V(\bbA)\to V(\bbB)$. If
$\varphi\colon \bbA\to \bbB$ is an embedding, then $\varphi\colon sh_\Delta(\bbA)\to sh_\Delta(\bbB)$
is an embedding too. 
\end{lemma}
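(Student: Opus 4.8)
The plan is to verify directly the two defining conditions of an embedding of $\tau$-structures for the map $\varphi\colon sh_\Delta(\bbA)\to sh_\Delta(\bbB)$. Injectivity is immediate: since $sh_\Delta$ leaves the vertex set unchanged, $\varphi$ is the same underlying function $V(\bbA)\to V(\bbB)$, which is injective because $\varphi\colon\bbA\to\bbB$ is already an embedding of $\sigma$-structures. It therefore remains to show that for every $R\in\tau$ of arity $k$ and every $k$-tuple $\overline a$ of vertices of $\bbA$, we have $\overline a\in R(sh_\Delta(\bbA))$ if and only if $\varphi(\overline a)\in R(sh_\Delta(\bbB))$. By the very definition of the $\Delta$-reduct, $\overline a\in R(sh_\Delta(\bbA))$ holds exactly when $\bbA\models\Delta(R)(\overline a)$, and likewise $\varphi(\overline a)\in R(sh_\Delta(\bbB))$ holds exactly when $\bbB\models\Delta(R)(\varphi(\overline a))$. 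Thus the lemma reduces to the single equivalence
\[
\bbA\models\Delta(R)(\overline a)\quad\text{if and only if}\quad\bbB\models\Delta(R)(\varphi(\overline a)).
\]

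The crux is that $\Delta(R)$ is a \emph{quantifier-free} $\sigma$-formula (with $k$ free variables, since $\Delta$ preserves the number of free variables), and embeddings of $\sigma$-structures both preserve and reflect the truth of quantifier-free formulas. I would obtain this preservation/reflection fact in one of two equivalent ways. The self-contained route is a short structural induction on an arbitrary quantifier-free $\sigma$-formula $\psi$: atomic formulas $x_i=x_j$ are preserved and reflected by injectivity of $\varphi$; atomic formulas $S(x_{i_1},\dots,x_{i_l})$ with $S\in\sigma$ are preserved and reflected precisely because $\varphi$ is an embedding of $\sigma$-structures; and the Boolean connectives $\lnot,\land,\lor$ propagate the biconditional immediately. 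Alternatively, one may specialize Lemma~\ref{lem:characteristic}: enumerating $V(\bbA)=(a_1,\dots,a_n)$ and setting $b_i=\varphi(a_i)$, the map $a_i\mapsto b_i$ is the embedding $\varphi$, so the third item of that lemma already states that every quantifier-free $\sigma$-formula in $n$ variables agrees on $(a_1,\dots,a_n)$ in $\bbA$ and on $(b_1,\dots,b_n)$ in $\bbB$.

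Applying this fact to $\psi=\Delta(R)$ and the tuple $\overline a$ yields exactly the displayed equivalence, completing the proof. I would also remark that Proposition~\ref{prop:shadowsmodel} is available as an alternative bridge, rewriting each side via $sh_\Delta$, although here it is cleaner to argue directly at the level of the $\sigma$-formula $\Delta(R)$. The only mildly delicate point — the one I would flag as the main obstacle — is that Lemma~\ref{lem:characteristic} is phrased for an enumeration of the \emph{entire} vertex set, whereas $\overline a$ is a $k$-subtuple; bridging this requires viewing $\Delta(R)(\overline a)$ as the instance $\Delta(R)(x_{j_1},\dots,x_{j_k})$ of a formula in the $n$ enumeration variables, obtained by renaming the free variables of $\Delta(R)$ to the appropriate positions. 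This bookkeeping is entirely routine, and it is avoided altogether if one prefers the direct inductive proof of quantifier-free preservation and reflection sketched above.
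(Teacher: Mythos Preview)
Your proof is correct. It differs from the paper's in its decomposition: you verify the embedding condition relation-by-relation, reducing each $R\in\tau$ directly to the preservation and reflection of the single quantifier-free $\sigma$-formula $\Delta(R)$ under the embedding $\varphi$. The paper instead works wholesale via the characteristic $\tau$-formula $\chi$ of $sh_\Delta(\bbA)$: it uses Proposition~\ref{prop:shadowsmodel} to translate $sh_\Delta(\bbA)\models\chi(\overline a)$ into $\bbA\models\Delta(\chi)(\overline a)$, invokes Lemma~\ref{lem:characteristic} to push $\Delta(\chi)$ across the $\sigma$-embedding, and then translates back. Your route is more elementary and self-contained (the structural induction you sketch avoids any appeal to Proposition~\ref{prop:shadowsmodel} or to the characteristic-formula machinery), while the paper's route exercises the infrastructure it has set up and handles all relations and the injectivity simultaneously through a single formula. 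The bookkeeping issue you flag about Lemma~\ref{lem:characteristic} being stated for full enumerations is real but, as you say, routine; the paper sidesteps it precisely by working with the characteristic formula on the full vertex set from the start.
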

\begin{proof}
Let $\chi$ be the characteristic formula of $sh_\Delta(\bbA)$ in $\tau$, and without loss
of generality, suppose that $sh_\Delta(\bbA)\models \chi(a_1,\dots a_n)$ where
$\{a_1,\dots, a_n\}$ is the vertex set of $sh_\Delta(\bbA)$ and of $\bbA$.
Then, Proposition~\ref{prop:shadowsmodel}
implies that $\bbA\models \Delta(\chi)(a_1,\dots, a_n)$. Since $\Delta(\chi)\in \QF_\sigma$ 
and $\varphi\colon \bbA\to \bbB$ is an embedding (of $\sigma$-structures), it follows from
Lemma~\ref{lem:characteristic} that $\bbB\models \Delta(\chi)(\varphi(a_1),\dots, \varphi(a_n))$.
Again,  Proposition~\ref{prop:shadowsmodel} implies that $sh_\Delta(\bbB)\models
\chi(\varphi(a_1),\dots, \varphi(a_n))$. Finally, since $\chi$ is the characteristic formula
of $sh_\Delta(\bbA)$ in $\tau$, we conclude that $\varphi\colon sh_\Delta(\bbA)\to sh_\Delta(\bbB)$
is an embedding (again, via Lemma~\ref{lem:characteristic}).
\end{proof}

Lemma~\ref{lem:preservemb} shows that $\Delta$-reducts preserve embeddings,
the following lemma states that $\Delta$-expansions lift embeddings.

\begin{lemma}\label{lem:closedemulations}
Let $\tau$ and $\sigma$ by a pair of relational signatures, and let $\Delta\colon \QF_\tau \to \QF_\sigma$
be a quantifier-free definition. Consider a pair of $\tau$-structures $\bbA$ and $\bbB$ and
a $\sigma$-structure $\bbB'$ such that $sh_\Delta(\bbB') = \bbB$. If there is an embedding
$\varphi\colon \bbA\to \bbB$ then there is a $\sigma$-structure $\bbA'$ such that 
$sh_\Delta(\bbA') = \bbA$ and  $\varphi\colon \bbA'\to \bbB'$ is an embedding.
\end{lemma}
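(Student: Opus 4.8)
The plan is to build $\bbA'$ by pulling back the $\sigma$-structure of $\bbB'$ along the embedding $\varphi$, exactly as in the discrete-fibration construction underlying the second item of Proposition~\ref{prop:concretefunct}. Recall first that $sh_\Delta$ preserves vertex sets, so $V(\bbB) = V(\bbB')$ and $\varphi$ maps $V(\bbA)$ into $V(\bbB')$. Since $\varphi\colon \bbA\to \bbB$ is an embedding it is injective, so I would take $\bbA'$ to be the $\sigma$-structure with vertex set $V(\bbA') := V(\bbA)$ in which, for every relation symbol $R\in \sigma$ of arity $k$ and every tuple $\overline a\in V(\bbA)^k$, we declare $\overline a\in R(\bbA')$ if and only if $\varphi(\overline a)\in R(\bbB')$. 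By construction $\varphi$ both preserves and reflects every relation of $\sigma$, and it is injective, so $\varphi\colon \bbA'\to \bbB'$ is an embedding of $\sigma$-structures.

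It then remains to check that $sh_\Delta(\bbA') = \bbA$. Since $\varphi\colon \bbA'\to \bbB'$ is an embedding, Lemma~\ref{lem:preservemb} gives that $\varphi\colon sh_\Delta(\bbA')\to sh_\Delta(\bbB')=\bbB$ is an embedding of $\tau$-structures. Now $sh_\Delta(\bbA')$ and $\bbA$ are both $\tau$-structures on the common vertex set $V(\bbA)$, and $\varphi$ embeds each of them into $\bbB$ via the same underlying map. Because embeddings reflect relations, for every $R\in\tau$ and every tuple $\overline a$ of $V(\bbA)$ we have $\overline a\in R(sh_\Delta(\bbA'))$ precisely when $\varphi(\overline a)\in R(\bbB)$, which in turn holds precisely when $\overline a\in R(\bbA)$, the last equivalence using that $\varphi\colon \bbA\to \bbB$ is an embedding. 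Hence $R(sh_\Delta(\bbA')) = R(\bbA)$ for every $R\in\tau$, that is, $sh_\Delta(\bbA')=\bbA$, as required.

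I do not expect a genuine obstacle here. Once Lemma~\ref{lem:preservemb} is in hand, $sh_\Delta\colon \Mod_\sigma\to \Mod_\tau$ is a concrete functor, and this statement is essentially the embedding-lifting (discrete-fibration) property recorded in Proposition~\ref{prop:concretefunct}, specialized to $F = sh_\Delta$; one could even phrase the whole argument as a direct appeal to that proposition. The single point that deserves care is securing the equality $sh_\Delta(\bbA')=\bbA$ on the nose rather than merely up to isomorphism. This is exactly what the choice $V(\bbA')=V(\bbA)$ together with the quantifier-freeness of $\Delta$ guarantees: whether a tuple satisfies a $\Delta$-defined relation depends only on the substructure induced by that tuple, so it is both preserved and reflected by $\varphi$, and the pulled-back interpretations reduce back exactly to those of $\bbA$.
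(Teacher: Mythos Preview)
Your proof is correct and follows essentially the same approach as the paper: define $\bbA'$ as the $\sigma$-substructure of $\bbB'$ pulled back along $\varphi$, then verify $sh_\Delta(\bbA')=\bbA$. The only cosmetic difference is that the paper re-runs the characteristic-formula computation (via Proposition~\ref{prop:shadowsmodel} and Lemma~\ref{lem:characteristic}) directly, whereas you invoke the already-proved Lemma~\ref{lem:preservemb} and compare the two embeddings into $\bbB$; your packaging is arguably cleaner, and your remark that the whole thing is an instance of Proposition~\ref{prop:concretefunct} is on point.
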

\begin{proof}
This statement has a straightforward proof using a similar technique to the proof of  
Lemma~\ref{lem:preservemb}. We include a brief
proof for the interested reader. We first suppose without loss of generality
that $i\colon \bbA\to \bbB$ is an inclusion embedding, i.e., $V(\bbA)\subseteq V(\bbB)$. 
Now we consider the characteristic formula  $\chi$ of $\bbA$ in $\tau$
and let $V(\bbA) = \{a_1,\dots, a_n\}$ and $\overline{a} = (a_1,\dots, a_n)$.
Then, we see that $\bbB\models \chi(\overline{a})$ (by Lemma~\ref{lem:characteristic}),
so $\bbB'\models \Delta(\chi)(\overline{a})$ (by Proposition~\ref{prop:shadowsmodel}).
Which implies that $\bbA'\models \Delta(\chi)(\overline{a})$  where $\bbA' = \bbB[V(\bbA)]$
(by Lemma~\ref{lem:characteristic}).
Then, $sh_\Delta(\bbA') \models \chi(\overline{a})$ (by Proposition~\ref{prop:shadowsmodel}),
so the identity function  $Id_\bbA\colon V(\bbA)\to V(\bbA)$ defines an isomorphism
$Id_\bbA\colon \bbA\to sh_\Delta(\bbA')$ (via Lemma~\ref{lem:characteristic}),  and thus
$\bbA = sh_\Delta(\bbA')$. The claim follows.
\end{proof}

Recall that a hereditary class is a class of structures closed under
induced substructures, equivalently, preserved under inverse-embeddings. Thus,
Lemmas~\ref{lem:preservemb} and~\ref{lem:closedemulations} imply the following  statement.

\begin{proposition}\label{prop:images}
Consider a pair $\tau$ and $\sigma$ of relational signatures and a quantifier-free definition
$\Delta\colon \QF_\tau \to \QF_\sigma$. Then, for every hereditary class
$\calC$ of $\sigma$-structures its $\Delta$-reduct $sh_\Delta[\calC]$
is a hereditary class of $\tau$-structures.
\end{proposition}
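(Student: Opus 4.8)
The plan is to prove directly that $sh_\Delta[\calC]$ is closed under embeddings, using the two preceding lemmas as the only substantive ingredients. Recall from the paper's conventions that a hereditary class is exactly one closed under induced substructures, equivalently one preserved under inverse-embeddings; so it suffices to show that whenever $\bbB\in sh_\Delta[\calC]$ and $\varphi\colon\bbA\to\bbB$ is an embedding of $\tau$-structures, then $\bbA\in sh_\Delta[\calC]$.

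First I would fix such a $\bbB\in sh_\Delta[\calC]$ and an embedding $\varphi\colon\bbA\to\bbB$. By the definition of the image there is a $\sigma$-structure $\bbB'\in\calC$ with $sh_\Delta(\bbB')=\bbB$. This $\bbB'$ is precisely the structure to feed into Lemma~\ref{lem:closedemulations}: since $\varphi\colon\bbA\to sh_\Delta(\bbB')=\bbB$ is an embedding, the lemma supplies a $\sigma$-structure $\bbA'$ with $sh_\Delta(\bbA')=\bbA$ and such that $\varphi\colon\bbA'\to\bbB'$ is an embedding of $\sigma$-structures.

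The second step is to invoke hereditariness of $\calC$: because $\bbA'$ embeds in $\bbB'\in\calC$ and $\calC$ is closed under substructures, we get $\bbA'\in\calC$. Hence $\bbA=sh_\Delta(\bbA')\in sh_\Delta[\calC]$, which is exactly the closure property sought. I would also record the routine point that $sh_\Delta[\calC]$ is closed under isomorphism, so that it is genuinely a class of $\tau$-structures in the paper's sense; this follows since isomorphisms are in particular embeddings and are preserved by $sh_\Delta$ (Lemma~\ref{lem:preservemb}).

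There is essentially no hard step: the entire content has been pushed into the lifting Lemma~\ref{lem:closedemulations}, which is the engine of the argument, while Lemma~\ref{lem:preservemb} serves only as its natural companion at the isomorphism level. The one point requiring minor care is the bookkeeping with the definition of the image $sh_\Delta[\calC]$ --- ensuring that the chosen witness $\bbB'$ for $\bbB$ is the structure fed into the lifting lemma, and that the resulting $\bbA'$ is recognized as an \emph{induced substructure} of $\bbB'$ (via $\varphi$) rather than merely mapping into it, so that heredity of $\calC$ genuinely applies.
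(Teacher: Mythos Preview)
Your proof is correct and is exactly the argument the paper intends: the paper simply states that Lemmas~\ref{lem:preservemb} and~\ref{lem:closedemulations} imply the proposition, and your write-up spells out precisely how, with Lemma~\ref{lem:closedemulations} doing the lifting and heredity of $\calC$ finishing the job.
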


Furthermore, the same Lemmas~\ref{lem:preservemb} and \ref{lem:closedemulations}
imply the following statement.

\begin{proposition}\label{prop:preimages}
Consider a pair $\tau$ and $\sigma$ of relational signatures, and let 
$\Delta\colon \QF_\tau \to \QF_\sigma$ be a quantifier-free definition. If 
$\calC \subseteq sh_\Delta[\Mod_\sigma]$
is a hereditary class, then the full $\Delta$-expansion of $\calC$ is a 
hereditary class. Moreover, if $\calF$ is the set of minimal obstructions of
$\calC$ in  $sh_\Delta[\Mod_\sigma]$, then the full $\Delta$-expansion of $\calF$
is the set of $\sigma$-minimal obstructions of the full $\Delta$-expansion of $\calC$.
\end{proposition}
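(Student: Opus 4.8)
The plan is to recognize this as an instance of Lemma~\ref{lem:preimages} applied to the concrete functor $sh_\Delta$ itself. By Lemma~\ref{lem:preservemb} the map $sh_\Delta$ preserves embeddings and acts as the identity on underlying maps, so it is a concrete functor, and by Proposition~\ref{prop:images} (with $\calC = \Mod_\sigma$) its image $\mathcal{D} := sh_\Delta[\Mod_\sigma]$ is a hereditary class. Taking $F := sh_\Delta\colon \Mod_\sigma \to \mathcal{D}$ and $\mathcal{D}' := \calC \subseteq \mathcal{D}$, the first assertion is precisely that $F^{-1}[\mathcal{D}'] = sh_\Delta^{-1}[\calC]$ is hereditary, and the second is precisely that $F^{-1}[\calF] = sh_\Delta^{-1}[\calF]$ is the set of minimal bounds of $sh_\Delta^{-1}[\calC]$ in $\Mod_\sigma$; both are delivered verbatim by Lemma~\ref{lem:preimages}. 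To keep the argument at the level of $\Delta$-reducts, as the surrounding text suggests, I would then unfold this using only Lemmas~\ref{lem:preservemb} and~\ref{lem:closedemulations}.

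The single fact I would isolate first is that $\Delta$-reducts commute with induced substructures: for every $U \subseteq V(\bbX)$ one has $sh_\Delta(\bbX[U]) = sh_\Delta(\bbX)[U]$. This follows by applying Lemma~\ref{lem:preservemb} to the inclusion $\bbX[U] \hookrightarrow \bbX$ and noting that both $sh_\Delta(\bbX[U])$ and $sh_\Delta(\bbX)[U]$ have vertex set $U$. The hereditary part is then immediate: if $\bbX \in sh_\Delta^{-1}[\calC]$ and $\bbY < \bbX$, then $sh_\Delta(\bbY)$ embeds into $sh_\Delta(\bbX) \in \calC$ by Lemma~\ref{lem:preservemb}, so $sh_\Delta(\bbY) \in \calC$ since $\calC$ is hereditary, whence $\bbY \in sh_\Delta^{-1}[\calC]$.

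For the obstruction equality I would prove, for a $\sigma$-structure $\bbX$, the biconditional ``$\bbX$ is a minimal bound of $sh_\Delta^{-1}[\calC]$ in $\Mod_\sigma$ if and only if $sh_\Delta(\bbX) \in \calF$'', using throughout the translation $\bbX \notin sh_\Delta^{-1}[\calC] \iff sh_\Delta(\bbX) \notin \calC$. If $sh_\Delta(\bbX) \in \calF$, then $sh_\Delta(\bbX) \notin \calC$ gives $\bbX \notin sh_\Delta^{-1}[\calC]$, and for any proper $\bbY < \bbX$ the reduct $sh_\Delta(\bbY)$ is a proper substructure of $sh_\Delta(\bbX)$, hence in $\calC$ by minimality, so $\bbY \in sh_\Delta^{-1}[\calC]$; thus $\bbX$ is a minimal bound. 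Conversely, if $\bbX$ is a minimal bound of $sh_\Delta^{-1}[\calC]$, then $sh_\Delta(\bbX) \in \mathcal{D} \setminus \calC$, and every proper substructure of $sh_\Delta(\bbX)$ equals $sh_\Delta(\bbX)[U] = sh_\Delta(\bbX[U])$ for some $U \subsetneq V(\bbX)$ by the commutation identity; since $\bbX[U] \in sh_\Delta^{-1}[\calC]$ by minimality, each such substructure lies in $\calC$, so $sh_\Delta(\bbX)$ is a minimal bound of $\calC$ in $\mathcal{D}$, i.e.\ $sh_\Delta(\bbX) \in \calF$. Surjectivity of this correspondence onto $\calF$ costs nothing: each $\bbB \in \calF$ lies in $\mathcal{D} = sh_\Delta[\Mod_\sigma]$, hence admits a $\Delta$-expansion, which the forward direction certifies is a minimal bound.

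I expect the only genuine subtlety to be the relative nature of the obstructions, namely that minimality of $sh_\Delta(\bbX)$ is taken inside $\mathcal{D} = sh_\Delta[\Mod_\sigma]$ rather than inside all of $\Mod_\tau$; the proof works precisely because the proper substructures of $sh_\Delta(\bbX)$ are themselves $\Delta$-reducts (via the commutation identity) and so automatically lie in $\mathcal{D}$. The heavy lifting is done by Lemma~\ref{lem:preservemb}; Lemma~\ref{lem:closedemulations} is the tool I would fall back on only if I instead built the converse by lifting a minimal bound of $\calC$ in $\mathcal{D}$ to a $\sigma$-expansion, rather than descending $\bbX[U]$ as above. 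Everything else is routine bookkeeping once the commutation identity is recorded.
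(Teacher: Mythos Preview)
Your proposal is correct and matches the paper's approach: the paper does not spell out a proof but simply states that the result follows from Lemmas~\ref{lem:preservemb} and~\ref{lem:closedemulations}, and your unfolding is precisely such a verification (your first remark, that this is literally Lemma~\ref{lem:preimages} applied to the concrete functor $sh_\Delta$, is the cleanest way to see it). One minor observation: your direct argument via the commutation identity $sh_\Delta(\bbX[U]) = sh_\Delta(\bbX)[U]$ uses only Lemma~\ref{lem:preservemb}, so you in fact establish the statement without invoking Lemma~\ref{lem:closedemulations} at all, which is slightly more economical than what the paper's one-line justification suggests.
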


\begin{corollary}\label{cor:preslocal}
Consider a pair $\tau$ and $\sigma$ of relational signatures, a quantifier-free definition
$\Delta\colon \QF_\tau \to \QF_\sigma$, and a class $\calC \subseteq sh_\Delta[\Mod_\sigma]$.
Then, $\calC$ is a local class relative to $sh_\Delta[\Mod_\sigma]$ 
if and only if  its full $\Delta$-expansion, $sh_\Delta^{-1}[\calC]$, is a local class.
\end{corollary}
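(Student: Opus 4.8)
The plan is to reduce the equivalence to a finiteness count of minimal obstructions and then to exploit that $sh_\Delta$ does not alter the vertex set of a structure. Set $\calD = sh_\Delta[\Mod_\sigma]$, which is a hereditary class of $\tau$-structures by Proposition~\ref{prop:images}; note that in either direction of the stated equivalence the class $\calC$ is hereditary (a local class is always hereditary, and when $sh_\Delta^{-1}[\calC]$ is local we recover $\calC$ as its $\Delta$-reduct, which is hereditary by Proposition~\ref{prop:images}), so Proposition~\ref{prop:preimages} applies. By the characterization of relative locality in terms of bounds, $\calC$ is local relative to $\calD$ if and only if the set $\calF$ of minimal bounds of $\calC$ in $\calD$ is finite up to isomorphism; and by Observation~\ref{obs:localfinite}, $sh_\Delta^{-1}[\calC]$ is a local class if and only if its set of minimal bounds in $\Mod_\sigma$ is finite. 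Crucially, Proposition~\ref{prop:preimages} identifies this latter set of minimal bounds with the full $\Delta$-expansion $sh_\Delta^{-1}[\calF]$. Thus the whole statement collapses to the claim that $\calF$ is finite up to isomorphism if and only if $sh_\Delta^{-1}[\calF]$ is.

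First I would establish that preimages of finite structures are finite. For any $\sigma$-structure $\bbB'$ the reduct $sh_\Delta(\bbB')$ has vertex set $V(\bbB')$, so every member of $sh_\Delta^{-1}[\bbB]$ has the same (finite) vertex set as $\bbB$. Since $\sigma$ is a finite relational signature, there are only finitely many $\sigma$-structures on a fixed finite vertex set, whence $sh_\Delta^{-1}[\bbB]$ is finite for every single $\tau$-structure $\bbB$. This immediately yields the forward implication: if $\calF$ is finite, then $sh_\Delta^{-1}[\calF]$ is a finite union of finite sets and hence finite, so $sh_\Delta^{-1}[\calC]$ is finitely bounded and therefore local.

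For the converse I would argue by contraposition, relying on the fact that the bounds of $\calC$ in $\calD$ live inside $\calD = sh_\Delta[\Mod_\sigma]$, so every $\bbB \in \calF$ has nonempty preimage under $sh_\Delta$. As $sh_\Delta$ preserves embeddings (Lemma~\ref{lem:preservemb}), it preserves isomorphism type, so structures lying over non-isomorphic members of $\calF$ are themselves non-isomorphic; hence an infinite $\calF$ produces infinitely many pairwise non-isomorphic structures in $sh_\Delta^{-1}[\calF]$. Therefore $\calF$ infinite forces $sh_\Delta^{-1}[\calF]$ infinite, completing the reduction. I do not expect a genuine obstacle here: all the real work is packaged in Proposition~\ref{prop:preimages}, and the only point to keep straight is the asymmetry that minimal bounds are measured relative to $\calD$ on the $\tau$-side but absolutely on the $\sigma$-side --- precisely the mismatch that Proposition~\ref{prop:preimages} is built to absorb.
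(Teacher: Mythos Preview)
Your proof is correct and follows the same route the paper intends: the corollary is stated immediately after Proposition~\ref{prop:preimages} without a separate argument, so the paper's implicit proof is precisely to read off the minimal bounds of $sh_\Delta^{-1}[\calC]$ as $sh_\Delta^{-1}[\calF]$ and then observe that finiteness is preserved in both directions because $sh_\Delta$ fixes vertex sets and the signatures are finite. Your write-up merely spells out the two finiteness directions and the heredity of $\calC$ more explicitly than the paper does, but there is no substantive difference in strategy.
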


Corollary~\ref{cor:preslocal} asserts that certain relative localness of
classes of $\tau$-structures relates to (absolute) localness of their full $\Delta$-expansions.
On the contrary, it is not hard to find examples of quantifier-free definitions
$\Delta\colon \QF_\tau\to \QF_\sigma$ together with a  local class $\calC$ of
$\sigma$-structures whose $\Delta$-reduct is not a local class --- actually, such examples are
the motivation for studying local expressions of graph classes. Moreover, one can choose $\calC$
to be the class of all $\sigma$-structures.

\begin{example}
    Let $\tau$ be the signatures of graphs $\{E\}$, and $\sigma = \{E,U\}$ where $U$
    is a unary symbol. Consider the quantifier-free definition
    $\Delta\colon \QF_\tau\to \QF_\sigma$ defined by mapping $E$ to 
    \[
    (E(x,y) \lor E(y,x)) \land U(x)\land \lnot U(y) \bigvee (E(x,y) \lor E(y,x)) \land \lnot U(x)\land U(y).
    \]
    One can think of $\sigma$-structures as $2$-vertex coloured digraphs, and for
    each $\sigma$-structure $\mathbb D$ its $\Delta$-reduct is obtained by taking 
    the symmetric closure of the edge relation, then removing all edges incident in
    monochromatic vertices, and then forgetting the vertex colouring. Thus, the
    $\Delta$-reduct of the class of $\sigma$-structures is contained in the class
    of bipartite graphs. Actually, it is not hard to notice that $sh_\Delta[\Mod_\sigma]$
    is the class of bipartite graphs. Thus, the $\Delta$-reduct of the class of
    all $\sigma$-structures is not a local class.
\end{example}


\subsection{Category of quantifier-free definitions}
\label{sec:CatEmul}

Proposition~\ref{prop:insurduality} asserts that a quantifier-free definition
$\Delta\colon \QF_\tau \to \QF_\sigma$ is logically injective if and only
if $sh_\Delta\colon \Mod_\sigma\to \Mod_\tau$ is a surjective mapping. This
suggests  certain duality between quantifier-free definitions and quantifier-free reducts.
In this subsection we explore this duality in terms of category theory.

Consider three relational signatures $\tau$, $\sigma$, and $\pi$, and a
pair of quantifier-free definitions $\Gamma\colon \QF_\tau \to \QF_\sigma$ and $\Delta\colon
\QF_\sigma \to \QF_\pi$.  Recall that for any quantifier-free $\tau$-formula
$\phi$ with $n$ free variables, its image $\Gamma(\phi)$ is a quantifier-free
$\sigma$-formula  with $n$ free variables, and in turn, the $\pi$-formula
$\Delta(\Gamma(\phi))$ also has $n$ free variables. 
Thus, $\Delta \circ \Gamma\colon \QF_\tau\to \QF_\pi$ is a quantifier-free
$\pi$-definition of $\tau$, i.e., quantifier-free definitions compose.

To provide a simple example of composition of quantifier-free definitions, 
consider the complement qf definition $CO_\tau$
(Example~\ref{ex:complement-definition}). Given a relation symbol $R\in \tau$
we evaluate $CO_\tau \circ CO_\tau(R)$ by the
equalities 
\[
CO_\tau(CO_\tau(R)) = CO_\tau(\lnot R) = \lnot(CO_\tau(R)) = \lnot CO_\tau(R) = \lnot\lnot R.
\]
Thus $CO_\tau \circ CO_\tau(R)$ is logically equivalent to $R$, and so,
$CO_\tau\circ CO_\tau\sim Id_\tau$. 

As one would expect, composing quantifier-free definitions commutes with
composing quantifier-free reducts, strengthening the evidence of a
duality between qf definitions and qf reducts suggested by
Proposition~\ref{prop:insurduality}.

\begin{proposition}\label{prop:composition}
Consider three signatures $\tau$, $\sigma$, $\pi$, and a pair of quantifier-free 
definitions $\Gamma\colon \QF_\tau \to \QF_\sigma$ and $\Delta\colon \QF_\sigma\to \QF_\pi$.
For each $\pi$-structure $\bbA$ the following equality holds
\[
sh_{\Delta\circ \Gamma}(\bbA) = sh_\Gamma\circ sh_\Delta(\bbA).
\]
\end{proposition}
\begin{proof}
For any $\pi$-structure $\bbA$, the vertex sets of $sh_{\Delta\circ \Gamma}(\bbA)$
and of $sh_\Gamma(sh_\Delta(\bbA))$ are the same. Now,  consider a relation symbol 
$R\in \tau$. By definition of $sh_{\Delta\circ \Gamma}(\bbA)$, a tuple $\overline{a}$
of vertices  of $\bbA$ belongs to the interpretation $R(sh_{\Delta\circ \Gamma}(\bbA))$
if and only if  $\bbA\models \Delta\circ \Gamma(R)$. On the other hand,
$\overline{a}$ belongs to the interpretation of $R$  in $sh_\Gamma (sh_\Delta(\bbA))$ if
and only if $sh_\Delta(\bbA)\models \Gamma(R)(\overline{a})$. 
Since $\Gamma(R)$ is a quantifier-free formula of $\sigma$, we apply 
Proposition~\ref{prop:shadowsmodel} to $\Delta$,
and conclude that $sh_\Delta(\bbA)\models \Gamma(R)(\overline{a})$ if and only if
$\bbA\models \Delta(\Gamma(R))(\overline{a})$. Therefore, the interpretations of
$R$ in $sh_{\Delta\circ \Gamma}(\bbA)$ and in $sh_\Gamma(sh_\Delta(\bbA))$ are the
same for every relation symbol $R\in \tau$. Thus, $sh_{\Delta\circ \Gamma}(\bbA) =
sh_\Gamma(sh_\Delta(\bbA))$.
\end{proof} 

It is not hard to notice that the composition of quantifier-free definitions is associative,
and that for any quantifier-free definition $\Delta\colon \QF_\tau \to \QF_\sigma$
the equalities $Id_\tau \circ \Delta = \Delta$ and $\Delta \circ Id_\sigma = \Delta$ hold. Thus,
we define the \textit{category of quantifier-free definitions} whose objects  are  finite relational
signatures, and given a pair $\tau$ and $\sigma$ of relational signatures the morphisms
from $\tau$ to $\sigma$ are logically equivalence classes $[\Delta]_\sim$ of quantifier-free
definitions $\Delta\colon \tau \to \QF_\sigma$. We represent an equivalence class by any
qf definition in it, and we denote the category of quantifier-free definitions by
$\mathbf{QF}$.

Lemma~\ref{lem:preservemb}, shows that given a quantifier-free definition
$\Delta\colon \QF_\tau \to \QF_\sigma$, the mapping
$sh_\Delta\colon \Mod_\sigma\to \Mod_\tau$ defined by $\Delta$-reducts commutes
with embeddings. Given a quantifier-free definition  $\Delta\colon \QF_\tau \to \QF_\sigma$,
we will abuse notation and consider $sh_\Delta\colon \Mod_\sigma\to \Mod_\tau$
as the concrete functor defined by $sh_\Delta\colon \Mod_\sigma\to \Mod_\tau$ on objects
and $sh_\Delta(\varphi) = \varphi$ for each embedding of $\sigma$-structures.
For instance, if $\Delta\colon \QF_\tau \to \QF_\sigma$ is an inclusion then
$sh_\Delta$  is the forgetful functor $sh_\tau \colon \Mod_\sigma \to \Mod_\tau$. 

Since a pair of concrete functors $F\colon \mathcal{C}\to \mathcal{D}$
and $G\colon \mathcal{C}\to \mathcal{D}$ satisfy
that $F(\varphi) = \varphi = G(\varphi)$ for every embedding $\varphi$, 
then $F = G$ if and only if for every $C\in\mathcal{C}$ the equality 
$F(C) = G(C)$ holds. Thus, the following corollary is essentially a restatement
of Observation~\ref{obs:logicaleq}.

\begin{corollary}\label{cor:logequivemul}
Consider a pair $\tau, \sigma$ of relational signatures, and a pair
of quantifier-free definitions $\Gamma,\Delta\colon \QF_\tau \to \QF_\sigma$.
Then, $\Gamma \sim \Delta$ if and only if the functors
$sh_\Gamma\colon\Mod_\sigma \to \Mod_\tau$
and $sh_\Delta\colon\Mod_\sigma \to \Mod_\tau$ are equal.
\end{corollary}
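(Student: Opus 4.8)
The plan is to obtain the claim as a direct chaining of two facts already available: the object-level characterization of logical equivalence in Observation~\ref{obs:logicaleq}, and the general principle (stated in the paragraph immediately preceding this corollary) that two concrete functors sharing a domain and a codomain are equal precisely when they agree on objects. So I would not prove anything new; I would simply splice these two equivalences together.

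First I would recall that by Lemma~\ref{lem:preservemb} both $sh_\Gamma$ and $sh_\Delta$ are genuine concrete functors $\Mod_\sigma \to \Mod_\tau$, so each of them sends every embedding $\varphi$ to $\varphi$ itself. Consequently they automatically agree on all morphisms, and the cited principle yields that $sh_\Gamma = sh_\Delta$ as functors if and only if $sh_\Gamma(\bbA) = sh_\Delta(\bbA)$ for every $\sigma$-structure $\bbA$. Next I would invoke Observation~\ref{obs:logicaleq}, which asserts exactly that $\Gamma \sim \Delta$ holds if and only if $sh_\Gamma(\bbA) = sh_\Delta(\bbA)$ for every $\sigma$-structure $\bbA$. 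Composing these two equivalences gives $\Gamma \sim \Delta$ iff $sh_\Gamma = sh_\Delta$, which is the claim.

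There is no genuine obstacle here; the only point worth stating carefully is that the object-wise equality appearing in Observation~\ref{obs:logicaleq} is literally the hypothesis needed to conclude equality of concrete functors, which is legitimate precisely because concrete functors are forced to fix morphisms. Should one prefer a self-contained argument in place of citing Observation~\ref{obs:logicaleq}, I would unfold it through Proposition~\ref{prop:shadowsmodel}: for a fixed $\sigma$-structure $\bbA$, a relation symbol $R \in \tau$ of arity $k$, and a tuple $\overline{a} \in V(\bbA)^k$, one has $\overline{a} \in R(sh_\Gamma(\bbA))$ iff $\bbA \models \Gamma(R)(\overline{a})$, and likewise for $\Delta$. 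Hence the interpretations of $R$ in $sh_\Gamma(\bbA)$ and $sh_\Delta(\bbA)$ coincide for all $\bbA$ exactly when $\Gamma(R) \leftrightarrow \Delta(R)$, and ranging over all $R \in \tau$ this is precisely the condition $\Gamma \sim \Delta$.
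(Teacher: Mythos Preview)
Your proposal is correct and follows essentially the same approach as the paper: the paper states that the corollary is ``essentially a restatement of Observation~\ref{obs:logicaleq}'' together with the observation (made in the paragraph immediately preceding it) that concrete functors are equal iff they agree on objects, which is precisely the two-step chaining you describe. Your optional unfolding via Proposition~\ref{prop:shadowsmodel} is also fine and mirrors how the paper justifies Observation~\ref{obs:logicaleq} itself.
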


Denote by $\mathbf{Mod}$ the category whose objects are categories of relational
structures $\Mod_\tau$ (whose morphisms are embeddings) where $\tau$ ranges over
all finite relational signatures, and the morphisms of $\mathbf{Mod}$ are concrete
functors between these categories. We define a contravariant functor
$\mathbf{sh\colon QF\to Mod}$ as follows.
For every relational signature $\tau$ its image $\mathbf{sh}(\tau)$ is the category
$\Mod_\tau$, and for every $\sim$-class $[\Delta]_\sim$ of quantifier-free
$\sigma$-definitions of $\tau$, we define $\mathbf{sh}([\Delta]_\sim)$ to be 
$sh_\Delta\colon \Mod_\sigma \to \Mod_\tau$.
The fact that $\mathbf{sh}([\Delta]_\sim)$ is well defined on $\sim$-classes follows
from Corollary~\ref{cor:logequivemul}, and the  fact that it is a contravariant functor
follows from Proposition~\ref{prop:composition}. Furthermore, 
Corollary~\ref{cor:logequivemul} also shows that $\mathbf{sh\colon QF\to Mod}$
is a faithful contravariant functor, i.e., it is injective on morphisms. We put these
brief observations together in the following statement.

\begin{proposition}\label{prop:contravariantfb}
The functor $\mathbf{sh\colon QF\to Mod}$ is a faithful contravariant 
functor bijective on objects.
\end{proposition}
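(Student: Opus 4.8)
The plan is to assemble the statement from the verification results established earlier in this subsection, since the assertion bundles together four separate claims about $\mathbf{sh}$: that it is well defined on $\sim$-classes of quantifier-free definitions, that it respects identities and (contravariant) composition, that it is faithful, and that its object assignment $\tau \mapsto \Mod_\tau$ is a bijection. First I would record that a morphism $\tau \to \sigma$ in $\mathbf{QF}$ is by construction a $\sim$-class $[\Delta]_\sim$ of quantifier-free $\sigma$-definitions of $\tau$, and that $\mathbf{sh}$ sends it to the concrete functor $sh_\Delta\colon \Mod_\sigma \to \Mod_\tau$; the point is to check that nothing in this recipe depends on unmentioned choices.

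For well-definedness, I would invoke Corollary~\ref{cor:logequivemul}: if $[\Gamma]_\sim = [\Delta]_\sim$, i.e.\ $\Gamma \sim \Delta$, then $sh_\Gamma = sh_\Delta$, so $\mathbf{sh}([\Delta]_\sim)$ does not depend on the chosen representative. For the functoriality, two routine checks remain. Identities are handled by noting that $Id_\tau\colon \QF_\tau \to \QF_\tau$ sends each relation symbol $R$ to itself, whence for every $\tau$-structure $\bbA$ the interpretation $R(sh_{Id_\tau}(\bbA)) = \{\overline a : \bbA \models R(\overline a)\} = R(\bbA)$, so $sh_{Id_\tau} = Id_{\Mod_\tau}$. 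Contravariant composition is exactly Proposition~\ref{prop:composition}: for composable quantifier-free definitions $\Gamma\colon \QF_\tau \to \QF_\sigma$ and $\Delta\colon \QF_\sigma \to \QF_\pi$, whose composite in $\mathbf{QF}$ is $[\Delta \circ \Gamma]_\sim\colon \tau \to \pi$, that proposition yields $sh_{\Delta\circ\Gamma} = sh_\Gamma \circ sh_\Delta$, which is precisely $\mathbf{sh}([\Delta \circ \Gamma]_\sim) = \mathbf{sh}([\Gamma]_\sim)\circ \mathbf{sh}([\Delta]_\sim)$, the contravariant composition law. Together these show $\mathbf{sh}$ is a contravariant functor.

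Faithfulness is again Corollary~\ref{cor:logequivemul}, read in the other direction: if $\mathbf{sh}([\Gamma]_\sim) = \mathbf{sh}([\Delta]_\sim)$, i.e.\ $sh_\Gamma = sh_\Delta$, then $\Gamma \sim \Delta$, so $[\Gamma]_\sim = [\Delta]_\sim$; hence $\mathbf{sh}$ is injective on each hom-set. For bijectivity on objects, surjectivity is immediate because the objects of $\mathbf{Mod}$ are defined to be exactly the categories $\Mod_\tau$ as $\tau$ ranges over finite relational signatures. Injectivity of $\tau \mapsto \Mod_\tau$ follows from the observation that distinct finite signatures determine distinct classes of objects: a $\tau$-structure records precisely one interpretation per symbol of $\tau$ together with its arity, so $\tau$ can be recovered from $\Mod_\tau$ (for instance by inspecting the one-element structures and their relational content), and thus $\Mod_\tau = \Mod_\sigma$ forces $\tau = \sigma$.

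I expect essentially no serious obstacle, as the substantive work has already been done in Observation~\ref{obs:logicaleq}, Corollary~\ref{cor:logequivemul}, and Proposition~\ref{prop:composition}; the only mildly delicate point is the injectivity half of bijectivity on objects, where one must argue that the passage from a signature to its category of structures loses no information, which I would phrase carefully to avoid conflating equality of categories with mere equivalence.
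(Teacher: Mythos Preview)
Your proposal is correct and follows essentially the same approach as the paper: the paper's argument is the brief paragraph immediately preceding the proposition, which likewise cites Corollary~\ref{cor:logequivemul} for well-definedness and faithfulness and Proposition~\ref{prop:composition} for contravariance, while taking bijectivity on objects as evident from the definition of $\mathbf{Mod}$. You have simply filled in the identity check and the injectivity-on-objects argument that the paper leaves implicit.
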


The main result of this section asserts that $\mathbf{sh\colon Em\to Mod}$
is a also full contravariant functor, equivalently, for every concrete functor
$F\colon \Mod_\sigma\to \Mod_\tau$ there is a quantifier-free $\sigma$-definition
of $\tau$, $\Delta\colon \QF_\tau \to \QF_\sigma$, such that $sh_\Delta(\bbA) = F(\bbA)$ for
every $\sigma$-structure $\bbA$.  
We begin by studying a pair of universal properties in $\mathbf{QF}$  and $\mathbf{Mod}$.\\

\noindent \textbf{Initial and terminal objects.}
Let $\tau$ be a relational signature, and recall that $\Omega$ denotes the empty
signature.  It is not hard to notice that the unique quantifier-free $\tau$-definition of
$\Omega$ is the function $\varnothing_\tau\colon \QF_\Omega \to \QF_\tau$ that maps a
quantifier-free $\Omega$-formula $\phi$ (i.e., a conjunction and disjunction of equalities)
to $\phi$. We call the $\varnothing_\tau$ the \textit{empty quantifier-free}  $\tau$-definition.
This shows that the initial object in $\mathbf{QF}$ is the empty signature
$\Omega$. Dually, the category $Set$ of sets with injective functions
is the terminal object of $\mathbf{Mod}$. Indeed, for every relational signature
$\tau$ the unique concrete functor from $\Mod_\tau$ to $Set$ is
the forgetful functor. It is clear that $\mathbf{sh}(\Omega) = Set$
and $\mathbf{sh}([\varnothing_\tau]_\sim)$ is the forgetful functor
$sh_\Omega\colon \Mod_\tau\to Set$ for any relational signature $\tau$.\\

\noindent \textbf{Direct sum in $\mathbf{QF}$.}
It is not hard to notice that the category of quantifier-free definitions has direct sums.
Indeed, given a pair  of relational signatures $\tau,\sigma$ consider their
disjoint union $\tau\sqcup \sigma$, and for $\theta\in \{\tau,\sigma\}$ consider
the quantifier-free definition $i_\theta\colon \theta\hookrightarrow \tau\sqcup \sigma$
defined by the inclusion, i.e., $i_\theta(R) = R$ for each  $R\in \theta$. To see
that $\tau \sqcup \sigma$ together with $i_\tau$ and $i_\sigma$
is the direct sum of $\tau$ and $\sigma$, consider a relational signature
$\pi$, and a pair of qf definitions $\Gamma\colon \QF_\tau \to \QF_\pi$
and $\Delta\colon \QF_\sigma \to \QF_\pi$. We denote by  $\Gamma\oplus \Delta$
the Boolean algebra homomorphism defined by $R\mapsto \Gamma(R)$ if $R\in \tau$
and $R\mapsto \Delta(R)$ if $R\in \sigma$. It is straightforward to verify that
the following diagram commutes. 

\begin{center}
\begin{tikzcd}[column sep=1.8cm, row sep=1.1cm]
  & \QF_\pi  &  \\
  \QF_\tau \arrow[hookrightarrow, r, "i_\tau", swap] \arrow[ru, "\Gamma"] & \QF_{\tau \sqcup \sigma}\arrow[u, %
 "\Gamma \oplus \Delta", swap, dashed] &
 \QF_\sigma \arrow[hookrightarrow, l, "i_\sigma"] \arrow[lu, "\Delta", swap]
\end{tikzcd}
\end{center}

In order to show that $\tau \sqcup \sigma$ together with $i_\tau$ and $i_\sigma$ is
the direct sum of $\tau$ and $\sigma$, we still need to prove that $\Delta\oplus \Gamma$ is
unique up to logical equivalence. The reader may convince themself and skip the rest
of this paragraph. Suppose that $\Pi \colon \QF_{\tau \sqcup \sigma} \to \QF_\pi$ is a quantifier-free
definition such that $\Pi \circ i_\tau \sim \Gamma$ and $\Pi \circ i_\sigma
\sim \Delta$.  Since $i_\theta(R) = R\in \tau\sqcup \sigma$ for every $\theta\in \{\tau,\sigma\}$
and every relation symbol $R\in \theta$, it follows that $\Pi \circ i_\tau\sim \Gamma$
and $\Pi \circ i_\sigma \sim \Delta$ if and only if  $\Pi(R) \sim \Gamma(R)$ for
every $R\in \tau$ and $\Theta(R)\sim \Delta(R)$ for every $R\in \sigma$. By definition
of $\Gamma \oplus \Delta$, this implies that  $\Pi \sim \Gamma \oplus \Delta$.\\

\noindent \textbf{Direct product in $\mathbf{Mod}$.}
The dual universal property of the direct sum is the direct product. Similar as above, 
it is not hard to notice that the direct product of a pair of categories 
$\Mod_\tau$ and $\Mod_\sigma$ in the category $\mathbf{Mod}$, is the category
$\Mod_{\tau\sqcup \sigma}$ together with the forgetful functors $sh_\tau\colon
\Mod_{\tau\sqcup \sigma} \to \Mod_\tau$ and $sh_\sigma\colon \Mod_{\tau\sqcup \sigma}
\to \Mod_\sigma$.  To observe this simple assertion consider category $\Mod_\pi$, and a
pair of concrete functors $F\colon \Mod_\pi \to \Mod_\tau$ and $G\colon \Mod_\pi
\to \Mod_\sigma$. Now, we describe the unique functor $F\times G\colon \Mod_\pi\to
\Mod_{\tau\sqcup \sigma}$ that makes the following diagram commute. 

\begin{center}
\begin{tikzcd}[column sep=1.8cm, row sep=1.1cm]
  & \Mod_\pi \arrow[dl, "F", swap] \arrow[d, "F\times G", dashed]
 \arrow[dr, "G"] &  \\
 \Mod_\tau & \Mod_{\tau\sqcup \sigma} \arrow[l, "sh_\tau"] \arrow[r, "sh_\sigma", swap]
 & \Mod_\sigma
\end{tikzcd}
\end{center}

Let $\bbA$ be a $\pi$-structure.  The vertex set of $(F\times G)(\bbA)$ is
$V(\bbA)$ and  for every $R\in \tau \sqcup \sigma$ the interpretation of $R$ in
$(F\times G)(\bbA)$ is the interpretation  $R(F(\bbA))$ if $R\in \tau$, and
if $R\in \sigma$ the interpretation  of $R$ in $(F\times G)(\bbA)$ is $R(G(\bbA))$.
From this construction, it is evident that $F\times G$ is the unique functor that makes
the previous diagram commute. Furthermore, if there is a pair of quantifier-free definitions
$\Gamma\colon \QF_\tau\to \QF_\pi$ and $\Delta\colon \QF_\sigma \to \QF_\pi$ such that
$F = sh_\Gamma$ and $G = sh_\Delta$, then $F\times G = sh_{\Gamma \oplus \Delta}$. We state this
fact in the following lemma without proof since it is a direct implication of the definition
of $F\times G$ and $\Gamma \oplus \Delta$.

\begin{lemma}\label{lem:product}
Consider a pair of concrete functors $F\colon \Mod_\pi \to \Mod_\tau$
and $G\colon \Mod_\pi \to \Mod_\sigma$. If there is a pair of quantifier-free
definitions $\Gamma\colon \QF_\tau\to \QF_\pi$ and $\Delta\colon \QF_\sigma \to \QF_\pi$
such that $F = sh_\Gamma$ and $G = sh_\Delta$, then $F\times G = sh_{\Gamma \oplus \Delta}$.
\end{lemma}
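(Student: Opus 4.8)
The plan is to verify the equality of the two concrete functors objectwise. By the remark made just before Corollary~\ref{cor:logequivemul}, two concrete functors $\Mod_\pi \to \Mod_{\tau\sqcup\sigma}$ that agree on every object must coincide, since both act as the identity on embeddings. Hence it suffices to show that $(F\times G)(\bbA) = sh_{\Gamma\oplus\Delta}(\bbA)$ for an arbitrary $\pi$-structure $\bbA$. First I would record that both $F\times G$ and $sh_{\Gamma\oplus\Delta}$ preserve the vertex set: for $sh_{\Gamma\oplus\Delta}$ this is immediate from the definition of a quantifier-free reduct, and for $F\times G$ it is built into its construction; so both $(F\times G)(\bbA)$ and $sh_{\Gamma\oplus\Delta}(\bbA)$ have vertex set $V(\bbA)$, and it only remains to compare the interpretations of each relation symbol $R\in\tau\sqcup\sigma$.

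Then I would split into the cases $R\in\tau$ and $R\in\sigma$ and unwind the two definitions. Suppose $R\in\tau$ and let $\overline{a}$ be a tuple of vertices of $\bbA$ of the appropriate arity. By the construction of $F\times G$, we have $\overline{a}\in R((F\times G)(\bbA))$ if and only if $\overline{a}\in R(F(\bbA)) = R(sh_\Gamma(\bbA))$, which by the definition of the $\Gamma$-reduct holds if and only if $\bbA\models \Gamma(R)(\overline{a})$. On the other hand, by the definition of the $(\Gamma\oplus\Delta)$-reduct, $\overline{a}\in R(sh_{\Gamma\oplus\Delta}(\bbA))$ if and only if $\bbA\models (\Gamma\oplus\Delta)(R)(\overline{a})$; and since $(\Gamma\oplus\Delta)(R) = \Gamma(R)$ for $R\in\tau$, the two conditions coincide. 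The case $R\in\sigma$ is identical, using $G = sh_\Delta$ and $(\Gamma\oplus\Delta)(R) = \Delta(R)$. As the interpretations agree for every relation symbol, the two structures are equal, whence $F\times G = sh_{\Gamma\oplus\Delta}$.

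There is no genuine obstacle here; the only point requiring care is the appeal to objectwise equality of concrete functors, which is precisely the observation recorded just before Corollary~\ref{cor:logequivemul}. As an alternative route, one could instead argue purely categorically: using $(\Gamma\oplus\Delta)\circ i_\tau \sim \Gamma$ and $(\Gamma\oplus\Delta)\circ i_\sigma \sim \Delta$ together with the contravariance of $sh$ (Proposition~\ref{prop:composition}), one gets $sh_\tau\circ sh_{\Gamma\oplus\Delta} = sh_\Gamma = F$ and $sh_\sigma\circ sh_{\Gamma\oplus\Delta} = sh_\Delta = G$, so $sh_{\Gamma\oplus\Delta}$ makes the defining diagram of the product commute; the conclusion then follows from the uniqueness of $F\times G$. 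I expect the direct objectwise computation to be the cleaner presentation, matching the authors' assessment that the statement is a direct consequence of the definitions of $F\times G$ and $\Gamma\oplus\Delta$.
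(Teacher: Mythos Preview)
Your proposal is correct and matches exactly what the paper has in mind: the paper explicitly omits the proof, stating that the lemma ``is a direct implication of the definition of $F\times G$ and $\Gamma \oplus \Delta$,'' which is precisely the objectwise unwinding you carry out. Your alternative categorical argument via the universal property is also valid and equally in the spirit of the surrounding text.
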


Also, the definition of direct product in $\mathbf{Mod}$ naturally implies
that every concrete functor $F \colon \Mod_\pi \to \Mod_{\tau \sqcup \sigma}$
can be decomposed as the product $(sh_\tau \circ F)\times (sh_\sigma \circ F)$.
With a simple inductive argument, we conclude the following statement. 

\begin{proposition}\label{prop:decomposition}
Let $\sigma$ and $\tau$ be a pair of relational signatures, and $F\colon \Mod_\sigma \to
\Mod_\tau$ be a concrete functor. If $\tau \neq \varnothing$ then
\[
 F = \prod_{R\in \tau} sh_R \circ F
\]
where $sh_R$ denotes the forgetful functor from $\Mod_\tau$ to $\Mod_{\{R\}}$.
\end{proposition}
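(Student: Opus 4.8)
The plan is to argue by induction on the number $|\tau|$ of relation symbols in $\tau$, using as the engine the binary decomposition recorded just before the proposition, namely that any concrete functor into a product signature $\Mod_{\alpha \sqcup \beta}$ splits as $(sh_\alpha \circ F) \times (sh_\beta \circ F)$. For the base case $|\tau| = 1$, say $\tau = \{R\}$, the forgetful functor $sh_R \colon \Mod_\tau \to \Mod_{\{R\}}$ forgets no relation symbol and is therefore the identity on $\Mod_\tau$; hence the one-factor product $\prod_{R \in \tau} sh_R \circ F$ is just $sh_R \circ F = F$, and the claimed equality holds.

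For the inductive step, suppose $|\tau| \geq 2$, fix a symbol $R_0 \in \tau$, and set $\tau' = \tau \setminus \{R_0\}$, so that $\tau = \{R_0\} \sqcup \tau'$ with $\tau'$ nonempty. Since $\Mod_\tau = \Mod_{\{R_0\} \sqcup \tau'}$ is the direct product of $\Mod_{\{R_0\}}$ and $\Mod_{\tau'}$ in $\mathbf{Mod}$, applying the binary decomposition to $F\colon \Mod_\sigma \to \Mod_{\{R_0\} \sqcup \tau'}$ yields
\[
F = (sh_{R_0} \circ F) \times (sh_{\tau'} \circ F).
\]
Now $sh_{\tau'} \circ F \colon \Mod_\sigma \to \Mod_{\tau'}$ is a concrete functor whose codomain signature $\tau'$ is nonempty and strictly smaller than $\tau$, so the induction hypothesis gives $sh_{\tau'} \circ F = \prod_{R \in \tau'} sh_R^{\tau'} \circ (sh_{\tau'} \circ F)$, where $sh_R^{\tau'} \colon \Mod_{\tau'} \to \Mod_{\{R\}}$ is the forgetful functor. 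The one fact I would check explicitly here is the transitivity of forgetting, $sh_R^{\tau'} \circ sh_{\tau'} = sh_R$ for each $R \in \tau'$: both sides are the reduct functor attached to the inclusion $\QF_{\{R\}} \hookrightarrow \QF_\tau$, since the composite of inclusions $\QF_{\{R\}} \hookrightarrow \QF_{\tau'} \hookrightarrow \QF_\tau$ is that inclusion, and Proposition~\ref{prop:composition} converts this composite of definitions into the corresponding composite of reducts. Substituting, $sh_{\tau'} \circ F = \prod_{R \in \tau'} (sh_R \circ F)$, and therefore
\[
F = (sh_{R_0} \circ F) \times \prod_{R \in \tau'} (sh_R \circ F) = \prod_{R \in \tau} sh_R \circ F,
\]
where the last equality is just the associativity of the binary product together with the identification $\{R_0\} \sqcup \tau' = \tau$.

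The argument is essentially pure bookkeeping, so the only points demanding care — and the ones I would state carefully rather than the calculation itself — are the transitivity identity $sh_R^{\tau'} \circ sh_{\tau'} = sh_R$ and the verification that iterating the binary product $\times$ indeed assembles into the single product indexed by $\tau$; both come down to associativity of disjoint union of signatures and the definition of a reduct, so no real difficulty arises beyond keeping the indexing consistent.
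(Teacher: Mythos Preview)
Your proof is correct and follows exactly the approach the paper sketches: the paper simply states that the result follows ``with a simple inductive argument'' from the binary decomposition $F = (sh_\tau \circ F)\times (sh_\sigma \circ F)$, and you have carefully filled in precisely those details, including the transitivity of forgetful functors and the associativity bookkeeping that the paper leaves implicit.
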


In simple words, Proposition~\ref{prop:decomposition} asserts that concrete
functors between categories of relational structures can be decomposed as
the product of functors whose codomains are categories of relational structures
with only one relation symbol.\\

\noindent \textbf{Main result.} Finally, we prove the main result of this
subsection: every concrete functor can be represented by a quantifier-free
definition. The proof will follow from a simple technical lemma, and then
using Proposition~\ref{prop:decomposition}. Before stating and proving this
lemma  we introduce some notation. First, it is not hard
to notice that for any integer $n$, there is a formula with $n$ free variables
which is logically equivalent to $\bot$, i.e., a contradiction. We denote
by $\bot_n$ any such formula. Given a relational signature $\tau$ and a
$\tau$-structure $\bbA$,  we say that a tuple  $\overline{a}$ of $V(\bbA)$
is a \textit{spanning tuple} if $\bbA$ is all vertices of $\bbA$ appear in 
$\overline a$. Furthermore, if $R\in \tau$, we say that $\overline a$
is a \textit{spanning $R$-tuple} if $\overline{a}\in R(A)$ (and $\overline a$
is a spanning tuple). In particular, if $\bbA$ has a spanning $R$-tuple then
the number of vertices of $\bbA$ is bounded by the arity of $R$.

Finally, recall that $\chi_\tau(\bbA)$ denotes the characteristic $\tau$-formula
of $\bbA$. Now, it will be convenient to consider a slight variation of these
formulas. For a $\tau$-structure $\bbA$ and a spanning tuple $\overline a$ of 
$\bbA$ we denote by $\chi_\tau(\bbA,\overline a)$ the conjunction of all
atomic and negated atomic formulas which are true of $\overline a$ in $\bbA$. 
In particular, if $\overline a$ is a spanning $|V(\bbA)|$-tuple of $\bbA$, 
then $\chi_\tau(\bbA,\overline a)$ and $\chi_\tau(\bbA)$ are logical equivalent
formulas. 

\begin{lemma}\label{lem:singularemul}
Consider a pair $\tau,\sigma$ of relational signatures, and a concrete functor
$F\colon \Mod_\sigma \to \Mod_\tau$. If $|\tau| \le 1$, then there is 
a quantifier-free definition $\Delta\colon \QF_\tau \to \QF_\sigma$  such that
$F = sh_\Delta$.
\end{lemma}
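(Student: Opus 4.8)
The plan is to reduce immediately to the case $|\tau| = 1$. If $\tau = \Omega$ is empty then $\Mod_\tau = Set$, the only concrete functor into $Set$ is the forgetful functor, and $F = sh_{\varnothing_\sigma}$, so there is nothing to prove. Hence assume $\tau = \{R\}$ with $R$ of arity $k$. Both $sh_\Delta$ and $F$ are concrete functors, hence equal on embeddings automatically, and $V(sh_\Delta(\bbA)) = V(\bbA) = V(F(\bbA))$ for every $\bbA$ by Proposition~\ref{prop:concretefunct}; so it suffices to produce a quantifier-free $\sigma$-formula $\Delta(R)$ with free variables $x_1,\dots,x_k$ such that for every $\sigma$-structure $\bbA$ and every tuple $\overline{a}\in V(\bbA)^k$ one has $\bbA\models \Delta(R)(\overline{a})$ if and only if $\overline{a}\in R(F(\bbA))$. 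Indeed, this is exactly the defining property of the $\Delta$-reduct.

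The key structural observation is that the truth of $\overline{a}\in R(F(\bbA))$ depends only on the isomorphism type of the pair $(\bbB,\overline{a})$, where $\bbB := \bbA[\{a_1,\dots,a_k\}]$ is the substructure induced by the entries of $\overline{a}$. To see this, let $\iota\colon \bbB\to\bbA$ be the inclusion embedding. Because $F$ is a concrete functor, $F(\iota)=\iota$, so $F(\bbB)$ is the substructure of $F(\bbA)$ induced by $\{a_1,\dots,a_k\}$; since every entry of $\overline{a}$ lies in this set and induced substructures inherit relations by intersection, we get $\overline{a}\in R(F(\bbA))$ if and only if $\overline{a}\in R(F(\bbB))$. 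Moreover, if $\psi\colon \bbB\to\bbB'$ is an isomorphism with $\psi(a_i)=a_i'$, then $\psi$ is an embedding, so $F(\psi)=\psi$ is an isomorphism $F(\bbB)\to F(\bbB')$, whence $\overline{a}\in R(F(\bbB))$ if and only if $\overline{a}'\in R(F(\bbB'))$. As $\sigma$ is finite and $\bbB$ has at most $k$ vertices, there are only finitely many such types.

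With this in hand I would let $\mathcal{T}$ be the finite set of isomorphism types of pairs $(\bbB,\overline{b})$ where $\overline{b}$ is a spanning $k$-tuple of the $\sigma$-structure $\bbB$ (so $\{b_1,\dots,b_k\}=V(\bbB)$) and $\overline{b}\in R(F(\bbB))$, and define $\Delta(R) := \bigvee_{(\bbB,\overline{b})\in\mathcal{T}} \chi_\sigma(\bbB,\overline{b})(x_1,\dots,x_k)$, taking $\Delta(R)=\bot_k$ when $\mathcal{T}$ is empty. For the verification, suppose first $\overline{a}\in R(F(\bbA))$; setting $\bbB=\bbA[\{a_1,\dots,a_k\}]$, the observation gives $\overline{a}\in R(F(\bbB))$ with $\overline{a}$ spanning $\bbB$, so $(\bbB,\overline{a})\in\mathcal{T}$, and since the atomic and negated atomic facts comprising $\chi_\sigma(\bbB,\overline{a})$ are inherited from $\bbA$ by the induced substructure $\bbB$, we obtain $\bbA\models \chi_\sigma(\bbB,\overline{a})(\overline{a})$ and hence $\bbA\models\Delta(R)(\overline{a})$. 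Conversely, if $\bbA\models \chi_\sigma(\bbB,\overline{b})(\overline{a})$ for some $(\bbB,\overline{b})\in\mathcal{T}$, then by Lemma~\ref{lem:characteristic} the map $b_i\mapsto a_i$ is a well-defined isomorphism of $\bbB$ onto $\bbA[\{a_1,\dots,a_k\}]$ (using that $\overline{b}$ spans $\bbB$); transporting $\overline{b}\in R(F(\bbB))$ along this isomorphism as in the observation yields $\overline{a}\in R(F(\bbA))$.

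The main obstacle, and really the only nontrivial step, is the structural observation of the second paragraph: extracting from the bare hypothesis that $F$ is a concrete functor the fact that membership of $\overline{a}$ in $R(F(\bbA))$ is a purely local, isomorphism-invariant condition on the substructure carried by the entries of $\overline{a}$. Everything else is bookkeeping, since the passage from ``depends on finitely many types'' to an explicit quantifier-free formula is routine given the characteristic formulas $\chi_\sigma(\bbB,\overline{b})$ and Lemma~\ref{lem:characteristic}; the hypothesis $|\tau|\le 1$ is used only to avoid packaging several relation symbols simultaneously, which is precisely the role that Proposition~\ref{prop:decomposition} plays afterwards.
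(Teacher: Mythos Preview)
Your proof is correct and follows essentially the same approach as the paper: both construct $\Delta(R)$ as a disjunction of characteristic formulas $\chi_\sigma(\bbB,\overline{b})$ over the finitely many $\sigma$-structures $\bbB$ on at most $k$ vertices together with spanning tuples $\overline{b}\in R(F(\bbB))$, and both verify the two directions via the embedding-preservation of $F$ and Lemma~\ref{lem:characteristic}. Your explicit isolation of the ``locality and isomorphism-invariance'' observation is a slightly cleaner packaging of what the paper does implicitly in its verification paragraphs, but the underlying argument is identical.
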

\begin{proof}
If $|\tau| = 0$, then $\tau$ is the empty signature $\Omega$, and we already
discussed this case when considering initial and terminal objects in $\mathbf{QF}$
and $\mathbf{Mod}$. Now, let $R$ be the unique relation symbol in $\tau$, and let
$r$ be the arity of $R$.
In order to construct a quantifier-free definition $\Delta\colon \QF_\tau \to \QF_\sigma$
we only need to define a quantifier-free $\sigma$-formula $\Delta(R)$. We first consider
the trivial case when every  $\bbA\in F[\Mod_\sigma]$ has an empty interpretation of
$R$.  In this case, simply let $\Delta(R)$ be $\bot_r$, and clearly $F = sh_\Delta$.
Otherwise, notice that there is at least one $\tau$-structure $\bbA\in$ $ F[\Mod_\sigma]$ with
an $R$-spanning tuple. Indeed, we are assuming that there is at least one $\tau$-structure
$\bbA \in F[\Mod_\sigma]$, and $F[\Mod_\sigma]$ is a hereditary class (Lemma~\ref{lem:images}),
thus there is such a $\tau$-structure $\bbA$. For each positive integer $n$, let $SP_n$
be the set of $\tau$-structures on $n$ vertices in $F[\Mod_\tau]$ that have a spanning
$R$-tuple, and let $N$ be the set of integer such that $SP_n \neq \varnothing$. Notice that for every
 $n > r$ the set $SP_n$ is the empty set, so $N$ is a finite non-empty set.
For every $n\in N$ let $\mathcal{S}_n$ be $F^{-1}[SP_n]$. For a $\sigma$-structure $\bbB\in \mathcal S_n$,
we denote by $\phi_\bbB$ the disjunction of all formulas $\chi_\sigma(\bbB,\overline b)$
where $\overline b$ ranges over all spanning $R$-tuples of $F(\bbB)$. It is not hard to notice
that these formulas are well-defined: on the one hand, every spanning $R$-tuple $\overline b$ of
$F(\bbB)$ is a spanning tuple of $F(\bbB)$, and by Proposition~\ref{prop:concretefunct} we know that
$V(\bbB) = V(F(\bbB))$, so $\overline b$ is a spanning tuple of $\bbB$, so each disjunt of $\phi_\bbB$
is well-defined; on the other one,  there are finitely many spanning $R$-tuples of $F(\bbB)$, 
so $\phi_\bbB$ is a finite disjunction.
Finally, we define $\Delta(R)$ as follows.
\[
\Delta(R) = \bigvee_{n\in N} \bigvee_{\bbB\in\mathcal{S}_n} \phi_\bbB.
\]
The formula $\Delta(R)$ is a quantifier-free $\sigma$-formula
since each formula $\phi_\bbB$ is a quantifier-free $\sigma$-formula,
and because $N$ is a finite set, and there are only finitely many non-isomorphic
$\sigma$-structures in $\mathcal{S}_n$ for each $n\in N$.

Now, we want to show that for every $\sigma$-structure $\bbC$, a tuple
$\overline{c}$ of vertices of $\bbC$ belongs to the interpretation $R(F(\bbC))$ if
and only  if $\bbC\models \Delta(R)(\overline{c})$. First suppose
that $\overline{c}$ belongs to $R(F(\bbC))$, and let $\bbB$
be the substructure of $\bbC$ induced by the vertices in $\overline{c}$.
Since $F$ is a concrete functor, it preserves embeddings so $F(\bbB)$ is the
substructure of $F(\bbC)$ induced by the vertices in $\overline{c}$. Thus, 
$\overline{c}$ is a spanning $R$-tuple of $F(\bbB)$ and so, $\bbB\in \mathcal{S}_n$
and clearly $\bbB\models\chi_\sigma(\bbB,\overline c)(\overline{c})$. By definition
of $\Delta(R)$, the latter implies that $\bbB\models \Delta(R)(\overline{c})$. Since
$\Delta(R)$ is a quantifier-free formula, and $\bbB$ is the substructure of $\bbC$
induced by the vertices in $\overline{c}$, it follows by Lemma~\ref{lem:characteristic}
that $\bbC\models \Delta(R)(\overline{c})$.

Conversely, suppose that $\bbC\models \Delta(R)(\overline{c})$ for some tuple
$\overline{c}$ of vertices of $\bbC$. By definition of $\Delta(R)$ there
is an integer $n$, a $\sigma$-structure $\bbB\in \mathcal S_n$, and a spanning
$R$-tuple $\overline b$ of $F(\bbB)$ such that
$\bbC\models \chi_\sigma(\bbB,\overline b)(\overline{c})$. 
Since $\overline{b}$ is a spanning tuple of $B$, it follows via Lemma~\ref{lem:characteristic}
that the mapping $b_i\mapsto c_i$ defines an embedding $\varphi\colon \bbB\to \bbC$.
Since $F$ is a concrete functor, the same function defines an embedding
$\varphi\colon F(\bbB)\to F(\bbC)$. Finally, since $\overline{b}$ is a spanning
$R$-tuple of $R(F(\bbB))$, then $\overline{b}\in R(\bbB)$ and thus $\overline{c}\in R(F(\bbC))$.
This concludes the proof.
\end{proof}

It is straightforward to observe that the following statement holds via a simple
inductive argument using the decomposition stated in Proposition~\ref{prop:decomposition},
together with Lemmas~\ref{lem:product} and~\ref{lem:singularemul}.

\begin{proposition}\label{prop:duality}
Consider a pair $\tau$ and $\sigma$ of relational signatures. If
$F\colon \Mod_\sigma\to \Mod_\tau$ is a concrete functor, then
there is a quantifier-free definition $\Delta\colon \QF_\tau \to
\QF_\sigma$ such that $F = sh_\Delta$.
\end{proposition}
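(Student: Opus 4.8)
The plan is to reduce the general statement to the single-relation case that was already settled in Lemma~\ref{lem:singularemul}, and then to glue the resulting pieces together using the product structure of $\mathbf{Mod}$ recorded in Proposition~\ref{prop:decomposition} and Lemma~\ref{lem:product}. First I would dispose of the degenerate signature $\tau = \Omega$: here $\Mod_\tau = Set$, the only concrete functor $\Mod_\sigma \to Set$ is the forgetful functor, and this is exactly $sh_{\varnothing_\sigma}$ where $\varnothing_\sigma\colon \QF_\Omega \to \QF_\sigma$ is the empty quantifier-free definition discussed above. Hence we may assume $\tau \neq \varnothing$.

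Next I would invoke Proposition~\ref{prop:decomposition}, which expresses $F$ as the product $F = \prod_{R \in \tau} (sh_R \circ F)$, where $sh_R\colon \Mod_\tau \to \Mod_{\{R\}}$ is the forgetful functor onto the one-relation reduct. Each factor $sh_R \circ F\colon \Mod_\sigma \to \Mod_{\{R\}}$ is again a concrete functor, being the composition of concrete functors (both $F$ and the forgetful $sh_R$ fix embeddings, so their composite does too), and its codomain is a category of structures over the singleton signature $\{R\}$. Therefore Lemma~\ref{lem:singularemul} applies to each factor, producing a quantifier-free definition $\Delta_R\colon \QF_{\{R\}} \to \QF_\sigma$ with $sh_R \circ F = sh_{\Delta_R}$.

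Then I would assemble the $\Delta_R$ into a single quantifier-free definition of $\tau$. Since $\tau = \bigsqcup_{R \in \tau} \{R\}$ is the direct sum of the singleton signatures in $\mathbf{QF}$, the universal property of the direct sum yields $\Delta := \bigoplus_{R \in \tau} \Delta_R\colon \QF_\tau \to \QF_\sigma$, the Boolean-algebra homomorphism sending each $R$ to $\Delta_R(R)$. A routine induction on $|\tau|$ using the binary identity of Lemma~\ref{lem:product} shows that the product of the reducts $sh_{\Delta_R}$ is realized by the direct sum of the $\Delta_R$, that is, $\prod_{R \in \tau} sh_{\Delta_R} = sh_{\bigoplus_R \Delta_R}$. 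Combining this with the decomposition of the previous step gives $F = \prod_{R \in \tau} sh_{\Delta_R} = sh_\Delta$, as required.

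The genuine obstacle is really hidden upstream: all the substantive content---turning ``$F$ preserves embeddings'' into an explicit quantifier-free formula by glueing characteristic formulas $\chi_\sigma(\bbB,\overline b)$ over the spanning $R$-tuples of $F(\bbB)$---is carried by Lemma~\ref{lem:singularemul}. Granting that lemma, the only thing to watch at this level is the bookkeeping of the induction: one must check that the binary case $F \times G = sh_{\Gamma \oplus \Delta}$ of Lemma~\ref{lem:product} iterates correctly to an arbitrary finite family, which hinges on the associativity of the direct sum $\oplus$ in $\mathbf{QF}$ and of the product $\times$ in $\mathbf{Mod}$, together with the fact that $\mathbf{sh}$ is a contravariant functor (Proposition~\ref{prop:contravariantfb}).
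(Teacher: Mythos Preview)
Your proposal is correct and follows essentially the same approach as the paper: decompose $F$ via Proposition~\ref{prop:decomposition}, apply Lemma~\ref{lem:singularemul} to each single-relation factor, and reassemble using Lemma~\ref{lem:product} inductively. If anything, you have spelled out more of the details (the base case $\tau = \Omega$ and the induction bookkeeping) than the paper's one-sentence sketch does.
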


Proposition~\ref{prop:contravariantfb} asserts that $\mathbf{sh\colon QF\to Mod}$
is a faithful contravariant functor bijective on objects. Proposition~\ref{prop:duality}
implies that $\mathbf{sh\colon QF\to Mod}$ is a full functor, i.e., surjective on morphisms. 
Recall that given a category $\mathcal D$ we denote by $\mathcal D^{opp}$ the dual
category, i.e., the category obtained from  $\mathcal D$ by reversing all morphisms. 

\begin{theorem}\label{thm:EmMod}
The categories $\mathbf{QF}$ and $\mathbf{Mod}^{opp}$  are isomorphic categories.
\end{theorem}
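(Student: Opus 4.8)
The plan is to assemble the isomorphism from the pieces already established in this section. Recall that an isomorphism of categories requires a functor that is bijective on objects and bijective on morphisms, together with its inverse. I will take the functor $\mathbf{sh}\colon\mathbf{QF}\to\mathbf{Mod}$ and show it gives an isomorphism $\mathbf{QF}\cong\mathbf{Mod}^{opp}$, where passing to the opposite category accounts for $\mathbf{sh}$ being contravariant.

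First I would recall that $\mathbf{sh}$ is a contravariant functor from $\mathbf{QF}$ to $\mathbf{Mod}$; equivalently, it is a (covariant) functor $\mathbf{sh}\colon\mathbf{QF}\to\mathbf{Mod}^{opp}$. By Proposition~\ref{prop:contravariantfb}, this functor is bijective on objects and faithful (injective on morphisms). By Proposition~\ref{prop:duality}, every concrete functor $F\colon\Mod_\sigma\to\Mod_\tau$ arises as $sh_\Delta$ for some quantifier-free definition $\Delta\colon\QF_\tau\to\QF_\sigma$, which says precisely that $\mathbf{sh}$ is surjective on morphisms, i.e.\ full. A functor that is faithful, full, and bijective on objects is an isomorphism of categories, so the result follows.

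The only subtlety to address carefully is the direction of the morphisms. A morphism $[\Delta]_\sim$ in $\mathbf{QF}$ from $\tau$ to $\sigma$ is an equivalence class of quantifier-free definitions $\Delta\colon\QF_\tau\to\QF_\sigma$, and $\mathbf{sh}$ sends it to the concrete functor $sh_\Delta\colon\Mod_\sigma\to\Mod_\tau$, which is a morphism in $\mathbf{Mod}$ from $\Mod_\sigma$ to $\Mod_\tau$ --- the source and target are swapped, confirming contravariance. In $\mathbf{Mod}^{opp}$ this same arrow is a morphism from $\Mod_\tau$ to $\Mod_\sigma$, matching the direction of $[\Delta]_\sim$, so $\mathbf{sh}\colon\mathbf{QF}\to\mathbf{Mod}^{opp}$ is genuinely covariant and respects composition: Proposition~\ref{prop:composition} gives $sh_{\Delta\circ\Gamma}=sh_\Gamma\circ sh_\Delta$, and reversing arrows in $\mathbf{Mod}^{opp}$ turns this into the correct composition law. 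Faithfulness combined with fullness means $\mathbf{sh}$ induces a bijection on each hom-set, and together with bijectivity on objects this produces a two-sided inverse functor, establishing the isomorphism.

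I do not expect a genuine obstacle here, since all the hard work is packaged in Propositions~\ref{prop:contravariantfb} and~\ref{prop:duality}; the main point requiring care is purely bookkeeping, namely verifying that ``faithful, full, and bijective on objects'' does yield an isomorphism of categories (one must check the object-inverse and the hom-set inverses assemble into a functor, which is immediate once composition is respected). I would present the argument as a short synthesis of the two cited propositions, explicitly noting that the inverse sends each concrete functor $sh_\Delta$ back to $[\Delta]_\sim$, well-defined by Corollary~\ref{cor:logequivemul}.
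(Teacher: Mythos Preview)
Your proposal is correct and matches the paper's approach exactly: the paper's proof is the one-line ``Immediate implication of Propositions~\ref{prop:contravariantfb} and~\ref{prop:duality},'' and you have simply unpacked that implication carefully. The extra bookkeeping you supply about contravariance and the opposite category is accurate and would make the argument more readable without changing its substance.
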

\begin{proof}
    Immediate implication of Propositions~\ref{prop:contravariantfb} and~\ref{prop:duality}.
\end{proof}

An application of these results as that concrete functors defined on the class
of all $\sigma$-structures are computable in
polynomial-time. This is, for every concrete concrete functor $F\colon \Mod_\sigma\to \Mod_\tau$
there is a polynomial-time algorithm that, given an input $\sigma$-structure $\bbA$,
computes $F(\bbA)$ in polynomial-time (with respect to $|V(\bbA)|$).

\begin{corollary}\label{cor:duality-pol-computation}
Let $\tau$ and $\sigma$ be a pair of relational signatures. Then, every
concrete functor
$F\colon \Mod_\sigma\to \Mod_\tau$ is computable in polynomial-time.
\end{corollary}
\begin{proof}
It follows from Proposition~\ref{prop:duality} that there is a qf $\tau$-definition
$\Delta\colon\QF_\tau\to \QF_\sigma$ such that $F(\bbA) = sh_\Delta(\bbA)$
for each $\sigma$-structure $\bbA$. Thus, one can efficiently construct $F(\bbA)$ 
by first defining $V(F(\bbA)) = V(\bbA)$ (see Proposition~\ref{prop:concretefunct}),
and the for each $R\in \tau$ using the quantifier-free definition $\Delta(R)$ to 
construct the interpretation of $R$ in $F(\bbA)$. It is straight forward to verify that
this construction can be done in $O(|V(\bbA)|)^r)$ time, where $r$ is the largest arity
of a relation symbol in $\tau$. 
\end{proof}

\subsection{Extensions of concrete functors}

In this brief section, we study extensions of concrete functors $F\colon \calC\to \calD$
defined on a proper subclass of $\sigma$-structures, to concrete functors
$F'\colon \Mod_\sigma\to \Mod_\tau$ defined on the class of all $\sigma$-structures.
In particular, we are interested in extending
Proposition~\ref{prop:duality} to concrete functors $F\colon\mathcal{C\to D}$,
where $\calC$ might be a proper subclass of all $\sigma$-structures. And thus, show
that every concrete functor $F\colon\mathcal{C\to D}$ is computable in polynomial-time. 

Consider a pair $\tau$ and $\sigma$ of relational signatures, and let 
$F\colon\calC\to \calD$ be a concrete functor where $\calC$ and $\calD$
are hereditary classes of $\sigma$-structures and $\tau$-structures,
respectively. We define the \textit{weak extension} $F^\ast\colon\Mod_\sigma
\to \Mod_\tau$ as follows.  Naturally, for every $\bbC\in\calC$, the image
$F^\ast(\bbC)$ equals $F(\bbC)$. Consider a structure
$\bbA\in \Mod_\sigma\setminus\mathcal{C}$.  We define  $F^\ast(\bbA)$ by
considering the induced substructures $\bbC < \bbA$ such that $\bbC\in\mathcal{C}$, 
and for each of these, construct a copy of $F(\bbC)$ in $F^\ast(\bbA)$.
Formally, we define $F^\ast(\bbA)$ as follows.
\begin{itemize}
    \item The vertex set  of $F^\ast(\bbA)$ is the vertex set $V(\bbA)$ of $\bbA$.
    \item For each $R\in \tau$ of arity $r$, there is an  $r$-tuple $\overline a$ of
    $V(F^\ast(\bbA))$ in $R(F^\ast(\bbA))$ if and only if there is a structure
    $\bbC\in \mathcal C$, an $r$-tuple $\overline c$ of $V(\bbC)$, and an embedding
    $\varphi\colon\bbC\to \bbA$ such that: $\varphi(\overline c) = \overline a$,
    and $\overline c\in R(F(\bbC))$.
\end{itemize}
It directly follows from the construction of $F^\ast$ that $F^\ast\colon\Mod_\sigma\to
\Mod_\tau$ preserves embeddings, i.e., $F^\ast$ is a concrete functor. The following
observation can be also proved immediately from the definition of $F^\ast$
and the definition of concrete functors. 

\begin{observation}\label{obs:weakmin}
Consider a pair of hereditary classes $\calC$ and $\calD$ of 
$\sigma$-structures and $\tau$-structures, respectively, 
and let $F\colon\mathcal{C\to D}$ be a concrete functor. 
If $F'\colon\Mod_\sigma \to \Mod_\tau$ is an extension
of $F$ then, for every $\sigma$-structure $\bbA$, and each relation
symbol $R\in \tau$, the inclusion $R(F^\ast(\bbA))\subseteq R(F'(\bbA))$
holds.
\end{observation}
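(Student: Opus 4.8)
The plan is to unfold the definition of $F^\ast$ and then transport the witnessing data through $F'$, using only the defining property of concrete functors together with the hypothesis that $F'$ restricts to $F$ on $\calC$. The statement is essentially a built-in minimality property of the weak extension, so the proof should be short and purely definitional.

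First I would fix a $\sigma$-structure $\bbA$, a relation symbol $R\in\tau$ of arity $r$, and a tuple $\overline a\in R(F^\ast(\bbA))$; the goal is to show $\overline a\in R(F'(\bbA))$. By the construction of $F^\ast$, the membership $\overline a\in R(F^\ast(\bbA))$ comes with a witness: a structure $\bbC\in\calC$, a tuple $\overline c$ of $V(\bbC)$, and an embedding $\varphi\colon\bbC\to\bbA$ with $\varphi(\overline c)=\overline a$ and $\overline c\in R(F(\bbC))$. The next step is to push this witness through $F'$. Since $F'\colon\Mod_\sigma\to\Mod_\tau$ is a concrete functor and $\varphi\colon\bbC\to\bbA$ is an embedding of $\sigma$-structures, we have $F'(\varphi)=\varphi$, and this is an embedding $F'(\bbC)\to F'(\bbA)$ of $\tau$-structures; here I also use $V(\bbA)=V(F'(\bbA))$ from Proposition~\ref{prop:concretefunct} so that $\overline a$ indeed lives in $V(F'(\bbA))$.

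Because $F'$ extends $F$, we have $F'(\bbC)=F(\bbC)$, so in fact $\varphi\colon F(\bbC)\to F'(\bbA)$ is an embedding. Embeddings preserve relations, so the membership $\overline c\in R(F(\bbC))$ transports to $\varphi(\overline c)=\overline a\in R(F'(\bbA))$, which is exactly the desired inclusion.

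I do not expect a genuine obstacle: every ingredient is already in hand, namely that concrete functors fix embeddings on underlying functions and map embeddings to embeddings (Proposition~\ref{prop:concretefunct}), that $F^\ast$ is itself a concrete functor (noted immediately before the statement, so that the comparison with $F'$ is meaningful), and the reading of ``extension'' as $F'|_{\calC}=F$. The only point requiring mild care is ensuring the quantifier structure of the definition of $F^\ast$ is used correctly — that a single witnessing triple $(\bbC,\overline c,\varphi)$ suffices to place $\overline a$ into $R(F'(\bbA))$ — but this is immediate since the definition of $R(F^\ast(\bbA))$ is an existential condition and we only need one witness.
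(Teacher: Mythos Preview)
Your proof is correct and is exactly the definitional unfolding the paper has in mind: the paper does not give a proof at all but simply remarks that the observation ``can be proved immediately from the definition of $F^\ast$ and the definition of concrete functors,'' which is precisely the argument you wrote out.
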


This observation motivated the name ``weak extension''. We provide
a simple example to illustrate that concrete functors might have 
different extensions. 

\begin{example}\label{ex:non-unique-ext}
    Recall that $\DI$ is the class of irreflexive digraphs, and  let $\tau$ be its signature,
    i.e., $\tau = \{E\}$.  Consider the symmetric functor $\hyperlink{DI}{S\colon \mathcal{DI
    \to G}}$ that maps an irreflexive digraph to its symmetric closure. Then,  the weak extension
    $S^\ast\colon\Mod_\tau \to\Mod_\tau$ maps a digraph $D$ (with possible loops) to the
    underlying simple graph of $D$, i.e., $S^\ast$ takes the symmetric closure of $D$ and it
    erases all loops.  Now, consider the functor $S'\colon\Mod_\tau\to \Mod_\tau$ that
    preserves loops and takes the symmetric closure of the non-loop arcs. Clearly, $S'$ also
    extends $\hyperlink{DI}{S}$ and the arc set of $S^\ast(D)$ is a subset of the arc set
    of $S'(D)$ for each digraph $D$ (as Observation~\ref{obs:weakmin} asserts).
\end{example}

In Example~\ref{ex:non-unique-ext}, we were able to construct different extensions
of $S$ because $S$ was not defined on certain ``small'' structures. In general, it
is not hard to create different extensions of a concrete functor $F\colon \calC\to \calD$
when $F$ is not defined on ``small'' structures. In the following proposition, we observe
that only in such cases $F$ has different extensions to $F\colon\Mod_\sigma\to \Mod_\tau$.

\begin{proposition}\label{prop:uniqueext}
Consider a pair  of hereditary classes  $\calC,~\calD$ of  $\sigma$-structures
and $\tau$-structures, respectively, and let $F\colon\mathcal{C\to D}$ be a
concrete functor.  There is a unique extension $F'\colon\Mod_\sigma\to \Mod_\tau$
of $F$ if and only if the arity of every relation symbol in $\tau$ is strictly
larger than the vertex of each  minimal obstruction of $\calC$. In this case,
$F' = F^\ast$. 
\end{proposition}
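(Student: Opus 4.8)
The plan is to reduce every extension of $F$ to purely local data and then read off the obstruction to uniqueness. By Observation~\ref{obs:weakmin}, the weak extension $F^\ast$ is the pointwise-minimal extension of $F$: for every extension $F'\colon\Mod_\sigma\to\Mod_\tau$, every $R\in\tau$ and every $\sigma$-structure $\bbA$ we have $R(F^\ast(\bbA))\subseteq R(F'(\bbA))$. Hence $F$ has a unique extension exactly when $F^\ast$ is also maximal, and in that case the unique extension is $F^\ast$, which settles the last sentence. So the proposition reduces to deciding when some extension $F'$ can place a tuple into a relation that $F^\ast$ leaves out.

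First I would show that an extension $F'$ is completely determined by isomorphism-invariant \emph{atomic} choices. Fix $R\in\tau$ of arity $r$, a $\sigma$-structure $\bbA$, and an $r$-tuple $\overline a$, and let $\bbB=\bbA[\overline a]$ be the substructure induced by the entries of $\overline a$; then $\overline a$ is a spanning tuple of $\bbB$ and $|V(\bbB)|\le r$. Since $F'$ is a concrete functor, the inclusion $\bbB\hookrightarrow\bbA$ is sent to an embedding $F'(\bbB)\hookrightarrow F'(\bbA)$, and embeddings of $\tau$-structures both preserve and \emph{reflect} relations; therefore $\overline a\in R(F'(\bbA))$ if and only if $\overline a\in R(F'(\bbB))$. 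Thus membership of a tuple depends only on the isomorphism type of the pointed structure $(\bbB,\overline a)$. Two cases arise. If $\bbB\in\calC$, then $F'(\bbB)=F(\bbB)$ is forced, so every extension, including $F^\ast$, takes the same value here. If $\bbB\notin\calC$ while $\overline a$ spans $\bbB$, then no embedding from a structure of $\calC$ can have image $\bbB$ (such an embedding would be an isomorphism onto $\bbB$), so by the definition of the weak extension $\overline a\notin R(F^\ast(\bbB))$; on the other hand the value of $F'$ on this pointed structure is a priori unconstrained.

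Conversely I would verify that any isomorphism-invariant assignment of truth values to the pointed structures $(\bbB,\overline b)$ with $\bbB\notin\calC$ and $\overline b$ spanning, together with the forced values on $\calC$, produces a genuine concrete functor extending $F$: declaring $\overline a\in R(F'(\bbA))$ via the isomorphism type of $(\bbA[\overline a],\overline a)$ makes every embedding preserve and reflect relations automatically, and restricts to $F$ on $\calC$ (using that $F$ is concrete and $\calC$ is hereditary, so non-spanning tuples of a structure of $\calC$ are pinned down by its proper substructures). Hence extensions of $F$ are in bijection with such assignments, with $F^\ast$ corresponding to setting every free value to $0$. Consequently $F^\ast$ is the unique extension if and only if there are no free values at all, i.e.\ no pointed structure $(\bbB,\overline b)$ with $\bbB\notin\calC$ and $\overline b$ a spanning $r$-tuple for some $R\in\tau$ of arity $r$; otherwise flipping a single such value yields a second extension distinct from $F^\ast$.

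Finally I would translate this into a vertex count. A structure $\bbB$ admits a spanning $r$-tuple if and only if $|V(\bbB)|\le r$ (pad an enumeration of $V(\bbB)$ with repetitions). Since $\calC$ is hereditary, $\bbB\notin\calC$ if and only if $\bbB$ contains a minimal obstruction of $\calC$, and the smallest structures outside $\calC$ are the minimal obstructions themselves; thus a free value for a symbol $R$ of arity $r$ exists if and only if some minimal obstruction $\bbM$ satisfies $|V(\bbM)|\le r$. Therefore $F^\ast$ is the unique extension if and only if every minimal obstruction of $\calC$ has strictly more vertices than the arity of every relation symbol of $\tau$ (equivalently, each arity is strictly smaller than the number of vertices of each minimal obstruction), which is the asserted condition, and in that case $F'=F^\ast$. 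The main technical point to get right is the local reduction of the middle two paragraphs: specifically, using reflection of relations by embeddings to see that only spanning tuples of structures outside $\calC$ carry any freedom, and checking that flipping one such value indeed gives a well-defined, still-functorial extension different from $F^\ast$.
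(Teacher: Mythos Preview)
Your proof is correct and follows essentially the same approach as the paper: both hinge on the observation that, because extensions are concrete functors and embeddings reflect relations, whether an $r$-tuple $\overline a$ lies in $R(F'(\bbA))$ is determined by the substructure $\bbA[\overline a]$ of size at most $r$, and this value is forced precisely when that substructure belongs to $\calC$. You package this as a bijection between extensions and isomorphism-invariant assignments to the ``free'' pointed structures, whereas the paper simply exhibits one alternative extension in the non-unique case and checks $F'=F^\ast$ directly in the unique case; the content is the same.
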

\begin{proof}
Suppose that there is a minimal obstruction $\mathbb M$ of $\calC$,
and a relation symbol $R\in \tau$ of arity $r$ such that $|V(\mathbb M)|\le r$. 
Consider the following extension $F_1$ defined on $\mathcal{C'}$,
$\mathcal{C'} = \mathcal{C}\cup \{\mathbb M\}$. For every $\bbC\in\mathcal{C}$,
$F_1(\bbC) = F(\bbC)$. The vertex set of $F_1(\mathbb M)$ is $V(\mathbb M)$ and
the interpretation of each symbol $S\in \tau\setminus\{R\}$ in $F_1(\mathbb M)$
is the same as the interpretation in the weak extension $F^\ast(\mathbb M)$.
Finally, the interpretation of $R$ in $F_1(\mathbb M)$ is the union of
$R(F^\ast(\mathbb M))$ together with all spanning $r$-tuples of 
$V(\mathbb M)$ (recall that a spanning $r$-tuple of $V(\mathbb M)$ is an $r$-tuple
that contains all vertices of $V(\mathbb M)$). Since we only added spanning
tuples to the interpretation of $R$ in $F^\ast(\mathbb M)$, and $\mathbb M$ is a minimal
obstruction of $\mathcal{C}$, then $F_1\colon\mathcal{C'\to D}$ is a
concrete functor, and clearly $F^\ast(\mathbb M)\neq F_1(\mathbb M)$. 
Finally, the weak extension $F_1^\ast$ of $F_1$ is an extension of $F$
that does not equal the weak extension $F^\ast$ of $F$.

To prove the converse implication, suppose that $F'\colon\Mod_\sigma\to
\Mod_\tau$ is an extension of $F$, and let $\bbA$ be a $\sigma$-structure. 
We show that $F^\ast(\bbA) = F'(\bbA)$. Clearly, if $\bbA\in\mathcal{C}$, then
$F'(\bbA) = F(\bbA) = F^\ast(\bbA)$. Suppose that $\bbA\in \Mod_\sigma\setminus
\calC$ and let $R$ be a relation
symbol  of $\tau$ of arity $r$. By Observation~\ref{obs:weakmin}, we know that
$R(F^\ast(\bbA))\subseteq R(F'(\bbA))$, we show that the inverse inclusion
holds. Let $\overline{a}$ be an $r$-tuple in $R(F'(\bbA))$. Since
the $r$ is strictly less that the size of the vertex set any minimal
obstruction of $\mathcal{C}$, then the substructure $\bbB$ of $\bbA$ induced
by the vertices in $\overline{a}$ belongs to $\mathcal{C}$. 
So, by construction of $F^\ast(\bbA)$, the vertices in $\overline{a}$
induce a copy of $F(\bbC)$ in $F^\ast(\bbA)$. Finally, since $F'$ preserves
embeddings, it follows that $\overline{a}\in R(F(\bbC))$, and thus
$\overline{a}\in R(F^\ast(\bbA))$.
\end{proof}

\begin{corollary}
Let $\tau$ and $\sigma$ be a pair of relational signatures, $m$ be the maximum arity
of a relation symbol in $\tau$, and consider a pair of concrete functors
$F,G\colon\Mod_\sigma \to\Mod_\tau$.
If $F(\bbC) = G(\bbC)$ for every $\sigma$-structure $\bbC$ on at most $m$ vertices, then
$F = G$.
\end{corollary}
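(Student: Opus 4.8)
The plan is to reduce everything to the behaviour of $F$ and $G$ on objects. Since $F$ and $G$ are concrete functors, they act as the identity on embeddings, so $F(\varphi)=\varphi=G(\varphi)$ for every embedding $\varphi$; hence $F=G$ as soon as $F(\bbA)=G(\bbA)$ for every $\sigma$-structure $\bbA$. Fixing such an $\bbA$, \cref{prop:concretefunct} already gives $V(F(\bbA))=V(\bbA)=V(G(\bbA))$, so it only remains to verify, for each relation symbol $R\in\tau$, that the interpretations $R(F(\bbA))$ and $R(G(\bbA))$ coincide.

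The heart of the argument is a locality observation. Let $R\in\tau$ have arity $r$, so that $r\le m$, and let $\overline a=(a_1,\dots,a_r)$ be any $r$-tuple of vertices of $\bbA$. First I would pass to the substructure $\bbB=\bbA[\{a_1,\dots,a_r\}]$ induced by the entries of $\overline a$. Since $\bbB$ has at most $r\le m$ vertices, the hypothesis applies and yields $F(\bbB)=G(\bbB)$, and in particular $R(F(\bbB))=R(G(\bbB))$. The inclusion of $\bbB$ into $\bbA$ is an embedding of $\sigma$-structures, so because $F$ is concrete this same inclusion is an embedding $F(\bbB)\to F(\bbA)$; by the defining biconditional of an embedding applied to the tuple $\overline a\in V(\bbB)^r$, we get that $\overline a\in R(F(\bbA))$ if and only if $\overline a\in R(F(\bbB))$. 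Running the identical reasoning for $G$ and then chaining the three equivalences through the equality $R(F(\bbB))=R(G(\bbB))$ shows that $\overline a\in R(F(\bbA))$ if and only if $\overline a\in R(G(\bbA))$. As $\overline a$ and $R$ were arbitrary, $F(\bbA)=G(\bbA)$, which completes the reduction.

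I do not expect a serious obstacle here: the statement is essentially a one-line consequence of embedding-preservation, and the bound $m$ on the arities is precisely what guarantees that the small substructure witnessing membership of any tuple in any relation of $F(\bbA)$ lies in the range where $F$ and $G$ are assumed to agree. The only point requiring a little care is that the entries of $\overline a$ need not be distinct, so $\bbB$ may have strictly fewer than $r$ vertices; this is harmless, since its vertex count is still at most $m$. A secondary bookkeeping point is to invoke the embedding characterization in the correct direction --- membership upstairs in $F(\bbA)$ being controlled by membership downstairs in $F(\bbB)$ --- which is immediate from the biconditional in the definition of an embedding.
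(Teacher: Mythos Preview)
Your argument is correct. It is, however, a different route from the paper's: the paper derives this corollary as an immediate application of \cref{prop:uniqueext}. Concretely, it lets $\calC$ be the class of $\sigma$-structures on at most $m$ vertices, observes that the restrictions $F_m,G_m\colon\calC\to\Mod_\tau$ coincide by hypothesis, and then notes that every minimal obstruction of $\calC$ has $m+1$ vertices, which exceeds every arity in $\tau$; so by \cref{prop:uniqueext} the extension to $\Mod_\sigma$ is unique and equals the weak extension, giving $F=F_m^\ast=G_m^\ast=G$.

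What you do is essentially the direct computation hidden inside the ``converse'' half of the proof of \cref{prop:uniqueext}: you localize each tuple to a small induced substructure and use that concrete functors send the inclusion embedding to itself. This is more elementary and self-contained, and it makes transparent exactly where the bound $m$ enters. The paper's version is shorter only because the work has already been packaged into \cref{prop:uniqueext}; conceptually the two arguments are the same locality observation.
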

\begin{proof}
Consider the class $\mathcal{C}$ of $\sigma$-structures on at most $m$ vertices. 
Let $F_m\colon\calC \to \Mod_\tau$ and $G_m\colon\calC\to \Mod_\tau$
be the restrictions of $F$ and $G$ to $\mathcal{C}$. So, 
$F_m = G_m$, and since the minimal obstructions of $\mathcal{C}$
have $m+1$ vertices, by Proposition~\ref{prop:uniqueext} we conclude
that $F = F_m^\ast = G_m^\ast = G$.
\end{proof}

It would be interesting to characterize the image of the weak extension
$F^\ast$ of a concrete functor $F\colon\mathcal{C\to D}$ in terms of the image
 $F[\mathcal{C}]$. In particular, it could be interesting to characterize
the concrete functors $F\colon\mathcal{C\to D}$ such that
$F[\mathcal{C}] = F^\ast[\mathcal{C}]$.
Nonetheless, the main purpose of this section
is to extend Proposition~\ref{prop:duality} to concrete functors
between hereditary classes of relational structures. We do so in the following
theorem.

\begin{theorem}\label{thm:concrete-emul}
Consider a pair of relational signatures $\tau,\sigma$, and let $\calC$
be a hereditary class of $\sigma$-structures. If $\calD$ is a class of
$\tau$-structures and $F\colon\mathcal{C\to D}$ is a concrete functor,
then there is a quantifier-free definition $\Delta\colon \QF_\tau \to
\QF_\sigma$ such that $F(\bbC) = sh_\Delta(\bbC)$ for every $\bbC\in\calC$.
\end{theorem}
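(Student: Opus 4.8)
The plan is to reduce the statement to Proposition~\ref{prop:duality} by routing everything through the weak extension introduced earlier in this section. First I would take the weak extension $F^\ast\colon \Mod_\sigma\to \Mod_\tau$ of $F$. As already observed when $F^\ast$ was defined, it preserves embeddings and is therefore a concrete functor, and crucially it is defined on the class of \emph{all} $\sigma$-structures; moreover, by construction $F^\ast(\bbC) = F(\bbC)$ for every $\bbC\in\calC$. This is the whole point of passing to $F^\ast$: it converts the partially-defined functor $F$ into a genuine concrete functor on the full category $\Mod_\sigma$, to which the global duality result applies.

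Next I would invoke Proposition~\ref{prop:duality} verbatim. Since $F^\ast\colon \Mod_\sigma\to \Mod_\tau$ is a concrete functor between categories of all relational structures, the proposition yields a quantifier-free definition $\Delta\colon \QF_\tau\to \QF_\sigma$ with $F^\ast = sh_\Delta$, that is, $F^\ast(\bbA) = sh_\Delta(\bbA)$ for every $\sigma$-structure $\bbA$. Restricting this equality to the subclass $\calC$ and using $F^\ast(\bbC) = F(\bbC)$ gives, for every $\bbC\in\calC$, the chain $F(\bbC) = F^\ast(\bbC) = sh_\Delta(\bbC)$, which is exactly the desired conclusion. Note that no assumption on $\calD$ is needed: only the values $F(\bbC)$ with $\bbC\in\calC$ enter the statement, and these all lie in the hereditary image $F[\calC]$ (Lemma~\ref{lem:images}).

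In this argument there is essentially no obstacle remaining to overcome, since the difficult content has been packaged into Proposition~\ref{prop:duality} (via Lemma~\ref{lem:singularemul} and the product decomposition) and into the fact that $F^\ast$ is a well-defined concrete functor extending $F$. The only thing I would be careful to state explicitly is precisely that $F^\ast$ is a concrete functor on all of $\Mod_\sigma$ and restricts to $F$ on $\calC$; both points were discharged when the weak extension was constructed. Thus the theorem follows as an immediate corollary, with the same $\Delta$ representing the global functor $F^\ast$ serving to represent $F$ on its domain.
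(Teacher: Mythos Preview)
Your proof is correct and follows essentially the same approach as the paper: extend $F$ to its weak extension $F^\ast$ on all of $\Mod_\sigma$, apply Proposition~\ref{prop:duality} to obtain $\Delta$ with $F^\ast = sh_\Delta$, and restrict back to $\calC$.
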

\begin{proof}
By Proposition~\ref{prop:duality}, there is an quantifier-free definition
$\Delta\colon \QF_\tau \to \QF_\sigma$ such that $F^\ast = sh_\Delta$.
Since $F^\ast$ is an extension of $F$, then $F(\bbC) = sh_\Delta(\bbC)$
for every $\bbC\in\mathcal{\calC}$.
\end{proof}

The following application of  Theorem~\ref{thm:concrete-emul} can be proved
with similar arguments as Corollary~\ref{cor:duality-pol-computation}.

\begin{corollary}\label{cor:concrete-poly}
Consider a pair of relational signatures $\tau,\sigma$, and let $\calC$ and
$\calD$ be hereditary classes of $\sigma$ and $\tau$ structures, respectively. 
 Then, every concrete functor $F\colon\mathcal{C\to D}$ can be computed
 in polynomial-time.
\end{corollary}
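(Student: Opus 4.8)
The plan is to reduce this to the computation already carried out in Corollary~\ref{cor:duality-pol-computation}, using Theorem~\ref{thm:concrete-emul} to replace the functor $F$ by a quantifier-free definition. First I would invoke Theorem~\ref{thm:concrete-emul} to obtain a quantifier-free definition $\Delta\colon\QF_\tau\to\QF_\sigma$ such that $F(\bbC)=sh_\Delta(\bbC)$ for every $\bbC\in\calC$. Since $F$ is defined only on $\calC$, every input of interest is promised to lie in $\calC$, and hence it suffices to exhibit a polynomial-time algorithm that computes the $\Delta$-reduct $sh_\Delta(\bbC)$ from $\bbC$.

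Next I would describe this algorithm, which is exactly the one used in Corollary~\ref{cor:duality-pol-computation} and which does not use the hypothesis $\bbC\in\calC$ at all. By Proposition~\ref{prop:concretefunct}, the structure $sh_\Delta(\bbC)$ has vertex set $V(\bbC)$, so the algorithm begins by copying this vertex set. Then, for each relation symbol $R\in\tau$ of arity $k$, it ranges over all $k$-tuples $\overline a$ of $V(\bbC)$ and places $\overline a$ in $R(sh_\Delta(\bbC))$ precisely when $\bbC\models\Delta(R)(\overline a)$, as prescribed by the definition of the $\Delta$-reduct. Evaluating the fixed quantifier-free $\sigma$-formula $\Delta(R)$ on a single tuple amounts to looking up finitely many atomic facts of $\bbC$, which costs constant time; since there are $O(|V(\bbC)|^{k})$ tuples to test for $R$ and only finitely many relation symbols in $\tau$, the whole construction runs in $O(|V(\bbC)|^{r})$ time, where $r$ is the maximum arity of a symbol in $\tau$.

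The key conceptual point, and the only thing one must notice, is that the formulas $\Delta(R)$ depend solely on $F$, $\tau$, and $\sigma$, not on the input; their sizes are therefore absorbed into the constant factor and the bound is genuinely polynomial in $|V(\bbC)|$. I do not expect a real obstacle here: because Theorem~\ref{thm:concrete-emul} already yields the agreement $F(\bbC)=sh_\Delta(\bbC)$ on all of $\calC$, there is no need to handle structures outside $\calC$ separately, and the $\Delta$-reduct computation of Corollary~\ref{cor:duality-pol-computation} applies verbatim to the inputs of interest, giving the claim.
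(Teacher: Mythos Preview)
Your proposal is correct and follows exactly the approach the paper intends: invoke Theorem~\ref{thm:concrete-emul} to obtain a quantifier-free definition $\Delta$ with $F=sh_\Delta$ on $\calC$, and then compute the $\Delta$-reduct as in Corollary~\ref{cor:duality-pol-computation}. The paper itself does not spell out a proof for this corollary but simply notes that it follows by the same arguments as Corollary~\ref{cor:duality-pol-computation}; you have filled in precisely those details.
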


 
\section{Pseudo-local classes}
\label{sec:pseudo-local}

Recall that a local class of graphs is a hereditary class with a finite
set of minimal obstructions, up to isomorphism. In general, a local class
of $\sigma$-structures is a hereditary class of $\sigma$-structures with
finitely many minimal obstructions, equivalently, an $\forall_1$-definable
class. In this second-to-last section we introduce the notion of pseudo-local
(graph) classes, we study some elementary properties of such classes, and
notice that, similar to the relation between local classes an $\forall_1$-definable
classes, pseudo-local classes correspond to definable classes
in the logic SNP. 

Let $\tau$ be a relational signature, and $\calD$ a class of $\tau$-structures. 
We say that $\calD$ is a  \textit{pseudo-local class} if there is a local
class $\calC$ of $\sigma$-structures (sore some finite relational signature $\sigma$),
and a surjective concrete functor $F\colon\calC\to \calD$. That is,
pseudo-local classes are the images of local classes under concrete functors. 
It is straightforward to observe that every local class is a pseudo-local class, 
and that there are pseudo-local classes which are not local (see, e.g.,
Example~\ref{ex:non-unique-ext}).

In Section~\ref{sec:local-expresions}, we introduced the expressive power of
a given concrete functor $F$. We briefly relate pseudo-local classes with expressive
power of concrete functors. Clearly, if $\calD$ is a pseudo-local class, then
$\calD$ is locally expressible by a concrete functor with local domain. Indeed, 
let $F\colon\calC\to \calD$ be a surjective functor where $\calC$ is a local class. 
Then, $\calD\in \ex(F)$ and in particular, this implies that $\calD$ is locally expressible
by some concrete functor with local domain. Conversely, suppose that $\calD$ is locally
expressible by some functor $F\colon \calC\to \calD$ with local domain, and 
let $\calD'\subseteq_l \calC$ be a local class relative to $\calC$ such that
$F[\calD'] = \calD$. In this case, $\calD'$ is a local class
(Observation~\ref{obs:localtransitive}) and the restriction $F'\colon \calD'\to \calD$
is a surjective functor with local domain. Hence, $\calD$ is a pseudo-local class 
if and only if there is a  functor $F$ with local domain such that $\mathcal{D}$ is locally
expressible by $F$. The following statement shows that we can actually substitute
``local domain'' by ``pseudo-local domain'' in the previous sentence. 

\begin{proposition}\label{prop:pseudoloceq}
For every class $\mathcal{D}$ of relational structures (with finite signature) the following
statements are equivalent.
\begin{itemize}
	\item $\mathcal{D}$ is a pseudo-local class.
	\item There is a surjective concrete functor
	$F\colon \mathcal{C}\to \mathcal{D}$ for some pseudo-local class $\calC$.
	\item There is a  functor $F$ with pseudo-local domain such that $\calD\in \ex(F)$.
\end{itemize}
\end{proposition}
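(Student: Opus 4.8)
The plan is to prove the cycle of implications (1)$\Rightarrow$(2)$\Rightarrow$(3)$\Rightarrow$(1), with the first two being essentially formal and the last carrying all the content. For (1)$\Rightarrow$(2) I would simply recall that every local class is pseudo-local (it is the image of itself under the identity functor, which is concrete and surjective), so the local class $\calC$ and surjective concrete functor $F\colon\calC\to\calD$ witnessing pseudo-locality of $\calD$ already provide a surjective concrete functor from a \emph{pseudo-local} class. For (2)$\Rightarrow$(3), given a surjective concrete functor $F\colon\calC\to\calD$ with $\calC$ pseudo-local, I would reuse the very same $F$: since $\calC\subseteq_l\calC$ holds trivially (a class has no minimal bounds in itself, so the relative-localness condition is vacuous), we get $\calD=F[\calC]\in\ex(F)$, and $F$ has pseudo-local domain by hypothesis.

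The substantive direction is (3)$\Rightarrow$(1). Suppose $F\colon\calB\to\calE$ is a concrete functor with $\calB$ pseudo-local and $\calD\in\ex(F)$. Unpacking the latter, there is a local class $\calB'\subseteq_l\calB$ with $F[\calB']=\calD$. Since $\calB$ is pseudo-local, I would fix a local class $\calC$ and a surjective concrete functor $G\colon\calC\to\calB$. The idea is to pull $\calB'$ back along $G$ and then push forward along $F\circ G$. Concretely, set $\calC'=G^{-1}[\calB']$. By Lemma~\ref{lem:preimages}, $\calC'$ is a hereditary class and its set of minimal bounds in $\calC$ equals $G^{-1}[\calF]$, where $\calF$ is the set of minimal bounds of $\calB'$ in $\calB$; this set $\calF$ is finite precisely because $\calB'\subseteq_l\calB$.

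The key step is then to argue that $\calC'$ is a local class. Here I would observe that $G^{-1}[\calF]$ is finite: for each $\bbB\in\calF$, every structure in $G^{-1}[\{\bbB\}]$ has vertex set $V(\bbB)$ since concrete functors preserve vertex sets (Proposition~\ref{prop:concretefunct}), and over a finite signature there are only finitely many structures on a fixed finite vertex set. Hence $\calC'$ has finitely many minimal bounds in $\calC$, i.e.\ $\calC'\subseteq_l\calC$, and since $\calC$ is itself a local class, transitivity (Observation~\ref{obs:localtransitive}) gives that $\calC'$ is a local class. To finish, I would note that $F\circ G$ is concrete (concreteness is preserved under composition, as $F(f)=f=G(f)$ on embeddings), and that $G[\calC']=\calB'$ because $G$ is surjective and $\calB'\subseteq\calB=G[\calC]$; therefore $(F\circ G)[\calC']=F[\calB']=\calD$. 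Restricting $F\circ G$ to the local class $\calC'$ yields a surjective concrete functor onto $\calD$, so $\calD$ is pseudo-local.

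I expect the main obstacle to be exactly the local-class verification for the pullback $\calC'$: one must invoke Lemma~\ref{lem:preimages} to pin down its minimal bounds as $G^{-1}[\calF]$ and then combine vertex-set preservation with finiteness of the signature to see that this preimage is finite. Everything else is formal bookkeeping about composition and surjectivity of concrete functors.
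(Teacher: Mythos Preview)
Your proposal is correct and follows essentially the same route as the paper. The only difference is packaging: for (3)$\Rightarrow$(1) the paper composes with a surjective $G\colon\calA\to\calC$ from a local class and then cites Proposition~\ref{prop:factor-restriction} (factoring implies $\ex(F)\subseteq\ex(F\circ G)$) together with the paragraph preceding the proposition, whereas you unpack these two black boxes directly via Lemma~\ref{lem:preimages} and the vertex-set-preservation argument to show $G^{-1}[\calB']$ is local. Both arguments are the same in substance.
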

\begin{proof}
Since every local class is a pseudo-local class, the first item implies the second
one.  With similar arguments as in the paragraph above, one should be able to
notice that the second item implies the third one.  To see that the third one implies
the the first one, let $F\colon\mathcal{C\to D'}$ be a pseudo-local functor such
that $\mathcal{D}\in \ex(F)$. Since $\calC$ is a pseudo-local class, there
is a surjective functor $G\colon\mathcal{A\to C}$ with $\calA$ a local class. Clearly, 
$F\circ G$ factors through $F$ so, by the first part of Proposition~\ref{prop:factor-restriction},
$\ex(F)\subseteq \ex(F\circ G)$ and thus, $\mathcal{D}\in \ex(F\circ G)$. We showed in
the paragraph preceding this proposition that if $\calD\in \ex(F)$ for some functor
$F$ with local domain, then $\calD$ is a pseudo-local class. The claim now follows.
\end{proof}

Observation~\ref{obs:localtransitive} asserts that if $\mathcal{C}$
is a local class and $\mathcal{D}$ is a local class relative to $\mathcal{C}$, 
then $\mathcal{D}$ is a local class. This also holds for pseudo-local classes. 

\begin{corollary}\label{cor:plocaltransitive}
Let $\calC$ and $\calD$ be a pair of hereditary classes
such that $\mathcal{D}\subseteq_l\mathcal{C}$. If $\calC$
is a pseudo-local class, then $\calD$ is a pseudo-local class.
\end{corollary}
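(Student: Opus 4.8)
The plan is to deduce the statement directly from \cref{prop:pseudoloceq} by exhibiting a functor with pseudo-local domain whose expressive power already contains $\calD$. The natural choice is the identity functor $Id_\calC\colon\calC\to\calC$. Since $\calC$ is pseudo-local, $Id_\calC$ is a surjective concrete functor whose domain is pseudo-local, and as noted right after the definition of expressive power, $\ex(Id_\calC)$ is exactly the collection of local classes relative to $\calC$. The hypothesis $\calD\subseteq_l\calC$ therefore says precisely that $\calD\in\ex(Id_\calC)$, so the implication from the third to the first item of \cref{prop:pseudoloceq} immediately gives that $\calD$ is pseudo-local. This is the argument I would record as the proof.

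For transparency I would also sketch the underlying hands-on construction, which avoids \cref{prop:pseudoloceq} and explains why the statement is true. Unfolding pseudo-locality of $\calC$, there is a local class $\calA$ over some finite signature $\sigma$ together with a surjective concrete functor $G\colon\calA\to\calC$. Let $\calF$ be the set of minimal bounds of $\calD$ in $\calC$, which is finite because $\calD\subseteq_l\calC$. Applying \cref{lem:preimages} to $G$ and the hereditary class $\calD\subseteq\calC$, the preimage $G^{-1}[\calD]$ is a hereditary subclass of $\calA$ whose set of minimal bounds in $\calA$ is exactly $G^{-1}[\calF]$. The restriction $G'\colon G^{-1}[\calD]\to\calD$ is then a concrete functor, and it is surjective: for $\bbD\in\calD\subseteq\calC$ surjectivity of $G$ yields some $\bbA\in\calA$ with $G(\bbA)=\bbD$, whence $\bbA\in G^{-1}[\calD]$.

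The one step that needs a short argument --- and the only place I expect any friction --- is the finiteness of $G^{-1}[\calF]$. This follows because a concrete functor preserves vertex sets (\cref{prop:concretefunct}), so every structure mapping onto a fixed $\bbB\in\calF$ has exactly $|V(\bbB)|$ vertices, and since $\sigma$ is finite there are only finitely many such $\sigma$-structures up to isomorphism. Hence $G^{-1}[\calD]\subseteq_l\calA$, and as $\calA$ is local, transitivity of the relation ``local relative to'' (\cref{obs:localtransitive}) shows that $G^{-1}[\calD]$ is itself a local class. Thus $\calD$ is the image of a local class under a surjective concrete functor, i.e.\ pseudo-local, recovering the conclusion without invoking \cref{prop:pseudoloceq}.
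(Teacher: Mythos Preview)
Your proof is correct and follows essentially the same route as the paper: both deduce the result from \cref{prop:pseudoloceq} by placing $\calD$ in the expressive power of a functor with pseudo-local domain. The paper phrases it via an arbitrary such $F$ with $\calC\in\ex(F)$ and then invokes item~3 of \cref{prop:basicexpressiveproperties}, whereas you make the specific (and slightly cleaner) choice $F=Id_\calC$; your additional hands-on unwinding is also correct but not needed.
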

\begin{proof}
By the equivalence between the first and third items of
Proposition~\ref{prop:pseudoloceq}, if $\calC$ is a pseudo-local class,
then there is a pseudo-local functor  $F$ such that $\calD\in
\ex(F)$. So, if $\calD\subseteq_l\calC$ then, by
Proposition~\ref{prop:basicexpressiveproperties}, $\calD\in \ex(F)$,
and thus $\calD$ is a pseudo-local class.
\end{proof}

Notice that each concrete functor considered in Section~\ref{sec:examples}
has a pseudo-local domain --- actually, all except 
$\hyperlink{AO}{S_{|\AO}}\colon\AO\to\calG$ have local domain. Thus,
all examples considered in Section~\ref{sec:examples} provide further examples
of pseudo-local graph classes such as matrix-partition classes,  subclasses of
circular-arc graphs, amongst several others.

Recall that local classes are closed
under finite unions and finite intersections (Corollary~\ref{cor:inter+union}).
From this, it is straightforward to verify that pseudo-local classes are closed
under unions too: if $F\colon \calC_1\to \calD_1$ and $G\colon \calC_2\to \calD_2$
are surjective concrete functors with local domain, then $F\oplus G$ is a functor
with local domain that surjects onto $\calD_1\cup \calD_2$. 
To show that the intersection of pseudo-local classes is a pseudo-local class
we show that the pullback of local classes is a local class.

\begin{lemma}\label{lem:pulllocal}
Consider a pair  $F\colon\mathcal{A\to D}$ and $G\colon\mathcal{B\to D}$ of
concrete functors. If $\mathcal{A}$ and $\mathcal{B}$ are local classes
then $\mathcal{A \times_\mathcal{D} B}$ is a local class.
\end{lemma}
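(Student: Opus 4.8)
The plan is to exhibit $\mathcal{A} \times_{\mathcal{D}} \mathcal{B}$ as an intersection of local classes and then invoke Corollary~\ref{cor:inter+union}. Writing $\sigma_A,\sigma_B$ for the signatures of $\mathcal{A},\mathcal{B}$, $\tau = \sigma_A \sqcup \sigma_B$, and $\rho$ for the signature of $\mathcal{D}$, recall that by definition $\mathcal{A} \times_{\mathcal{D}} \mathcal{B}$ is the class of $\tau$-structures $\mathbb{C}$ with $sh_A(\mathbb{C}) \in \mathcal{A}$, $sh_B(\mathbb{C}) \in \mathcal{B}$, and $F(sh_A(\mathbb{C})) = G(sh_B(\mathbb{C}))$. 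Setting $\mathcal{P} = sh_A^{-1}[\mathcal{A}] \cap sh_B^{-1}[\mathcal{B}]$, I would first argue that $\mathcal{P}$ is a local class, and then that the commutativity condition cuts out a subclass of $\mathcal{P}$ that is local relative to $\mathcal{P}$; transitivity of relative localness (Observation~\ref{obs:localtransitive}) then finishes the proof.

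For the first step, I would apply Lemma~\ref{lem:preimages} to the forgetful (hence concrete) functor $sh_A\colon \Mod_\tau \to \Mod_{\sigma_A}$ and the hereditary class $\mathcal{A} \subseteq \Mod_{\sigma_A}$: it gives that $sh_A^{-1}[\mathcal{A}]$ is hereditary with set of minimal bounds $sh_A^{-1}[\mathcal{F}_A]$, where $\mathcal{F}_A$ is the set of minimal bounds of $\mathcal{A}$, finite by Observation~\ref{obs:localfinite}. Since $\sigma_B$ is finite, each $\mathbb{M} \in \mathcal{F}_A$ admits only finitely many $\tau$-expansions on the vertex set $V(\mathbb{M})$, so $sh_A^{-1}[\mathcal{F}_A]$ is finite; hence $sh_A^{-1}[\mathcal{A}]$ is finitely bounded, i.e.\ local. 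The same argument gives $sh_B^{-1}[\mathcal{B}]$ local, and Corollary~\ref{cor:inter+union} makes $\mathcal{P}$ local.

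The crux is the commutativity condition, and this is where I expect the main (though not deep) work. The key observation is that for $\mathbb{C} \in \mathcal{P}$ both $F(sh_A(\mathbb{C}))$ and $G(sh_B(\mathbb{C}))$ are $\rho$-structures on the same vertex set $V(\mathbb{C})$ (Proposition~\ref{prop:concretefunct}), so their equality is equivalent to requiring, for every $R \in \rho$ of arity $k$ and every $k$-tuple $\overline{c}$, that membership of $\overline{c}$ in $R(F(sh_A(\mathbb{C})))$ and in $R(G(sh_B(\mathbb{C})))$ agree. I would then show this membership depends only on the substructure $\mathbb{C}[U]$ induced by the set $U$ of entries of $\overline{c}$: since $sh_A$ and $F$ preserve embeddings, $F(sh_A(\mathbb{C}[U]))$ is exactly the induced substructure $F(sh_A(\mathbb{C}))[U]$, whence $\overline{c} \in R(F(sh_A(\mathbb{C})))$ iff $\overline{c} \in R(F(sh_A(\mathbb{C}[U])))$, and likewise for $G$. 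Letting $m$ be the largest arity in $\rho$, it follows that $\mathbb{C} \in \mathcal{P}$ satisfies the commutativity condition iff every substructure of $\mathbb{C}$ on at most $m$ vertices does; that is, $\mathcal{A} \times_{\mathcal{D}} \mathcal{B}$ is local relative to $\mathcal{P}$. Combined with $\mathcal{P} \subseteq_l \Mod_\tau$ and Observation~\ref{obs:localtransitive}, this yields that $\mathcal{A} \times_{\mathcal{D}} \mathcal{B}$ is a local class. Alternatively, one could replace $F$ and $G$ by quantifier-free definitions via Theorem~\ref{thm:concrete-emul} and read off the bounded witnesses directly from Lemma~\ref{lem:characteristic}, which makes the equalizer condition an absolute local class on $\Mod_\tau$ rather than a relative one.
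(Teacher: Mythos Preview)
Your argument is correct, and it takes a genuinely different route from the paper's. The paper proves the lemma \emph{syntactically}: it invokes the duality of Theorem~\ref{thm:concrete-emul} to replace $F$ and $G$ by quantifier-free definitions $\Gamma,\Delta$, writes down an explicit $\forall_1$-sentence
\[
\phi_A \land \phi_B \land \bigwedge_{R\in\rho} \forall \overline{c}\,\bigl(\Delta(R)(\overline{c}) \leftrightarrow \Gamma(R)(\overline{c})\bigr)
\]
axiomatizing $\mathcal{A}\times_{\mathcal{D}}\mathcal{B}$, and then appeals to Theorem~\ref{thm:equivlocal}. Your main argument is instead \emph{combinatorial}: you bypass the qf-definition machinery entirely and use only the raw fact that concrete functors commute with induced substructures (Proposition~\ref{prop:concretefunct}) to see that a violation of the equalizer condition is witnessed on at most $m$ vertices. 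This is more elementary---it does not rely on Section~\ref{sec:duality}---and makes the bound on the minimal obstructions of the equalizer part completely transparent. The paper's approach, on the other hand, packages everything into a single $\forall_1$-sentence, which is convenient for the later use of the lemma inside Theorem~\ref{thm:equivpseudolocal}. Your closing remark about using Theorem~\ref{thm:concrete-emul} as an alternative is in fact exactly what the paper does.
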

\begin{proof}
To show  that $\mathcal{A\times_\mathcal{D} B}$ is a local class, we will 
use the duality between concrete functors and qf definitions stated in 
Theorem~\ref{thm:concrete-emul}, and the equivalence between local
classes and  $\forall_1$-definable classes stated in Theorem~\ref{thm:equivlocal}.
Let $\pi$ and $\sigma$ be the respective (disjoint) signatures of
the structures in $\calA$ and $\calB$. Recall that the signature
of $\mathcal{A\times_\mathcal{D} B}$ is the disjoint union of $\pi$ and $\sigma$.
Denote by $sh_\pi$ and $sh_\sigma$ the forgetful functors from $\pi\sqcup \sigma$-structures to 
$\pi$-structures and to $\sigma$-structures, respectively. 
Since $\calA$ and $\calB$ are local classes, there is a pair $\phi_A$
and $\phi_B$ of $\forall_1$-formulas that define $\calA$ and $\calB$ in
their corresponding signatures. Thus, it is not hard to notice that
$\phi_A\land \phi_B$ is an $\forall_1$-formula with signature $\pi\sqcup \sigma$
that defines  $sh_\pi^{-1}[\calA ]\cap sh_\sigma^{-1}[\calB]$. Now, recall that
$\mathcal{A\times_\mathcal{D} B}$ is the subclass of structures $\bbC$ in
$sh_\pi^{-1}[\calA]\cap sh_\sigma^{-1}[\calB]$ such that
$G\circ sh_\sigma(\bbC) = F\circ sh_\pi(\bbC)$.  Let $\tau$ be the signature
of $\calD$, and $\Gamma\colon \QF_\pi\to \QF_\tau$
and $\Delta\colon \QF_\sigma\to \QF_\tau$ be the qf definitions
such that $sh_\Gamma(\bbA) = F(\bbA)$ and $sh_\Delta(\bbB) = G(\bbB)$ for
all $\bbA\in\mathcal{A}$ and all $\bbB\in \mathcal{B}$. Thus, it is not hard to
notice that for a structure $\bbC\in sh_\pi^{-1}[\mathcal{A}]\cap
sh_\sigma^{-1}[\calC]$ it is the case that $G\circ sh_\sigma(\bbC) = F\circ sh_\tau(\bbC)$
if and only if $\bbC$ models the formula $\psi$ defined as
\[
\bigwedge_{R\in \tau} \forall c_1,\dots, c_r(\Delta(R)(c_1,\dots,c_r) \iff \Gamma(R)(c_1,\dots, c_r)),
\]
where $r$ is the arity of $R$. Therefore, $\mathcal{A\times_\mathcal{D} B}$ is defined by
the $\forall_1$-formula  $\phi_A\land \phi_B\land \psi$, and the claim
follows.
\end{proof}

It is not hard to notice that the image of the pullback
$P\colon \mathcal{A\times_\calD B\to D}$ is the intersection of the images $F[\mathcal{A}]$
and $G[\mathcal{B}]$. Thus, as a consequence of Lemma~\ref{lem:pulllocal}
we conclude that the intersection of pseudo-local classes is a pseudo-local class. 

\begin{proposition}\label{prop:pllattice}
The class of pseudo-local classes is a lattice under the inclusion order,
where the join and meet are the set theoretic union and intersection. 
\end{proposition}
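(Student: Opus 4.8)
The plan is to present the pseudo-local classes of $\tau$-structures as a sublattice of the lattice of all subclasses of $\Mod_\tau$ ordered by inclusion, in which the join of two subclasses is their union and the meet is their intersection. It therefore suffices to verify that the pseudo-local classes are closed under binary unions and binary intersections: once this is known, for pseudo-local $\calD_1,\calD_2$ the union $\calD_1\cup\calD_2$ is simultaneously the least upper bound of the pair in the ambient subclass lattice and a member of the collection, hence it is the join inside the collection, and dually $\calD_1\cap\calD_2$ is the meet. Since inclusion is already a partial order, this is exactly the asserted lattice structure.

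Closure under union has been recorded in the paragraph preceding the statement: given surjective concrete functors $F\colon\calC_1\to\calD_1$ and $G\colon\calC_2\to\calD_2$ with $\calC_1,\calC_2$ local, regarded as functors into the common codomain $\Mod_\tau$, the disjoint union $F\oplus G\colon\calC_1\sqcup\calC_2\to\Mod_\tau$ is a concrete functor with local domain (the disjoint union of local classes is local, using Corollary~\ref{cor:inter+union} and the colouring device of Section~\ref{sec:local-expresions}) and image $\calD_1\cup\calD_2$; hence $\calD_1\cup\calD_2$ is pseudo-local.

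Closure under intersection is the consequence of Lemma~\ref{lem:pulllocal} noted just above: with $F,G$ as above and common codomain $\calD=\Mod_\tau$, the pullback $P\colon\calC_1\times_\calD\calC_2\to\calD$ has local domain by Lemma~\ref{lem:pulllocal}, and its image equals $F[\calC_1]\cap G[\calC_2]=\calD_1\cap\calD_2$; restricting $P$ to a surjection onto its image realizes $\calD_1\cap\calD_2$ as the image of a local class under a surjective concrete functor, so it too is pseudo-local.

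Assembling these two closure facts gives the sublattice, and hence the lattice structure with the stated operations. The substantive input is entirely Lemma~\ref{lem:pulllocal}; the only thing I would be careful about is the signature bookkeeping, namely that the union and intersection of pseudo-local classes make sense only over a common signature $\tau$ and that both the disjoint-union and pullback constructions land in $\Mod_\tau$. I anticipate no real obstacle beyond this routine check.
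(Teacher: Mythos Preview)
Your proposal is correct and follows exactly the paper's approach: closure under unions via the disjoint union $F\oplus G$ (with local domain), and closure under intersections via Lemma~\ref{lem:pulllocal} applied to the pullback $P$, whose image is $F[\calC_1]\cap G[\calC_2]$. The paper's proof is essentially a two-sentence reference back to these same observations, so your write-up is just a more explicit version of the same argument.
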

\begin{proof}
    We already argued that pseudo-local classes are closed under unions,
    and as mentioned above, it follows from Lemma~\ref{lem:pulllocal}
    that pseudo-local classes are closed under intersections. 
\end{proof}

Proposition~\ref{prop:pllattice} does not extend to countable unions
nor to countable intersections. This assertion follows from the observations
that there are countably many pseudo-local (graph) classes, there are uncountably
many hereditary (graph) classes, and every hereditary class can be
recovered by both, a countable union and a countable intersection
of local (graph)  classes.

Local graph classes are fairly easy to understand both intuitively, and
from a logical point of view  (see, e.g., Theorem~\ref{thm:equivlocal}).
Moreover, it is straightforward to notice that local classes belong to the
famous computational class  P, i.e.,  deciding if an input graph $G$ belongs
to a given local class $\calC$ can be decided in polynomial-time: if $\calF$ is
the (finite) set of minimal obstructions of a local class $\calC$, then,
on an input graph $G$ run through all $f$-tuples (where $f$ is the maximum
number of vertices of a graph in $\calF$) and verify if one of these tuples
induces a graph in $\calF$.  Thus, even though the computational class P
is far from being understood, and usually ad-hoc polynomial-time
algorithms are needed for different problems,  the previous algorithms yields a simple
and ``universal'' method for solving the decision problem associated to local classes.

Neither of the previous aspects is as clear when we consider pseudo-local (graph)
classes. In the rest of this section, we study some simple meta-properties
of pseudo-local graph classes that transfer from local classes. To begin with,
we address the computational complexity aspect of pseudo-local classes.
Similar as in the case of local classes, there is a ``universal'' certification
method: let $\calD$ be a pseudo-local class of $\tau$-structures (resp.\ graphs),
and $F\colon\calC\to \calD$ a surjective concrete functor $\calC$ a local
class of $\sigma$-structures; then a yes-certificate for a $\tau$-structure
(resp.\ graph) $\bbD\in \calD$ is a $\sigma$-structure $\bbC\in \calC$ such
that $F(\calC) = \bbD$. The ``universal'' polynomial-time certification algorithm
follows from the polynomial-time recognition of local classes explained above, 
and the polynomial-time computation of concrete functors from
Corollary~\ref{cor:concrete-poly}.

\begin{proposition}\label{prop:pseudoNP}
Every pseudo-local class belongs to NP.
\end{proposition}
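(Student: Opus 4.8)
The plan is to exhibit, for a pseudo-local class $\calD$, a polynomial-time checkable membership certificate, as anticipated in the paragraph preceding the statement. Unfolding the definition, since $\calD$ is pseudo-local there is a finite relational signature $\sigma$, a local class $\calC$ of $\sigma$-structures, and a surjective concrete functor $F\colon\calC\to\calD$. Given an input $\tau$-structure $\bbD$, I would take as a yes-certificate a single $\sigma$-structure $\bbC\in\calC$ with $F(\bbC)=\bbD$. The first point to observe is that such a certificate has polynomial size: by the first item of Proposition~\ref{prop:concretefunct} every concrete functor preserves vertex sets, so $F(\bbC)=\bbD$ forces $V(\bbC)=V(\bbD)$; since $\sigma$ is finite and of bounded arity, the total number of tuples in all interpretations of $\bbC$ is $O(|V(\bbD)|^{s})$, where $s$ is the largest arity occurring in $\sigma$. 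Hence it suffices to guess a $\sigma$-structure on the vertex set $V(\bbD)$.

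The verification then proceeds in three steps. First, check $\bbC\in\calC$: by Theorem~\ref{thm:equivlocal} the local class $\calC$ is finitely bounded, so with $\calF$ its finite set of minimal bounds and $f$ the maximum order of a structure in $\calF$, one runs over all subsets $U$ of $V(\bbC)$ of size at most $f$ and rejects if $\bbC[U]$ is isomorphic to some member of $\calF$; as $f$ is a constant, this is polynomial in $|V(\bbD)|$. Second, compute $F(\bbC)$; this is exactly where the work of Section~\ref{sec:duality} pays off, since Corollary~\ref{cor:concrete-poly} guarantees that every concrete functor between hereditary classes is polynomial-time computable. Third, compare $F(\bbC)$ with $\bbD$ symbol by symbol, which is again polynomial. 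The verifier accepts if and only if all three checks succeed.

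For correctness, completeness follows from surjectivity of $F$: if $\bbD\in\calD=F[\calC]$ then some $\bbC\in\calC$ satisfies $F(\bbC)=\bbD$, and this $\bbC$ passes all three checks. Soundness follows from the equality $\calD=F[\calC]$ as well: if some certificate $\bbC$ passes verification then $\bbC\in\calC$ and $F(\bbC)=\bbD$, whence $\bbD\in F[\calC]=\calD$. I do not anticipate a genuine obstacle here; the only delicate point is the polynomial bound on certificate size, which is secured by vertex-set preservation, and the one substantial ingredient---polynomial-time evaluation of the functor---is already encapsulated in Corollary~\ref{cor:concrete-poly}. The single thing to verify carefully is that guessing $\bbC$ solely on the vertex set $V(\bbD)$ loses no generality, and this is precisely what concreteness of $F$ provides, so no further case analysis is required.
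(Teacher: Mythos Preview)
Your proposal is correct and follows essentially the same approach as the paper: the paper's proof (given in the paragraph immediately preceding the proposition) takes a $\sigma$-structure $\bbC\in\calC$ with $F(\bbC)=\bbD$ as the yes-certificate, appeals to the polynomial-time recognition of local classes for the check $\bbC\in\calC$, and to Corollary~\ref{cor:concrete-poly} for computing $F(\bbC)$. Your version is in fact more thorough, as you make explicit the polynomial bound on certificate size via vertex-set preservation (Proposition~\ref{prop:concretefunct}), which the paper leaves implicit.
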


Finally, we observe that pseudo-local classes correspond to the 
classes in \textit{SNP} which stands for \textit{strict NP}. Given
a relational signature $\tau$, an SNP sentence is a formula of the
form $\exists R_1,\dots, R_k(\phi)$ where $R_1,\dots, R_k$ are relational
symbols not in $\tau$, and $\phi$ is an $\forall_1$-sentence in the
expanded signature $\tau\cup\{R_1,\dots, R_k\}$. A class $\calC$ is
an SNP-definable class if there is an SNP sentence $\varphi$ such that
a $\tau$-structure $\bbC$ belongs to $\calC$ if and only if $\bbC\models \varphi$.

\begin{theorem}\label{thm:equivpseudolocal}
The following statements are equivalent for a class of relational
structures $\calD$ (with finite relational signature).
\begin{itemize}
	\item $\calD$ is a pseudo-local class.
    \item $\calD$ is a quantifier-free reduct of a local class. 
    \item $\calD$ is a reduct of a local class. 
    \item $\calD$ is an SNP-definable class. 
\end{itemize}
\end{theorem}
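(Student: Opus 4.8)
The plan is to prove all four equivalences through the cycle (pseudo-local) $\Rightarrow$ (quantifier-free reduct of a local class) $\Rightarrow$ (reduct of a local class) $\Rightarrow$ (SNP-definable) $\Rightarrow$ (pseudo-local), so that each arrow can lean on machinery already in place. The first implication is immediate from Theorem~\ref{thm:concrete-emul}: if $\calD$ (with signature $\tau$) is the image of a local class $\calC$ of $\sigma$-structures under a surjective concrete functor $F$, then there is a quantifier-free definition $\Delta\colon \QF_\tau\to \QF_\sigma$ with $F(\bbC) = sh_\Delta(\bbC)$ for every $\bbC\in\calC$, whence $\calD = F[\calC] = sh_\Delta[\calC]$ is the $\Delta$-reduct of the local class $\calC$.

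The heart of the argument is the second implication, which upgrades an arbitrary quantifier-free reduct to an ordinary signature reduct. Given $\calD = sh_\Delta[\calC]$ with $\calC$ local in $\sigma$ and $\Delta\colon\QF_\tau\to\QF_\sigma$, I would pass to the disjoint-union signature $\rho = \sigma\sqcup\tau$ and consider the ``graph'' of $\Delta$: the class $\mathcal{E}$ of $\rho$-structures $\bbX$ whose $\sigma$-reduct lies in $\calC$ and whose $\tau$-part is exactly the relation defined by $\Delta$, i.e. $R(\bbX) = \{\overline a : \bbX\models \Delta(R)(\overline a)\}$ for every $R\in\tau$. The key claim is that $\mathcal{E}$ is local. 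By Theorem~\ref{thm:equivlocal} the class $\calC$ is axiomatised by a $\forall_1$-sentence $\phi$; since $\sigma\subseteq\rho$, reading $\phi$ over $\rho$-structures defines precisely the full expansion $sh_\sigma^{-1}[\calC]$. The condition pinning down the $\tau$-part is the sentence $\bigwedge_{R\in\tau}\forall\overline x\,(R(\overline x)\leftrightarrow \Delta(R)(\overline x))$, which is universal exactly because each $\Delta(R)$ is quantifier-free. Hence $\mathcal{E}$ is defined by a single $\forall_1$-sentence and is local by Theorem~\ref{thm:equivlocal} (equivalently, it is an intersection of two local classes, Corollary~\ref{cor:inter+union}). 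Finally, since $\Delta(R)$ is a $\sigma$-formula one checks $sh_\tau(\bbX) = sh_\Delta(sh_\sigma(\bbX))$ for every $\bbX\in\mathcal{E}$, so $sh_\tau[\mathcal{E}] = sh_\Delta[\calC] = \calD$, exhibiting $\calD$ as the $\tau$-reduct of the local class $\mathcal{E}$.

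The last two implications are short. For (reduct of a local class) $\Rightarrow$ (SNP-definable), write $\sigma\setminus\tau = \{R_1,\dots,R_k\}$ and let $\phi$ be a $\forall_1$-sentence axiomatising the local class $\calC$ (Theorem~\ref{thm:equivlocal}); then $\bbD\in\calD$ iff $\bbD$ admits a $\sigma$-expansion in $\calC$ iff $\bbD\models\exists R_1\cdots R_k\,\phi$, and the right-hand side is an SNP-sentence by definition. For (SNP-definable) $\Rightarrow$ (pseudo-local), given a defining SNP-sentence $\exists R_1\cdots R_k\,\phi$ with $\phi$ a $\forall_1$-sentence over $\sigma = \tau\cup\{R_1,\dots,R_k\}$, let $\calC$ be the class of models of $\phi$; it is local and hereditary by Theorem~\ref{thm:equivlocal}, and the forgetful functor $sh_\tau$ is a concrete functor (Lemma~\ref{lem:preservemb}) whose restriction $\calC\to\calD$ is surjective, its image being exactly the $\tau$-structures satisfying the SNP-sentence. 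Thus $\calD$ is the image of a local class under a surjective concrete functor.

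I expect the second implication to be the main obstacle, as it is the only step demanding a genuine construction rather than the invocation of an earlier theorem; the delicate points are verifying that the ``graph'' class $\mathcal{E}$ is really local (which hinges on $\Delta(R)$ being quantifier-free, so that the defining biconditionals stay universal) and confirming that reducing $\mathcal{E}$ back to $\tau$ recovers precisely $\calD$. The remaining three arrows are, in essence, dictionary translations between the functorial language supplied by Theorem~\ref{thm:concrete-emul} and the logical characterisation of local classes given by Theorem~\ref{thm:equivlocal}.
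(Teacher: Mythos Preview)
Your proof is correct and follows essentially the same approach as the paper. The paper organizes the implications slightly differently (proving $1\Leftrightarrow 2$, $3\Leftrightarrow 4$, the trivial $3\Rightarrow 2$, and then $1\Rightarrow 3$), but the central construction is identical: your ``graph'' class $\mathcal{E}$ in the signature $\sigma\sqcup\tau$ is exactly what the paper builds as the pullback $\calC\times_{\calD}\calD$ of $F$ with $Id_{\calD}$, axiomatised by the same universal sentence $\phi\wedge\bigwedge_{R\in\tau}\forall\overline x\,(R(\overline x)\leftrightarrow\Delta(R)(\overline x))$.
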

\begin{proof}
The equivalence between the first two statements follows from Theorem~\ref{thm:concrete-emul}, 
while the equivalence between the last two statements follows from definition
of SNP. The fact that the third item implies the second one is trivial.
We conclude by showing the the first item implies the third one. 
Let $F\colon\mathcal{C\to D}$ be a surjective concrete with $\calC$ a local
class of $\sigma$-structures. Let $\phi$ be an $\forall_1$-formula that defines
$\calC$ in the signature $\sigma$, and $\Delta\colon \QF_\tau\to \QF_\sigma$
a quantifier-free definition such that $F = sh_\Delta$. Similar as in the proof
of Lemma~\ref{lem:pulllocal}, we see that the pullback 
$P\colon \mathcal{A\times_\calD D\to D}$ of $F$ with the identity $Id_\calD$
has a local domain. Let $\psi$ be the $\sigma\sqcup \tau$-formula
\[
\bigwedge_{R\in \tau} \forall c_1,\dots, c_r(\Delta(R)(c_1,\dots,c_r) \iff R(c_1,\dots, c_r)),
\]
where $r$ is the arity of $R$. It is straightforward to verify that 
$\calA\times_\calD \calD$ is defined by the formula $\phi\land \psi$.
Clearly, $\calD$ is the $\tau$-reduct of $\calA\times_\calD \calD$ and thus,
every pseudo-local class is a reduct of a local class.
\end{proof}


\section{Conclusion and future research}
\label{sec:conclusion}

The main contribution of this work is to present the foundations of local
expressions of graph classes. As we saw above, this framework intends to
study several ways of characterizing graph classes under the same scope. 
In particular, in Section~\ref{sec:examples}, we presented new, simple,
and nice examples of such characterizations arising from the general
framework presented in the beginning of Section~\ref{sec:local-expresions}. 
In Section~\ref{sec:duality}, we built on the notion of quantifier-free
definitions to introduce the category of quantifier-free definitions,
and proved that qf definitions encode all embedding preserving functors
between classes of relational structures. Thus, we showed
that each local expression of a graph class $\mathcal{C}$
yields a polynomial time certificate to the membership in $\mathcal{C}$.
This motivated one of the open  questions in this framework, 
\textit{what can be certified by local expressions?}

\subsection*{Three generic problems}

Local expressions by concrete functors intend to grasp the fundamental
idea of expressions by forbidden orientations and forbidden orderings. 
Several observations about oriented and ordered expressions
generalize to local expressions by concrete functors.
Naturally, some of the driving questions and problems in the context of
forbidden orientations and forbidden orderings extend to the framework of
local expressions. Here, we propose three main generic problems of local
expressions by concrete functors, and briefly survey some open instances
of these.\\

\noindent
\textbf{Characterization Problem.}
\textit{Consider a concrete functor $F\colon\calC\to \calG$ and a finite set
$\calF\subseteq \mathcal{C}$. Characterize the class of graphs $G$
 for which  there is an  $\calF$-free structure $\bbC\in\mathcal{C}$
 such that $F(\bbC) = G$.}\\

In this terms, Skrien~\cite{skrienJGT6} studied the Characterization Problem
of $\hyperlink{OR}{S_{|\mathbf{OR}}}$ and when $\calF$ is a set of oriented paths
on three vertices. Also, Damaschke~\cite{damaschkeTCGT1990} was the first to
consider the characterization problem for $\hyperlink{LOR}{sh_L}$, and recently
Feuilloley and Habib~\cite{feuilloleyJDM} completed the task when $\calF$ is a set
of linearly ordered graphs on three vertices. Similarly, the Characterization 
Problem for genealogical graphs we considered in~\cite{paulPREPRINT} (in terms of
tree-layouts), and in~\cite{bokIP} this problem was considered for $2$-edge-coloured
graphs when $\calF$ is a set of structures on three vertices.  Regarding
forbidden orientations, it is still open to list all minimal obstructions to 
the class of graphs that admit a $\{B_1\}$-free orientation, where $B_1$ is
the orientation of $P_3$ with one vertex of out-degree $2$. As mentioned before, 
this problem was introduced by Skrien~\cite{skrienJGT6}, and studied by Hartinger
and Milanic~\cite{hartingerDM340,hartingerJGT85}. Similarly, it is still open 
to characterize the class of graphs that admit a $\{B_1,\overrightarrow{C_3}\}$-free
orientation~\cite{guzmanAR}. Open instances of this problem regarding forbidden
linearly ordered graphs on four vertices can be found in~\cite{feuilloleyAR}, where the
authors investigate the relation of forbidden linearly ordered graphs and intersection
graph  of certain geometric objects. Finally, the Characterization Problem for remains 
unexplored for most of the functors considered in Section~\ref{sec:examples}. In particular, 
we believe that the Characterization Problem for forbidden $2$-arc-coloured tournaments
on three vertices (Problem~\ref{prob:expressions2arc}) could yield several interesting
graph classes (see, e.g., Example~\ref{ex:2-arc-col} and the paragraph preceding it).\\

\noindent
\textbf{Complexity Problem.}
\textit{Consider a functor $F\colon\mathcal{C\to G}$ with pseudo-local domain $\calC$,
and a finite set $\calF\subseteq \mathcal{C}$. Determine the complexity of deciding 
if for some input graph $G$ there is an  $\calF$-free structure $\calC\in\mathcal{C}$
such that $F(\calC) = G$.}\\

As far as we are aware, Duffus, Ginn, and R\"odl~\cite{duffusRSA7} where the first
authors to  consider a particular instance of this problem. They considered the
Complexity Problem for $\hyperlink{LOR}{sh_L}$ and when $\calF = \{(G,\le)\}$ for
an arbitrary $2$-connected graph $G$. In particular, they showed that for almost
all $2$-connected graphs, the $\{(G,\le)\}$-free orderings problem is NP-complete.
Later,  Hell, Mohar, and Rafiey~\cite{hellESA2014} showed that
the $\calF$-free linear ordering problem is polynomial-time solvable
whenever $\calF$ is a set of linearly ordered graphs on at most three vertices.
Again, some open instances of this problem, regarding forbidden linear
orderings motivated by intersection graphs of geometric objects can be 
found in~\cite{feuilloleyAR}. An a conjecture from~\cite{duffusRSA7}
asserts that the Complexity Problem for $\{(G,\le)\}$ where $G$ is $2$-connected
is NP-complete, unless $G$ is a complete graph.
Similarly, the Complexity Problem was considered in~\cite{guzmanAR} for
forbidden orientations and when $\calF$ is a set of oriented graphs on $3$ vertices. 
Moreover, the Complexity Problem for forbidden orientations was recently settled
when $\calF$ is a set of tournaments~\cite{bodirskyAR}.  Regarding forbidden
orientation on three vertices, it only remains open to determine the
complexity of deciding if an input graph admits a $\{B_1,\overrightarrow{C_3}\}$-free
orientation~\cite{guzmanAR}.\\

\noindent
\textbf{Expressibility Problem.}
\textit{Consider a concrete functor $F\colon\mathcal{C\to G}$ and a class of
graphs $\mathcal{P}$. Determine if $\mathcal{P}$ is locally 
expressible by $F$.}\\

To date, this is the least considered problem in the literature. As far as we are
aware, the first instance of the Expressibility Problem was asked by
Damaschke~\cite{damaschkeTCGT1990} asked if the class of circle graphs is
an $FOSG$-class, i.e., the Expressibility Problem for $\hyperlink{LOR}{sh_L\colon\LOR\to \calG}$
and circle graphs (still open). In the same context, Hell and Hern\'andez-Cruz
asked the following question.

\begin{question}\cite{hellPC}
Is the class of even-hole free graphs expressible by forbidden linear
orderings?
\end{question}

The only known progress in this direction is the work of Guzm\'an-Pro and
Hern\'andez-Cruz~\cite{guzmanEJC28}. Motivated by understanding
the expressive power of forbidden orientations regarding classes with forbidden holes,
these authors asked if the class of odd-hole free graphs is expressible by forbidden
orientations. Other instances of the Expressibility Problem can be found
in~\cite{guzmanEJC105} for forbidden orientations and $H$-colouring classes;
and in~\cite{guzmanAMC438} it was asked whether the class of bipartite
graphs is expressible by forbidden circular orderings.

\subsection*{Local expressions and pseudo-local classes}

The three generic problems intend to encompass several addressed
and open problems that lie within the realm of structural and algorithmic
graph theory. Several questions from Sections~\ref{sec:local-expresions} and
Section~\ref{sec:duality} lie outside the scope of structural and
algorithmic theory. Nonetheless, we believe that these could still yield
interesting research. Several of these address the problem of comparing expressive
powers of different functors. In particular, we
asked if there is a meaningful relation between functors
with the same expressive power (Question~\ref{qst:equivalencefunctors}).
Other questions in the spirit of comparing expressive powers
include Questions~\ref{qst:k-k+1}, \ref{qst:OR-S}, and~\ref{qst:AO-L-S-P}. 
 It would also be interesting to know if by considering functors with larger
domain we always increase the expressive power (we ask it for graphs, but
it can also be asked for general classes of relational structures).

\begin{question}
    Given a concrete functor $F\colon \calC\to \calG$ for some local 
    class of $\tau$-structures $\calC$, is there a finite extension
    $\tau'$ of $\tau$ and an extension $F'\colon\calC'\to \calG$  of $F$
    such that $\ex(F')$ properly extends $\ex(F)$?
\end{question}

\noindent
\textbf{Logic and computational complexity.}
Feder and Vardi~\cite{federJC28} proved that the class of SNP-definable classes of 
relational structures has the full computational power of NP up to polynomial-time
equivalence. Thus, it follows from Theorem~\ref{thm:equivpseudolocal}
that the complexity of every language in NP, is represented by a pseudo-local class,
up to polynomial-time equivalence. This raises the question whether this remains
true if we restrict to pseudo-local graph classes. 

\begin{question}
    Is it true that for every language in NP there is a polynomial-time equivalent
    pseudo-local class of graphs?
\end{question}

This questions addresses the expressive power of pseudo-local graph classes up to
polynomial-time equivalence, but it also makes sense to ask about the literal expressive
power: every pseudo-local (graph) class is a hereditary class in NP, is the converse true?

\begin{question}
    Is there a hereditary class of graphs that belongs to NP but it is not a pseudo-local
    graph class?
\end{question}

\begin{question}
    Is there a hereditary class of graphs that belongs to P but it is not a pseudo-local
    graph class?
\end{question}

Similar questions had been previously considered for general relational structures in the context
of SNP and CSPs  by Bodirsky and Rudolph~\cite{bodirsky-personal} (see also, open problem 31
in~\cite{bodirsky2021}). It is not hard to notice that every finite CSP
is a pseudo-local class.  In particular, for each graph $H$ the class of $H$-colourable graphs is
a pseudo-local class. Moreover, Example~\ref{ex:equivalence-CSP} shows that there is a functor
$sh_\sim\colon\mathcal{EQ}\to \calG$
such that every class of $H$-colourable graphs is locally expressible by $F$. Now, we ask
if forbidden orientations and forbidden linear orderings also express all classes of $H$-colourable graphs.

\begin{question}
    Is every class of $H$-colourable graphs expressible by forbidden orientations?
\end{question}

\begin{question}
    Is every class of $H$-colourable graphs expressible by forbidden linear orderings?
\end{question}

\section*{Acknowledgements}
The author initiated this work during his PhD studies supervised by C\'esar Hern\'andez-Cruz
whose valuable comments, feedback, and several discussions helped improving the present work.
All these are gratefully acknowledged.  The author also thanks Manuel Bodirsky for point out
the logic SNP, reference~\cite{federJC28}, and proposing Example~\ref{ex:equivalence-CSP}.


\appendix 

\section{Glossary of concrete functors}
\label{ap:concrete-functors}

This appendix serves a double purpose: it provides a quick reference for some concrete 
functors mentioned throughout this work, and it summarizes several observations about
their expressive power. We list these functors according to their domain ---
in all cases the codomain is $\mathcal{G}$. 
Recall that we consider categories defined by hereditary classes of relational structures
with emebedings (of relational structures). To shorten our writing, given a positive
integer $k$ we denote by $\mathcal{C}_k$ and by $\mathcal{C}_{\le k}$ the set of structures
with exactly $k$ vertices and with at most $k$ vertices. Non-standard classes
of relational structures are can be found in Appendix~\ref{ap:graph-classes}.
In order to keep the information of each functor together, we use one page per
functor.\newpage

\hypertarget{AO}{
\noindent
\textbf{\large{{Acyclic oriented graphs -- $\AO$}}}}\\
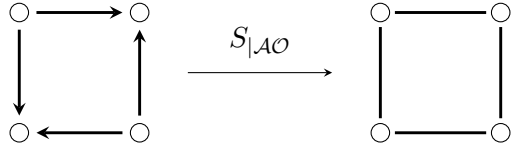
\begin{figure}[ht!]
\begin{center}
\begin{tikzpicture}[scale = 0.8]

\node [vertex] (1) at (-4,0){};
\node [vertex] (2) at (-4,2){};
\node [vertex]  (3) at (-2,2){};
\node [vertex] (4) at (-2,0){};
\foreach \from/\to in {2/3, 4/3, 4/1, 2/1}
\draw[arc]    (\from)  edge  (\to);

\node [vertex] (a) at (2,0){};
\node [vertex] (b) at (2,2){};
\node [vertex]  (c) at (4,2){};
\node [vertex] (d) at (4,0){};

\foreach \from/\to in {a/b, b/c, c/d, d/a}
\draw[edge]    (\from)  to  (\to);

\node[] (D) at (-1.5,1){};
\node[] (C) at (1.5,1){};
\node[] at (0,1.5){$S_{|\AO}$};
\draw[arc, thin]    (D)  edge  (C);

\end{tikzpicture}
\caption[Functor $S_{|\AO} \colon \mathcal{AO\to G}$]{To the left, an
acyclic oriented graph $A$ such that $S_{|\AO}(A) = C_4$.}
\end{center}
\end{figure}
\begin{itemize}
	\item \textit{Functor:}
	Maps an acyclic oriented graph to its
	underlying graph; also defined by self quantifier-free definition
    $E\mapsto E(x,y) \lor E(y,x)$.
	\item \textit{Equivalent expressions:}
	Classes expressible by forbidden acyclic orientations~\cite{guzmanEJC105},
	and $F^\ast$-graph classes~\cite{skrienJGT6}.
	\item \textit{Locally expressible examples:}
	Chordal graphs and forests~\cite{skrienJGT6}; $k$-colourable graphs
    (RGHV-Theorem); homomorphism classes of
    odd cycles~\cite{guzmanEJC28}.
	\item \textit{Not locally expressible examples:}
	Even-hole free graphs~\cite{guzmanEJC105}.
	\item \textit{Relation to other expressive powers:}
	$ex(S_{|\AO}) \subseteq ex(sh_L)$ (Proposition~\ref{prop:factor-restriction}).
	\item \textit{Complexity Problem:}
	Polynomial time solvable for $F\subseteq \AO_{\le3}$
    (follows from~\cite{hellESA2014} via Proposition~\ref{prop:factor-restriction}).
	For every $k\ge 4$ there is a set $F\subseteq \AO_k$
	such that its Complexity Problem is $NP$-complete (follows from RGHV-Theorem).
	\item \textit{Questions:} Do the classes of
	co-bipartite graphs or co-chordal graphs belong to $ex(S_{|\AO})$?
	Is $ex(S_{|\AO}) \subseteq ex(sh_L)$ a proper inclusion?\\
\end{itemize}	
\newpage

\hypertarget{T2}{
\noindent
\textbf{\large{{$\mathbf 2$-arc-coloured tournaments  -- $\T_2$}}}}\\
\begin{figure}[ht!]
\begin{center}
\begin{tikzpicture}[scale = 0.8]

\node [vertex] (1) at (-4,0){};
\node [vertex] (2) at (-4,2){};
\node [vertex]  (3) at (-2,2){};
\node [vertex] (4) at (-2,0){};
\foreach \from/\to in {3/4, 4/1, 1/2, 2/3}
\draw[arc, blue]    (\from)  edge  (\to);

\foreach \from/\to in {1/3, 2/4}
\draw[arc, red, dashed]    (\from)  edge  (\to);

\node [vertex] (a) at (2,0){};
\node [vertex] (b) at (2,2){};
\node [vertex]  (c) at (4,2){};
\node [vertex] (d) at (4,0){};

\foreach \from/\to in {a/b, b/c, c/d, d/a}
\draw[edge]    (\from)  to  (\to);

\node[] (D) at (-1.5,1){};
\node[] (C) at (1.5,1){};
\node[] at (0,1.5){$SB$};
\draw[arc, thin]    (D)  edge  (C);

\end{tikzpicture}
\caption[Functor $SB\colon\mathcal{T_2\to G}$]{To the left, a $2$-arc-coloured
tournament $A$ such that $SB(A) = C_4$.}
\end{center}
\end{figure}
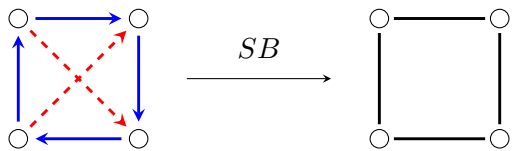
\begin{itemize}
	\item \textit{Functor:}
	Forgets red arcs and takes the symmetric closure of the blue arcs;
	also defined by the quantifier-free definition $E \mapsto E_b(x,y) \lor E_b(y,x)$.
	\item \textit{Equivalent expressions:}
	Local expressions by the pullback of $S_{|\OR}$ and
	$co\circ S_{|\OR}$ where $co(G) = \overline{G}$.
	\item \textit{Locally expressible examples:}
	Proper circular-arc co-bipartite graphs (Example~\ref{ex:2-arc-col})
	\item \textit{Not locally expressible examples:}
	Not considered yet.
	\item \textit{Relation to other expressive powers:}
	$ex(S_{|\OR}) \cup ex(sh_L)\subseteq ex(SB)$
	(Proposition~\ref{prop:factor-restriction}).
	\item \textit{Complexity Problem:}
	Not considered yet.
	\item \textit{Questions:} 
	Which classes are expressible by $2$-arc-coloured
	tournaments on $3$ vertices?
	Complexity Problem when $F$ contains tournaments on three vertices.\\
\end{itemize}
\newpage

\hypertarget{COR}{
\noindent
\textbf{\large{{Circularly ordered graphs -- $\COR$}}}}\\
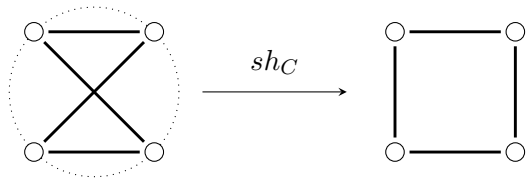
\begin{figure}[ht!]
\begin{center}
\begin{tikzpicture}[scale = 0.8]

\draw [black, dotted] (-3,1) circle[radius = 1.41];
\node [vertex, fill= white] (1) at (-2,0){};
\node [vertex, fill= white] (2) at (-2,2){};
\node [vertex, fill= white]  (3) at (-4,2){};
\node [vertex, fill= white] (4) at (-4,0){};

\foreach \from/\to in {1/3, 3/2, 2/4, 4/1}
\draw[edge]    (\from)  to  (\to);

\node [vertex] (a) at (2,0){};
\node [vertex] (b) at (2,2){};
\node [vertex]  (c) at (4,2){};
\node [vertex] (d) at (4,0){};

\foreach \from/\to in {a/b, b/c, c/d, d/a}
\draw[edge]    (\from)  to  (\to);

\node[] (D) at (-1.5,1){};
\node[] (C) at (1.5,1){};
\node[] at (0,1.5){$sh_C$};
\draw[arc, thin]    (D)  edge  (C);

\end{tikzpicture}
\caption[Functor $sh_C\colon\mathcal{COR\to G}$]{To the left, a circularly ordered
graph $A$ such that $sh_C(A) = C_4$. Vertices are circularly ordered
by the clockwise ordering.}
\end{center}
\end{figure}
\begin{itemize}
	\item \textit{Functor:}
	Forgets the circular ordering;
	also defined by the quantifier-free definition $E \mapsto E(x,y)$.
	\item \textit{Equivalent expressions:}
	Classes expressible by forbidden circular orderings (see, e.g., \cite{guzmanAMC438}).
	\item \textit{Locally expressible examples:}
	Circular-arc graphs, outerplanar graphs, graphs with circular chromatic
	number less than $k$~\cite{guzmanAMC438}.
	\item \textit{Not locally expressible examples:}
	Unknown.
	\item \textit{Relation to other expressive powers:}
	$ex(sh_C)\subseteq ex(sh_L)$~\cite{guzmanAMC438}.
	\item \textit{Complexity Problem:}
	Polynomial time solvable when $F\subseteq \COR_{\le 3}$.
	For every $k\ge 5$ there is a set $F\subseteq \COR_k$
	such that the Complexity Problem is $NP$-complete~\cite{guzmanAMC438}.
	\item \textit{Questions:} 
	Is the class of bipartite graphs locally
	expressible by $sh_C$?  Is $ex(sh_C)\subseteq ex(sh_L)$ a proper
	inclusion?
	What is the complexity of the Complexity Problem when
	$F \subseteq \COR_4$?\\ \newpage
\end{itemize}

\hypertarget{COOR}{
\noindent
\textbf{\large{{Circularly ordered oriented graphs -- $\mathcal{COOR}$}}}}\\
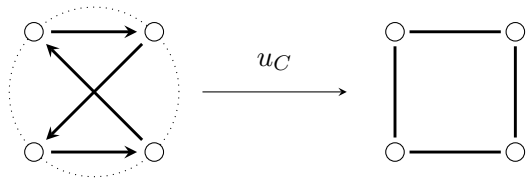
\begin{figure}[ht!]
\begin{center}
\begin{tikzpicture}[scale = 0.8]

\draw [black, dotted] (-3,1) circle[radius = 1.41];
\node [vertex, fill= white] (1) at (-2,0){};
\node [vertex, fill= white] (2) at (-2,2){};
\node [vertex, fill= white]  (3) at (-4,2){};
\node [vertex, fill= white] (4) at (-4,0){};

\foreach \from/\to in {1/3, 3/2, 2/4, 4/1}
\draw[arc]    (\from)  to  (\to);

\node [vertex] (a) at (2,0){};
\node [vertex] (b) at (2,2){};
\node [vertex]  (c) at (4,2){};
\node [vertex] (d) at (4,0){};

\foreach \from/\to in {a/b, b/c, c/d, d/a}
\draw[edge]    (\from)  to  (\to);

\node[] (D) at (-1.5,1){};
\node[] (C) at (1.5,1){};
\node[] at (0,1.5){$u_C$};
\draw[arc, thin]    (D)  edge  (C);

\end{tikzpicture}
\caption[Functor $u_C\colon\mathcal{COOR\to G}$]{To the left, a circularly ordered
oriented graph $A$ such that $u_C(A) = C_4$. Vertices are circularly ordered
by the clockwise ordering.}
\end{center}
\end{figure}
\begin{itemize}
	\item \textit{Functor:}
	Forgets the circular ordering and the orientation of the arcs;
	also defined by quantifier-free definition $E \mapsto E(x,y) \lor E(y,x)$.
	\item \textit{Equivalent expressions:}
	Locally expressible classes by the pullback
	\hfill \newline $P\colon\mathcal{OR \times_G COR\to G}$.
	\item \textit{Locally expressible examples:}
	Circular-arc graphs (Example~\ref{ex:circular-arc}).
	\item \textit{Not locally expressible examples:}
	Not considered yet.
	\item \textit{Relation to other expressive powers:}
	$ex(sh_C\oplus S_{|\OR})\subseteq ex(u_C)$ 
	(Proposition~\ref{cor:pullback-disjoint}).
	\item \textit{Complexity Problem:}
	Not considered yet.
	\item \textit{Questions:} Is $ex(sh_C\oplus S_{|\OR})\subseteq ex(u_C)$ 
	a proper inclusion? Is there a set $F\subseteq \mathcal{COOR}_3$ such
	that its Complexity Problem is NP-complete?\\
\end{itemize}
\newpage

\hypertarget{DI}{
\noindent
\textbf{\large{{Digraphs -- $\DI$}}}}
\begin{figure}[ht!]
\begin{center}
\begin{tikzpicture}[scale = 0.8]

\node [vertex] (1) at (-4,0){};
\node [vertex] (2) at (-4,2){};
\node [vertex]  (3) at (-2,2){};
\node [vertex] (4) at (-2,0){};
\foreach \from/\to in {3/4, 4/1, 1/2}
\draw[arc]    (\from)  edge  (\to);

\draw[arc]    (2)  edge[bend right]  (3);
\draw[arc]    (3)  edge[bend right]  (2);

\node [vertex] (a) at (2,0){};
\node [vertex] (b) at (2,2){};
\node [vertex]  (c) at (4,2){};
\node [vertex] (d) at (4,0){};

\foreach \from/\to in {a/b, b/c, c/d, d/a}
\draw[edge]    (\from)  to  (\to);

\node[] (D) at (-1.5,1){};
\node[] (C) at (1.5,1){};
\node[] at (0,1.5){$S$};
\draw[arc, thin]    (D)  edge  (C);

\end{tikzpicture}
\caption[Functor $S\colon\mathcal{DI\to G}$]{To the left, a digraph $A$ such that
$S(A) = C_4$.}
\end{center}
\end{figure}
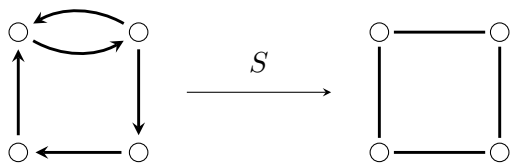
\begin{itemize}
	\item \textit{Functor:}
	Maps a digraph to its  underlying graph; also defined by the self
	quantifier-free definition $E\mapsto E(x,y) \lor E(y,x)$.
	\item \textit{Locally expressible examples:}
	$\mathcal{P}$-mixed graphs when $\mathcal{P}$ is a local class
	(Example~\ref{ex:Pmixed}).
	\item \textit{Not locally expressible examples:} Not considered yet.
	\item \textit{Relation to other expressive powers:}
	$ex(S_{|\OR}) \subseteq ex(S)$ (Proposition~\ref{prop:factor-restriction}).
	\item \textit{Complexity Problem:}
	For every $k\ge 4$ there is a set $F\subseteq \DI_k$
	such that its Complexity Problem is NP-complete (follows from RGHV-Theorem).
	\item \textit{Questions:} 
	Is there a graph property locally expressible by $S$ but not by $S_{|\OR}$?
	Is there a set $F\subseteq\DI_3$ such that its Complexity Problem
	is NP-complete?\\
\end{itemize}
\newpage

\hypertarget{EG2}{
\noindent
\textbf{\large{Edge-coloured graphs  -- $\mathcal{EG}_2$}}}\\
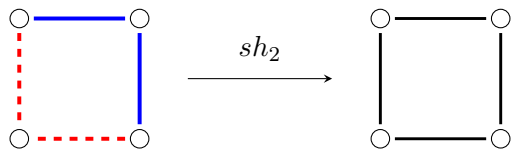
\begin{figure}[ht!]
\begin{center}
\begin{tikzpicture}[scale = 0.8]

\node [vertex] (1) at (-4,0){};
\node [vertex] (2) at (-4,2){};
\node [vertex]  (3) at (-2,2){};
\node [vertex] (4) at (-2,0){};
\foreach \from/\to in {4/1, 2/1}
\draw[redE]    (\from)  edge  (\to);

\foreach \from/\to in {2/3, 4/3}
\draw[blueE]    (\from)  edge  (\to);

\node [vertex] (a) at (2,0){};
\node [vertex] (b) at (2,2){};
\node [vertex]  (c) at (4,2){};
\node [vertex] (d) at (4,0){};

\foreach \from/\to in {a/b, b/c, c/d, d/a}
\draw[edge]    (\from)  to  (\to);

\node[] (D) at (-1.5,1){};
\node[] (C) at (1.5,1){};
\node[] at (0,1.5){$sh_2$};
\draw[arc, thin]    (D)  edge  (C);

\end{tikzpicture}
\caption[Functor $sh_2\colon\mathcal{EG_2\to G}$]{To the left, a
$2$-edge-coloured graph $A$ such that $sh_2(A) = C_4$.}
\end{center}
\end{figure}
\begin{itemize}
	\item \textit{Functor:}
	Forgets the edge colouring; also defined by quantifier-free definition
	$E\mapsto E_r(x,y) \lor E_b(x,y)$.
	\item \textit{Locally expressible examples:}
	Co-bipartite graphs, line-graphs of bipartite graphs, and
    $2$-edge-colourable graphs~\cite{bokIP}.
	\item \textit{Not locally expressible examples:}
	Not considered yet.
	\item \textit{Relation to other expressive powers:}
	Not considered yet.
	\item \textit{Complexity Problem:}
	In P for forbidden colourings of $3K_1$, $K_1 + K_2$, and $P_3$, and
    NP-complete for certain forbidden colourings of $K_3$~\cite{bokIP}.
	\item \textit{Questions:}
	Is there a meaningful relation between expressions by forbidden oriented graphs
    and by forbidden	$2$-edge-coloured graphs?\\
\end{itemize}
\newpage
\hypertarget{EQ}{
\noindent
\textbf{\large{Equivalence graphs -- $\mathcal{EQ}$}}}\\
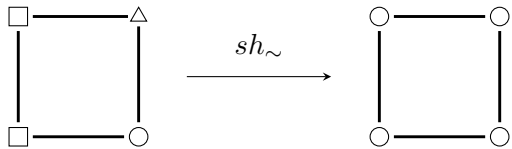
\begin{figure}[ht!]
\begin{center}
\begin{tikzpicture}[scale = 0.8]

\node [vertex, rectangle] (1) at (-4,0){};
\node [vertex, rectangle] (2) at (-4,2){};
\node [vertex, regular polygon, regular polygon sides=3]  (3) at (-2,2){};
\node [vertex] (4) at (-2,0){};
\foreach \from/\to in {2/3, 4/3, 4/1, 2/1}
\draw[edge]    (\from)  edge  (\to);

\node [vertex] (a) at (2,0){};
\node [vertex] (b) at (2,2){};
\node [vertex]  (c) at (4,2){};
\node [vertex] (d) at (4,0){};

\foreach \from/\to in {a/b, b/c, c/d, d/a}
\draw[edge]    (\from)  to  (\to);

\node[] (D) at (-1.5,1){};
\node[] (C) at (1.5,1){};
\node[] at (0,1.5){$sh_\sim$};
\draw[arc, thin]    (D)  edge  (C);

\end{tikzpicture}
\caption[Functor $sh_\sim\colon\mathcal{EQ\to G}$]{To the left, an
equivalence graph $A$ such that $sh_\sim(A) = C_4$.}
\end{center}
\end{figure}
\begin{itemize}
	\item \textit{Functor:}
	Forgets the equivalence relation; also defined by quantifier-free definition
	$E\mapsto E(x,y)$.
	\item \textit{Locally expressible examples:}
	Classes of $H$-colourable graphs (Example~\ref{ex:equivalence-CSP}).
	\item \textit{Not locally expressible examples:}
	Not considered yet.
	\item \textit{Relation to other expressive powers:}
	Not considered yet.
	\item \textit{Complexity Problem:}
	Not considered yet.
	\item \textit{Questions:} Not considered yet.\\
\end{itemize}
\newpage
\hypertarget{GEN}{
\noindent
\textbf{\large{{Genealogical graphs  -- $\mathcal{GEN}$}}}}\\
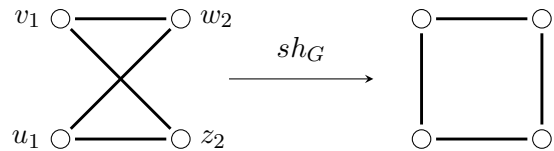
\begin{figure}[ht!]
\begin{center}
\begin{tikzpicture}[scale = 0.8]

\node [vertex, label = right:{$z_2$}] (1) at (-2,0){};
\node [vertex,  label = right:{$w_2$}] (2) at (-2,2){};
\node [vertex,  label = left:{$v_1$}]  (3) at (-4,2){};
\node [vertex,  label = left:{$u_1$}] (4) at (-4,0){};

\foreach \from/\to in {1/3, 3/2, 2/4, 4/1}
\draw[edge]    (\from)  to  (\to);

\node [vertex] (a) at (2,0){};
\node [vertex] (b) at (2,2){};
\node [vertex]  (c) at (4,2){};
\node [vertex] (d) at (4,0){};

\foreach \from/\to in {a/b, b/c, c/d, d/a}
\draw[edge]    (\from)  to  (\to);

\node[] (D) at (-1.5,1){};
\node[] (C) at (1.5,1){};
\node[] at (0,1.5){$sh_G$};
\draw[arc, thin]    (D)  edge  (C);

\end{tikzpicture}
\caption[Functor $sh_G\colon\mathcal{GEN\to G}$]{To the left, a genealogical graph
$A$ such that $sh_G(A) = C_4$. Vertices are ordered
from left to right.}
\end{center}
\end{figure}
\begin{itemize}
	\item \textit{Functor:}
	Forgets the ordering on the vertex set; also defined by quantifier-free definition
	$E \mapsto E(x,y)$.
	\item \textit{Locally expressible examples:}
	Proper chordal-graphs~\cite{paulPREPRINT} via Lemma~\ref{lem:tree-layout}.
	\item \textit{Not locally expressible examples:}
	Not considered yet.
	\item \textit{Relation to other expressive powers:}
	$\ex(sh_L) \subseteq \ex(sh_G)\subseteq \ex(sh_S)$
	(Proposition~\ref{prop:factor-restriction}).
	\item \textit{Complexity Problem:}
	Not considered yet.
	\item \textit{Questions:} 
	Which of the inclusions $\ex(S_L) \subseteq \ex(sh_G) \subseteq \ex(sh_S)$
	are proper inclusions?\\
\end{itemize}
\newpage

\hypertarget{G}{
\noindent
\textbf{\large{{Graphs -- $\mathcal{G}$}}}}\\
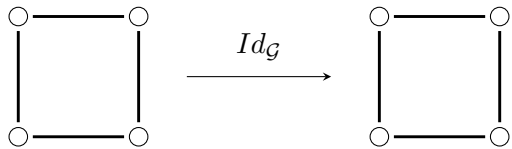
\begin{figure}[ht!]
\begin{center}
\begin{tikzpicture}[scale = 0.8]

\node [vertex] (1) at (-4,0){};
\node [vertex] (2) at (-4,2){};
\node [vertex]  (3) at (-2,2){};
\node [vertex] (4) at (-2,0){};
\foreach \from/\to in {2/3, 4/3, 4/1, 2/1}
\draw[edge]    (\from)  edge  (\to);

\node [vertex] (a) at (2,0){};
\node [vertex] (b) at (2,2){};
\node [vertex]  (c) at (4,2){};
\node [vertex] (d) at (4,0){};

\foreach \from/\to in {a/b, b/c, c/d, d/a}
\draw[edge]    (\from)  to  (\to);

\node[] (D) at (-1.5,1){};
\node[] (C) at (1.5,1){};
\node[] at (0,1.5){$Id_{\mathcal{G}}$};
\draw[arc, thin]    (D)  edge  (C);

\end{tikzpicture}
\caption[Functor $Id_\mathcal{G}\colon\mathcal{G\to G}$]{To the left, a graph $A$
such that $Id_\mathcal{G}(A) = C_4$.}
\end{center}
\end{figure}
\begin{itemize}
	\item \textit{Functor:}
	The identity on $\mathcal{G}$; also defined by the self quantifier-free definition
	$E\mapsto E(x,y)$.
	\item \textit{Equivalent expressions:}
	Local and $\forall_1$-definable graph classes
	(Theorem~\ref{thm:equivlocal}). Same expressive power as
	complement functor $co(G) = \overline{G}$
	(Proposition~\ref{prop:symmetries}).
	\item \textit{Locally expressible examples:}
	Cographs and triangle-free graphs.
	\item \textit{Not locally expressible examples:}
	Bipartite graphs and forests.
	\item \textit{Relation to other expressive powers:}
	$\ex(Id_\mathcal{G})\subseteq \ex(F)$ for any concrete functor
	$F\colon\mathcal{C\to G}$.
	\item \textit{Complexity Problem:}
	Polynomial-time solvable.\\
\end{itemize}
\newpage	

\hypertarget{LOR}{
\noindent
\textbf{\large{{Linearly ordered graphs -- $\LOR$}}}}\\
\begin{figure}[ht!]
\begin{center}
\begin{tikzpicture}[scale = 0.8]

\node [vertex, fill= white] (1) at (-2,1){};
\node [vertex, fill= white] (2) at (-3.5,1){};
\node [vertex, fill= white]  (3) at (-5,1){};
\node [vertex, fill= white] (4) at (-6.5,1){};

\foreach \from/\to in {1/3, 4/2, 1/4}
\draw[edge]    (\from)  edge [bend right]  (\to);
\draw[edge]    (3)  edge (2);

\node [vertex] (a) at (2,0){};
\node [vertex] (b) at (2,2){};
\node [vertex]  (c) at (4,2){};
\node [vertex] (d) at (4,0){};

\foreach \from/\to in {a/b, b/c, c/d, d/a}
\draw[edge]    (\from)  to  (\to);

\node[] (D) at (-1.5,1){};
\node[] (C) at (1.5,1){};
\node[] at (0,1.5){$sh_\tau$};
\draw[arc, thin]    (D)  edge  (C);

\end{tikzpicture}
\caption[Functor $sh_L\colon\mathcal{LOR\to G}$]{To the left, a linearly ordered
graph $A$ such that $sh_L(A) = C_4$. Vertices are ordered
from left to right.}
\end{center}
\end{figure}
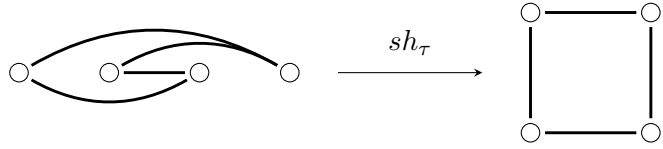
\begin{itemize}
	\item \textit{Functor:}
	Forgets the linear ordering; also defined by
	the quantifier-free definition $E\mapsto E(x,y)$.
	\item \textit{Equivalent expressions:}
	Classes expressible by forbidden linear orderings,
	$FOSG$-classes \cite{damaschkeTCGT1990}, and forbidden patterns
	\cite{feuilloleyJDM, hellESA2014}.
	\item \textit{Locally expressible examples:}
	Forests, proper-interval graphs, and bipartite graphs~\cite{damaschkeTCGT1990, feuilloleyJDM}.
	\item \textit{Not locally expressible examples:}
	Unknown.
	\item \textit{Relation to other expressive powers:}
	$\ex(S_{|\AO})\cup \ex(sh_C) \subseteq \ex(sh_L)$
	(Proposition~\ref{prop:factor-restriction}).
	\item \textit{Complexity Problem:}
	Polynomial time solvable for $F\subseteq \LOR_{\le3}$
	\cite{hellESA2014}.
	For every $k\ge 4$ there is a set $F\subseteq \LOR_k$
	such that its Complexity Problem is NP-complete
	(follows from RGHV-Theorem via Proposition~\ref{prop:factor-restriction}).
	\item \textit{Questions:} Is the class of even-hole free graphs
	expressible by forbidden linear orderings?~\cite{hellPC}.\\
\end{itemize}
\newpage

\hypertarget{LOOR}{
\noindent
\textbf{\large{{Linearly ordered oriented graphs  -- $\LOOR$}}}}\\
\begin{figure}[ht!]
\begin{center}
\begin{tikzpicture}[scale = 0.8]

\node [vertex, fill= white] (1) at (-2,1){};
\node [vertex, fill= white] (2) at (-3.5,1){};
\node [vertex, fill= white]  (3) at (-5,1){};
\node [vertex, fill= white] (4) at (-6.5,1){};

\foreach \from/\to in {1/3, 4/2, 1/4}
\draw[arc]    (\from)  edge [bend right]  (\to);
\draw[arc]    (3)  edge (2);

\node [vertex] (a) at (2,0){};
\node [vertex] (b) at (2,2){};
\node [vertex]  (c) at (4,2){};
\node [vertex] (d) at (4,0){};

\foreach \from/\to in {a/b, b/c, c/d, d/a}
\draw[edge]    (\from)  to  (\to);

\node[] (D) at (-1.5,1){};
\node[] (C) at (1.5,1){};
\node[] at (0,1.5){$u_\tau$};
\draw[arc, thin]    (D)  edge  (C);

\end{tikzpicture}
\caption[Functor $u_L\colon\mathcal{LOOR\to G}$]{To the left, a linearly ordered
oriented graph $A$ such that $u_L(A) = C_4$. Vertices are ordered
from left to right.}
\end{center}
\end{figure}
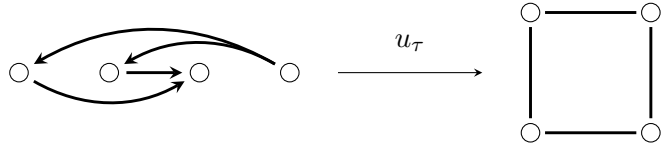
\begin{itemize}
	\item \textit{Functor:}
	Forgets the linear ordering and the orientation of the arcs;
	also defined by quantifier-free definition $E \mapsto E(x,y) \lor E(y,x)$.
	\item \textit{Equivalent expressions:}
	Isomorphic to the pullbacks \break
	$P\colon \mathcal{OR \times_G LOR\to G}$ and 
	$Q\colon \mathcal{EG}_2\mathcal{ \times_G LOR\to G}$.
	\item \textit{Locally expressible examples:}
	$3$-colourable graphs (Example~\ref{ex:3colLOOR}).
	\item \textit{Not locally expressible examples:}
	Not considered yet.
	\item \textit{Relation to other expressive powers:}
	$ex(sh_L\oplus S_{|\OR})\subseteq ex(u_L)$ 
	(Proposition~\ref{cor:pullback-disjoint})
	\item \textit{Complexity Problem:}
	For every $k\ge 3$ there is a set $F\subseteq \LOOR_k$ such
	that its Complexity Problem is NP-complete
	(Example~\ref{ex:3colLOOR}).
	\item \textit{Questions:} Which classes are expressible by linearly ordered
	oriented graphs on $3$ vertices?\\
\end{itemize}
\newpage

\hypertarget{OR}{
\noindent
\textbf{\large{{Oriented graphs  -- $\OR$}}}}\\
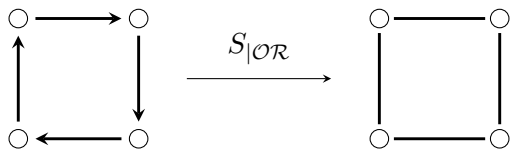
\begin{figure}[ht!]
\begin{center}
\begin{tikzpicture}[scale = 0.8]

\node [vertex] (1) at (-4,0){};
\node [vertex] (2) at (-4,2){};
\node [vertex]  (3) at (-2,2){};
\node [vertex] (4) at (-2,0){};
\foreach \from/\to in {2/3, 3/4, 4/1, 1/2}
\draw[arc]    (\from)  edge  (\to);

\node [vertex] (a) at (2,0){};
\node [vertex] (b) at (2,2){};
\node [vertex]  (c) at (4,2){};
\node [vertex] (d) at (4,0){};

\foreach \from/\to in {a/b, b/c, c/d, d/a}
\draw[edge]    (\from)  to  (\to);

\node[] (D) at (-1.5,1){};
\node[] (C) at (1.5,1){};
\node[] at (0,1.5){$S_{|\OR}$};
\draw[arc, thin]    (D)  edge  (C);
\end{tikzpicture}
\caption[Functor $S_{|\OR}\colon\mathcal{OR\to G}$]{To the left, an
oriented graph $A$ such that $S_{|\OR}(A) = C_4$.}
\end{center}
\end{figure}
\begin{itemize}
	\item \textit{Functor:}
	Maps an oriented graph to its
	underlying graph; also defined by the self quantifier-free definition $E\mapsto E(x,y) \lor E(y,x)$.
	\item \textit{Equivalent expressions:}
	Classes expressible by forbidden orientations~\cite{guzmanAR,guzmanEJC105,guzmanEJC28},
	and $F$-graph classes~\cite{skrienJGT6}.
	\item \textit{Locally expressible examples:}
	Proper circular-arc graphs~\cite{skrienJGT6}, unicyclic \break graphs~\cite{guzmanAR}, and homomorphism classes
	of odd cycles~\cite{guzmanEJC28}.
	\item \textit{Not locally expressible examples:} Forests,
	chordal graphs and even-hole free graphs~\cite{guzmanEJC105}.
	\item \textit{Relation to other expressive powers:}
	$ex(S_{|\OR})~\subseteq~ex(S) \cap ex(SB)$
	(Proposition~\ref{prop:factor-restriction}).
	\item \textit{Complexity Problem:}
	Known polynomial for $F\subseteq \OR_{\le3}\setminus\big\{\{B_1,\overrightarrow{C}_3\},$ $\{B_2,
	\overrightarrow{C}_3\}\big\}$~\cite{guzmanAR}.
	For every $k\ge 4$ there is a set $F\subseteq \OR_k$
	such that its Complexity Problem is NP-complete
	(follows from RGHV-Theorem).
	\item \textit{Questions:} 
	Is the class of odd-hole free graphs expressible by forbidden orientations?
	The Complexity and Characterization Problem for $\{B_1,\overrightarrow{C}_3\}$.\\
\end{itemize}
\newpage

\hypertarget{PO}{
\noindent
\textbf{\large{{Partially ordered graphs  -- $\PO$}}}}\\
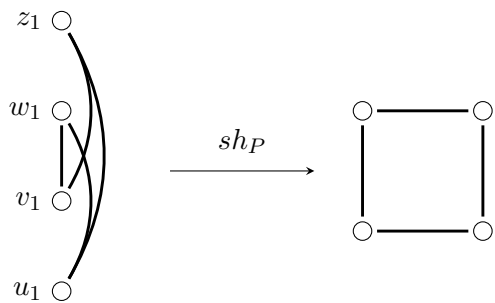
\begin{figure}[ht!]
\begin{center}
\begin{tikzpicture}[scale = 0.8]

\node [vertex, fill= white, label = left:{$u_1$}] (1) at (-3,-1){};
\node [vertex, fill= white, label = left:{$v_1$}] (2) at (-3,0.5){};
\node [vertex, fill= white, label = left:{$w_1$}]  (3) at (-3,2){};
\node [vertex, fill= white, label = left:{$z_1$}] (4) at (-3,3.5){};

\foreach \from/\to in {1/3, 2/4, 1/4}
\draw[edge]    (\from)  to [bend right]  (\to);
\draw[edge]    (3)  to   (2);

\node [vertex] (a) at (2,0){};
\node [vertex] (b) at (2,2){};
\node [vertex]  (c) at (4,2){};
\node [vertex] (d) at (4,0){};

\foreach \from/\to in {a/b, b/c, c/d, d/a}
\draw[edge]    (\from)  to  (\to);

\node[] (D) at (-1.5,1){};
\node[] (C) at (1.5,1){};
\node[] at (0,1.5){$sh_P$};
\draw[arc, thin]    (D)  edge  (C);
\end{tikzpicture}
\caption[Functor $sh_P\colon\mathcal{PO\to G}$]{To the left, a partially
ordered graph $A$ such that $sh_P(A) = C_4$. Vertices are ordered
from left to right.}
\end{center}
\end{figure}
\begin{itemize}
	\item \textit{Functor:}
	Forgets the partial ordering; also defined by quantifier-free definition
	$E \mapsto E(x,y)$.
	\item \textit{Equivalent expressions:}
	Graphs with a $T_0$-Alexandroff Topology on its vertex
	set (see, e.g.,~\cite{goubault2013}, or Example~\ref{ex:POkcol}) .	
	\item \textit{Locally expressible examples:}
	Comparability graphs and $k$-colourable graphs
	(Examples~\ref{ex:POcomp}, \ref{ex:POkcol}).
	\item \textit{Not locally expressible examples:}
	Not considered yet.
	\item \textit{Relation to other expressive powers:}
	$ex(sh_L) \subseteq \ex(sh_S)\subseteq ex(sh_P)$
	(Proposition~\ref{prop:factor-restriction}).
	\item \textit{Complexity Problem:}
	Not considered yet.
	\item \textit{Questions:} 
	Is $ex(S_L)$ properly contained in $ex(sh_P)$?\\
\end{itemize}
\newpage

\hypertarget{SO}{
\noindent
\textbf{\large{{Suitably ordered graphs -- $\SO$}}}}\\
\begin{figure}[ht!]
\begin{center}
\begin{tikzpicture}[scale = 0.8]

\node [vertex, label = right:{$z_3$}] (1) at (-2.5,1){};
\node [vertex, label = above:{$v_2$}] (2) at (-3.91,2.41){};
\node [vertex, label = left:{$u_1$}]  (3) at (-5.32,1){};
\node [vertex, label = below:{$w_2$}] (4) at (-3.91,-0.41){};

\foreach \from/\to in {2/3, 3/4, 2/1, 1/4}
\draw[edge]    (\from)  to  (\to);

\node [vertex] (a) at (2,0){};
\node [vertex] (b) at (2,2){};
\node [vertex]  (c) at (4,2){};
\node [vertex] (d) at (4,0){};

\foreach \from/\to in {a/b, b/c, c/d, d/a}
\draw[edge]    (\from)  to  (\to);

\node[] (D) at (-1.5,1){};
\node[] (C) at (1.5,1){};
\node[] at (0,1.5){$sh_S$};
\draw[arc, thin]    (D)  edge  (C);

\end{tikzpicture}
\caption[Functor $sh_S\colon\mathcal{SO\to G}$]{To the left, a suitably
ordered graph $A$ such that $sh_S(A) = C_4$. Vertices are ordered
from left to right.}
\end{center}
\end{figure}
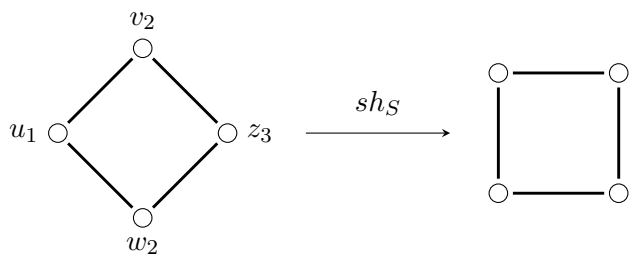
\begin{itemize}
	\item \textit{Functor:}
	Forgets the ordering on the vertex set; also defined by quantifier-free definition
	$E \mapsto E(x,y)$.
	\item \textit{Locally expressible examples:}
	Comparability graph of posets of height at most $k$
	(Example~\ref{ex:comparabilityheightk}).
	\item \textit{Not locally expressible examples:}
	Not considered yet.
	\item \textit{Relation to other expressive powers:}
	$ex(sh_L) \subseteq ex(sh_S)\subseteq ex(sh_P)$
	(Proposition~\ref{prop:factor-restriction}).
	\item \textit{Complexity Problem:}
	Not considered yet.
	\item \textit{Questions:} 
	Which of the inclusions $ex(S_L) \subseteq ex(sh_S) \subseteq ex(sh_P)$
	are proper inclusions?\\
\end{itemize}
\newpage

\hypertarget{Gk}{
\noindent
\textbf{\large{Vertex-coloured graphs -- $\mathcal{G}_k$}}}\\
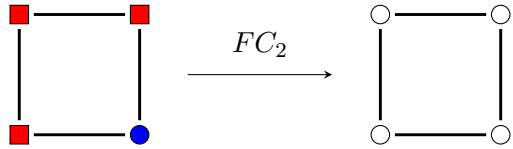
\begin{figure}[ht!]
\begin{center}
\begin{tikzpicture}[scale = 0.8]

\node [vertex, fill = red, rectangle] (1) at (-4,0){};
\node [vertex, fill = red, rectangle] (2) at (-4,2){};
\node [vertex, fill = red, rectangle]  (3) at (-2,2){};
\node [vertex, fill = blue] (4) at (-2,0){};
\foreach \from/\to in {2/3, 4/3, 4/1, 2/1}
\draw[edge]    (\from)  edge  (\to);

\node [vertex] (a) at (2,0){};
\node [vertex] (b) at (2,2){};
\node [vertex]  (c) at (4,2){};
\node [vertex] (d) at (4,0){};

\foreach \from/\to in {a/b, b/c, c/d, d/a}
\draw[edge]    (\from)  to  (\to);

\node[] (D) at (-1.5,1){};
\node[] (C) at (1.5,1){};
\node[] at (0,1.5){$FC_2$};
\draw[arc, thin]    (D)  edge  (C);

\end{tikzpicture}
\caption[Functor $FC_2\colon\mathcal{G_2\to G}$]{To the left, a 
$2$-vertex-coloured graph $A$ such that $FC_2(A) = C_4$.}
\end{center}
\end{figure}
\begin{itemize}
	\item \textit{Functor:}
	Forgets the vertex colouring; also defined by quantifier-free definition
	$E\mapsto E(x,y)$.
	\item \textit{Equivalent expressions:}
	$FC_1$ is isomorphic to $Id_\mathcal{G}$.
	\item \textit{Locally expressible examples:}
	Generalized $M$-partition classes for $k\times k$ matrices over
	$\{0,1,\ast,\oplus\}$ (Example~\ref{ex:M-partition}).
	\item \textit{Not locally expressible examples:}
	Not considered yet.
	\item \textit{Relation to other expressive powers:}
	$\ex(FC_k)\subseteq \ex(FC_{k+1})$ for every integer $k\ge 0$.
	\item \textit{Complexity Problem:}
	Polynomial-time solvable for $FC_1$. Not considered yet
	for $FC_2$. When $k\ge 3$ there is a set $X$ of $k$-coloured
	graphs on $2$ vertices such the Complexity Problem for $FC_k$
	and $X$ is NP-complete.
	\item \textit{Questions:} Is there a positive integer $k$ such 
	that $\ex(FC_k) = \ex(FC_{k+1})$?\\
\end{itemize}
\newpage


\section{Glossary of graph classes}
\label{ap:graph-classes}

This appendix intends to be a quick reference for all graph classes mentioned
in this work. We also include some classes of relational structures considered
in Section~\ref{sec:examples} as domains of certain concrete functors.\\

\noindent
\textbf{1-split graphs.}
A graph $G$ is a $1$-split graph if $G$ is
either a complete graph or a complete graph minus an edge, or the complement
of such a graph, i.e., an independent set, plus possibly an edge. 
\textit{Minimal obstructions:} All graphs on four vertices with
at least $2$ edges and $2$ non-edges.\\

\noindent
\textbf{Arborescence comparability graphs.}
A poset $X$ is an arborescence
if for every $x\in X$ the set $\{y\in X\colon
y\le x\}$ is linearly ordered in $X$. A graph $G$ is an arborescence 
comparability graph if it is isomorphic to the comparability graph of an
arborescence poset. \textit{Minimal obstructions:}
$C_4$ and $P_4$~\cite{golumbicDM24}.
This is the same class as nested interval graphs, and as trivially-perfect graphs.\\

\noindent
\textbf{Augmented clique.}
 A graph is an augmented clique if it is a complete
graph plus one additional node with arbitrary adjacency, and possibly some
isolated nodes. \textit{Minimal obstructions:} Finite since it is $\forall_1$-definable. \\

\noindent
\textbf{Bipartite graphs.}
A graph $G$ is a bipartite graph if its vertex set
can be partitioned into two independent sets. \textit{Minimal obstructions:}
Odd cycles \cite{bondy2008}.\\

\noindent
\textbf{Caterpillars.}
A graph $G$ is a caterpillar if it is a tree and it contains a
path $P$ such that every vertex has a neighbor in $P$.\\

\noindent
\textbf{Caterpillar forest.}
A caterpillar is a forest where every connected component
is a caterpillar. \textit{Minimal obstructions:} All cycles and $T_2$ ($T_2$ is obtained
from $K_{1,3}$ by subdividing each edge one).\\

\noindent
\textbf{Chordal graphs.}
A graph $G$ is chordal if it contains no induced
cycle on more that three vertices. Equivalently, a graph $G$ is chordal
if every cycle of $G$ of length at least $4$ has a chord. \textit{Minimal obstructions:}
Cycles on at least $4$ vertices.\\

\noindent
\textbf{Circular-arc graphs.}
A graph $G$ is a circular-arc graph if there is a family
$\mathcal{S}$ of circular arcs of a circumference
such that $G \cong I(\mathcal{S})$. \textit{Minimal obstructions:} Unknown.\\

\noindent
\textbf{Circularly ordered graphs.}
A circularly ordered graph is an $\{E,C\}$-structure $(V(A), E(A), C(A))$,
such that $(V(A),E(A))$ is a graph
and \break$(V(A), C(A))$ is a circular ordering.  To simplify notation we denote
a partially ordered graph by $(G,c(\le))$ where $G$ is a graph and $c(\le)$
is circular ordering on $V(G)$.\\

\noindent
\textbf{Cographs.}
The family of cographs is the minimal class containing
$K_1$ that it is closed under disjoint unions and  complements.  Equivalently, 
a graph is a cograph if it contains no induced path on four vertices.
\textit{Minimal obstructions:} $P_4$ \cite{corneilDAM3}.\\

\noindent
\textbf{Comparability graphs.}
The comparability graph of a poset $(X,\le)$ has
as vertex set $X$ and for every pair of comparable elements, $x \le y$ or $y\le x$,
we add an edge $xy$. A graph $G$ is a comparability graph if it is isomorphic to the
comparability graph of a poset. \textit{Minimal obstructions:} Known \cite{gallaiAMA18}.\\

\noindent
\textbf{Complete bipartite graphs.}
Given a pair of non-negative integers
$n$ and $m$, the graph $K_{n,m}$ is the graph with vertex set $\{x_1,\dots, x_n,
y_1,\dots, y_m\}$ and edge set $\{x_iy_j~| 1\le i\le n, 1\le j \le m\}$. A graph
$G$ is a complete bipartite if it is isomorphic to $K_{n,m}$ for some
non-negative integers $n$ and $m$.\\

\noindent
\textbf{Complete multipartite graphs.}
A graph $G$ is a complete multipartite
if its vertex set admits a partition $(V_1,\dots, V_k)$ into independent
set such that every pair of vertices in different classes are adjacent.
\textit{Minimal obstructions:} $K_1+K_2$.\\

\noindent
\textbf{Complete graphs.}
 A complete graph is a graph $G$
such that every pair of (different) vertices in $V(G)$ are adjacent. The unique
(up to isomorphism) complete graph on $n$ vertices is denoted by $K_n$.
\textit{Minimal obstructions:} $2K_1$.\\

\noindent
\textbf{Cycles.}
Given a non-negative integer $n$, a cycle on $n$ vertices is graph 
isomorphic to the graph with vertex set $\{v_0,\dots, v_{n-1}\}$ and edge
set $\{v_iv_{i+1}~|~i\in\{0,\dots, n-2\}\}\cup\{v_{n-1},v_0\}$. The unique
(up to isomorphism) cycle on $n$ vertices is denoted by $C_n$.\\

\noindent
\textbf{Digraphs.}
We consider a digraph $D$ as an
$\{E\}$-structure $(V(D),$ $ E(D))$ where $E(D)$ is an
irreflexive binary relation on $V(D)$.\\

\noindent
\textbf{Directed cycles.}
Given a non-negative integer $n$,  a directed cycle on
$n$ vertices is graph isomorphic to the graph with vertex set
$\{v_0,\dots, v_{n-1}\}$ and arc set
$\{(v_i,v_{i+1})~|~i\in\{0,\dots, n-2\}\}\cup\{(v_{n-1},v_0)\}$. The unique
(up to isomorphism) directed cycle on $n$ vertices is denoted by
$\overrightarrow{C}_n$.\\

\noindent
\textbf{Directed paths.}
Given a non-negative integer $n$,  a directed path on
$n$ vertices is graph isomorphic to the graph with vertex set
$\{v_0,\dots, v_{n-1}\}$ and arc set
$\{(v_i,v_{i+1})~|~i\in\{0,\dots, n-2\}\}$. The unique
(up to isomorphism) directed path on $n$ vertices is denoted by
$\overrightarrow{P}_n$.\\

\noindent
\textbf{Even-hole free graphs.}
A graph $G$ is an even-hole free graph if it
contains no induced even cycle. \textit{Minimal obstructions:} Even cycles.\\

\noindent
\textbf{$\mathbf{F}$-free graphs.}
Given a set of graphs $F$, a graph $G$ is
$F$-free if it contains no induced subgraph in $F$.\\

\noindent
\textbf{$\mathbf{Forb(F)}$.}
Given a set of digraphs $F$, a digraph $D$
belongs to $Forb(F)$ if for each $H\in F$, there is no homomorphism
from $H$ to $D$; in symbols $H\not\to D$ for every $H\in F$.\\

\noindent
\textbf{$\mathbf{Forb_e(F)}$.} Given a set of digraphs $F$, a digraph $D$
belongs to $Forb_e(F)$ if $D$ contains no induced digraph if $F$.
Equivalently, if for each $H\in F$, there is no embedding from $H$
to $D$.\\

\noindent
\textbf{Forests.}
A forest is an acyclic graph. Equivalently, 
a forest is a disjoint union of trees. \textit{Minimal obstructions:} All cycles.\\

\noindent
\textbf{Genealogical graphs.}
A genealogical graph is
a suitably ordered graph $(G,\le)$ such that the down-set of every vertex
is a chain; that is, for every $x\in V(G)$ if $y,z\le x$ then $y$ and $z$ are
comparable. Genealogical graphs can be represented by a parent
function $p\colon V(G)\to V(G)$ where $p(x) =
\max\{y\in V(G)\colon y\le x, y\neq x\}$ if $x$ is not a $\le$-minimal
element, and $p(x) = x$ for every $\le$-minimal element.\\

\noindent
\textbf{Generalized $\mathbf{M}$-partitionable graphs.}
Consider  an
$m\times m$ matrix $M$ over $\{0,1,\ast,\oplus\}$ and a graph $G$. 
An $M$-partition of $G$ is a partition (with possibly empty classes) $(V_1,\dots,V_n)$ 
of $V(G)$, such that for every $x\in V_i$ and every $y\in V_j$ where
$m_{i,j}\neq \oplus$ the following holds: if $m_{ij} = 0$ then $xy\not \in E(G)$, and
if $m_{ij} = 1$  then $xy \in E(G)$; otherwise, for every $i,j\in\{1,\dots, m\}$
such that $m_{ij} = \oplus$ it holds that: if $i = j$ then $|V_i| \le 1$, and if
$i\neq j$ then one of $V_i$ and $V_j$ is empty.
A graph $G$ is $M$-partitionable if it admits an $M$-partition.\\

\noindent
\textbf{Graphs.}
We consider a graph $G$ as an $\{E\}$-structure
$(V(G), E(G))$ such that $E(G)$ is a symmetric irreflexive binary relation on $V(G)$.\\

\noindent
\textbf{$\mathbf{H}$-colourable digraphs.}
Given a digraph $H$, a digraph
$D$ is $H$-colourable
if there is a homomorphism $\varphi\colon D\to H$. The class of $D$-colourable digraphs
is denoted by $CSP(H)$. The same notation is used for $H$-colourable graphs;
it is always clear from context whether we are referring to graphs or to digraphs.\\

\noindent
\textbf{$\mathbf{H}$-colourable graphs.} Given a graph $H$, a graph $G$ is $H$-colourable
if there is a homomorphism $\varphi\colon G\to H$. The class of $H$-colourable graphs
is denoted by $CSP(H)$. \textit{Minimal obstructions:}
$K_2$ if $H$ is an independent graph; odd cycles when $H$ is a non-empty
bipartite graph; unknown otherwise \cite{hell2004}.\\

\noindent
\textbf{$\mathbf{\mathcal{M}}$-colourable digraphs.} Given a set of
digraphs $\mathcal{M}$, a
digraph $D$ is $\mathcal{M}$-colourable if there is some digraph $H\in \mathcal{M}$
such that $G\to H$. The class of $\mathcal{M}$-colourable digraphs is denoted by
$CSP(\mathcal{M})$. The same notation is used for $\mathcal{M}$-colourable graphs;
it is always clear from context whether we are referring to graphs or to digraphs.\\

\noindent
\textbf{$\mathbf{\mathcal{M}}$-colourable graphs.} Given a set of graphs $\mathcal{M}$, a
graph $G$ is $\mathcal{M}$-colourable if there is some graph $H\in M$ such that
$G\to H$. The class of $\mathcal{M}$-colourable graphs is denoted by
$CSP(\mathcal{M})$.\\


\noindent
\textbf{Independent graphs.}
An independent (or empty) graph is a graph
with empty set of edges. The unique (up to isomorphism) independent graph on 
$n$ vertices is denoted by $nK_1$. \textit{Minimal obstructions:} $K_2$.\\

\noindent
\textbf{Intersection graphs.}
Given a finite family
of sets $\mathcal{S}$ the intersection graph of $\mathcal{S}$,
$I(\mathcal{S})$, has a vertex set $\mathcal{S}$ and two vertices $S,T\in
\mathcal{S}$ are adjacent if $S\neq T$ and $S\cap T\neq \varnothing$.\\

\noindent
\textbf{Interval graphs.}
A graph $G$ is an interval graph if there is a family
of intervals of the real line $\mathcal{S}$ such that $G \cong I(\mathcal{S})$.\\

\noindent
\textbf{$\mathbf{k}$-colourable graphs.}
Given a positive integer $k$, a graph
$G$ is $k$-colourable its vertex set can be partitioned into a most $k$ independent
sets. Equivalently, $G$ is $k$-colourable if there is a homomorphism 
$\varphi\colon G\to K_k$. \textit{Minimal obstructions:}
$K_2$ if $k =1$; odd cycles if $k = 2$;  unknown when $k\ge 3$
\cite{hell2004}.\\

\noindent
\textbf{$\mathbf{k}$-coloured graphs.}
We consider a $k$-coloured graph $G$ as an $\{E,U_1,\dots, U_k\}$-structure
such that  $(V(G), E(G))$ is a graph and $(U_1(G),\dots,$ $ U_k(G))$ is a partition of 
$V(G)$. Not to be confused with properly coloured graphs nor with 
$k$-colourable graphs.\\

\noindent
$\mathbf{(k,l)}$\textbf{-graphs.}
A graph $G$ is a $(k,l)$-graph if its vertex set
admits a partition $(V_1,\dots, V_{k'}, U_1,\dots, U_{l'})$ such that 
$V_i$ is a complete graph and $U_j$ an independent set for 
each $i\in\{1,\dots, k'\}$ and $j\in\{1,\dots, l'\}$, and $k'\le k$ and $l'\le l$.\\

\noindent
\textbf{Linear forest.}
A linear forest is a forest such that each connected component
is a path. \textit{Minimal obstructions:} All cycles and $K_{1,3}$.\\

\noindent
\textbf{Linearly ordered graph.}
A linearly ordered graph is an
$\{E,\le\}$-structure $A= (V(A), E(A), {\le}(A))$ such that $(V(A),E(A))$ is a graph
and $(V(A), {\le}(A))$ is a linearly ordered set. To simplify notation we denote
a linearly ordered graph by $(G,\le)$ where $G$ is a graph and $\le$ is 
a linear ordering on $V(G)$.\\

\noindent
\textbf{Locally $\mathcal{P}$ graphs.} Given a graph property $\mathcal{P}$, 
a graph $G$ is locally $\mathcal{P}$, if the neighborhood of each
vertex of $G$ induces a graph in $\mathcal{P}$.\\

\noindent
\textbf{$\mathbf{M}$-partitionable graphs.}
Consider  an $(m\times m)$ matrix $M$ over
$\{0,1,\ast\}$ and a graph $G$. An  $M$-partition of $G$ is a partition
$S_1,S_2,\dots, S_m$ (with some possible empty classes) of $V(G)$ such
that for any two vertices $v\in  S_i$ and $u\in S_j$ the following hold:
$uv\in E_G$ if $M(i,j) = 1$, and $uv\not\in E_G$ if $M(i,j)=0$.
A graph $G$ is $M$-partitionable if it admits an $M$-partition.\\

\noindent
\textbf{Nested interval graphs.}
A graph $G$ is an interval graph if there is a family
of intervals of the real line $\mathcal{S}$ such that $G \cong I(\mathcal{S})$, and
$\mathcal{S}$ satisfies that for every pair $S,T\in\mathcal{S}$ with non-empty
intersection then $S\subseteq T$ or $T\subseteq S$.
\textit{Minimal obstructions:} $C_4$ and $P_4$ \cite{golumbicDM24}.
This is the same class as arborescence comparability graphs, and as
trivially-perfect graphs.\\

\noindent
\textbf{Oriented graphs.}
We consider an oriented graph $G$ as an $\{E\}$-structure $(V(G), E(G))$
such that $E(G)$ is an antisymmetric irreflexive binary relation on $V(G)$.\\

\noindent
\textbf{Out-tournament.}
An oriented graph $D$ is an out-tournament if the
out-neighborhood of every vertex is a tournament.\\

\noindent
\textbf{Outerplanar graphs.}
A graph is outerplanar if it admits
a planar embedding where all vertices belong to the same face.\\

\noindent
\textbf{$\mathbf{\mathcal{P}}$-mixed graph.}
Consider a hereditary property
$\mathcal{P}$. A $\mathcal{P}$-mixed
graph is a graph $G$ whose edge set can be
partitioned into $E_1$ and $E_2$, where $(V(G),E_1)\in \mathcal{P}$, and
$(V(G),E_2)$ admits a transitive orientation $(V(G),E'_2)$
such that if $(x,y)\in E'_2(G)$ and $yz\in E_1$, then $xz\in E_1$
(see~\cite{gavrilIPL73}).\\

\noindent
\textbf{Partially ordered graphs.}
A partially ordered graph is an
$\{E,\le\}$-structure $A= (V(A), E(A), {\le}(A))$ such that $(V(A),E(A))$ is a graph
and $(V(A), {\le}(A))$ is partially ordered set. To simplify notation we denote
a partially ordered graph by $(G,\le)$ where $G$ is a graph and $\le$ is 
a partial ordering on $V(G)$.\\

\noindent
\textbf{Paths.}
Given a non-negative integer $n$,  a path on $n$ vertices is
graph isomorphic to the graph with vertex set $\{v_0,\dots, v_{n-1}\}$ and edge set
$\{v_iv_{i+1}~|~i\in\{0,\dots, n-2\}\}$. The unique
(up to isomorphism) path on $n$ vertices is denoted by $P_n$.\\

\noindent
\textbf{Perfect graphs.}
A graph $G$ is a perfect graph if for every induced
subgraph $G'$ of $G$, the chromatic number $\chi(G')$ is
the same as the maximum complete graph in $G'$. 
\textit{Minimal obstructions:} Odd cycles and complements of odd cycles
\cite{chusdnovskyAM164}.\\

\noindent
\textbf{Perfectly orientable graphs.}
A graph $G$ is perfectly orientable if
it admits an orientation as an out-tournament. Equivalently, a graph $G$ is
perfectly orientable if it admits an orientation as an in-tournament. 
\textit{Minimal obstructions:} Unknown (see partial list in \cite{hartingerJGT85,hartingerDM340}).\\

\noindent
\textbf{Planar graphs.}
A graph is planar if it can be embedded in the plane
(drawn with points for vertices and curves for edges) where the end points
of a curve representing $e$ correspond to the points representing the
end-vertices of $e$, and every pair of curves intersect only at end points.\\

\noindent
\textbf{$\mathbf{(\mathcal{P},\mathcal{Q})}$-colourable graphs.}
Given a pair $\mathcal{P}$ and  $\mathcal{Q}$ of hereditary properties,
a graph $G$ is $(\mathcal{P},\mathcal{Q})$-colourable if there is partition $(A,B)$
of $V(G)$ such that $G[A]\in\mathcal{P}$ and $G[B]\in\mathcal{Q}$.\\

\noindent
\textbf{Proper circular-arc graphs.}
A graph $G$ is a proper circular-arc graph if there
is a family $\mathcal{S}$ of  incomparable circular arcs of a circumference
such that $G \cong I(\mathcal{S})$. \textit{Minimal obstructions:}
Known \cite{tuckerDM7}.\\

\noindent
\textbf{Proper Helly circular-arc graphs}
A family of sets $\mathcal{A}$ is said to have the Helly property if, for any subfamily
$\mathcal{B}\subseteq\mathcal{A}$ such that any two sets $A,B\in\mathcal{B}$,
$A\cap B\neq\varnothing$, the intersection of all sets in $\mathcal{B}$ is
non empty. A graph $G$ is a proper Helly circular-arc graph if there is a family
of incomparable intervals of the real line $\mathcal{S}$ such that
$G \cong I(\mathcal{S})$, and such that $\mathcal{S}$ has the Helly property.
\textit{Minimal obstructions:} Known \cite{linDAM161}.\\

\noindent
\textbf{Proper interval graphs.}
A graph $G$ is a proper interval graph if there is a family
of incomparable intervals of the real line $\mathcal{S}$ such that
$G \cong I(\mathcal{S})$.\\

\noindent
\textbf{Rational complete graph.}
Given a pair of positive integers
$p$ and $q$, $q \le p$, the rational complete graph $K_{\sfrac{p}{q}}$
has vertices $\{0,1,\dots, p-1\}$ and there
is an edge $ij$ if and only if the circular distance between $i$ and $j$ is at
least $q$.\\

\noindent
\textbf{Split graphs.}
A graph $G$ is a split graph if $G$ and $\overline{G}$
are chordal graphs. Equivalently, a graph $G$ is a split graph if its vertex
set can be partitioned into an independent set and a complete graph.
 \textit{Minimal obstructions:} $2K_2$, $C_4$ and $C_5$ \cite{foldes1997}.\\
 
\noindent
\textbf{Stars.}
A graph $G$ is a star if it is connected and contains one
distinguished vertex $v$ such that every edge of $G$ is incident in $v$.
Equivalently, a graph $G$ is a star is it isomorphic to a complete
bipartite graph $K_{1,n}$ for some non-negative integer $n$.\\

\noindent
\textbf{Star forest.}
A graph $G$ is a star forests if it is a forest such that every component is
a star. \textit{Minimal obstructions:} $C_3$, $C_4$ and $P_4$.\\

\noindent
\textbf{Suitably ordered graphs.}\index{ordered graph!suitably ordered}
A partially ordered graph $(G,\le)$ is suitably ordered if,
for every pair $x$ and $y$ of adjacent vertices,  $x\le y$ or $y\le x$.\\

\noindent
\textbf{Threshold graphs.}
A graph $G$ is threshold graph if there exists
a real number $s \in \mathbb{R}$ and a set of real weights $\{a_x\}_{x\in V(G)}$,
such that $xy \in E(G)$ if and only if $a_x+a_y \ge s$. \textit{Minimal obstructions:}
$2K_2$, $C_4$ and $P_4$ \cite{chvatal1977}.\\

\noindent
\textbf{Transitive-perfectly orientable graphs.}
A graph $G$ is transitive-perfectly 
orientable if it admits an orientation as an out-tournament, where every
tournament is transitively oriented.
\textit{Minimal obstructions:} Unknown.\\

\noindent
\textbf{Transitive tournament.}
A transitive tournament is an tournament with no
directed cycle. Equivalently, a transitive tournament is a tournament such that
its arc set induces an irreflexive linear order on its vertex set.
The unique (up to isomorphism) transitive tournament on $n$ vertices is denoted
by $TT_n$.\\

\noindent
\textbf{Trivially-perfect graph.}
A trivially-perfect graph is a $\{C_4,P_4\}$-free graph. This is the same class
as arborescence comparability graphs, and as nested interval
graphs.\\

\noindent
\textbf{Tournaments.}
A tournament is an orientation of a complete graph.\\

\noindent
\textbf{Trees.}
A tree is a connected acyclic graph.\\

\noindent
\textbf{Unicyclic graphs.}
A graph $G$ is a unicyclic graph if it contains at most one cycle.\\

\noindent
\textbf{Wheels.}
A wheel is a graph obtained from a cycle by adding a
universal vertex. The (unique up to isomorphism) wheel obtained from
$C_n$ is  denoted by $W_n$.

\vspace{2mm}


\bibliographystyle{abbrv}
\bibliography{global.bib}

\end{document}